\newcounter{notes}%
\newtheorem{cor}{Corollary}[section]
\newtheorem{theorem}[cor]{Theorem}
\newtheorem{prop}[cor]{Proposition}
\newtheorem{lemma}[cor]{Lemma}
\newtheorem{Proposition}[cor]{Proposition}
\newtheorem{introthm}{Theorem}
\newtheorem*{rep@theorem}{\rep@title}
\newcommand{\newreptheorem}[2]{%
\newenvironment{rep#1}[1]{%
 \def\rep@title{#2 \ref{##1}}%
 \begin{rep@theorem}}%
 {\end{rep@theorem}}}
\theoremstyle{definition}
\newtheorem{defi}[cor]{Definition}
\newtheorem{example}[cor]{Example}
\newtheorem{remark}[cor]{Remark}
\theoremstyle{plain}
\def\co{\colon\thinspace}
\newcommand{\qcm}{{\mathcal QC}}
\newcommand{\pqc}{{\mathcal PQC}}
\newcommand{\cC}{{\mathcal C}}
\newcommand{\cT}{{\mathcal T}}
\newcommand{\cML}{{\mathcal M\mathcal L}}
\newcommand{\C}{{\mathbb C}}
\newcommand{\HH}{{\mathbb H}}
\newcommand{\N}{{\mathbb N}}
\newcommand{\R}{{\mathbb R}}
\newcommand{\Hyp}{\mathbb{H}}
\newcommand{\AdS}{\mathbb{A}\mathrm{d}\mathbb{S}}
\newcommand{\dS}{\mathrm{d}\mathbb{S}}
\newcommand{\SL}{\mathsf{SL}}
\newcommand{\PSL}{\mathsf{PSL}}
\newcommand{\PSO}{\mathsf{PSO}}
\newcommand{\PO}{\mathsf{PO}}
\newcommand{\PGL}{\mathsf{PGL}}
\newcommand{\ML}{\mathrm{ML}}
\newcommand{\CH}{\mathrm{CH}}
\newcommand{\rhombus}{C_{\diamond}}
\newcommand{\cro}{\mathrm{cr}}
\newcommand{\isom}{\mathsf{Isom}}
\newcommand{\im}{\Im}
\newcommand{\tr}{\mbox{\rm tr}}
\newcommand{\sh}{\mathrm{sinh}\,}
\newcommand{\ch}{\mathrm{cosh}\,}
\newcommand{\homeoqs}{\mathrm{Homeo}_{qs}(\RP^1)}
\newcommand{\Istar}{I^*}
\newcommand{\I}{I}
\newcommand{\II}{I\hspace{-0.1cm}I}
\newcommand{\III}{I\hspace{-0.1cm}I\hspace{-0.1cm}I}
\newcommand{\RR}{\mathbb R}
\newcommand{\CC}{\mathbb C}
\newcommand{\CP}{\mathbb{CP}}
\newcommand{\RP}{\mathbb{RP}}
\newcommand{\Herm}{\operatorname{Herm}}
\newcommand{\bminimatrix}[4]{\begin{bmatrix} #1 & #2 \\ #3 & #4 \end{bmatrix}  }
\newcommand{\Sa}{\mathcal S}
\newcommand{\Ein}{\mathrm{Ein}}
\newcommand{\cmp}{\mathbf{comp}}
\newcommand{\VCK}[1]{V_{#1}}
\let\oldtocsection=\tocsection
\let\oldtocsubsection=\tocsubsection
\let\oldtocsubsubsection=\tocsubsubsection
\renewcommand{\tocsection}[2]{\hspace{0em}\oldtocsection{#1}{#2}}
\renewcommand{\tocsubsection}[2]{\hspace{1em}\oldtocsubsection{#1}{#2}}
\renewcommand{\tocsubsubsection}[2]{\hspace{2em}\oldtocsubsubsection{#1}{#2}}
   \def\MR#1{}
\begin{document}

\title[Induced metrics on convex hulls of quasicircles]{The induced metric on the boundary of the convex hull of a quasicircle in hyperbolic and anti de Sitter geometry}

\author[F. Bonsante]{Francesco Bonsante}
\address{FB: Universit\`a degli Studi di Pavia}
\email{francesco.bonsante@unipv.it}
\urladdr{www-dimat.unipv.it/$\sim$bonsante}

\author[J. Danciger]{Jeffrey Danciger}
\address{JD: Department of Mathematics, University of Texas at Austin}
\email{jdanciger@math.utexas.edu}
\urladdr{www.ma.utexas.edu/users/jdanciger}

\author[S. Maloni]{Sara Maloni}
\address{SM: Department of Mathematics, University of Virginia}
\email{sm4cw@virginia.edu}
\urladdr{www.people.virginia.edu/$\sim$sm4cw}

\author[J.-M. Schlenker]{Jean-Marc Schlenker}
\address{JMS: Department of mathematics, University of Luxembourg}
\email{jean-marc.schlenker@uni.lu}
\urladdr{math.uni.lu/schlenker}

\thanks{Bonsante was partially supported by the Blue Sky research project ``Analytical and geometric properties of low dimensional manifolds''; Danciger was partially supported by an Alfred P. Sloan Foundation Fellowship and by NSF grants DMS-1510254 and DMS-1812216; Maloni was partially supported by NSF grant DMS-1506920 and DMS-1650811; Schlenker was partially supported by UL IRP grants GeoLoDim and NeoGeo and FNR grants INTER/ANR/15/11211745 and OPEN/16/11405402. The authors also acknowledge support from U.S. National Science Foundation grants DMS-1107452, 1107263, 1107367 ``RNMS: GEometric structures And Representation varieties'' (the GEAR Network). }

\date{\today}

\begin{abstract}
Celebrated work of Alexandrov and Pogorelov determines exactly which metrics on the sphere are induced on the boundary of a compact convex subset of hyperbolic three-space. As a step toward a generalization for unbounded convex subsets, we consider convex regions of hyperbolic three-space bounded by two properly embedded disks which meet at infinity along a Jordan curve in the ideal boundary. In this setting, it is natural to augment the notion of induced metric on the boundary of the convex set to include a gluing map at infinity which records how the asymptotic geometry of the two surfaces compares near points of the limiting Jordan curve. Restricting further to the case in which the induced metrics on the two bounding surfaces have constant curvature $K \in [-1,0)$ and the Jordan curve at infinity is a quasicircle, the gluing map is naturally a quasisymmetric homeomorphism of the circle. The main result is that for each value of~$K$, every quasisymmetric map is achieved as the gluing map at infinity along some quasicircle. We also prove analogous results in the setting of three-dimensional anti de Sitter geometry. 
Our results may be viewed as universal versions of the conjectures of Thurston and Mess about prescribing the induced metric on the boundary of the convex core of quasifuchsian hyperbolic manifolds and globally hyperbolic anti de Sitter spacetimes.
\end{abstract}

\maketitle

\tableofcontents

\section{Introduction}

\subsection{The induced metric on the boundary of a convex subset of $\HH^3$}
Let $\HH^3$ denote the three-dimensional hyperbolic space and let $\mathscr C \subset \HH^3$ be a compact convex subset with smooth boundary. By restriction from $\HH^3$, the boundary $\partial \mathscr C$ inherits a Riemannian metric which we refer to as the \emph{induced metric}, and the Gauss equation indicates that this metric has curvature $K\geq -1$. A celebrated theorem of Alexandrov  \cite{alex} and Pogorelov \cite{Po} states that, conversely, any smooth metric on the sphere with curvature $K>-1$ is the induced  metric on $\partial \mathscr C$ for some convex subset $\mathscr C \subset \HH^3$ and that, further, $\mathscr C$ is unique up to a global isometry of~$\HH^3$. This result in fact extends, by~\cite{Po}, to the general context of compact convex subsets of~$\HH^3$ whose boundary need not be smooth: any geodesic distance function on the sphere $\mathbb S^2$ with curvature $K\geq -1$ in the sense of Alexandrov is induced as the path metric on the boundary of a compact convex subset $\mathscr C \subset \HH^3$, unique up to isometry of $\HH^3$. 

A naive attempt to extend these results to arbitrary unbounded convex subsets immediately encounters problems. For instance, if $\overline \Omega \subset \partial \HH^3$ is any (not necessarily round) closed disk, then the convex hull $\mathscr C = \CH(\overline \Omega)$ is a closed half-space bounded by a convex pleated surface $\partial \mathscr C$ whose induced metric is just an isometric copy of the hyperbolic plane $\HH^2$, independent of $\overline \Omega$.
However,there does seem to be hope for extensions of the above theorems in cases that the boundary at infinity of the closed convex set $\mathscr C \subset \HH^3$ is small enough. For example, Rivin~\cite{rivin-comp} showed that any complete hyperbolic metric on the $n$-times punctured sphere is realized uniquely on the boundary of the convex hull $\mathscr C$ of $n$ points in $\partial \HH^3$ (such a $\mathscr C$ is called an ideal polyhedron).
We focus here on the situation where the boundary at infinity of $\mathscr C$ is a {\em quasicircle} $C$ (see below), so that the boundary of $\mathscr C$ is the disjoint union of two discs $\partial^+ \mathscr C$ and $\partial^- \mathscr C$. In this setting, the proper notion of induced metric for $\mathscr C$ includes not just the induced path metric on $\partial^+ \mathscr C$ and $\partial^- \mathscr C$, but also a \emph{gluing map at infinity} between $\partial^+ \mathscr C$ and $\partial^- \mathscr C$ which records how the asymptotic behavior of the two induced metrics compares for sequences going to infinity towards a point of $C$ along either surface. In the case that $\mathscr C = \CH(C)$ is the convex hull of such a quasicircle $C$, the path metrics on $\partial^+ \mathscr C$ and $\partial^- \mathscr C$ are each isometric to the hyperbolic plane and the induced metric on the boundary of $\mathscr C$ is reduced entirely to this gluing map, which turns out to be a quasisymmetric map. We will show (Theorem \ref{tm:induced-hyp} below) a partial extension of Alexandrov's theorem to this setting: 
   any quasisymmetric map of the circle is realized as the gluing map for some quasicircle. We also give a similar result when $\partial^+ \mathscr C$ and $\partial^- \mathscr C$ have constant curvature $K\in (-1,0)$ (Theorem \ref{tm:induced-hyp-K}).  
   Lorentzian versions of these results, in which the hyperbolic space is replaced by the anti de Sitter space $\AdS^3$, will be given as well (Theorems~\ref{tm:induced-ads} and \ref{tm:induced-ads-K}). We remark that, although there is not yet a well-developed analogue of the Alexandrov and Pogorelov theory in $\AdS^3$, the analogue of Rivin's result on induced metrics for ideal polyhedra were obtained by the last three authors~\cite{idealpolyhedra}.

\subsection{Quasicircles in $\CP^1$ and their convex hulls in $\HH^3$}\label{sec:intro-hyp}

We consider in this paper several natural constructions of \emph{gluing maps} associated to an oriented Jordan curve $C$ in $\CP^1$. The first construction comes from complex geometry and the others come from hyperbolic geometry. Since these constructions are invariant under the action of the conformal group $\PSL(2,\CC)$, there is no loss in generality in considering only the case that $C$ is a \emph{normalized} Jordan curve, meaning that $C$ contains the points $0,1, \infty \in \CP^1$ and these points appear in positive order in the orientation on $C$. We assume this is the case in the following discussion.

 The normalized, oriented Jordan curve $C$ divides $\CP^1$ into two connected components.
 We denote by $\Omega^+_C$ the component of $\CP^1 \setminus C$ on the positive, or ``upper", side of $C$ and by $\Omega^-_C$ the component on the negative, or ``lower" side. Then, by the Riemann Mapping Theorem, $\Omega^+_C$ is biholomorphic equivalent to the upper half-plane $\HH^2=\HH^{2+}$ in $\CC$ and $\Omega^-_C$ is biholomorphic equivalent to the lower half-plane $\HH^{2-}$. For each value of $\pm$, it follows from Caratheodory's theorem (see e.g.~\cite[Section 21]{Pommerenke}) that the biholomorphism $U^\pm_C\co \HH^{2\pm} \to \Omega^\pm_C$ extends to a homeomorphism $\partial U^\pm_C \co \RP^1 \cong \partial \HH^{2\pm} \xrightarrow{\cong} C$, which is well-defined upon imposing that $\partial U^\pm_C(i) = i$ for $i=0,1,\infty$.  The map $\varphi_C: \RP^1 \to \RP^1$ defined by $$\varphi_C =  (\partial U^-_C)^{-1}\circ \partial U^+_C$$ is a \emph{normalized} homeomorphism of $\RP^1$, meaning it is a homeomorphism that fixes $0,1, \infty$. It is called the \emph{gluing map between the upper and lower regions of the complement of $C$ in $\CP^1$}. The relationship between the properties of $C$ and the properties of $\varphi_C$ is in general mysterious. In particular, there seems to be no known good condition for a homeomorphism $\RP^1 \to \RP^1$ to be realized as the gluing map $\varphi_C$ associated to some Jordan curve $C$, see Thurston's comment \cite{thu_mathoverflow}. However, this gluing map is much better understood when the Jordan curves considered are restricted to the class of quasicircles.

An oriented Jordan curve $C$ in $\CP^1$ is called a \emph{quasicircle} if $C$ is the image of $\RP^1$ under a quasiconformal homeomorphism of $\CP^1$.
 Let $\qcm$ be the space of normalized quasicircles in $\CP^1$ with the Hausdorff topology. Then for any $C \in \qcm$, the gluing map $\varphi_C$ is a \emph{quasisymmetric} homeomorphism. The space of such normalized quasisymmetric homeomorphisms is called the \emph{universal Teichm\"uller space} and will be denoted $\mathcal T$. A classical result of Bers \cite{bers} states that the map $$\varphi_{.}: \qcm \to \mathcal T$$ is a bijection. In particular, every quasisymmetric homeomorphism is realized as the gluing map between the upper and lower regions in the complement of a unique normalized quasicircle $C$ in $\CP^1$, up to the action of the conformal group $\PSL(2,\CC)$.

In this paper, we study a second type of gluing map which is defined from hyperbolic geometry. Throughout the paper, we identify $\CP^1$ with the ideal boundary $\partial \HH^3$ of the hyperbolic three-space $\HH^3$ in the usual way.
Given a normalized Jordan curve $C$, let $\CH(C)$ denote the convex hull of $C$ in $\HH^3$, that is the smallest closed subset of $\HH^3$ whose accumulation set at infinity is $C$. There are two components of the boundary of $\CH(C)$, which we denote by $\partial^+ \CH(C)$ and $\partial^- \CH(C)$. By convention $\partial^+ \CH(C)$ is the component on the positive, or ``upper", side of $C$ and $\partial^- \CH(C)$ is the component on the negative, or ``lower" side. Each component is a pleated surface and inherits an induced path metric from $\HH^3$ which is isometric to the hyperbolic plane. The orientation-preserving isometries $V_C^\pm\co \HH^{2\pm} \to \partial^\pm \CH(C)$ extend (see Proposition~\ref{prop:extend}) to homeomorphisms $\partial V_C^\pm \co \RP^1 \cong \partial \HH^2 \xrightarrow{\cong} C$ and become well-defined upon imposing that $\partial V_C^\pm(i) = i$ for $i=0,1,\infty$. The map $\Phi_C\co \RP^1 \to \RP^1$ defined by $$\Phi_C = (\partial V_C^-)^{-1}\circ \partial V_C^+$$ is, similarly as above, a normalized homeomorphism which we call the \emph{gluing map between the upper and lower boundaries of the convex hull}. As above, in the case that $C$ is a quasicircle, $\Phi_C$ is quasisymmetric (Proposition~\ref{pr:main-properness}). However, the map $$\Phi_. : \qcm \to \mathcal T$$ is more mysterious than its counterpart $\varphi_.$ above. The first main goal of this paper is:

\begin{introthm} \label{tm:induced-hyp}
The map $\Phi_{.}\co \qcm \to \cT$ is surjective: Any normalised quasisymmetric homeomorphism of the circle is realized as the gluing map between the upper and lower boundary of the convex hull of a normalized quasicircle in $\CP^1$.
\end{introthm}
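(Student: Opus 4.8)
The plan is to prove surjectivity by a continuity-method argument: I will show that the image $\Phi_{\cdot}(\qcm) \subseteq \cT$ is nonempty, closed, and open, and then conclude since $\cT$ is connected. The target $\cT$ is connected --- via the Bers embedding \cite{bers} it is realized as a bounded, hence contractible, domain in a complex Banach space. The image is nonempty for a trivial reason: the round circle $\RP^1 \subset \CP^1$ is a normalized quasicircle whose convex hull degenerates to a totally geodesic plane, so $\partial^+\CH$ and $\partial^-\CH$ coincide and $\Phi_{\RP^1} = \mathrm{id}$ is the basepoint of $\cT$.

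The first two substantive steps are continuity and closedness of the image. For continuity, I would argue that $C \mapsto \CH(C)$ is continuous for the Hausdorff topology, so that a convergent sequence $C_n \to C$ of normalized quasicircles forces the pleated surfaces $\partial^\pm \CH(C_n)$ to converge to $\partial^\pm \CH(C)$; combined with the normalization $\partial V^\pm(i)=i$ for $i=0,1,\infty$ and the extension statement of Proposition~\ref{prop:extend}, this yields convergence of the boundary maps $\partial V_{C_n}^\pm \to \partial V_C^\pm$ and hence $\Phi_{C_n} \to \Phi_C$. Closedness of the image then follows formally from continuity together with the properness of $\Phi_{\cdot}$ established in Proposition~\ref{pr:main-properness}: if $\Phi_{C_n} \to \psi$ in $\cT$, the convergent sequence $\{\Phi_{C_n}\} \cup \{\psi\}$ is compact, so its preimage is compact, a subsequence $C_{n_k}$ converges to some $C \in \qcm$, and continuity gives $\Phi_C = \psi$.

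The main obstacle is openness of the image, which is where essentially all of the geometric content lies. The cleanest route is to show that $\Phi_{\cdot}$ is an open map. Identifying the tangent space to $\qcm$ at $C$ (via Bers, with infinitesimal Beltrami differentials, or equivalently with holomorphic quadratic differentials on $\Omega_C^\pm$) and the tangent space to $\cT$ at $\Phi_C$, I would linearize the construction $C \mapsto \Phi_C$ and aim to show that the differential $d\Phi_C$ is surjective with split kernel --- or, more ambitiously, a linear isomorphism --- so that an inverse function theorem in the Banach-manifold setting yields local surjectivity (and, in the isomorphism case, local injectivity, hence uniqueness as well). The analytic heart of this step is controlling the first-order variation of the two pleated surfaces $\partial^\pm \CH(C)$ and of their boundary extensions under an infinitesimal deformation of the quasicircle; the difficulty is that the bending locus of a pleated surface is an arbitrary measured lamination and need not vary smoothly, so neither surjectivity nor injectivity of $d\Phi_C$ is a formal consequence of anything established so far.

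An alternative and perhaps more robust route to openness is finite-dimensional approximation. One approximates a general normalized quasicircle by limit sets of quasifuchsian surface groups; for such an invariant quasicircle the construction descends to the two boundary components of the convex core of a quasifuchsian manifold, and the gluing map becomes an equivariant object governed by classical, finite-dimensional deformation theory, where openness (or a degree count) is more tractable. Passing back to the universal setting using the continuity and properness already in hand would then upgrade the finite-dimensional statement. In either approach, it is this openness/local-surjectivity step --- rather than continuity, properness, or the formal topology of the continuity method --- that constitutes the crux of the theorem.
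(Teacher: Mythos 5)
Your overall architecture (nonempty $+$ closed $+$ open in the connected space $\cT$) is not the paper's, and the step you yourself identify as the crux --- openness of the image --- is a genuine gap that neither of your proposed routes fills. The linearization/inverse-function-theorem route is not available: the map $C \mapsto \Phi_C$ is not known to be differentiable (the bending lamination of $\partial^\pm\CH(C)$ varies only measurably), and injectivity of $d\Phi_C$ would essentially settle the uniqueness half of Thurston's conjecture, which the paper explicitly leaves open. Your second route (approximation by quasifuchsian quasicircles) is the right idea, but you aim it at the wrong target: the paper never proves openness. Instead it proves a \emph{quantitative} density statement (Proposition~\ref{pr:uniflim}): every $v \in \cT$ is a $C^0$-limit of quasifuchsian quasisymmetric maps $v_n$ that are \emph{uniformly} $k$-quasisymmetric. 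Combined with the known surjectivity onto quasifuchsian elements (Sullivan, Epstein--Marden, Labourie), properness (Proposition~\ref{pr:main-properness}) gives quasicircles $C_n$ with $\Phi_{C_n} = v_n$ that are uniformly $k'$-quasicircles; compactness of $k'$-quasicircles extracts a Hausdorff limit $C_\infty$ which is again a $k'$-quasicircle, and continuity (Proposition~\ref{pr:main-continuity}) yields $\Phi_{C_\infty} = v$. The uniformity of the approximating sequence is produced by earthquake theory: the shearing lamination of $v$ has finite Thurston norm, and Lemma~\ref{lem:approxlam} approximates it weakly-$*$ by cocompact-invariant laminations with uniformly bounded Thurston norm.

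The same uniformity issue undermines your closedness step as written. If $\Phi_{C_n} \to \psi$ uniformly, it does \emph{not} follow that the maps $\Phi_{C_n}$ are $k$-quasisymmetric for a single $k$: uniform convergence of circle homeomorphisms gives no control on quasisymmetry constants (a sequence of $n$-quasisymmetric maps can converge uniformly to the identity), and a $C^0$-compact subset of $\cT$ need not have uniformly bounded constants. Hence Proposition~\ref{pr:main-properness} cannot be applied to the sequence $(C_n)$ to place it in a compact family of quasicircles, and your extraction of a convergent subsequence fails. This is precisely why the paper arranges for the approximating sequence $v_n$ to be uniformly quasisymmetric \emph{before} pulling it back through $\Phi$, rather than attempting to show that the image of $\Phi$ is closed in the $C^0$ topology. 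Your continuity step and the nonemptiness observation are consistent with the paper (though continuity is itself delicate there: Proposition~\ref{pr:main-continuity} is proved by factoring $\Phi_{C,K}$ through nearest-point retractions, since uniform convergence of the isometries $V^\pm_{C_n}$ on compact sets does not by itself control the boundary extensions).
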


The image of $\Phi_{.}$ is already known to contain a large subset of $\cT$, namely the collection of quasisymmetric maps which are equivariant, conjugating one Fuchsian closed surface group action to another. Since this is important for both the context and for the proof of Theorem~\ref{tm:induced-hyp}, we make a short digression to explain this.
 
 Let $\Sigma = \Sigma_g$ be the closed surface of genus $g \geq 2$. Recall that a discrete faithful representation $\rho: \pi_1 \Sigma \to \PSL(2,\C)$ is called \emph{quasifuchsian} if its action leaves invariant an oriented quasicircle $C \subset \CP^1$, or alternatively if the convex core $\cC_\rho$ in $M_\rho = \rho(\pi_1(\Sigma)) \backslash \HH^3$ is compact, homeomorphic to $\Sigma \times [0,1]$ (except in the case that $\rho$ is Fuchsian, in which $\cC_\rho$ is a totally geodesic surface in $M_\rho$). In this case, $\cC_\rho$ is bounded by two convex pleated surfaces $\partial^+ \cC_\rho \cong \Sigma \times \{1\}$ and $\partial^-  \cC_\rho \cong \Sigma \times \{0\}$. Each surface inherits a path metric from $M$ which is locally isometric to the hyperbolic plane. Hence the quasifuchsian representation $\rho$ determines two elements $X^+$ and $X^-$ of the Teichm\"uller space $\mathcal T(\Sigma) = \mathcal T_g$, namely the induced path metrics on the top $\partial^+ \cC_\rho$ and bottom $\partial^-  \cC_\rho$ of the convex core respectively. 
Thurston conjectured that conversely any pair $(X^+, X^-) \in \mathcal T(\Sigma) \times \mathcal T(\Sigma)$ of hyperbolic metrics is realized as the metric data on the boundary of the convex core of a unique quasifuchsian manifold $M_\rho$ (up to isometry). 
The existence portion of this statement is due to Sullivan \cite{sullivan_travaux}, Epstein--Marden \cite{ep-ma} and Labourie \cite{L4}. The uniqueness remains an open question.
\begin{theorem}[]\label{thm:exist-thurston-induced}
Let $X^+, X^- \in \cT(\Sigma)$ be two hyperbolic structures on the closed surface $\Sigma$ of genus $g \geq 2$. Then there exists a quasifuchsian representation $\rho\co \pi_1 \Sigma \to \PSL(2,\C)$ for which $X^+$ and $X^-$ are realized respectively as the induced metrics on the top and bottom boundary components of the convex core of $M_{\rho}$.
\end{theorem}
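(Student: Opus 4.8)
The plan is to recast the statement as a surjectivity result for the induced-metric map on quasifuchsian space and to prove surjectivity by a degree argument. Write $\cQF(\Sigma)$ for the space of quasifuchsian representations $\rho\co \pi_1\Sigma \to \PSL(2,\C)$ up to conjugacy; by Bers simultaneous uniformization $\cQF(\Sigma)$ is a connected manifold diffeomorphic to $\cT(\Sigma) \times \cT(\Sigma)$, and in particular has real dimension $12g-12$, the same as the target. Consider the map
$$\mathcal I \co \cQF(\Sigma) \lra \cT(\Sigma) \times \cT(\Sigma), \qquad \rho \longmapsto (X^+, X^-),$$
sending $\rho$ to the pair of induced path metrics on the upper and lower boundary components $\partial^+\cC_\rho$ and $\partial^-\cC_\rho$ of its convex core; the theorem is exactly the assertion that $\mathcal I$ is onto. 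My strategy is the standard one for prescription problems of this kind: show that $\mathcal I$ is continuous and proper, that it has nonzero degree, and hence that it surjects onto the connected manifold $\cT(\Sigma)\times\cT(\Sigma)$.

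Continuity of $\mathcal I$ is a known if delicate point: although the bending lamination of $\partial^\pm\cC_\rho$ can jump, the induced metric itself varies continuously with $\rho$. The main obstacle, and the step requiring genuine hyperbolic geometry, is properness. The key input is a theorem of Sullivan (in the form given by Epstein--Marden): the nearest-point retraction from the conformal boundary at infinity of $M_\rho$ onto the convex core boundary $\partial\cC_\rho$ is uniformly quasiconformal, so the induced metric $X^\pm$ on $\partial^\pm\cC_\rho$ and the Bers conformal structure $Y^\pm$ at infinity remain within a universally bounded Teichm\"uller distance. Consequently, if $\mathcal I(\rho_n) = (X_n^+, X_n^-)$ converges in $\cT(\Sigma)\times\cT(\Sigma)$, the structures $X_n^\pm$ stay in a compact set, so the conformal structures $Y_n^\pm$ do as well; since $\rho_n$ corresponds under Bers uniformization to the pair $(Y_n^+, Y_n^-)$, the sequence $\rho_n$ subconverges in $\cQF(\Sigma)$. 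This establishes properness.

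It remains to compute the degree, and here the cleanest route is to pass through the smooth theory and take a limit, mirroring the structure of the present paper. For each $K\in(-1,0)$ one considers instead the map sending $\rho$ to the pair of (rescaled) induced metrics of constant curvature $K$ carried by the $K$-surfaces foliating the two ends of $M_\rho$ outside the convex core; this map is differentiable, again proper by the same retraction estimate, and its degree can be evaluated on the Fuchsian locus. When $X^+=X^-$ the Fuchsian representation uniformizing that hyperbolic structure is a preimage, the two $K$-surfaces are symmetric equidistant surfaces about a totally geodesic plane, and a computation of the linearization there (invertible by the infinitesimal rigidity of the Fuchsian configuration) shows the local degree equals $1$; properness then forces the degree of the whole map to be $1$, giving surjectivity for every $K\in(-1,0)$. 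Letting $K\to -1$, the constant-curvature surfaces converge to the pleated convex core boundary and the prescribed metrics converge to the hyperbolic ones, so a compactness argument recovers the pleated ($K=-1$) case and completes the proof. The delicate points to control throughout are the non-smoothness of $\mathcal I$ at representations with nontrivial bending --- which is precisely why the degree is best organized via the smooth $K\in(-1,0)$ approximations rather than directly --- and the uniformity, in $K$, of the properness bounds needed to pass to the limit.
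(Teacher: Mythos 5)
First, a point of comparison: the paper does not prove Theorem \ref{thm:exist-thurston-induced} at all. It is quoted as a known result, with existence attributed to Sullivan, Epstein--Marden and Labourie, and is then used as an input (it supplies the quasifuchsian elements in the image of $\Phi_{.}$, i.e.\ condition (iii) of Proposition \ref{pr-surj}). So there is no internal proof to measure you against; your proposal should instead be compared with the cited literature, and in outline it does follow that literature: properness via the uniformly quasiconformal nearest-point retraction of Sullivan/Epstein--Marden, a degree argument for the smooth constant-curvature-$K$ boundary problem, and a limit $K\to -1$ to recover the pleated convex-core case is essentially Labourie's route.

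As written, however, your degree computation has a genuine gap. For a proper map between oriented manifolds of the same dimension, the degree is the \emph{sum} of local degrees over the full preimage of a regular value; showing that the Fuchsian point is a preimage of $(X_K,X_K)$ with local degree $+1$ does not determine the global degree, and ``properness then forces the degree of the whole map to be $1$'' is not a valid inference --- properness only guarantees that the degree is well defined. To close this you must either (i) invoke uniqueness of the convex hyperbolic structure on $\Sigma\times[0,1]$ inducing given smooth boundary metrics (Schlenker \cite{hmcb}, cited in Section \ref{K_hyp}), so that $(X_K,X_K)$ is a regular value with exactly one preimage, or (ii) prove infinitesimal rigidity at \emph{every} quasifuchsian point, so that the $K$-surface metric map is a proper local diffeomorphism onto the simply connected manifold $\cT(\Sigma)\times\cT(\Sigma)$ and hence a global diffeomorphism. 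Either route rests on a substantive rigidity theorem, not merely a linearization at the Fuchsian locus. The remaining ingredients you identify --- continuity of the induced metric, the Sullivan--Epstein--Marden bound giving properness, uniformity of that bound in $K$, and convergence of the $K$-surfaces to the pleated boundary as $K\to-1$ --- are the correct ones and are standard, though each requires more than the one-line justification given.
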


In the context of the above discussion, the preimage in $\HH^3$ of the convex core $\cC_\rho$ of a quasifuchsian hyperbolic three-manifold $M_{\rho}$ is the convex hull $\CH(C)$ of the invariant quasicircle $C$. We call $C$ a \emph{quasifuchsian} quasicircle. For each value of $\pm$, the map $(V^\pm_C)^{-1} \co \partial^\pm \CH(C) \to \HH^{2\pm}$, defined above, conjugates the action of $\rho$ on $\partial^\pm \CH(C)$ to a properly discontinuous action by isometries on $\HH^{2\pm}$, namely the holonomy representation $\rho^\pm\co \pi_1 \Sigma \to \PSL(2,\RR)$ of the hyperbolic structure $X^\pm$. Hence the gluing map $\Phi_C\co \RP^1 \to \RP^1$, between the upper and lower boundaries of the convex hull of the $\rho$-invariant quasicircle $C$, is equivariant taking the action of one Fuchsian representation $\rho^+$ to another $\rho^-$. We call such a quasisymmetric homeomorphism \emph{quasifuchsian}. Theorem \ref{thm:exist-thurston-induced} implies that given a quasifuchisan quasisymmetric homeomorphism $v\co  \RP^1 \to  \RP^1$, there exists a quasifuchsian quasicircle $C$ such that $v = \Phi_C$. Hence in Theorem~\ref{tm:induced-hyp}, the image of the quasifuchsian quasicircles is precisely the set of quasifuchsian quasisymmetric maps. This fact, together with a density statement for quasifuchsian quasisymmetric maps in~$\mathcal T$, will be used to prove Theorem~\ref{tm:induced-hyp}. 
Indeed, we think of Theorem~\ref{tm:induced-hyp} as a universal version of Theorem \ref{thm:exist-thurston-induced}. We note that, similarly to Theorem \ref{thm:exist-thurston-induced}, in the context of Theorem~\ref{tm:induced-hyp}, the question of whether the quasisymmetric homeomorphism $\Phi_{C}$ uniquely determines the quasicircle $C$ remains open. Theorem~\ref{thm:parameterized-hyp} in Section~\ref{sec:param} discusses a slightly different version of Theorem~\ref{tm:induced-hyp} about parameterized quasicircles that more superficially resembles the statement of Theorem \ref{thm:exist-thurston-induced}. In fact, the injectivity of the map in Theorem~\ref{thm:parameterized-hyp} will imply Thurston's conjecture discussed above.

\subsection{Gluing maps at infinity for $K$--surfaces in $\HH^3$}\label{K_hyp}

For $K \in \RR$, a $K$-surface in a Riemannian manifold is a smoothly embedded surface whose Gauss curvature is constant equal to $K$. Let $C \subset \CP^1$ be an oriented Jordan curve. Then, Rosenberg--Spruck~\cite[Theorem 4]{RS} showed that for each $K \in (-1,0)$, there are exactly two complete $K$-surfaces embedded in $\HH^3$ which are asymptotic to $C$  (see Theorem~\ref{tm:K-surfaces-hyp}). They are each locally convex, but with opposite convexity, and together they bound a convex region $\mathscr C_K(C)$ of $\HH^3$ that contains the convex hull $\CH(C)$. By convention, for each value of $\pm$, we denote by $S^\pm_K = S^\pm_K(C)$ the $K$-surface spanning $C$ that lies between $\partial^\pm \CH(C)$ and $\Omega^\pm_C$. Note that for $K$ varying from $0$ to $-1$, the $K$-surfaces $S^+_K$ (respectively $S^-_K$) in fact form a foliation of the upper (respectively lower) component of $\HH^3\setminus \CH(C)$ which limits to $\Omega^+_C$ (respectively $\Omega^-_C$) as $K \to 0$ and to $\partial^+ \CH(C)$ (respectively $\partial^-\CH(C)$) as $K \to -1$.

Generalizing the above, we may consider, for each $K \in (0,1)$, a gluing map between the upper and lower $K$-surfaces spanning a normalized Jordan curve $C$ as follows. Let $\HH^{2+}_K$ (resp. $\HH^{2-}_K$) denote the upper (resp. lower) half-plane in $\CC$ equipped with the unique $\PSL(2,\R)$-invariant metric of constant curvature $K$. For each value of $\pm$, the $K$-surface $S^\pm_K(C)$ is orientation-preserving isometric to $\HH^{2+}_K$. The orientation-preserving isometry $V^\pm_{C, K}\co \HH^{2\pm}_K \to S^\pm_K(C)$ extends (Proposition \ref{prop:extend}) to a homeomorphism $\partial V^\pm_{C, K} \co \RP^1 \cong \partial \HH^2 \xrightarrow{\cong} C$ which becomes well-defined upon imposing that $\partial V^\pm_{C, K}(i) = i$ for $i=0,1,\infty$. The map $\Phi_{C, K}\co \RP^1 \to \RP^1$ defined by $$\Phi_{C, K} = (\partial V^-_{C, K})^{-1}\circ \partial V^+_{C, K}$$ is, similarly as above, a normalized homeomorphism which we call the \emph{gluing map between the upper and lower $K$-surfaces spanning $C$}. As above, in the case that $C$ is a quasicircle, $\Phi_{C, K}$ is quasisymmetric (Proposition~\ref{pr:main-properness}). Our second main result is:

\begin{introthm} \label{tm:induced-hyp-K} 
Given $K\in (-1,0)$, the map $\Phi_{\cdot, K} \co \qcm \to \cT$ is surjective: Any normalised quasisymmetric homeomorphism of the circle is realized as the gluing map between the upper and lower $K$--surfaces spanning some normalized quasicircle in $\CP^1$.
\end{introthm}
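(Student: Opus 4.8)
The plan is to follow the same three-step scheme that proves Theorem~\ref{tm:induced-hyp}, replacing the convex-core existence result (Theorem~\ref{thm:exist-thurston-induced}) by its constant-curvature analogue. The first step is to settle the \emph{equivariant} case: given a closed surface $\Sigma = \Sigma_g$ of genus $g\ge 2$ and a pair $(X^+,X^-)$ of hyperbolic structures on $\Sigma$, I would produce a quasifuchsian representation $\rho\co \pi_1\Sigma \to \PSL(2,\CC)$ whose invariant quasicircle $C$ has the property that the upper and lower $K$-surfaces $S^\pm_K(C)$, with their curvature-$K$ induced metrics rescaled by $\sqrt{-K}$ to curvature $-1$, realize $X^+$ and $X^-$ respectively. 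Since rescaling the induced metric does not change the boundary extensions $\partial V^\pm_{C,K}$, the associated gluing map $\Phi_{C,K}$ is then exactly the quasifuchsian quasisymmetric homeomorphism conjugating the holonomy of $X^+$ to that of $X^-$. Consequently the image of $\Phi_{\cdot,K}$ contains every quasifuchsian quasisymmetric map. This equivariant statement is the $K$-surface counterpart of Theorem~\ref{thm:exist-thurston-induced}: where the latter uses the pleated surfaces bounding the convex core, here one uses the constant-curvature foliation of the two complementary regions of the convex core and prescribes the induced metrics on the leaves $S^\pm_K$.

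Second, I would invoke the density of quasifuchsian quasisymmetric maps in $\cT$ --- the same density input used for Theorem~\ref{tm:induced-hyp} --- so that the image of $\Phi_{\cdot,K}$ is a dense subset of $\cT$. Third, I would show that the image is closed, which together with density yields surjectivity. For this, take an arbitrary $v\in\cT$ and a sequence of quasifuchsian maps $v_n = \Phi_{C_n,K}$ converging to $v$. The convergence $v_n \to v$ bounds the quasisymmetric norms of the $v_n$, and the properness of $\Phi_{\cdot,K}$ (Proposition~\ref{pr:main-properness}) converts this into a uniform bound on the maximal dilatation of the normalized quasicircles $C_n$. Normalized quasicircles of uniformly bounded dilatation form a sequentially compact family in the Hausdorff topology on $\qcm$, so after passing to a subsequence $C_n \to C \in \qcm$; continuity of $\Phi_{\cdot,K}$ then gives $v = \lim_n \Phi_{C_n,K} = \Phi_{C,K}$, placing $v$ in the image.

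The main obstacle is the properness estimate underlying the third step: one must bound the geometry of the quasicircle $C$ in terms of the gluing map $\Phi_{C,K}$. In contrast with the conformal gluing map $\varphi_C$, for which Bers' theorem gives an explicit bijection with $\cT$, the dependence of $C$ on $\Phi_{C,K}$ is implicit and genuinely three-dimensional, so the a priori control must come from the geometry of the $K$-surfaces themselves --- uniform bounds on their principal curvatures, the nesting $\CH(C) \subset \mathscr C_K(C)$ that compares $S^\pm_K$ with the convex-hull boundary $\partial^\pm\CH(C)$, and equicontinuity of the boundary extensions $\partial V^\pm_{C,K}$. A secondary point, should the equivariant input not be directly citable, is to prove the constant-curvature analogue of Theorem~\ref{thm:exist-thurston-induced}; this can be approached by a continuity/degree argument on the finite-dimensional space $\cT(\Sigma)\times\cT(\Sigma)$, using that the boundary-at-infinity behaviour forces properness of the relevant map there as well.
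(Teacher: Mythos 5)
Your overall scheme is the paper's: establish the quasifuchsian case (condition (iii) of the surjectivity criterion, which the paper gets from Labourie's work on prescribing boundary metrics of compact convex hyperbolic manifolds), combine it with density of quasifuchsian elements in $\cT$, and close up the image using properness (Proposition~\ref{pr:main-properness}), Hausdorff compactness of normalized $k$-quasicircles, and continuity (Proposition~\ref{pr:main-continuity}). However, there is one genuine gap in your third step.

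You assert that the convergence $v_n \to v$ in the $C^0$ topology ``bounds the quasisymmetric norms of the $v_n$.'' This is false: uniform convergence of quasisymmetric homeomorphisms to a quasisymmetric homeomorphism does not control their quasisymmetric constants. By the cross-ratio characterization (Proposition~\ref{qs_cross}), the constant is governed by the distortion of \emph{all} symmetric quadruples, including quadruples of arbitrarily close points, and one can easily produce maps converging uniformly to the identity whose distortion on shrinking quadruples blows up. Without a uniform bound on the quasisymmetric constants of the $v_n$, properness gives you nothing: the quasicircle constants of the $C_n$ may diverge, the sequence $C_n$ may degenerate (e.g.\ converge to a point or a non-Jordan set), and the continuity statement --- which is only proved for sequences of $k$-quasicircles with a \emph{fixed} $k$ converging to a $k$-quasicircle --- does not apply. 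This is exactly why the paper does not use bare density of quasifuchsian elements but rather the strengthened statement (Proposition~\ref{pr:uniflim}) that every $v \in \cT$ is the $C^0$-limit of quasifuchsian homeomorphisms that are \emph{uniformly} quasisymmetric. Producing such a sequence is the content of Section~\ref{approx}: one takes the shearing lamination $\lambda$ of the earthquake extending $v$ (bounded in Thurston norm by Proposition~\ref{pr:bounlam}), approximates it by laminations $\lambda_n$ invariant under cocompact Fuchsian groups with \emph{uniformly bounded} Thurston norm (Lemma~\ref{lem:approxlam}, which requires a careful polygon/reflection-group construction), and then converts the uniform norm bound back into a uniform quasisymmetric bound. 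Your argument needs this input; with it, the rest of your outline goes through as in the paper.
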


 Theorem~\ref{tm:induced-hyp} may be thought of as the limiting case $K = -1$ of Theorem~\ref{tm:induced-hyp-K}. Indeed, the proof of both theorems follow a similar general strategy. However, we keep the two statements separate since the technical tools required for the proofs are different. As for Theorem~\ref{tm:induced-hyp}, we do not determine whether $\Phi_{\cdot, K}$ is injective.

Next, for $K \in (-1,0)$, the third fundamental form $\III$ on the $K$-surface $S_K^\pm(C)$ (see Section \ref{sssc:surfaces}) is a positive definite symmetric two-tensor which has constant curvature $K^* = \frac{K}{K+1} \in (-\infty,0)$.

 Proposition \ref{principal_curv} shows that the principal curvatures of $S^\pm_{K}(C)$ are bounded away from $0$ and $\infty$, and so the third fundamental form $\III$ on $S^\pm_{K}(C)$ is a complete metric. The rescaled isometry  $V^{*\pm}_{C, K}\co \HH^{2\pm}_{K^*} \to (S^\pm_{K}(C), \III)$ extends to a homeomorphism $\partial V^{*\pm}_{C, K} \co \RP^1 \xrightarrow{\cong} C$ which is well-defined upon imposing that $\partial V^{*\pm}_{C, K}(i) = i$ for $i=0,1,\infty$. Proposition~\ref{pr:main-properness-III} shows that the normalized homeomorphism $\Phi_{C, K^*}\co \RP^1 \to \RP^1$ defined by 
 $\Phi^*_{C, K} = (\partial V^{*-}_{C, K})^{-1}\circ \partial V^{*+}_{C, K}$ is quasisymmetric if $C$ is a quasicircle. Our third main result is:
 
\begin{introthm} \label{tm:III-hyp-K}
Given $K \in (-1,0)$, the map $\Phi^*_{\cdot, K}\co \qcm \to \cT$ is surjective: Any normalised quasisymmetric homeomorphism of the circle is realized as the gluing map of the third fundamental forms of the $K$--surfaces spanning some normalized quasicircle in $\CP^1$.
\end{introthm}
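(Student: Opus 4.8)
The plan is to follow the three-step template already used for Theorems~\ref{tm:induced-hyp} and~\ref{tm:induced-hyp-K}: locate a dense subset of $\cT$ inside the image of $\Phi^*_{\cdot,K}$ coming from the quasifuchsian locus, and then upgrade density to surjectivity by a properness argument. The structural remark that organizes everything is that the third fundamental form $\III$ on $S^\pm_K(C)$ is the first fundamental form of the Gauss-dual spacelike surface $S^{*\pm}_K(C)\subset\dS^3$, a surface of constant curvature $K^*$ asymptotic to the same curve $C$ in the common ideal boundary $\partial\HH^3=\partial\dS^3$. Consequently $\Phi^*_{C,K}$ is precisely the gluing map at infinity of this dual pair of constant-curvature de Sitter surfaces, and realizing a prescribed quasisymmetric map becomes a prescription problem for the induced metrics of these duals.

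First I would treat the quasifuchsian locus. If $C$ is invariant under a quasifuchsian $\rho\co\pi_1\Sigma\to\PSL(2,\CC)$, then each $S^\pm_K(C)$ is $\rho$-invariant, the third fundamental form $\III$ descends to a metric of constant curvature $K^*$ on $\Sigma$, and after rescaling this produces two points $Y^+,Y^-\in\cT(\Sigma)$; the map $\Phi^*_{C,K}$ then conjugates the two associated Fuchsian holonomies and so is quasifuchsian. The crucial point is the converse realization: every pair $(Y^+,Y^-)$ should arise as the pair of third fundamental forms of the two $K$-surfaces foliating the ends of some quasifuchsian convex core. Through the de Sitter duality this is equivalent to realizing any pair of constant-curvature-$K^*$ metrics as the induced metrics of the $\rho$-invariant dual spacelike surfaces, which I would extract from the same compact/equivariant prescription theory that underlies the quasifuchsian realization for Theorem~\ref{tm:induced-hyp-K}, now applied to the dual surfaces. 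Establishing this equivariant prescription, with enough regularity in $\rho$ for a continuity argument, is where the genuine work lies and is the main obstacle.

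The other two ingredients are comparatively soft and are inherited unchanged. The set of quasifuchsian quasisymmetric maps is an intrinsic dense subset of $\cT$, independent of which gluing construction produced it, so the density statement used for Theorems~\ref{tm:induced-hyp} and~\ref{tm:induced-hyp-K} applies verbatim. Properness of $\Phi^*_{\cdot,K}\co\qcm\to\cT$, which accompanies the quasisymmetry assertion of Proposition~\ref{pr:main-properness-III} and is proved by controlling the geometry of $S^\pm_K(C)$ through the principal curvature bounds of Proposition~\ref{principal_curv}, shows that the image of $\Phi^*_{\cdot,K}$ is closed. A closed subset of $\cT$ containing the dense quasifuchsian locus must be all of $\cT$, which yields surjectivity.

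Finally, a comparison both guides and limits the approach. Writing $h^\pm:=\partial\big((V^{*\pm}_{C,K})^{-1}\circ V^\pm_{C,K}\big)$ for the boundary extension of the identity map of $S^\pm_K(C)$ from its first to its third fundamental form, which is normalized and quasisymmetric because $\I$ and $\III$ are uniformly bi-Lipschitz by Proposition~\ref{principal_curv}, one checks directly that $\Phi^*_{C,K}=h^-\circ\Phi_{C,K}\circ(h^+)^{-1}$. This records the $\III$-gluing map in terms of the $\I$-gluing map of Theorem~\ref{tm:induced-hyp-K}, but since $h^\pm$ depend on $C$ in an uncontrolled manner it does not by itself transfer surjectivity; the substantive content remains the equivariant prescription of the third fundamental form together with the properness estimate.
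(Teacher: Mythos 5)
Your overall architecture matches the paper's: the paper also interprets $\Phi^*_{C,K}$ via the polar-dual surfaces $S^{*\pm}_K(C)\subset\dS^3$, establishes exactly your factorization $\Phi^*_{C,K}=\cmp(V^{*-}_{C,K},V^-_{C,K})\circ\Phi_{C,K}\circ\cmp(V^+_{C,K},V^{*+}_{C,K})$ with the comparison maps uniformly quasisymmetric (constant depending only on $K$, by the principal curvature bounds of Proposition~\ref{principal_curv}), and feeds the result into the same surjectivity criterion built on the density of quasifuchsian elements. The ingredient you flag as ``the main obstacle'' --- the equivariant prescription of the third fundamental forms on the boundary of a quasifuchsian convex body --- is not new work in the paper: it is exactly \cite[Theorem 0.2]{hmcb}, cited in Proposition~\ref{pr:III-qF}. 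So your reduction there is correct; you simply did not know the reference.

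The genuine gap is in your closing step. You assert that properness of $\Phi^*_{\cdot,K}$ (the implication ``$\Phi^*_{C,K}$ is $k$-quasisymmetric $\Rightarrow$ $C$ is a $k'$-quasicircle'') shows the image is closed, so that a closed set containing the dense quasifuchsian locus is everything. Properness alone does not give closedness. Given uniformly quasisymmetric quasifuchsian $v_n=\Phi^*_{C_n,K}\to v$, properness and Lemma~\ref{lm:int-conv} produce a subsequence $C_n\to C$ with $C$ a $k'$-quasicircle, but to conclude $\Phi^*_{C,K}=v$ you must know that $\Phi^*_{C_n,K}\to\Phi^*_{C,K}$ uniformly, i.e.\ continuity of $\Phi^*_{\cdot,K}$ with respect to Hausdorff convergence of uniformly quasisymmetric quasicircles. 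This is condition~(i) of the paper's Proposition~\ref{pr-surj}, and for $\Phi^*_{\cdot,K}$ it is Proposition~\ref{pr:III-continuity}, whose proof is not soft: one must show Hausdorff convergence of the dual surfaces $S^{*\pm}_K(C_n)\to S^{*\pm}_K(C)$, convergence of the duality maps (via convergence of support planes), and then invoke a compactness theorem for nondegenerating isometric immersions $\HH^2_{K^*}\to\dS^3$ (\cite[Th\'eor\`eme 5.6]{these}) together with Lemmas~\ref{lm:norm} and~\ref{lm:int-vs-bound} to upgrade interior convergence of the normalized isometries to uniform convergence of their boundary extensions. The subtlety is precisely that uniform convergence of maps on compact subsets of the interior does not by itself control the boundary extensions; the paper circumvents this by working with uniformly bilipschitz comparison maps. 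Your proposal needs this continuity statement spelled out; without it the passage from density to surjectivity does not close.
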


As in the discussion of Theorem~\ref{tm:induced-hyp}, we note that the analogues of Theorem~\ref{tm:induced-hyp-K} and Theorem~\ref{tm:III-hyp-K} in the setting of quasifuchsian quasicircles are already known: The restrictions of the maps $\Phi_{\cdot, K}$ and  $\Phi^*_{\cdot, K}$ to the space of quasicircles invariant under some quasifuchsian representation $\rho\co \pi_1 \Sigma \to \PSL(2,\C)$ is surjective onto the space of quasisymmetric homeomorphisms $v\co \RP^1 \to \RP^1$ which conjugate one Fuchsian representation of $\pi_1\Sigma$ to another.
This follows from work of Labourie~\cite{L4} which shows, much more generally, that the convex hyperbolic structures on a compact hyperbolic manifold $M$, in particular $M = \Sigma \times [0,1]$, induce all possible metrics of curvature bounded between zero and $-1$ on the boundary $\partial M$.
Schlenker~\cite{hmcb} showed further that the convex hyperbolic structure on $M$ realizing any given metric on $\partial M$ is unique.
Similarly, any metric of negative curvature on $\partial M$ is realized uniquely as the third fundamental form on the boundary of a unique convex hyperbolic structure on $M$.

\subsection{Quasicircles in the $\mathrm{Ein}^{1,1}$ and their convex hulls in $\AdS^3$}\label{sec:intro-ads}

We will also prove analogues of Theorems~\ref{tm:induced-hyp}, \ref{tm:induced-hyp-K}, and~\ref{tm:III-hyp-K} in the setting of three-dimensional anti de Sitter geometry. Anti de Sitter space $\AdS^3$ is a Lorentzian analogue of hyperbolic space $\HH^3$. It is the model for Lorentzian geometry of constant negative curvature in dimension $2+1$. The natural boundary at infinity $\partial \AdS^3$ of $\AdS^3$ is the Einstein space $\mathrm{Ein}^{1,1}$, a conformal Lorentzian space analogous to the Riemann sphere $\CP^1$.

In this setting, it is natural to consider Jordan curves $C \subset \mathrm{Ein}^{1,1}$ which are \emph{achronal}, meaning that in any small neighborhood of a point $x$ of $C$, all other points of $C$ are seen only in spacelike (positive) or lightlike (null) directions for the Lorentzian metric. We will restrict further to the class of achronal Jordan curves of $\Ein^{1,1}$ which bound a topological disk in $\AdS^3$, calling these the \emph{achronal meridians}. Achronal meridians $C$ are precisely the curves for which we can make sense of a notion of convex hull $\CH(C)$ in $\AdS^3$. See Section~\ref{sc:prelim_ads}. Amongst the achronal meridians, we will distinguish those for which the relationship between nearby points is spacelike (positive) only, calling these the \emph{acausal meridians}.

The null lines on $\mathrm{Ein}^{1,1}$ determine two transverse foliations by circles which endow $\mathrm{Ein}^{1,1}$ with a product structure $\mathrm{Ein}^{1,1} \cong \RP^1 \times \RP^1$. The identity component of the isometry group of $\AdS^3$ is also a product $\mathrm{Isom}_0 \AdS^3 \cong \PSL(2,\RR) \times \PSL(2,\RR)$ acting factor-wise on $\mathrm{Ein}^{1,1} \cong \RP^1 \times \RP^1$ by M\"obius transformations. An acausal meridian in $\Ein^{1,1}$ is precisely one which arises as the graph of an orientation-preserving homeomorphism $f: \RP^1 \to \RP^1$. It is this map $f$ which plays the role of the gluing map between the top and bottom regions of the complement of a Jordan curve in $\CP^1$, although in this setting $f$ arises via the product structure rather than as a gluing map.

Given an orientation-preserving homeomorphism $f: \RP^1 \to \RP^1$, let $\Gamma(f) \subset \RP^1 \times \RP^1$ denote the graph of~$f$.
Since the constructions we consider are invariant under $\mathrm{Isom}_0 \AdS^3$, we restrict to {\em normalized homeomorphisms}, i.e. we assume that $f(i) = i$ for $i=0,1,\infty$.
Since $\Gamma(f) \subset \mathrm{Ein}^{1,1}$ is an acausal meridian, the convex hull $\CH(\Gamma(f)) \subset \AdS^3$ is well-defined. 
There are two components of the boundary of $\CH(\Gamma(f))$, which we denote by $\partial^+ \CH(\Gamma(f))$ and $\partial^- \CH(\Gamma(f))$ (unless $f$ is a M\"obius map, in which case $\CH(\Gamma(f))$ is a totally geodesic spacelike plane in $\AdS^3$, in which case $\partial^+ \CH(\Gamma(f)) = \partial^- \CH(\Gamma(f))$). By convention $\partial^+ \CH(\Gamma(f))$ is the component on the ``future'' side of $\CH(\Gamma(f))$, and $\partial_- \CH(\Gamma(f))$ is the component on the ``past'' side.
Each component inherits a path metric from $\AdS^3$ which is locally isometric to the hyperbolic plane. However, by contrast to the setting of hyperbolic geometry above, this induced metric need not be complete but may be isometric to any region of $\HH^2$ bounded by disjoint geodesics (see~\cite[Cor 6.12]{BonsanteSurvey} and~\cite[Prop 6.16]{BeBo}). 
We will focus here on a special class of acausal meridians which are the analogues of the quasicircles in $\CP^1$. These have many nice properties, in particular the induced metrics on the future and past boundary components of the convex hull are complete.

We define a \emph{quasicircle in $\Ein^{1,1}$} to be an acausal meridian which arises as the graph $\Gamma(f)$ of a quasisymmetric homeomorphism $f\co \RP^1 \to \RP^1$. Let $\qcm(\Ein^{1,1})$ denote the space of all normalized quasicircles $\Gamma(f)$ in $\Ein^{1,1}$, i.e. those for which $f$ is normalized. Then $\qcm(\Ein^{1,1})$ is in natural bijection with the universal Teichm\"uller space $\cT$. Assume that $f: \RP^1 \to \RP^1$ is quasisymmetric.  Then the induced metrics on $\partial^+ \CH(\Gamma(f))$ and $\partial^- \CH(\Gamma(f))$ are complete (Proposition~\ref{prop:extend-ads}) and the orientation-preserving isometries $\VCK{\Gamma(f)}^\pm\co \HH^{2+} \to \partial^\pm \CH(\Gamma(f))$ extend to homeomorphisms $\partial \VCK{\Gamma(f)}^\pm \co \RP^1  \xrightarrow{\cong} \Gamma(f)$ and become well-defined upon imposing that $\partial V_{\Gamma(f)}^\pm(i) = (i,i)$ for $i=0,1,\infty$. The map $\Psi_{\Gamma(f)}\co \RP^1 \to \RP^1$ defined by
$$\Psi_{\Gamma(f)} = (\partial \VCK{\Gamma(f)}^-)^{-1}\circ \partial \VCK{\Gamma(f)}^+$$
is, as in the hyperbolic case described above, a normalized homeomorphism which we call the \emph{gluing map between the future and past boundaries of the convex hull}. In fact, we show (Proposition~\ref{prop:well-defined-ads}) that $\Psi_{\Gamma(f)}$ is also quasisymmetric. Hence this construction gives a map $\Psi_{.}\co \qcm(\Ein^{1,1}) \to \cT$ analogous to the map $\Phi_{.}: \qcm \to \cT$ defined above in the context of hyperbolic geometry.

\begin{introthm} \label{tm:induced-ads} 
The map $\Psi_{.}\co \qcm(\Ein^{1,1}) \to \cT$ is surjective: Any normalized quasisymmetric homeomorphism of the circle is realized as the gluing map at infinity for the convex hull of a normalized quasicircle in $\Ein^{1,1}$.
\end{introthm}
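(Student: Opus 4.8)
The plan is to adapt the strategy used for the hyperbolic Theorem~\ref{tm:induced-hyp}. Identifying $\qcm(\Ein^{1,1})$ with $\cT$ via $f \mapsto \Gamma(f)$, Proposition~\ref{prop:well-defined-ads} already guarantees that $\Psi_.$ is a well-defined map into $\cT$. The three ingredients I would assemble are: continuity of $\Psi_.$, properness of $\Psi_.$, and the fact that its image contains a dense subset of $\cT$. Surjectivity then follows formally, because a continuous proper map between metric spaces has closed image (if $\Psi_{\Gamma(f_n)} \to \psi$, then $\{\Psi_{\Gamma(f_n)}\} \cup \{\psi\}$ is compact, so by properness a subsequence of the $\Gamma(f_n)$ converges to some quasicircle $\Gamma(f_\infty)$, and continuity gives $\Psi_{\Gamma(f_\infty)} = \psi$), and a closed set containing a dense set is everything. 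Continuity should be read off the construction: if normalized quasicircles $\Gamma(f_n) \to \Gamma(f)$, then the convex hulls $\CH(\Gamma(f_n))$ converge, hence so do the boundary components $\partial^\pm\CH(\Gamma(f_n))$, their uniformizing isometries $\VCK{\Gamma(f_n)}^\pm$, and the boundary extensions of Proposition~\ref{prop:extend-ads}, yielding $\Psi_{\Gamma(f_n)} \to \Psi_{\Gamma(f)}$.

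The technical heart, and what I expect to be the main obstacle, is properness: I must show that if $\Psi_{\Gamma(f_n)}$ remains in a compact subset of $\cT$, then after passing to a subsequence $\Gamma(f_n)$ converges to a genuine quasicircle in $\qcm(\Ein^{1,1})$. Equivalently, a uniform bound on the gluing data must force a uniform quasisymmetry bound on the $f_n$ and prevent the graphs $\Gamma(f_n)$ from degenerating (for example, acquiring lightlike directions, so that the limit is achronal but not acausal). This is the $\AdS$ counterpart of Proposition~\ref{pr:main-properness}, and it is genuinely delicate in the Lorentzian setting: the remark following the definition of $\partial^\pm\CH(\Gamma(f))$ shows that for a general achronal meridian the induced metric on a boundary component need not even be complete. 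I would therefore aim for $\AdS$-specific a priori estimates---uniform control of the width of the convex core $\CH(\Gamma(f_n))$ and of the bending and principal curvatures of its two boundary pleated surfaces---so that the induced metrics stay uniformly bi-Lipschitz to $\HH^2$ and the extensions $\partial\VCK{\Gamma(f_n)}^\pm$ remain uniformly quasisymmetric; a bounded gluing map $\Psi_{\Gamma(f_n)}$ should then preclude any collapse of this geometry.

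For the dense subset I would pass through the Mess parametrization of globally hyperbolic maximal compact $\AdS^3$ spacetimes. If $f$ conjugates one Fuchsian representation $\rho_l\co\pi_1\Sigma\to\PSL(2,\RR)$ to another $\rho_r$, then $\Gamma(f)$ is invariant under $(\rho_l,\rho_r)$ and $\CH(\Gamma(f))$ descends to the convex core of the associated GHMC spacetime; its future and past boundaries induce hyperbolic structures $X^+,X^-\in\cT(\Sigma)$, and by equivariance $\Psi_{\Gamma(f)}$ is precisely the quasisymmetric map conjugating the holonomy of $X^+$ to that of $X^-$. The $\AdS$ analogue of Theorem~\ref{thm:exist-thurston-induced}---existence of a GHMC $\AdS^3$ spacetime realizing any prescribed pair $(X^+,X^-)$ on the boundary of its convex core---then places every such equivariant quasisymmetric map in the image of $\Psi_.$. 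Since these equivariant maps are dense in $\cT$, combining with the closedness of the image established above yields $\Psi_.(\qcm(\Ein^{1,1})) = \cT$, which is the assertion of the theorem.
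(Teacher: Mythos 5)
Your outline follows the same three-ingredient strategy as the paper: continuity of $\Psi_{.}$ along Hausdorff-convergent sequences (Proposition~\ref{pr:ads-continuity}), properness (Proposition~\ref{pr:ads_properness}, proved via the width of the convex hull and degeneration to the rhombus, exactly the mechanism you anticipate), and realization of all quasifuchsian elements via Diallo's theorem, assembled through the surjectivity criterion Proposition~\ref{pr-surj-ads}. So the architecture is right, and you correctly identify properness as the technical heart.

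There is, however, a genuine gap in the way you close the argument with ``continuous proper map has closed image, and a closed set containing a dense set is everything.'' The topology on $\cT$ in play is the $C^0$ (uniform) topology on normalized homeomorphisms, and a $C^0$-convergent sequence of quasisymmetric maps need \emph{not} be uniformly quasisymmetric; in particular $\{\Psi_{\Gamma(f_n)}\}\cup\{\psi\}$ being $C^0$-compact gives you no single $k$ for which all $\Psi_{\Gamma(f_n)}$ are $k$-quasisymmetric. But the properness statement is quantitative: it converts a $k$-quasisymmetric gluing map into a $k'$-quasicircle with $k'=k'(k)$, and it is only for a \emph{uniformly} quasisymmetric family of quasicircles that the compactness Lemma~\ref{lm:cmpqcads} lets you extract a subsequence converging to a genuine quasicircle rather than to the degenerate union of a left and a right ruling line. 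Consequently, mere $C^0$-density of the quasifuchsian elements in $\cT$ is not enough to run your argument. The paper must strengthen the Bonsante--Seppi density result to Proposition~\ref{pr:uniflim}: every $v\in\cT$ is the $C^0$-limit of quasifuchsian maps $v_n$ that are $k$-quasisymmetric for a single $k$. This uniformity is obtained by approximating the shearing lamination of the earthquake extending $v$ by laminations invariant under cocompact groups while keeping the Thurston norm uniformly bounded (Lemma~\ref{lem:approxlam}), then applying Proposition~\ref{pr:bounlam} and Lemma~\ref{lm:continearth}. Without this uniform control your ``closedness of the image'' step fails, and this is precisely the extra work the paper devotes Section~\ref{approx} to. A similar caution applies to your continuity step: uniform convergence of the isometries $\VCK{\Gamma(f_n)}^{\pm}$ on compact sets does not by itself control their boundary extensions; the paper needs the left projections to be uniformly quasi-isometric (Lemma~\ref{lem:unif-qi-ads}) together with Lemma~\ref{lm:int-vs-bound} to pass to the boundary.
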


As in the discussion following Theorem~\ref{tm:induced-hyp} about $\Phi_{.}$, the image of $\Psi_{.}$ is already known to contain a large subset of $\cT$, namely the collection of quasifuchsian quasisymmetric homeomorphisms. Indeed, this follows from the study of anti de Sitter analogues of quasifuchsian hyperbolic manifolds called \emph{globally hyperbolic maximal compact} (GHMC) $\AdS^3$ spacetimes. Such a spacetime is non-compact, homeomorphic to $\Sigma \times \RR$ for some closed surface $\Sigma = \Sigma_g$ of genus $g \geq 2$, and has holonomy representation $\rho: \pi_1 \Sigma \to \mathrm{Isom}_0(\AdS^3) \cong \PSL(2,\RR) \times \PSL(2,\R)$ for which the projections $\rho_L$ and $\rho_R$ to the left and right factors are Fuchsian.
Conversely, every such representation $\rho = (\rho_L, \rho_R)$, called a GHMC representation, determines a unique GHMC $\AdS^3$ manifold $M_\rho$.
Any GHMC $\AdS^3$ spacetime has a compact convex core $\cC_\rho$ homeomorphic to $\Sigma \times [0,1]$ (except when $\rho_L = \rho_R$, in which case $\cC_\rho$ is a totally geodesic spacelike surface). Much like the convex core of a quasifuchsian hyperbolic $3$-manifold, $\cC_\rho$ is bounded by two spacelike convex pleated surfaces $\partial^+ \cC_\rho \cong \Sigma \times \{1\}$ and $\partial^-  \cC_\rho \cong \Sigma \times \{0\}$. Each surface inherits a path metric from $M_\rho$ which is locally isometric to the hyperbolic plane. Hence the representation $\rho$ determines two elements $X^+$ and $X^-$ of the Teichm\"uller space $\mathcal T(\Sigma) = \mathcal T_g$, namely the induced path metrics on the top $\partial^+ \cC_\rho$ and bottom $\partial^-  \cC_\rho$ of the convex core respectively. 
Mess~\cite{mess} conjectured that conversely any pair $(X^+, X^-) \in \mathcal T(\Sigma) \times \mathcal T(\Sigma)$ of hyperbolic metrics is realized as the metric data on the boundary of the convex core of a unique GHMC spacetime $M_\rho$ (up to isometry). This is the analogue of the conjecture of Thurston described in Section \ref{sec:intro-hyp}. The analogue of Theorem \ref{thm:exist-thurston-induced}, that existence holds in Mess's conjecture, was proved by Diallo~\cite{diallo2013}.

\subsection{Gluing maps at infinity for $K$--surfaces in $\AdS^3$}\label{K_ads}

Given a quasicircle $\Gamma(f)$ in $\Ein^{1,1}$, Bonsante and Seppi \cite{bon_are} proved that the $K$--surfaces spanning $\Gamma(f)$, for $K$ varying in $(-\infty, -1)$ form a foliation of the complement of the convex hull of $\CH(\Gamma(f))$ in the \emph{invisible domain} $E(\Gamma(f))$ of $\Gamma(f)$, the maximal convex region of $\AdS^3$ consisting of points which see the curve $\Gamma(f)$ in spacelike directions. For each $K \in (-\infty, -1)$, there is exactly one $K$-surface $\Sa^+_K(\Gamma(f))$ (resp. $\Sa^-_K(\Gamma(f))$)  in $E(\Gamma(f))$ which is asymptotic to $\Gamma(f)$ and lies in the future (resp. past) of $\CH(\Gamma(f))$; it is convex toward the past (resp. future).

The \emph{gluing map $\Psi_{\Gamma(f), K}\co \RP^1 \to \RP^1$ between the future and past $K$-surfaces spanning $\Gamma(f)$} is defined in exactly the same way as the gluing map $\Phi_{C, K}$ between the top and bottom $K$-surfaces in $\HH^3$ that span a quasicircle $C$ in $\CP^1$. Indeed, $\Psi_{\Gamma(f)}$ is a normalized quasisymmetric homeomorphism (Proposition \ref{prop:well-defined-ads}).

\begin{introthm} \label{tm:induced-ads-K} 
Given $K\in (-\infty, -1)$, the map $\Psi_{\cdot, K}\co \qcm(\Ein^{1,1}) \to \cT$ is surjective: Any normalized quasisymmetric homeomorphism of the circle is realized as the gluing map between the future and past $K$--surfaces spanning some normalized quasicircle in $\Ein^{1,1}$.
\end{introthm}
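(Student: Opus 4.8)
The plan is to prove Theorem~\ref{tm:induced-ads-K} following the same three-step scheme used for its hyperbolic counterpart Theorem~\ref{tm:induced-hyp-K}, with the Rosenberg--Spruck foliation replaced by the Bonsante--Seppi foliation of $E(\Gamma(f)) \setminus \CH(\Gamma(f))$ by the constant curvature surfaces $\Sa^\pm_{K'}(\Gamma(f))$, $K'\in(-\infty,-1)$. Concretely, I would establish that the map $\Psi_{\cdot, K}\co \qcm(\Ein^{1,1}) \to \cT$ is \textbf{(i)} continuous, \textbf{(ii)} proper, in the sense that any sequence whose gluing maps converge in $\cT$ admits a subsequence converging in $\qcm(\Ein^{1,1})$, and \textbf{(iii)} has image containing a dense subset of $\cT$. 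A continuous map with property \textbf{(ii)} has closed image, and a closed subset containing a dense subset is the whole space, so \textbf{(i)}--\textbf{(iii)} together yield surjectivity. Throughout I would use the identification $\qcm(\Ein^{1,1}) \cong \cT$ to argue directly with the normalized quasisymmetric maps $f$ rather than with the meridians $\Gamma(f)$.

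For \textbf{(iii)}, I would first establish the equivariant statement that the restriction of $\Psi_{\cdot, K}$ to quasifuchsian quasicircles surjects onto the quasifuchsian quasisymmetric maps. Given a GHMC representation $\rho = (\rho_L,\rho_R)$, the future and past $K$-surfaces of $M_\rho$ are $\rho$-invariant, so their induced metrics descend to constant curvature $K$ structures on $\Sigma$ and determine a pair of points of $\cT(\Sigma)$; as $\rho$ ranges over all GHMC representations every such pair is realized, by the parametrization of constant curvature surfaces in GHMC $\AdS^3$ spacetimes underlying~\cite{bon_are}. The associated gluing maps $\Psi_{\Gamma(f),K}$ are precisely the quasifuchsian quasisymmetric maps, which are dense in $\cT$; hence the image of $\Psi_{\cdot, K}$ contains a dense subset.

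The heart of the argument, and the step I expect to be the main obstacle, is the properness \textbf{(ii)}: given normalized quasisymmetric maps $f_n$ with $\Psi_{\Gamma(f_n),K} \to \psi$ in $\cT$, I must extract a subsequence along which $f_n$ converges to a normalized quasisymmetric map. The danger is that the quasisymmetric norms of the $f_n$ blow up --- the meridians $\Gamma(f_n)$ degenerating toward a lightlike configuration --- while the gluing data remains bounded. To exclude this I would control the geometry of the pair $\Sa^\pm_K(\Gamma(f_n))$ in terms of $\Psi_{\Gamma(f_n),K}$: using the $\AdS$ analogue of the principal-curvature pinching behind Proposition~\ref{principal_curv}, a uniform bound on the quasisymmetric norm of $\Psi_{\Gamma(f_n),K}$ should control the relative asymptotic geometry of the two $K$-surfaces, and hence, through the invisible domain $E(\Gamma(f_n))$ in which they are trapped, the quasisymmetric norm of $f_n$ itself. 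A normal families argument on the resulting uniformly quasisymmetric family then produces the convergent subsequence, and continuity identifies its limit gluing map with $\psi$.

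Continuity \textbf{(i)} should be comparatively routine: Hausdorff convergence $\Gamma(f_n) \to \Gamma(f)$ should force $C^\infty_{loc}$ convergence of $\Sa^\pm_K(\Gamma(f_n))$ to $\Sa^\pm_K(\Gamma(f))$ by the barrier and maximum-principle stability of the Bonsante--Seppi foliation, so that the developing isometries and their boundary extensions converge and $\Psi_{\Gamma(f_n),K} \to \Psi_{\Gamma(f),K}$. The only genuinely Lorentzian input beyond the hyperbolic proof of Theorem~\ref{tm:induced-hyp-K} is the replacement of the completeness and curvature estimates in $\HH^3$ by their analogues inside the invisible domain, which are exactly the constant-curvature surface estimates of~\cite{bon_are}.
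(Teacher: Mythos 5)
Your overall scheme (continuity, properness, dense image via the quasifuchsian case, then conclude by closedness of the image) is exactly the scheme of the paper, which packages it as the surjectivity criterion of Proposition~\ref{pr-surj-ads} together with the density statement Proposition~\ref{pr:uniflim}. Your step \textbf{(iii)} is essentially right in outline, though the equivariant existence result you need is Tamburelli's theorem (any two metrics of curvature $<-1$ on $\Sigma$ are induced on the boundary of a convex GHMC AdS structure), not merely the existence of the Bonsante--Seppi foliation; the foliation gives you the map to $\cT(\Sigma)\times\cT(\Sigma)$ but not its surjectivity.

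The genuine gap is in your properness step \textbf{(ii)}, and it is precisely at the point you flag as the main obstacle. You propose to control $f_n$ by ``the $\AdS$ analogue of the principal-curvature pinching behind Proposition~\ref{principal_curv}.'' No such analogue exists: the paper emphasizes (Theorem~\ref{tm:K-surfaces-ads}(3) and the discussion following it) that in $\AdS^3$ a properly embedded $K$-surface has bounded principal curvatures \emph{if and only if} its boundary curve is a quasicircle, and the bound of Lemma~\ref{lm:unif-bound-II-ads} depends on the quasicircle constant of $C$. So the curve-independent pinching that drives the hyperbolic properness argument (via the uniformly quasisymmetric comparison with the Riemann maps $U^\pm_C$) is exactly the estimate that fails in the Lorentzian setting, and there is in any case no conformal uniformization of the complement of $C$ to compare against. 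The paper's actual argument is different in kind: if the quasisymmetric constants of $C_n$ blow up, Proposition~\ref{pr:w_ads} shows the width $w(C_n)\to\pi/2$ and, after isometries, $C_n$ converges to the rhombus curve $\rhombus$; one then intersects the degenerating $K$-surfaces with the half-spaces $P^\pm$ of a fixed spacelike plane, uses the intrinsic convexity of these intersections (Lemma~\ref{st:convex}) to trap geodesics of $\HH^{2\pm}_K$ in shrinking collars, and shows that the cross-ratio of four marked ideal points tends to $0$ on the future surface while the cross-ratio of their $\Psi_{C_n,K}$-images tends to $\infty$, contradicting a uniform quasisymmetric bound on $\Psi_{C_n,K}$ via Proposition~\ref{qs_cross}. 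Without this (or some substitute for the missing curvature pinching), your properness step does not go through.
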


As in the context of Theorem~\ref{tm:induced-hyp-K}, the map $\Psi_{\cdot, K}$ is known to take the set of GHMC quasicircles in $\Ein^{1,1}$, namely those that are invariant under a GHMC representation, surjectively onto the quasifuchsian quasisymmetric homeomorphisms. 
This follows from a more general theorem of Tamburelli \cite{tamburelli2016} which states that any two metrics of curvature less than $-1$ (in particular, metrics of constant curvature $K < -1$) on a closed surface $\Sigma$ of genus $g \geq 2$ are induced on the boundary of a convex GHMC AdS structure on $\Sigma \times [0,1]$.

The AdS geometry analogue of Theorem \ref{tm:III-hyp-K} is also true. Let $\Psi^*_{\cdot, K}\co \qcm(\Ein^{1,1}) \to \cT$ be the map assigning to a quasicircle $\Gamma(f)$ in $\Ein^{1,1}$ the gluing map at infinity between the third fundamental forms on the future and past $K$-surfaces spanning $\Gamma(f)$, defined analogously to the map $\Phi^*_{\cdot, K}$. Unlike in the setting of hyperbolic geometry, this statement is \emph{equivalent} to Theorem \ref{tm:induced-ads-K} by a simple argument using the duality in $\AdS$ between points and spacelike totally geodesic planes. 

\begin{introthm} \label{tm:III-ads-K}
Given $K \in (-\infty,-1)$, the map $\Psi^*_{\cdot, K}\co \qcm(\Ein^{1,1}) \to \cT$ is surjective: Any normalised quasisymmetric homeomorphism of the circle is realized as the gluing map at infinity between the third fundamental forms of future and past $K$--surfaces spanning some normalized quasicircle in $\Ein^{1,1}$.
\end{introthm}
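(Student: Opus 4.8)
The plan is to derive Theorem~\ref{tm:III-ads-K} from Theorem~\ref{tm:induced-ads-K} using the projective polarity of $\AdS^3$, exactly as the sentence preceding the statement indicates. Realize $\AdS^3$ as the quadric $\{\langle x,x\rangle = -1\}$ in $\RR^{2,2}$. Then every point $p \in \AdS^3$ has a \emph{dual plane} $p^\dual := p^\perp \cap \AdS^3$, which is a spacelike totally geodesic plane, and conversely every spacelike totally geodesic plane arises in this way from a unique (up to sign) point. The essential feature of the Lorentzian setting is that this polarity preserves $\AdS^3$ itself; this is precisely what fails for $\HH^3$, where the dual of a convex surface lands in de Sitter space rather than back in $\HH^3$, and it is exactly why Theorem~\ref{tm:III-hyp-K} is \emph{not} a mere restatement of Theorem~\ref{tm:induced-hyp-K}, whereas its AdS counterpart is.

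First I would promote this point--plane polarity to a duality on convex spacelike surfaces. Given a future-convex (resp. past-convex) spacelike $K$-surface $\Sa$, its Gauss map sends each $p \in \Sa$ to the point dual to the tangent plane $T_p \Sa$; the image is a spacelike surface $\Sa^\dual$, the \emph{dual surface}, which is past-convex (resp. future-convex). The two standard facts to verify are: (i) the induced metric on $\Sa^\dual$ coincides with the third fundamental form $\III$ of $\Sa$ (and symmetrically $\III_{\Sa^\dual} = \I_\Sa$), since the shape operator of $\Sa^\dual$ is the inverse of that of $\Sa$; and (ii) if $\Sa$ has constant curvature $K$ then $\Sa^\dual$ has constant curvature $K^* = -K/(K+1)$. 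A direct computation from the AdS Gauss equation, using that polarity inverts the principal curvatures, yields this formula, and one checks that $K \mapsto K^*$ is an involution of the interval $(-\infty,-1)$ (with fixed point $K=-2$), so it carries admissible curvatures to admissible curvatures. At infinity, the future $K$-surface $\Sa^+_K(\Gamma(f))$ is carried to a past-convex $K^*$-surface spanning the curve $\Gamma(f)^\dual$ obtained by applying the induced polarity of $\Ein^{1,1} = \RP^1 \times \RP^1$; since this polarity swaps the two $\RP^1$ factors, $\Gamma(f)^\dual = \Gamma(f^{-1})$ up to normalization, which is again a normalized quasicircle in $\qcm(\Ein^{1,1})$.

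Tracking the two boundary extensions through the duality then identifies the gluing maps: the third-fundamental-form gluing map $\Psi^*_{\Gamma(f), K}$ agrees, up to the normalization fixing $0,1,\infty$ and the swap of future and past induced by the Gauss map, with the induced-metric gluing map $\Psi_{\Gamma(f)^\dual, K^*}$ of the dual quasicircle. Since Theorem~\ref{tm:induced-ads-K} gives surjectivity of $\Psi_{\cdot, K^*}$ for every $K^* \in (-\infty,-1)$, and since $K \mapsto K^*$ is a bijection of this interval, surjectivity of $\Psi^*_{\cdot, K}$ follows for every $K$: given a target $v \in \cT$, realize it (or its inverse, should the future/past swap require it, which is harmless since $\cT$ is closed under inversion) as $\Psi_{\Gamma(g), K^*} = v$ for some $\Gamma(g)$, and then $\Gamma(f) := \Gamma(g)^\dual$ satisfies $\Psi^*_{\Gamma(f), K} = v$. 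The two statements are thus genuinely equivalent. I expect the only real work to be the bookkeeping in step (ii) together with the identification of the dual curve and the careful matching of normalizations and of the future/past orientation reversal in the final identification; the geometric input ($\I \leftrightarrow \III$ under polarity, and that polarity preserves $\AdS^3$) is classical.
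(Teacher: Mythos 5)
Your overall architecture coincides with the paper's: dualize the $K^*$-surfaces produced by Theorem~\ref{tm:induced-ads-K}, use that polarity exchanges $\I$ and $\III$ and sends curvature $K^*$ to $K=-K^*/(K^*+1)$, and absorb the future/past swap by realizing $v^{-1}$ rather than $v$. But there is a genuine error at the step you yourself flag as ``the identification of the dual curve.'' The polar dual of a convex spacelike surface spanning an acausal meridian $C=\Gamma(f)$ does \emph{not} span $\Gamma(f^{-1})$: it spans the \emph{same} curve $C$, and the duality map extends to the \emph{identity} on $C$. (The confusion likely comes from the fact that the plane dual to a single point $\gamma\in\PSL(2,\RR)$ has ideal boundary $\Gamma(\gamma^{-1})$; but the dual of a surface is the locus of points dual to its support planes, and as $x$ tends to $\xi\in C$ along the surface, the dual of the support plane at $x$ also tends to $\xi$. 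A concrete check: the surfaces equidistant from a totally geodesic spacelike plane $\gamma^\perp$ all span $\partial\gamma^\perp=\Gamma(\gamma^{-1})$, and the dual of the one at timelike distance $t$ is the one at distance $\pi/2-t$ --- same ideal boundary, not $\Gamma(\gamma)$.) This is not mere bookkeeping: with your identification the final quasicircle would be $\Gamma(g^{-1})$ rather than $\Gamma(g)$, and the relation you would need between the third-fundamental-form gluing map of $\Gamma(g^{-1})$ and the induced-metric gluing map of $\Gamma(g)$ is false. The correct statement --- the dual surfaces share the boundary $C$ and the duality is the identity there --- is exactly what makes the comparison map of the boundary extensions of the dual parametrizations equal to the inverse of the gluing map of the induced metrics on the $K^*$-surfaces, with no change of quasicircle at all; this is why the paper can say the two theorems are equivalent.

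Two smaller points you should also supply. First, completeness of the dual surfaces, which is needed before one can speak of isometries from $\HH^{2\pm}_K$ and their boundary extensions; this follows from the uniform bound on the principal curvatures of $\Sa^{\pm}_{K^*}(C)$ (Lemma~\ref{lm:unif-bound-II-ads}), which makes the duality map bilipschitz. Second, the verification that the dual surfaces lie in the invisible domain $E(C)$ (a point of the dual surface is dual to a support plane lying in the past, respectively future, of $C$), so that ``spanning $C$'' makes sense. With the dual curve corrected to $C$ itself and these two facts added, your plan becomes the paper's proof.
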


\subsection{Acknowledgements}
The authors would like to thank Andrea Seppi for his generosity in sharing Figure \ref{fig:torus} with us and for allowing us to use the background material from \cite{bon_are} in Sections \ref{prel ads} and \ref{projec}.

\section{Preliminaries I: Hyperbolic geometry, quasicircles, quasisymmetric maps}

Here we collect some preliminaries relevant for Theorems~\ref{tm:induced-hyp}, \ref{tm:induced-hyp-K}, and \ref{tm:III-hyp-K}. Anti de Sitter geometry preliminaries, relevant for Theorems~\ref{tm:induced-ads}, \ref{tm:induced-ads-K}, and \ref{tm:III-ads-K}, will be given in Section~\ref{sc:prelim_ads}.

\subsection{Quasiconformal, quasi-isometric, quasisymmetric mappings}

Maps that are not structure preserving, but only quasi structure preserving play an important role in both conformal geometry and metric geometry. We begin with the definitions of such maps and then examine the important example of the hyperbolic plane.

Let $f\co X \to Y$ be a diffeomorphism between Riemann surfaces (not necessarily compact). Then $f$ is called \emph{$K$-quasiconformal} if the complex dilatation $K(f) = \frac{1+|| \mu ||_\infty}{1-|| \mu ||_\infty}$ is at most $K$, where here $\mu = \mu(f)$ is the Beltrami differential, defined by the equation $\partial f/ \partial \bar{z} = \mu(z) \partial f/ \partial z$.
This condition makes sense, more generally, in the setting that $f\co X \to Y$ is a (not necessarily $C^1$) homeomorphism between Riemann surfaces whose derivatives (in the sense of distributions) are in $L^2$. See~\cite{Lehto}.

Let $(X,d_X)$ and $(Y,d_Y)$ be metric spaces. For $A > 1$, a map $f: X \to Y$ is called an $A$-\emph{quasi-isometric embedding} if for all $x_1, x_2 \in X$,  
\begin{align*}
\frac{1}{A}d_X(x_1,x_2) - A &\leq d_Y(f(x_1), f(x_2)) \leq A d_X(x_1,x_2) + A.
\end{align*}
More typically, the multiplicative and additive constants are allowed to be different, but for simplicity we will work with this definition. A map is a quasi-isometric embedding if it is an $A$-quasi-isometric embedding for some $A$. The map $f\co X \to Y$ is called an $A$-\emph{quasi-isometry} if it is an  $A$-quasi-isometric embedding and is $A$-dense in $Y$ for some $A > 1$.
It is well-known that if $X$ and $Y$ are $\delta$-hyperbolic spaces, then any quasi-isometric embedding (resp. any quasi-isometry) $f\co X \to Y$ extends uniquely to an embedding (resp. a homeomorphism) $\partial f\co \partial_\infty X \to \partial_\infty Y$ of the visual boundaries.

The hyperbolic space $\HH^n$ is the unique simply connected, complete Riemannian $n$-manifold of constant curvature $-1$.
In dimension $n=2$, the hyperbolic plane $\HH^2$ serves as an important example both of a Riemann surface and of a $\delta$-hyperbolic metric space.
A common model, which we will use frequently in this paper, realizes the hyperbolic plane $\HH^2$ as the upper half-plane $\HH^2 = \HH^{2+}$ in the complex plane $\CC$ equipped with the metric $ds^2 = (dx^2 + dy^2)/y^2$. The visual boundary $\partial \HH^{2+}$ of $\HH^{2+}$ naturally identifies with the equator $\RP^1$ in $\CP^1$, and the natural orientation on $\HH^{2+}$, coming from restriction from the complex plane, induces an orientation on $\RP^1$ that agrees with the orientation coming from the ordering of the reals. 
The lower half-plane $\HH^{2-}$ in $\CC$, equipped with the metric $ds^2 = (dx^2 + dy^2)/y^2$, also gives a model for the hyperbolic plane. The visual boundary $\partial \HH^{2-}$ also identifies with $\RP^1$. The natural orientation on $\HH^{2-}$ coming from restriction from $\CC$ induces an orientation on $\RP^1$ which is opposite to that induced by $\HH^{2+}$. 
The action of the group $\PSL(2,\RR)$ of real fractional linear transformations on $\CP^1 = \HH^{2+} \sqcup \RP^1 \sqcup \HH^{2-}$
restricts to actions by orientation-preserving isometries on $\HH^{2+}$ and $\HH^{2-}$. Hence the orientation-preserving isometries of $\HH^{2+}$ are precisely the conformal automorphisms of $\HH^{2+}$, and the same is true for $\HH^{2-}$. Each such map extends to a M\"obius map of the projective line $\RP^1$. In fact, both quasi-isometries and quasiconformal homeomorphisms of the hyperbolic plane extend to homeomorphisms of the visual boundary which are not quite M\"obius transformations. These are called quasisymmetric homeomorphisms.

In what follows we will say that $F: \HH^{2+} \to \HH^{2+}$ \emph{extends to } $f: \RP^1 \to \RP^1$ or that $f$ \emph{extends to} $F$ if the map $\overline{F}: \overline{\HH^{2+}} \to \overline{\HH^{2+}}$ which restricts to $F$ on $\HH^{2+}$ and to $f$ on $\RP^1$ is continuous along $\RP^1$. The following is well known, see e.g.~\cite{FlMa}.

\begin{prop}\label{prop:extend-QC}
Any quasiconformal homeomorphism $F: \HH^{2+} \to \HH^{2+}$ extends to a homeomorphism $f:\RP^1 \to \RP^1$.
\end{prop}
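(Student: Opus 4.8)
The plan is to reduce the statement to the general boundary-extension principle for quasi-isometries of $\delta$-hyperbolic spaces recalled above. Concretely, I would first show that any $K$-quasiconformal homeomorphism $F\co \HH^{2+}\to\HH^{2+}$ is a quasi-isometry of $\HH^{2+}$ equipped with its hyperbolic metric $d$, and then invoke the fact that $\HH^{2+}$ is a proper geodesic $\delta$-hyperbolic space with visual boundary $\partial_\infty \HH^{2+}\cong\RP^1$. The cited principle then produces a continuous extension $\partial F\co \RP^1\to\RP^1$; because $F$ is moreover a bijection (hence a quasi-isometry, not merely a quasi-isometric embedding, since a surjection is automatically $A$-dense), the map $\partial F$ is a homeomorphism, and we set $f=\partial F$.

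The heart of the argument is therefore the coarse distortion estimate: there is a constant $A=A(K)$ with
$$\frac{1}{A}\,d(z,w)-A \;\le\; d(F(z),F(w)) \;\le\; A\,d(z,w)+A \qquad (z,w\in\HH^{2+}).$$
I would establish the upper bound from the quasi-invariance of conformal moduli under quasiconformal maps, using the equivalence of the analytic definition above with the geometric (modulus) definition of quasiconformality. The key point is that the hyperbolic distance $d(z,w)$ is comparable, up to bounded additive and multiplicative error, to the logarithm of the modulus of a standard ring domain (of Gr\"otzsch or Teichm\"uller type) naturally associated to $z$ and $w$; since a $K$-quasiconformal map changes the modulus of any ring domain by a factor at most $K$, taking logarithms turns this into exactly the sought affine control on $d(F(z),F(w))$. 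The lower bound then requires no new work: the inverse $F^{-1}$ is again $K$-quasiconformal, so applying the upper bound to $F^{-1}$ and rearranging yields the left-hand inequality. A convenient way to organize the modulus estimate is to normalize by pre- and post-composing with elements of $\PSL(2,\RR)$, which are hyperbolic isometries and do not affect the quasi-isometry constants, reducing to the case $z=i$, $F(i)=i$, with $w=e^{L}i$ on the imaginary axis, and then to bound $d(i,F(e^{L}i))$ in terms of $L=d(z,w)$.

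I expect the main obstacle to be precisely this coarse estimate, and in particular the fact that $F$ is \emph{not} bi-Lipschitz for the hyperbolic metric in general: a quasiconformal map can push points much deeper into or out of $\HH^{2+}$, so the pointwise hyperbolic distortion is unbounded and the additive constant $A$ is genuinely necessary. This is exactly the phenomenon that forces one to work coarsely (via moduli of ring domains) rather than by a naive length computation along the image of a geodesic, since quasiconformal maps control area but not length.

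Finally, I would note the standard alternative route, which avoids the $\delta$-hyperbolic machinery entirely: one shows directly, by a length--area (modulus of continuity) argument, that the cluster set of $F$ at each boundary point of $\RP^1$ is a single point, producing a continuous extension $f$; injectivity and surjectivity of $f$ then follow by applying the same reasoning to $F^{-1}$. Either route reduces to the same analytic input, namely the distortion theory for quasiconformal maps as in \cite{FlMa}.
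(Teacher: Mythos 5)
Your argument is correct, but note that the paper does not actually prove Proposition~\ref{prop:extend-QC}: it is stated as a well-known fact with a pointer to~\cite{FlMa}, so there is no in-text proof to compare against. What you propose is one of the two standard routes, and it is internally consistent: the distortion estimate $\frac{1}{A}d(z,w)-A\le d(F(z),F(w))\le A\,d(z,w)+A$ for a $K$-quasiconformal self-map of $\HH^{2+}$ does hold with $A=A(K)$ (via quasi-invariance of moduli of ring domains, or equivalently the Hersch--Pfluger distortion function), your observation that $F$ is genuinely not bi-Lipschitz for the hyperbolic metric correctly identifies why the additive constant is unavoidable and why one must argue through moduli rather than lengths, and surjectivity does upgrade the quasi-isometric embedding to a quasi-isometry, so the Gromov-boundary extension is a homeomorphism. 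This route has the advantage of dovetailing with the rest of the paper: it derives Proposition~\ref{prop:extend-QC} from the existence half of Proposition~\ref{prop:QI-extend}, and in fact yields more than is asked, namely that $\partial F$ is quasisymmetric with constant depending only on $K$ (which is also immediate from Definition~\ref{def:k-qs} once the extension exists). The alternative you mention at the end --- the classical length--area/cluster-set argument showing directly that $F$ has a single limiting value at each point of $\RP^1$ --- is closer to what one finds in the cited references and avoids any Gromov-hyperbolicity machinery, at the cost of redoing by hand the boundary-continuity argument that the Morse lemma packages for you. Either is acceptable; the only point worth tightening in a written version is the precise choice of ring domain (e.g.\ the family of curves in $\HH^{2+}$ joining a continuum through $z$ and $w$ to $\RP^1$, together with the symmetrization statement identifying the hyperbolic geodesic segment as the extremal continuum), since that is where the comparison between $d(z,w)$ and a conformal modulus actually lives.
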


\begin{defi}\label{def:k-qs}
An orientation-preserving homeomorphism $f:\RP^1 \to \RP^1$ is called \emph{$k$-quasisymmetric} if it admits an extension $F:\HH^{2+} \to \HH^{2+}$ to the upper half-space which is $k$-quasiconformal. We call $f$ quasisymmetric if it is $k$-quasisymmetric for some~$k$.
\end{defi}

From Definition~\ref{def:k-qs}, it is clear that the composition of a $k$-quasisymmetric homeomorphism with a $k'$-quasisymmetric homeomorphism is $kk'$-quasisymmetric. Quasisymmetric maps also satisfy a useful compactness result.
The following is an immediate consequence of well-known compactness results for $k$-quasiconformal mappings (see for instance \cite[Theorem 5.2]{Lehto}). In what follows, a homeomorphism of $\RP^1$ is called \emph{normalized} if it fixes $0,1,$ and $\infty$. 
\begin{lemma}\label{lm:cmpqs}
Let $f_n: \RP^1 \to \RP^1$ be a sequence of $k$-quasisymmetric homeomorphisms.
Then either there exists a subsequence converging uniformly to a $k$-quasisymmetric homemorphism, or there are two points $q,p\in\RP^1$ such that
$f_n(x)\to q$ uniformly on any compact subset of $\RP^1\setminus\{p\}$ and  $f_n^{-1}(x)\to p$ uniformly on any compact set of $\RP^1\setminus\{q\}$. In particular, if each $f_n$ is normalized, there exists a subsequence converging in the uniform topology to a normalized $k$-quasisymmetric homemorphism.
\end{lemma}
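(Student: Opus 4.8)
The plan is to reduce the statement to the standard normal-family theorem for $k$-quasiconformal maps of the sphere, and then to account for the absence of normalization by analyzing a possibly degenerating sequence of M\"obius transformations. First I would promote each boundary map to a map of $\CP^1$. By Definition~\ref{def:k-qs} each $f_n$ admits a $k$-quasiconformal extension $F_n \co \HH^{2+}\to\HH^{2+}$ which is continuous up to $\RP^1$ and restricts there to $f_n$. Reflecting across $\RP^1$ (the anticonformal involution $z\mapsto\bar z$ preserves $k$-quasiconformality and fixes $\RP^1$ pointwise) produces a $k$-quasiconformal self-homeomorphism $\widehat F_n\co\CP^1\to\CP^1$ that preserves $\RP^1$, sends $\HH^{2\pm}$ to $\HH^{2\pm}$, and restricts to $f_n$ on $\RP^1$; that the two halves glue to a globally $k$-quasiconformal map is the usual reflection principle, $\RP^1$ being of measure zero.

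Second, I would normalize on the target. For each $n$ let $m_n\in\PSL(2,\RR)$ be the unique real M\"obius transformation sending the ordered triple $(f_n(0),f_n(1),f_n(\infty))$ to $(0,1,\infty)$; this exists because $f_n$ is orientation preserving, and $g_n := m_n\circ f_n$ is then a normalized $k$-quasisymmetric homeomorphism, since composition with a conformal map does not change the maximal dilatation. Applying the normal-family theorem \cite[Theorem 5.2]{Lehto} to the reflected extensions $\widehat G_n$ of the $g_n$, which fix $0,1,\infty$ and therefore cannot degenerate to a constant, yields a subsequence converging uniformly on $\CP^1$ to a $k$-quasiconformal homeomorphism; its restriction $g$ to $\RP^1$ is a normalized $k$-quasisymmetric homeomorphism, $g_n\to g$ uniformly on $\RP^1$, and hence also $g_n^{-1}\to g^{-1}$ uniformly.

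Third, I would recover $f_n = m_n^{-1}\circ g_n$ by examining the M\"obius sequence $m_n^{-1}$. After passing to a further subsequence, a sequence in $\PSL(2,\RR)$ either converges to some $M\in\PSL(2,\RR)$, or degenerates in the sense that there are points $p',q\in\RP^1$ with $m_n^{-1}\to q$ uniformly on compact subsets of $\RP^1\setminus\{p'\}$, and correspondingly $m_n\to p'$ uniformly on compact subsets of $\RP^1\setminus\{q\}$. In the first case $f_n\to M\circ g$ uniformly and $M\circ g$ is $k$-quasisymmetric, giving the first alternative. In the second case, combining $g_n\to g$ uniformly with the degeneration of $m_n^{-1}$ gives $f_n\to q$ uniformly on compact subsets of $\RP^1\setminus\{p\}$ with $p := g^{-1}(p')$, and symmetrically, writing $f_n^{-1}=g_n^{-1}\circ m_n$, one gets $f_n^{-1}\to p$ uniformly on compact subsets of $\RP^1\setminus\{q\}$; this is the second alternative. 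Finally, when each $f_n$ is normalized the degenerate case is impossible: at least two of the fixed points $0,1,\infty$ lie in $\RP^1\setminus\{p\}$, so their constant and distinct values would both have to converge to $q$. Hence the first alternative holds, and the limit, being a uniform limit of maps fixing $0,1,\infty$, is normalized.

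The main obstacle is not the quasiconformal compactness itself, which is quoted from \cite{Lehto}, but rather organizing the non-normalized case so that the collapse of the limit to a constant is translated precisely into the stated behavior of \emph{both} $f_n$ and $f_n^{-1}$ relative to the correct pair of points $p,q$. The delicate point is verifying the uniform-on-compacta convergence after composing the uniformly convergent $g_n$ with the degenerating $m_n^{\pm 1}$: for a compact $L\subset\RP^1\setminus\{p\}$ the image $g(L)$ stays a definite distance from the exceptional point $p'$, so $g_n(L)$ eventually lies in a fixed compact set avoiding $p'$, on which $m_n^{-1}$ converges uniformly to $q$.
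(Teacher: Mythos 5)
Your proof is correct, and it follows exactly the route the paper intends: the paper offers no argument of its own for this lemma, simply declaring it an immediate consequence of the compactness theorem for $k$-quasiconformal mappings \cite[Theorem 5.2]{Lehto}, and your reflection-plus-renormalization reduction (with the analysis of the degenerating M\"obius sequence $m_n^{\pm1}$) is precisely the standard way to make that deduction precise. The only cosmetic remark is that, as with the paper's own phrasing, the second alternative should really be read along a subsequence (different subsequences of a non-precompact $(m_n)$ could a priori degenerate toward different pairs $(p,q)$), but this is all that is ever used later in the paper.
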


Quasisymmetric homeomorphims may also be characterized in terms of cross ratios.
The cross-ratio of four points $(a,b,c,d) \in (\RP^1)^4$ in general position is defined by the formula
\begin{align*}
\cro(a,b,c,d) := \frac{(c-a)(d-b)}{(b-a)(d-c)}
\end{align*}
so that in particular $\cro(0,1,y,\infty) = y$ holds for all $y$. It is well known that the cross-ratio is invariant under the diagonal action of $\PSL(2,\RR)$ on $(\RP^1)^4$.
A quadruple of points $Q = (a,b,c,d)$ is called symmetric if $\cro(Q) = -1$, or equivalently if there exists $g \in \PSL(2,\RR)$ so that $g(Q) = (0,1,-1,\infty)$. The following is well-known, see e.g.~\cite{FlMa}.

\begin{prop}\label{qs_cross}
For any $k \geq 1$, there exists $M \geq 1$ so that if $f: \RP^1 \to \RP^1$ is $k$-quasisymmetric then
\begin{align}\label{eqn:M}
-M & \leq \cro f(Q) \leq -1/M
\end{align}
holds for all symmetric quadruples $Q$. The constant $M$ goes to infinity with $k$. Conversely for any $M \geq 1$, there exists $k \geq 1$, so that if~\eqref{eqn:M} holds for some orientation homeomorphism $f: \RP^1 \to \RP^1$, then $f$ is $k$-quasisymmetric. The constant $k$ goes to infinity with $M$.
\end{prop}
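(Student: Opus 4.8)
The plan is to reduce both implications to the classical Beurling--Ahlfors three-point condition, the bridge being the observation that symmetric quadruples containing the point $\infty$ encode exactly that condition. I would begin with a normalization. Given a symmetric quadruple $Q=(a,b,c,d)$, by definition there is $g \in \PSL(2,\RR)$ with $g(Q) = (0,1,-1,\infty)$. Choosing $h \in \PSL(2,\RR)$ sending the triple $(f(g^{-1}(0)), f(g^{-1}(1)), f(g^{-1}(\infty)))$ to $(0,1,\infty)$ and setting $\tilde f = h\circ f \circ g^{-1}$, the $\PSL(2,\RR)$--invariance of the cross-ratio together with the identity $\cro(0,1,y,\infty) = y$ gives $\cro f(Q) = \tilde f(-1)$, where $\tilde f$ is a normalized homeomorphism which is $k$--quasisymmetric whenever $f$ is (pre- and post-composition by Möbius maps preserves the quasisymmetry constant, by the remark after Definition~\ref{def:k-qs}). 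Thus the forward estimate \eqref{eqn:M} amounts to bounding $\tilde f(-1)$ away from $0$ and $\infty$ uniformly over all normalized $k$--quasisymmetric $\tilde f$.

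For the forward direction I would invoke the compactness supplied by Lemma~\ref{lm:cmpqs}: the space of normalized $k$--quasisymmetric homeomorphisms is compact in the uniform topology, and evaluation $\tilde f \mapsto \tilde f(-1)$ is continuous. Since every such $\tilde f$ fixes $0,1,\infty$ and preserves orientation, it maps the component of $\RP^1\setminus\{0,1,\infty\}$ consisting of negative reals to itself, so $\tilde f(-1) \in (-\infty,0)$. The continuous image of the compact space of normalized $k$--quasisymmetric maps therefore lies in a compact subset of the open interval $(-\infty,0)$, hence in some $[-M,-1/M]$ with $M = M(k)$; this is precisely \eqref{eqn:M}.

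For the converse I would exploit the case $d=\infty$. After post-composing $f$ with a Möbius map $m$ so that $\tilde f := m\circ f$ fixes $\infty$ --- which preserves \eqref{eqn:M} by Möbius-invariance of the cross-ratio --- the map $\tilde f$ is an increasing homeomorphism of $\RR$. For $x\in\RR$ and $s>0$ the quadruple $(x, x+s, x-s, \infty)$ is symmetric (its midpoint condition gives cross-ratio $-1$), and a direct computation gives $\cro \tilde f(x,x+s,x-s,\infty) = (\tilde f(x-s) - \tilde f(x))/(\tilde f(x+s) - \tilde f(x))$. Hence \eqref{eqn:M} is exactly the Beurling--Ahlfors condition $\tfrac1M \le (\tilde f(x+s) - \tilde f(x))/(\tilde f(x) - \tilde f(x-s)) \le M$ for all $x$ and $s$. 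The Beurling--Ahlfors extension theorem (see \cite{FlMa}) then produces a $k(M)$--quasiconformal extension of $\tilde f$ to $\HH^{2+}$, so $\tilde f$, and therefore $f = m^{-1}\circ \tilde f$, is $k(M)$--quasisymmetric in the sense of Definition~\ref{def:k-qs}.

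Finally, the monotonicity assertions follow formally from the two bounds: if $M(k)$ stayed bounded as $k\to\infty$ then, by the converse, every quasisymmetric map would be $k_0$--quasisymmetric for a single fixed $k_0$, which is false since the quasisymmetry constant is unbounded over all quasisymmetric maps; the dual contrapositive argument gives $k(M)\to\infty$ as $M\to\infty$. The only input beyond the paper's own Lemma~\ref{lm:cmpqs} is the classical Beurling--Ahlfors theorem, and I expect the one genuinely non-obvious step to be the recognition that restricting attention to \emph{symmetric} quadruples loses no information: those passing through $\infty$ already reproduce the full three-point condition once $\infty$ is fixed, and an arbitrary symmetric quadruple is brought to this model by the $\PSL(2,\RR)$ normalization above.
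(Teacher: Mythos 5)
Your proof is correct. There is, however, nothing in the paper to compare it against: the authors state this proposition as ``well known'' and cite \cite{FlMa} without giving an argument, so your write-up is in effect supplying the missing proof, and it is essentially the standard one. The two key reductions are both sound. For the forward direction, the Möbius normalization turning $\cro f(Q)$ into $\tilde f(-1)$ for a normalized $k$-quasisymmetric $\tilde f$ is exactly the normalization the paper itself uses elsewhere (e.g.\ in the proof of Proposition~\ref{pr:w_ads}), and the compactness of normalized $k$-quasisymmetric maps from Lemma~\ref{lm:cmpqs}, combined with the observation that such a map preserves the arc $(-\infty,0)$, does give a compact image of the evaluation map inside that open arc, hence the bound $[-M,-1/M]$. (A more quantitative route, which is what \cite{FlMa} does, bounds the distortion of the modulus of a quadrilateral under a $k$-quasiconformal map and yields an explicit $M(k)$; your soft argument proves the same existence statement with less machinery, at the cost of no explicit constant.) For the converse, the identification of the symmetric quadruples $(x,x+s,x-s,\infty)$ with the Beurling--Ahlfors three-point condition after post-composing to fix $\infty$ is correct -- the cross-ratio computation checks out, and the reciprocal form of the inequality you write is equivalent since $[1/M,M]$ is closed under inversion -- and Beurling--Ahlfors then gives the $k(M)$-quasiconformal extension required by Definition~\ref{def:k-qs}. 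The asymptotic statements about $M(k)$ and $k(M)$ are soft and your contrapositive argument for them is fine.
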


Finally, we note that the quasisymmetric homeomorphisms of the projective line are also characterized as the boundary extensions of the quasi-isometries of the hyperbolic plane, see again~\cite{FlMa}.

\begin{prop}\label{prop:QI-extend}
Any $A$-quasi-isometry $F:\HH^{2+} \to \HH^{2+}$ extends to a $k$-quasisymmetric homeomorphism $f:\RP^1 \to \RP^1$ where the constant $k$ depends only on $A$. Any $k'$-quasisymmetric homeomorphism $f:\RP^1 \to \RP^1$ extends to an $A'$-quasi-isometry $F:\HH^{2+} \to \HH^{2+}$ where the constant $A'$ depends only on $k'$.
\end{prop}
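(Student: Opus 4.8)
The plan is to prove the two implications separately, using the cross-ratio characterization of quasisymmetry (Proposition~\ref{qs_cross}) as the bridge between the coarse geometry of $\HH^{2+}$ and the boundary map. For the first statement, I start from the fact recalled above that an $A$-quasi-isometry $F\co\HH^{2+}\to\HH^{2+}$ of the $\delta$-hyperbolic space $\HH^{2+}$ extends to a homeomorphism $f=\partial F\co\RP^1\to\RP^1$; composing $F$ with a reflection if necessary, we may assume $f$ is orientation-preserving, which does not affect the quasisymmetry conclusion. By Proposition~\ref{qs_cross} it then suffices to bound $\cro(f(Q))$ away from $0$ and $-\infty$, uniformly in $A$, for every symmetric quadruple $Q=(a,b,c,d)$. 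The key geometric input is that the cross-ratio of a quadruple is a monotone function of a hyperbolic distance: for four boundary points in cyclic order, each of the two pairs of opposite sides of the ideal quadrilateral determines two disjoint geodesics, and a direct computation gives $\cro=\cosh^2(\delta/2)$ in the ordering for which the cross-ratio exceeds $1$, where $\delta$ is the hyperbolic distance between those two geodesics; the other orderings are obtained via the anharmonic group, so controlling $\delta$ controls $\cro(f(Q))$.

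First I would apply the Morse lemma (stability of quasi-geodesics in a $\delta$-hyperbolic space): the image $F(\gamma)$ of any geodesic $\gamma$ is an $A$-quasi-geodesic, hence lies within Hausdorff distance $R=R(A,\delta)$ of the genuine geodesic $\gamma'$ with endpoints $\partial F(\partial\gamma)$. Applying this to the two geodesics cut out by one pair of opposite sides of $Q$, and using the quasi-isometry inequalities with the triangle inequality, one bounds from \emph{above} the distance $\delta'$ between the corresponding image geodesics purely in terms of $A$ (here one uses that the distance realized between opposite sides of a symmetric quadruple is a fixed constant $\delta_*$, since all symmetric quadruples lie in one $\PSL(2,\RR)$-orbit). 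This already controls one side of the cross-ratio bound. The genuine subtlety is the other side: a priori the image geodesics could become asymptotic ($\delta'\to 0$), forcing $\cro(f(Q))\to-\infty$. This is ruled out by running the same argument on the \emph{other} pair of opposite sides: the two distances $\delta_A',\delta_B'$ attached to the image quadruple satisfy the identity $\cosh^{-2}(\delta_A'/2)+\cosh^{-2}(\delta_B'/2)=1$ (a restatement of the anharmonic relations), so an upper bound on \emph{both} distances forces each of them to be bounded below away from $0$. Together these give $\cro(f(Q))\in[-M,-1/M]$ with $M=M(A)$, and Proposition~\ref{qs_cross} yields that $f$ is $k$-quasisymmetric with $k=k(A)$.

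For the second statement, Definition~\ref{def:k-qs} already furnishes a $k'$-quasiconformal extension $F\co\HH^{2+}\to\HH^{2+}$ (its existence as a boundary homeomorphism is Proposition~\ref{prop:extend-QC}), so it remains to show that a $K$-quasiconformal self-homeomorphism of the hyperbolic plane is a quasi-isometry with constants depending only on $K$. The plan is: (i) recall that a $K$-quasiconformal map of a disk is $\eta_K$-quasisymmetric in the metric sense, with $\eta_K$ depending only on $K$; (ii) combine this with the standard comparison between the Euclidean and hyperbolic scales near $\partial\HH^{2+}$ to conclude that $F$ maps each hyperbolic ball of radius $1$ into a set of hyperbolic diameter at most $D=D(K)$; (iii) chain $O(d_{\HH}(z,w))$ such balls along the geodesic from $z$ to $w$ to obtain the coarse-Lipschitz upper bound $d_{\HH}(F(z),F(w))\le D\,d_{\HH}(z,w)+D$; and (iv) apply the same estimate to the $K$-quasiconformal map $F^{-1}$ to obtain the matching lower bound. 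This shows $F$ is an $A'$-quasi-isometry with $A'=A'(k')$.

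The main obstacle is step (ii): converting the metric (Euclidean) quasisymmetry of a quasiconformal map into control of the hyperbolic metric, i.e. showing that unit hyperbolic balls have images of bounded hyperbolic diameter. In the first statement the only delicate point is the two-sided control of the cross-ratio; once the monotone dictionary $\cro=\cosh^2(\delta/2)$ and the identity linking the two pairs of opposite sides are in hand, the Morse lemma supplies the rest.
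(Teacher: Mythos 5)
The paper does not actually prove this proposition: it is quoted as a known fact with a pointer to \cite{FlMa}, so there is no internal argument to measure yours against, and what you propose is a legitimate self-contained route. Your two nonstandard-looking ingredients check out. For a quadruple in cyclic order $a,b,c,d$ one indeed has $\cro(a,b,c,d)=\cosh^2(\delta_A/2)$ with $\delta_A$ the distance between the disjoint geodesics $\overline{ab}$ and $\overline{cd}$ (for $(-1,1,s,-s)$ both sides equal $(s+1)^2/4s$), a cyclic shift sends $\lambda$ to $\lambda/(\lambda-1)$, and $\tfrac1\lambda+\tfrac{\lambda-1}{\lambda}=1$ gives exactly your identity $\cosh^{-2}(\delta_A/2)+\cosh^{-2}(\delta_B/2)=1$; combined with the Morse lemma, the $\PSL(2,\R)$-homogeneity of symmetric quadruples, and Proposition~\ref{qs_cross}, this yields the first statement with $k=k(A)$. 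For the second statement the only step you leave open is (ii), and it does close: after reflecting, $F$ extends to a $k'$-quasiconformal map of $\C$ fixing $\infty$, hence one that is $\eta$-quasisymmetric in the metric sense with $\eta$ depending only on $k'$; for $z_0=x_0+iy_0$ and $|z-z_0|\le c\,y_0$ take as comparison point the \emph{preimage} $w^*\in\R$ of the point of $\R$ nearest to $F(z_0)$, so that $|w^*-z_0|\ge y_0$ while $|F(w^*)-F(z_0)|=\im F(z_0)$, whence $|F(z)-F(z_0)|\le \eta(c)\,\im F(z_0)$ and the image of a fixed small hyperbolic ball has uniformly bounded hyperbolic diameter once $\eta(c)<1$. (Alternatively, steps (i)--(ii) can be replaced wholesale by a normal-families argument: after pre- and post-composition with isometries one may assume $F(i)=i$, and the family of such $k'$-quasiconformal self-maps of $\HH^{2+}$ is compact by \cite[Theorem 5.2]{Lehto}, so unit balls about $i$ have images of uniformly bounded diameter.) One wording correction for step (i): the uniform metric quasisymmetry is a property of quasiconformal \emph{self}-maps of the half-plane (via reflection to a global quasiconformal map), not of arbitrary quasiconformal maps of a disk onto some image domain -- a conformal map onto a slit disk is $1$-quasiconformal but not quasisymmetric.
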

\subsection{The Universal Teichm\"uller Space} \label{universal}

Let $\homeoqs$ be the group of quasisymmetric homeomorphisms of $\RP^1$. The {\em universal Teichm\"uller space} $\cT= \cT(\HH^{2+})$ is defined as the quotient of $\homeoqs$ by the group $\PSL(2,\R)$ of M\"obius transformations, acting by post-composition:
$$\cT = \PSL(2,\R) \backslash \homeoqs.$$
Alternatively we may (and often will) identify $\cT$ with the set of normalized quasisymmetric homeomorphisms of $\RP^1$. 

The universal Teichm\"uller space contains copies of the classical Teichm\"uller spaces. We briefly explain. Let $\Sigma$ be a closed orientable surface of genus $g \geq 2$. The Teichm\"uller space $\cT(\Sigma)$ has many guises. Let us work from the classical definition, that $\cT(\Sigma)$ is the space of all marked Riemann surface structures (i.e. complex structures) on $\Sigma$. To begin, fix one Riemann surface structure on $\Sigma$ (a basepoint of $\cT(\Sigma)$).
The universal cover $\widetilde \Sigma$ is conformally equivalent to the hyperbolic plane, so we identify $\widetilde \Sigma = \HH^{2+}$. The group of deck automorphisms of $\widetilde \Sigma$ then identifies with a \emph{Fuchsian group}, i.e. a discrete subgroup $\pi_1 \Sigma \cong \Gamma_0 < \PSL(2,\RR)$. Now, let $g: \Sigma \to X$ be a diffeomorphism to another Riemann surface. Then any conformal isomorphism $h: \widetilde X \to \HH^{2+}$ of the universal cover $\widetilde X$ of $X$ conjugates the deck group of $\widetilde X$ to a Fuchsian group $\pi_1 X \cong \Gamma < \PSL(2,\RR)$. Since $\Sigma$ is compact, $g$ is quasiconformal, hence $\widetilde g$ is quasiconformal. 
It follows that the composition $f = h \circ \widetilde g$ is a quasiconformal diffeomorphism of $\HH^{2+}$. By Proposition~\ref{prop:extend-QC}, $f$ extends uniquely to a quasisymmetric homeomorphism $\partial f: \RP^1 \to \RP^1$. Further, $\partial f$ is equivariant under the isomorphism $\pi_1 \Sigma \cong \Gamma_0 \to \Gamma \cong \pi_1 X$ of Fuchsian groups induced by $g$. We call such a quasisymmetric homeomorphism a \emph{quasifuchsian quasisymmetric homeomorphism}.
Adjusting $g$ by isotopy (leaving the Riemann surface structure $X$ fixed) does not change $\partial f$. The isomorphism $h$ is only well-defined up to post-composition with $\PSL(2,\RR)$, hence $\partial f$ is well-defined up to post-composition with $\PSL(2,\RR)$ as well. Hence each isotopy class of map $g: \Sigma \to X$ to a Riemann surface $X$ determines a well-defined element of the universal Teichm\"uller space $\cT$, represented by a quasifuchsian quasisymmetric homeomorphism $\partial f$.
In fact, this map $\cT(\Sigma) \to \cT$ is an embedding, for the simple reason that the representation $\Gamma_0 \to \Gamma$ induced by $g$ determines the map $\partial f$.

\subsection{Quasicircles in $\CP^1$}\label{sub:qc}

In this paper, we will focus on a special class of oriented Jordan curves in the complex projective line $\CP^1$, called quasicircles.
Since all the constructions that we consider are $\PSL(2,\CC)$ invariant, we will often restrict to working with oriented Jordan curves $C \subset \CP^1$ which pass through $0, 1, \infty$ in positive order. Such a Jordan curve is called \emph{normalized}.

Let $C \subset \CP^1$ be a normalized Jordan curve. The complement of $C$ consists of two regions, one called $\Omega^+_C$ on the positive side of $C$, and one called $\Omega^-_C$ on the negative side. By the Riemann mapping theorem, both $\Omega^+_C$ and $\Omega^-_C$ are conformally isomorphic to $\HH^{2+}$ and by the Caratheodory theorem \cite[Section 21]{Pommerenke} any such conformal isomorphism extends to a homeomorphism between $C$ and the boundary $\partial \HH^2 = \RP^1$. Note that, by definition, the orientation of the Jordan curve $C$ is compatible with the orientation of $\Omega^+_C$. We let $U^+_C: \HH^{2+} \to \Omega^+_C$ be the unique conformal isomorphism whose extension $\partial U^+_C: \RP^1 \to C$ satisfies $\partial U^+_C(i) = i$ for $i = 0,1,\infty$. On the other hand, we note that the orientation of the Jordan curve $C$ is not compatible with the orientation of $\Omega^-_C$. For this reason, it makes sense to identify $\Omega^-_C$ with $\HH^{2-}$ rather than $\HH^{2+}$. Let $U^-_C: \HH^{2-} \to \Omega^-_C$ be the unique conformal isomorphism whose extension $\partial U^-_C: \RP^1 \to C$ satisfies $\partial U^-_C(i) = i$ for $i = 0,1,\infty$.
 The {\em gluing map between the upper and lower regions of the complement of $C$} is $\varphi_C :=(\partial U^-_C)^{-1}\circ(\partial U^+_C)$. 
We have the following central result:

\begin{lemma}\label{lem:ahl} \cite{ahlfors-reflections}
The following properties are equivalent:
\begin{itemize}
\item $C$ is the image of $\RP^1$ under a $k$-quasiconformal homeomorphism of $\CP^1$; 
\item $U^+_C\co \HH^2 \to\CP^1$ extends to a $k$-quasiconformal map of $\CP^1$;
\item  $\varphi_C$ is $k$-quasisymmetric.
\end{itemize}
\end{lemma}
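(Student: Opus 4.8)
The plan is to establish the three equivalences by relating each condition to the existence of a global $k$-quasiconformal extension, using the Measurable Riemann Mapping Theorem and Ahlfors--Beurling-type reflection arguments as the main engine. First I would set up the cyclic chain of implications, proving that the first bullet implies the second, the second implies the third, and the third implies the first, since each individual link is cleaner than attempting a direct two-way equivalence. The key structural observation is that all three conditions are really statements about a single object: a quasiconformal homeomorphism of the whole sphere $\CP^1$ whose restriction encodes $C$, $U_C^+$, or $\varphi_C$ respectively. The overarching strategy is to produce such a global extension from whatever local data each bullet provides, and then read off the other two conditions from it.

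For the implication that the first bullet implies the second, I would argue as follows. Suppose $C = h(\RP^1)$ where $h$ is a $k$-quasiconformal homeomorphism of $\CP^1$ with $h(\RP^1) = C$; after normalizing by post-composition with a M\"obius transformation we may assume $h$ respects the normalization at $0,1,\infty$. Then $h$ restricts to a $k$-quasiconformal map $\HH^{2+} \to \Omega^+_C$. Comparing with the conformal isomorphism $U_C^+ \co \HH^{2+} \to \Omega^+_C$, the composition $(U_C^+)^{-1} \circ h|_{\HH^{2+}}$ is a conformal automorphism of $\HH^{2+}$, hence extends by the Schwarz reflection principle to a M\"obius transformation of $\CP^1$. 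Thus $U_C^+$ differs from $h|_{\HH^{2+}}$ by a conformal map, and I can extend $U_C^+$ across $\RP^1$ to a global $k$-quasiconformal map of $\CP^1$ by transporting the extension of $h$ through this reflection. The care needed here is to ensure that the dilatation bound $k$ is preserved, which it is because conformal pre- and post-composition does not change the maximal dilatation.

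The implication that the second bullet implies the third is the most transparent: if $U_C^+$ extends to a $k$-quasiconformal homeomorphism $\widehat{U}$ of $\CP^1$, then $\widehat{U}$ maps $\HH^{2-}$ quasiconformally onto $\Omega^-_C$, and composing with $(U_C^-)^{-1}$ (a conformal map) shows that $(U_C^-)^{-1}\circ \widehat{U}|_{\HH^{2-}}$ is $k$-quasiconformal, whose boundary restriction is exactly $\varphi_C = (\partial U_C^-)^{-1}\circ \partial U_C^+$; by Definition~\ref{def:k-qs} this makes $\varphi_C$ $k$-quasisymmetric. The hardest link, and the place I expect the real work, is the converse implication that the third bullet implies the first: given only that the boundary gluing map $\varphi_C$ is $k$-quasisymmetric, one must reconstruct a global quasiconformal homeomorphism of $\CP^1$ sending $\RP^1$ to $C$. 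Here I would invoke the extension furnished by Definition~\ref{def:k-qs}, lifting $\varphi_C$ to a $k$-quasiconformal self-map of $\HH^{2+}$, and then use the Beltrami equation: one transports the dilatation of this extension onto $\Omega^-_C$ via $U_C^-$ and solves the resulting Beltrami equation on $\CP^1$ by the Measurable Riemann Mapping Theorem to obtain a global quasiconformal homeomorphism whose image of $\RP^1$ is precisely $C$. The main obstacle is bookkeeping the dilatation constant through these transfers so that the same $k$ (up to the controlled dependence already built into Proposition~\ref{qs_cross} and Definition~\ref{def:k-qs}) governs all three conditions; keeping the constant uniform, rather than merely finite, is what requires the Ahlfors reflection estimate rather than a soft compactness argument.
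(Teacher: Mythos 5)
The paper does not actually prove this lemma: it is quoted from Ahlfors' work on quasiconformal reflections, so there is no in-paper argument to compare against. Judged on its own terms, your proposal has a genuine gap in the first link of your cycle. You claim that if $h$ is a $k$-quasiconformal homeomorphism of $\CP^1$ with $h(\RP^1)=C$, then $(U_C^+)^{-1}\circ h|_{\HH^{2+}}$ is a \emph{conformal} automorphism of $\HH^{2+}$. It is not: $h|_{\HH^{2+}}$ is merely quasiconformal, so the composition is a $k$-quasiconformal self-map of $\HH^{2+}$ (take $C=\RP^1$, $U_C^+=\mathrm{id}$, and any non-conformal $k$-quasiconformal $h$ preserving $\RP^1$ for a counterexample). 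The Schwarz-reflection/M\"obius step therefore collapses, and with it your transport of the extension of $h$ to an extension of $U_C^+$. This is not a cosmetic slip: the implication from the first bullet to the second is exactly where the substance of Ahlfors' theorem lives. The standard repair is to build a quasiconformal reflection in $C$, e.g.\ $\rho=h\circ\sigma\circ h^{-1}$ with $\sigma(z)=\bar z$, which is an orientation-reversing $k^2$-quasiconformal involution fixing $C$ pointwise and swapping $\Omega^+_C$ and $\Omega^-_C$; one then extends $U_C^+$ to $\HH^{2-}$ by $z\mapsto\rho\bigl(U_C^+(\bar z)\bigr)$ and invokes removability of $\RP^1$ to see that the glued map is globally quasiconformal. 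Note this also shows the constants degrade, so the three bullets are equivalent only with constants controlled in terms of one another, not with a single $k$.

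Your middle implication is fine. The last one, from quasisymmetry of $\varphi_C$ back to the first bullet, points in the right direction but does not close as written: the Measurable Riemann Mapping Theorem produces a quasiconformal map with a prescribed dilatation, and nothing in that construction forces its image of $\RP^1$ to be $C$; you would need a conformal-welding uniqueness argument on top. The cleaner route is direct gluing: let $\Phi$ be a $k$-quasiconformal self-map of $\HH^{2-}$ extending $\varphi_C$, and set $h=U_C^+$ on $\overline{\HH^{2+}}$ and $h=U_C^-\circ\Phi$ on $\HH^{2-}$; the boundary values match because $\partial U_C^-\circ\varphi_C=\partial U_C^+$, the resulting homeomorphism is quasiconformal off $\RP^1$, and removability of $\RP^1$ gives a global quasiconformal map with $h(\RP^1)=C$.
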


\begin{defi}
A $k$-quasicircle $C$ in $\CP^1$ is a Jordan curve that satisfies one of the equivalent conditions in Lemma \ref{lem:ahl}.
We denote by $\qcm$ the space of normalized quasicircles in $\CP^1$.
\end{defi}

Using the compactness properties of quasiconformal maps we have the following continuity result. Here we denote by $\overline{U^\pm_C}: \overline{\HH^{2\pm}} \to \overline{\HH^{2+}}$ the map which restricts to $U^\pm_C$ on $\HH^{2\pm}$ and to $\partial U^\pm_C$ on $\RP^1$.

\begin{lemma}\label{lm:int-conv}
Let $k > 1$, let $C_n$ be a sequence of normalized $k$-quasicircles, and suppose that $C_n$ converges to $C$ in the Hausdorff sense.
Then $C$ is a $k$-quasicircle and the maps $\overline{U^{\pm}_{C_n}}$ converge to $\overline{U^{\pm}_{C}}$ uniformly on the closed disk $\overline{\HH^{2\pm}}$.
\end{lemma}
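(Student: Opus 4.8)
The plan is to reduce everything to the compactness (normal family) property of normalized $k$-quasiconformal homeomorphisms of $\CP^1$, using Lemma~\ref{lem:ahl} to present each conformal uniformizer $U^\pm_{C_n}$ as the restriction of a global quasiconformal map. First I would treat the $+$ side: by Lemma~\ref{lem:ahl}, the hypothesis that $C_n$ is a $k$-quasicircle means precisely that $U^+_{C_n}$ is the restriction to $\HH^{2+}$ of a $k$-quasiconformal homeomorphism $F^+_n\colon \CP^1 \to \CP^1$. Since $0,1,\infty \in C_n$ and $\partial U^+_{C_n}$ fixes these three points, $F^+_n$ fixes $0,1,\infty$, so it is a \emph{normalized} $k$-quasiconformal homeomorphism. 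The analogue holds on the $-$ side: being a $k$-quasicircle is symmetric in the two complementary regions (apply Lemma~\ref{lem:ahl} after the anticonformal reflection $z \mapsto \bar z$ exchanging $\HH^{2+}$ and $\HH^{2-}$), so $U^-_{C_n}$ extends to a normalized $k'$-quasiconformal homeomorphism $F^-_n$ of $\CP^1$ with $k'$ depending only on $k$. Note that $F^\pm_n$ restricted to $\overline{\HH^{2\pm}}$ is exactly $\overline{U^\pm_{C_n}}$, so uniform control of $F^\pm_n$ yields uniform control of the maps in the statement.

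Next I would apply the compactness theorem for uniformly quasiconformal maps (see \cite[Theorem 5.2]{Lehto}). Fixing a sign $\pm$, every subsequence of $(F^\pm_n)$ admits a further subsequence converging uniformly on $\CP^1$, in the spherical metric, to a quasiconformal homeomorphism $F^\pm$ that again fixes $0,1,\infty$. Here the three fixed points are essential: they rule out the degeneration alternative (as in the dichotomy of Lemma~\ref{lm:cmpqs}) and guarantee that the limit is a genuine homeomorphism, not a constant.

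I would then identify the subsequential limit. Restricting the uniform convergence $F^\pm_{n_j} \to F^\pm$ to the compact set $\RP^1$ shows that $C_{n_j} = F^\pm_{n_j}(\RP^1)$ converges in the Hausdorff sense to $F^\pm(\RP^1)$; since by hypothesis $C_{n_j} \to C$, uniqueness of Hausdorff limits forces $F^\pm(\RP^1) = C$. As $F^\pm$ is an orientation-preserving homeomorphism of $\CP^1$ carrying $\RP^1$ to $C$, it maps $\HH^{2\pm}$ onto $\Omega^\pm_C$; thus $C = F^\pm(\RP^1)$ is a $k$-quasicircle (normalized, since $F^\pm$ fixes $0,1,\infty$ and preserves orientation). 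Moreover $F^\pm|_{\HH^{2\pm}}$ is a locally uniform limit of the holomorphic maps $U^\pm_{C_{n_j}} = F^\pm_{n_j}|_{\HH^{2\pm}}$, hence holomorphic by the Weierstrass convergence theorem, and injective because $F^\pm$ is a homeomorphism. It is therefore a conformal isomorphism $\HH^{2\pm} \to \Omega^\pm_C$ whose boundary extension fixes $0,1,\infty$, and by uniqueness of the normalized conformal uniformizer this gives $F^\pm|_{\overline{\HH^{2\pm}}} = \overline{U^\pm_C}$.

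Finally, since every subsequence of $(\overline{U^\pm_{C_n}}) = (F^\pm_n|_{\overline{\HH^{2\pm}}})$ has a further subsequence converging uniformly to the \emph{same} limit $\overline{U^\pm_C}$, the full sequence must converge uniformly to $\overline{U^\pm_C}$ on $\overline{\HH^{2\pm}}$, which is the assertion. I expect the main obstacle to be the careful invocation of the quasiconformal compactness theorem, and in particular verifying that the normalization excludes degeneration so that each subsequential limit is an honest quasiconformal homeomorphism whose boundary values trace out exactly $C$; once the limit is pinned down, the identification with the conformal uniformizer and the promotion from subsequential to full convergence are routine.
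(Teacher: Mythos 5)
Your proof is correct and follows essentially the same route as the paper's: extend $U^\pm_{C_n}$ to normalized $k$-quasiconformal homeomorphisms of $\CP^1$ via Lemma~\ref{lem:ahl}, invoke the compactness theorem for normalized quasiconformal maps, and identify the (holomorphic, normalized) limit with $\overline{U^\pm_C}$. The only difference is that you spell out a few steps the paper leaves implicit (the subsequence-of-subsequence argument and the uniqueness of the normalized uniformizer), which is fine.
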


\begin{proof}
First, we note that it is sufficient to prove that the statement holds on some subsequence.
By Lemma \ref{lem:ahl} the map $U^{+}_{C_n}$ extends to a normalised $k$-quasiconformal homeomorphism $g_n$ of $\CP^1$. By the normalization in the definition of $\partial U^+_C$, we have that $g_n(i) = i$ for all $i = 0,1,\infty$.
Hence, by standard results in the theory of quasiconformal mappings (see~\cite[Theorem 5.2]{Lehto}), up to extracting a subsequence, $g_n$ converges uniformly to a $k$-quasiconformal homeomorphism $g$ of $\CP^1$. Clearly $g(i) = i$ for $i = 0,1, \infty$. Since $g_n(\RP^1) = C_n$, we have $g(\RP^1) = C$. Hence $C$ is a $k$-quasicircle. 
Since $g_n$ is holomorphic on $\HH^{2+}$, the limit $g$ is as well, and so $\overline{U^+_C}$ is the restriction of $g$ to $\overline{\HH^{2+}}$.

A similar argument shows that $\overline{U^-_{C_n}}$ uniformly converges to $\overline{U^-_C}$.
\end{proof}

\begin{cor}
In the setting of Lemma \ref{lm:int-conv}, the gluing map $\varphi_{C_n}$ between the upper and lower regions of the complement of $C_n$ uniformly converges to the gluing map $\varphi_C$ between the upper and lower regions of the complement of $C$.
\end{cor}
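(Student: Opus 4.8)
The plan is to deduce the uniform convergence of the gluing maps from the uniform convergence of the boundary uniformizations furnished by Lemma~\ref{lm:int-conv}, while deliberately avoiding any direct argument about convergence of the inverse homeomorphisms: the inverses $(\partial U^-_{C_n})^{-1}$ are defined on the varying domains $C_n$, and convergence of inverses is awkward to control directly. The device that circumvents this is the tautological identity
\begin{equation*}
\partial U^-_{C_n}\circ \varphi_{C_n} = \partial U^+_{C_n},
\end{equation*}
which is immediate from the definition $\varphi_{C_n} = (\partial U^-_{C_n})^{-1}\circ \partial U^+_{C_n}$, together with the compactness of normalized $k$-quasisymmetric maps.

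First I would record that each $\varphi_{C_n}$ is a \emph{normalized} $k$-quasisymmetric homeomorphism of $\RP^1$. It is normalized because all the boundary extensions $\partial U^\pm_{C_n}$ fix each of $0,1,\infty$, so $\varphi_{C_n}(i) = (\partial U^-_{C_n})^{-1}(i) = i$ for $i=0,1,\infty$; and it is $k$-quasisymmetric by the third equivalent condition of Lemma~\ref{lem:ahl} applied to the $k$-quasicircle $C_n$. By the compactness statement of Lemma~\ref{lm:cmpqs} (using that the $\varphi_{C_n}$ are normalized, which rules out the degenerate alternative), every subsequence of $(\varphi_{C_n})$ admits a further subsequence converging uniformly to a normalized $k$-quasisymmetric homeomorphism $\psi$. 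Since uniform convergence of the full sequence to $\varphi_C$ follows by the standard subsequence argument once every such subsequential limit $\psi$ is shown to equal $\varphi_C$, it suffices to identify $\psi$.

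To identify $\psi$, I would pass to a convergent subsequence and take limits in the key identity. Its right-hand side converges uniformly to $\partial U^+_C$ by Lemma~\ref{lm:int-conv}. For the left-hand side I would use that $\partial U^-_{C_n}\to \partial U^-_C$ uniformly (again Lemma~\ref{lm:int-conv}) and $\varphi_{C_n}\to \psi$ uniformly, via the splitting
\begin{equation*}
\partial U^-_{C_n}\circ \varphi_{C_n} - \partial U^-_C\circ \psi = \big(\partial U^-_{C_n} - \partial U^-_C\big)\circ \varphi_{C_n} + \big(\partial U^-_C\circ \varphi_{C_n} - \partial U^-_C \circ \psi\big).
\end{equation*}
The first term tends to $0$ uniformly because $\sup|\partial U^-_{C_n} - \partial U^-_C|\to 0$, and the second tends to $0$ uniformly because $\partial U^-_C$ is uniformly continuous on the compact set $\RP^1$ and $\varphi_{C_n}\to \psi$ uniformly. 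Hence $\partial U^-_{C_n}\circ \varphi_{C_n}\to \partial U^-_C\circ \psi$. Comparing the two limits gives $\partial U^-_C\circ \psi = \partial U^+_C$, and since $\partial U^-_C$ is a homeomorphism onto $C$ we may invert it to conclude $\psi = (\partial U^-_C)^{-1}\circ \partial U^+_C = \varphi_C$.

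The only genuinely delicate point is the passage to the limit in the composition $\partial U^-_{C_n}\circ \varphi_{C_n}$, where the outer and inner maps both move simultaneously; this is precisely what the displayed splitting handles, the crucial input being the uniform continuity of the fixed limit $\partial U^-_C$. Everything else is formal once Lemma~\ref{lm:cmpqs} is invoked to produce the subsequential limit and reduce the corollary to its identification.
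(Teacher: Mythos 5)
Your proof is correct. The paper states this corollary without proof, treating it as an immediate consequence of Lemma~\ref{lm:int-conv}, so there is no written argument to compare against; what you have done is supply the details that the authors leave implicit. You correctly identify the one genuinely nontrivial point — that $\varphi_{C_n}=(\partial U^-_{C_n})^{-1}\circ\partial U^+_{C_n}$ involves inverting homeomorphisms onto the varying targets $C_n$, so uniform convergence of $\overline{U^\pm_{C_n}}$ does not formally hand you uniform convergence of the inverses — and your resolution is clean: use Lemma~\ref{lem:ahl} to see the $\varphi_{C_n}$ are normalized $k$-quasisymmetric, invoke the compactness of Lemma~\ref{lm:cmpqs} to extract subsequential limits, and identify every such limit $\psi$ by passing to the limit in $\partial U^-_{C_n}\circ\varphi_{C_n}=\partial U^+_{C_n}$, where the splitting (really a triangle inequality in the spherical metric on $\CP^1$, since the ``difference'' of two maps into $\CP^1$ should be read as a pointwise distance) handles the simultaneously moving inner and outer maps. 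The only cosmetic remark is that one could alternatively deduce equicontinuity of the family $\{(\partial U^-_{C_n})^{-1}\}$ directly from the uniform quasiconformal extension in Lemma~\ref{lem:ahl} and argue about the inverses head-on, but your route via the tautological identity is at least as clean and avoids that discussion entirely.
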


\subsection{Hyperbolic geometry in dimension three}\label{sec:hyp-3}

For the most part, the arguments in this paper involving hyperbolic geometry are independent of any specific model of hyperbolic three-space. Nonetheless, for concreteness we introduce here a version of the projective model for hyperbolic three-space.
Consider the $2\times2$ matrices $M_2(\CC)$ with complex coefficients. 
Let $$\Herm(2, \CC) = \{ A \in M_2(\CC) \mid A^* = A\}$$ denote the $2\times 2$ Hermitian matrices, where $A^*$ is the conjugate transpose of $A$.  As a real vector space, $\Herm(2,\CC) \cong \mathbb R^4$. We define the following (real) inner product on $\Herm(2,\CC)$:
\begin{align}\label{eqn:3-1-product} \left\langle \bminimatrix{a}{z}{\bar{z}}{d}, \bminimatrix{e}{w}{\bar{w}}{h} \right\rangle = -\frac{1}{2}tr\left( \bminimatrix{a}{z}{\bar{z}}{d} \bminimatrix{h}{-w}{-\bar{w}}{e}\right).
\end{align}
We will use the coordinates on $\Herm(2,\CC)$ given by 
\begin{align} \label{coordinates-on-Herm}
X &= \bminimatrix{x_4+x_1}{x_2 - x_3 i}{x_2 + x_3i}{x_4 - x_1}.
\end{align} In these coordinates, we have that
$$\langle X, X\rangle = -\text{det}(X) = x_1^2 + x_2^2 + x_3^2 - x_4^2,$$
and we see that the signature of the inner product is $(3,1)$.

The coordinates defined in~\eqref{coordinates-on-Herm} together with the inner product~\eqref{eqn:3-1-product} naturally identify $\Herm(2,\CC)$ with the standard copy of $\RR^{3,1}$. We also identify the real projective space $\RP^3$ with the non-zero elements of $\Herm(2,\CC) = \RR^{3,1}$, considered up to multiplication by a real number. We define the three-dimensional hyperbolic space $\HH^3$ to be the region of $\RP^3$ consisting of  the negative lines with respect to $\langle \cdot, \cdot\rangle$:
$$\HH^3 = \left\{ X \in \Herm(2,\CC) \mid \langle X, X\rangle < 0 \right\} / \RR^*.$$
Note that in the affine chart $x_4 = 1$, $\HH^3$ is the standard round ball. In particular, $\HH^3$ is a properly convex subset of projective space.
There are several ways to define the hyperbolic metric $g_{\HH^3}$. The tangent space to a point $x = [X] \in \HH^3$ naturally identifies with the space $\operatorname{Hom}(x, x^\perp)$. We equip $\HH^3$ with the Riemannian metric defined by 
$$(g_{\HH^3})_x(v,w) = \left\langle v\left(\frac{X}{\|X\|}\right), w\left(\frac{X}{\|X\|}\right) \right\rangle$$
where $v,w \in \operatorname{Hom}(x, x^\perp) = T_x \HH^3$ and $\|X \| = \sqrt{-\langle X, X \rangle}$. This metric, known as the hyperbolic metric, is complete and has constant curvature equal to $-1$. 
Alternatively, the hyperboloid $\left\{ X \in \Herm(2,\CC) \mid \langle X, X\rangle = -1 \right\} $ projects two-to-one onto $\HH^3$ and the hyperbolic metric is just the push forward under this projection of the restriction of $\langle \cdot, \cdot \rangle$. Alternatively, the hyperbolic metric also agrees with (a multiple of) the Hilbert metric, defined in terms of cross-ratios, see e.g.~\cite{Benoist-survey}. From this last description, it is clear that the geodesics of $\HH^3$ are the intersections with $\HH^3$ of projective lines in $\RP^3$. The totally geodesic planes of $\HH^3$ are the intersections with $\HH^3$ of projective planes in $\RP^3$. Hence, the intrinsic notion of convexity in $\HH^3$, thought of as a Riemannian manifold, agree with the notion of convexity coming from the ambient projective space. Recall that a set $P \subset \RP^3$ is called convex if it is contained in some affine chart and it is convex there.

Next, the isometry group $\isom(\HH^3)$ is naturally the group of automorphisms of the vector space $\Herm(2,\CC)$ which preserve the bilinear form $\langle \cdot, \cdot\rangle$ up to projective equivalence, also known as the projective orthogonal group $\PO(3,1)$. The orientation-preserving subgroup $\isom^+(\HH^3)$ is the projective special orthogonal group $\PSO(3,1)$. 
However, in our coordinates, we may also describe the orientation-preserving isometries in terms of two by two complex matrices.
Indeed, an element $A \in \PSL(2,\CC)$ acts on $\Herm(2,\CC)$ by the formula
$$ A \cdot X := A X A^*.$$
This action preserves the bilinear form $\langle \cdot, \cdot \rangle$, and hence we have an embedding $\PSL(2,\CC) \to \PSO(3,1)$ which one easily checks is an isomorphism.

The visual boundary $\partial \HH^3$ of $\HH^3$ coincides with the boundary of $\HH^3$ in projective space. It is given by the null lines in $\Herm(2,\CC)$ with respect to $\langle \cdot, \cdot \rangle$. Thus $$\partial\HH^3 = \left\{ X \in \Herm(2,\CC) \mid \det(X) = 0, X \neq 0\right\}/\RR^*$$ can be thought of as the $2\times 2$ Hermitian matrices of rank one. This gives a natural identification $\partial \HH^3 = \CP^1$ since any rank one Hermitian matrix $X$ can be decomposed as 
\begin{equation} \label{eqn:decomposition}
X= v v^*,
\end{equation} where $v \in \CC^2\setminus \{0\}$ is a two-dimensional column vector unique up to multiplication by $\lambda \in \CC \setminus \{0\}$ and $v^*$ denotes the transpose conjugate.  The action of $\PSL(2, \CC)$ on $\CP^1$ by matrix multiplication extends the action of $\PSL(2,\CC)$ on $\HH^3$ described above. We note also that the metric on $\HH^3$ determines a compatible conformal structure on $\partial_{\infty} \HH^3 = \CP^1$ which agrees with the usual conformal structure on $\CC$. 

Given a subset $C \subset \CP^1$, we define its \emph{convex hull} $\CH(C)$ in $\HH^3$ to be the intersection with $\HH^3$ of the usual convex hull in (say, the affine chart $x_4 = 1$ of) projective space. Given $C \subset \CP^1$, a closed convex subset $\mathscr C \subset \HH^3$ is said to \emph{span} $C$ or to have \emph{boundary at infinity} equal to $C$, if the closure $\overline{\mathscr C}$ of $\mathscr C$ in $\RP^3$ is the union of $\mathscr C$ and $C$.

\subsection{Geometry of surfaces embedded in $\HH^3$} \label{sssc:surfaces}

Given a smooth surface $S$ embedded in $\HH^3$, recall that the restriction of the metric of $\HH^3$ to the tangent bundle of $S$ is a Riemannian metric on $S$ which is called the {\em first fundamental form}, or alternatively the \emph{induced metric}, and is denoted $\I$. Let $N$ be a unit normal vector field to $S$, and let $\triangledown$ be the Levi-Civita connection of $\HH^3$, then the {\em shape operator} $B\co T S \to T S$ is defined by $Bx = - \triangledown_x N$. 

The {\em second fundamental form} $\II$ of $S$ is defined by
$$\forall s \in S, \forall x, y \in T_s S,\;\; \II(x, y) := \I(Bx, y) = \I(x, By)~,$$
and its {\em third fundamental form} $\III$ by
$$\forall s \in S, \forall x, y \in T_s S,\;\; \III(x, y) := \I(Bx, By)~.$$ 

Given a surface $S$ immersed in a hyperbolic $3$--manifold $M$, the {\em extrinsic curvature} $K_{ext}$ is the determinant of the shape operator $B$, or equivalently, the product $\kappa_1 \kappa_2$ of the two principal curvatures of $S$. This quantity is related to the {\em intrinsic} or {\em Gaussian curvature} $K$ of the $S$ by the Gauss equation, which in hyperbolic geometry takes the form:
\begin{equation}\label{hypcurv}
  K = K_{ext}-1.
\end{equation}
A {\em $K$--surface} in a hyperbolic $3$--manifold $M$ is a surface in $M$ which has constant Gaussian curvature equal to $K$.

The shape operator $B$ of $S$ satisfies the {\em Codazzi equation}: when $B$ is a considered as a 1-form with values in $TS$, $d^DB=0$, where $D$ is the Levi-Civita connection of the induced metric $I$. If $D$ is non degenerate, a direct computation shows that, as a consequence of this Codazzi equation, the Levi-Civita connection $D^*$ of $\III$ is given, for two vector fields $u,v$ on $S$, by
$$ D^*_uv = B^{-1}D_u(Bv)~. $$
It then follows that the curvature 2-form of $D^*$ is equal to the curvature 2-form of $D$, and the curvature $K^*$ of $\III$ is equal to
\begin{equation}
  \label{eq:KK*}
  K^* = \frac{K}{K_{ext}} = \frac{K}{K+1}~. 
\end{equation}

\subsection{Polar duality between surfaces in $\HH^3$ and the de Sitter space}\label{sec:polar}

The third fundamental form can also be interpreted in terms of the {\em polar duality} between $\HH^3$ and the de Sitter space $\dS^3$. Recall that we can identify the real projective space $\RP^3$ with the non-zero elements of $\Herm(2,\CC) = \RR^{3,1}$, considered up to multiplication by a real number. We define the three-dimensional de Sitter space $\dS^3$ to be the region of $\RP^3$ consisting of the positive lines with respect to $\langle \cdot, \cdot\rangle$:
$$\dS^3 = \left\{ X \in \Herm(2,\CC) \mid \langle X, X\rangle > 0 \right\} / \RR^*.$$
The inner product $\langle \cdot, \cdot\rangle$ determines a metric on $\dS^3$, defined up to scale. We choose the metric with constant curvature $+1$.

Given a point $x = [X] \in \HH^3$, the orthogonal of the line $\R X$ in $\R^{3,1}$ is a spacelike hyperplane, which intersects $\dS^3$ along a totally geodesic spacelike plane $x^*$ of $\dS^3$, and any totally geodesic spacelike plane in $\dS^3$ is obtained uniquely in this manner. Conversely, given a point $y =[Y] \in \dS^3$, the orthogonal of the oriented line $\R Y$ is an oriented timelike hyperplane in $\R^{3,1}$, which intersects $\HH^3$ along an {\em oriented} totally geodesic plane $y^*$ in $\HH^3$, and each oriented totally geodesic plane in $\HH^3$ is dual to a unique point in $\dS^3$.

Now consider a smooth surface $S\subset \HH^3$. We can consider the `dual' set $S^*$ of points in $\dS^3$ which are dual to the oriented tangent planes of $S$. Some of the key properties of this duality map are:
\begin{itemize}
\item If $S$ is convex with positive definite second fundamental form at each point, then $S^*$ is a smooth, spacelike, convex surface, with positive definite second fundamental form at each point.
\item The pull-back by the duality map of the induced metric on $S^*$ is the third fundamental form $\III$ of $S$, and conversely. So it follows from \eqref{eq:KK*} that the dual of a $K$-surface is a $K^*$-surface.
\end{itemize}
In the same manner, given a smooth surface $S$ in $\dS^3$, we can consider the ``dual'' set $S^*$, defined as the set of points in $\HH^3$ dual to the totally geodesic planes tangent to $S$. As before we have:
\begin{itemize}
\item If $S$ is spacelike and convex  with positive definite second fundamental form at each point, then $S^*$ is a smooth, convex surface, with positive definite second fundamental form at each point.
\item The duality maps pulls back the induced metric on $S^*$ to the third fundamental form $\III$ of $S$, and conversely. So it follows from \eqref{eq:KK*} that the dual of a $K$-surface is a $K^*$-surface.
\end{itemize}
Finally, again if $S$ is a smooth surface in $\HH^3$ (resp. a spacelike smooth surface in $\dS^3$) with positive definite second fundamental form, then $S=(S^*)^*$. See Hodgson and Rivin \cite{HR} or \cite{shu,horo} for the proofs of the main points asserted here and a more detailed discussion.

\section{Gluing maps in hyperbolic geometry}
Here we carefully define the gluing maps $\Phi_{\cdot}, \Phi_{\cdot, K}\co \qcm \to \cT$ from the introduction, filling in the technical results needed for the definitions. We will also give a critical estimate needed for the proofs of Theorems~\ref{tm:induced-hyp}, and ~\ref{tm:induced-hyp-K}, (and eventually~\ref{tm:III-hyp-K}).

Recall that, given an oriented Jordan curve $C$ in $\CP^1$, the convex hull $\CH(C) \subset \HH^3$ of $C$ is the smallest closed convex subset of $\HH^3$ whose closure in $\overline{\HH^3}$ includes $C$. The boundary $\partial \CH(C)$ of $\CH(C)$ consists of two convex properly embedded disks, spanning $C$, which inherit an orientation from that of $C$. We call the component of $\partial \CH(C)$ for which the outward normal is positive the \emph{top boundary component} and denote it $\partial^+ \CH(C)$. Similarly, the other boundary component, whose outward pointing normal is negative, is called the \emph{bottom boundary component} and denoted $\partial^- \CH(C)$. Note that the surfaces $\partial^\pm \CH(C)$ are not smooth, but rather are each bent along a geodesic lamination.

In the case that $C$, and hence $\CH(C)$, is invariant under some quasifuchsian surface group $\Gamma < \PSL(2,\CC)$, then the the quotient $\Gamma \backslash \CH(C)$ is compact and is called the convex core of the quasifuchsian hyperbolic three-manifold $\Gamma \backslash \HH^3$. In this case, Labourie~\cite{L5} proved that the complement of $\Gamma \backslash \CH(C)$ in $\Gamma \backslash \HH^3$ admits a foliation by $K$--surfaces, i.e. surfaces whose Gauss curvature is constant equal to $K$. The following result of Rosenburg--Spruck generalizes that result to the context of interest here.

\begin{theorem}[Rosenberg and Spruck \cite{RS}] \label{tm:K-surfaces-hyp}
Let $C\subset \C \mathbb{P}^1$ be a Jordan curve, and let $K \in (-1,0)$. There are exactly two properly embedded $K$--surfaces in $\HH^3$ spanning $C$. These are each homeomorphic to disks, are disjoint, and bound a closed convex region $\mathscr C_K(C)$ in $\HH^3$ which contains a neighborhood of the convex hull $\CH(C)$. Further the $K$--surfaces spanning $C$, for $K\in (-1,0)$, form a foliation of $\HH^3 \setminus \CH(C)$. 
\end{theorem}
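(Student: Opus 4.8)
The plan is to treat this as an asymptotic Dirichlet problem for the constant Gauss curvature equation, solving it first for smooth curves and then for an arbitrary Jordan curve by approximation. Fix $K \in (-1,0)$ and set $\sigma = K+1 \in (0,1)$; by the Gauss equation \eqref{hypcurv} a $K$-surface is exactly a surface whose extrinsic (Gauss--Kronecker) curvature $K_{ext} = \det B$ equals $\sigma$, so a convex one has positive definite second fundamental form and is locally strictly convex. I would work in the upper half-space model of $\HH^3$ and seek the surface on the positive side as a vertical graph $t = u(x)$ over a planar domain (or, dually, via its support function), so that the condition $K_{ext} \equiv \sigma$ becomes a fully nonlinear, uniformly elliptic equation of Monge--Amp\`ere type for $u$. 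The two natural geometric barriers are the component $\Omega^+_C$ of the complement of $C$ at infinity and the top face $\partial^+\CH(C)$ of the convex hull: any convex surface of extrinsic curvature $\sigma \in (0,1)$ spanning $C$ must lie in the region between these two, since horospheres (extrinsic curvature $1$) and the flat pleated boundary (extrinsic curvature $0$) bracket the allowable curvatures.

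First I would establish existence for a smooth Jordan curve $C$ by the continuity method (or Leray--Schauder degree), deforming from a model curve for which a solution is known and using uniform a priori $C^0$, $C^1$, and $C^2$ estimates to keep the solution set closed and open. The $C^0$ bound comes from the barriers above; the gradient and global second-order estimates are the heart of the matter and are obtained, following Rosenberg--Spruck and Guan--Spruck, from the strict convexity of the surface together with the structure of the equation. Crucially, one must build local barriers near points of $C$ at infinity forcing the graph to become asymptotically vertical at the correct rate, so that the solution genuinely limits onto $C$; this is what guarantees it is properly embedded and spans $C$. Running the same construction on the negative side yields a second surface, and the two are disjoint since each lies on its own side of $\CH(C)$; together they bound a convex region $\mathscr C_K(C) \supset \CH(C)$.

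For a general (non-smooth) Jordan curve $C$ I would approximate $C$ in the Hausdorff topology by smooth Jordan curves $C_n$, solve the problem for each, and pass to the limit: the uniform estimates give compactness in $C^2_{loc}$, so a subsequence of $S^\pm_K(C_n)$ converges to a $K$-surface, while the barrier estimates, which depend only on convex-hull data and hence are stable under Hausdorff convergence, prevent degeneration and ensure the limit is asymptotic to $C$. Uniqueness on each side then follows from the geometric maximum principle (tangency principle) for surfaces of constant extrinsic curvature: given two convex $\sigma$-surfaces spanning $C$ on the same side, slide one toward the other until first contact, and a first interior or asymptotic contact point forces, via the strong maximum principle for the elliptic operator, that the two coincide. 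This yields exactly two such surfaces.

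Finally, the foliation follows from monotonicity in $K$. For $K_1 < K_2$ in $(-1,0)$ one has $\sigma_1 < \sigma_2$, and the comparison principle shows $S^+_{K_1}(C)$ lies between $S^+_{K_2}(C)$ and $\CH(C)$, so the surfaces $S^+_K(C)$ are nested and vary continuously and monotonically with $K$. Identifying the limits — $S^+_K \to \partial^+\CH(C)$ as $K \to -1$ (i.e. $\sigma \to 0$, the flat pleated limit) and $S^+_K \to \Omega^+_C$ as $K \to 0$ (i.e. $\sigma \to 1$, the horospherical limit pushing out to infinity) — shows these surfaces sweep out precisely the positive component of $\HH^3 \setminus \CH(C)$, and symmetrically on the negative side. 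The main obstacle throughout is the package of a priori estimates and, above all, the construction of barriers at the asymptotic boundary uniform enough to survive the passage to a general quasicircle; this asymptotic analysis near $C$, where the ambient geometry degenerates and $C$ may be highly irregular, is the technically delicate step.
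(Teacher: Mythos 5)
The paper does not prove this statement: it is quoted wholesale as a theorem of Rosenberg--Spruck \cite{RS} (with the foliation assertion folded into the citation), so there is no internal argument to compare yours against. Your outline is a faithful reconstruction of the strategy of the cited source --- recasting constant Gauss curvature $K$ as constant extrinsic curvature $\sigma = K+1 \in (0,1)$ via the Gauss equation, solving the resulting Monge--Amp\`ere-type asymptotic Dirichlet problem by the continuity method with $\partial^\pm\CH(C)$ and $\Omega^\pm_C$ as geometric barriers, approximating a general Jordan curve by smooth ones, and getting uniqueness and the nested foliation from the tangency principle and monotonicity in $\sigma$ --- and I see no wrong turn in it. As you acknowledge, the sketch defers the genuine content (the global $C^2$ estimates and the uniform asymptotic barriers that survive the passage to an arbitrary Jordan curve) to the analysis carried out in \cite{RS}, which is exactly what this paper does by citing the result rather than reproving it.
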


An orientation of $C$ induces an orientation of the boundary $\partial \mathscr C_K(C)$ and hence, as above for $\partial \CH(C)$, determines a \emph{top $K$-surface}, which we label $S_K^+(C)$, and a \emph{bottom $K$-surface}, which we label $S_K^-(C)$. Note that as $K \to -1^+$, $S_K^\pm(C)$ converges to the top/bottom boundaries $\partial^\pm \CH(C)$ of the convex hull $\CH(C)$. Hence, we will sometimes use the convention $S_{-1}^\pm = \partial^\pm \CH(C)$, even though these surfaces are not technically considered $K$-surfaces since they are not smooth.

For $K \in [-1,0)$, let $\HH^{2\pm}_K$ be a copy of $\HH^{2\pm}$ equipped with the conformal metric that has constant curvature equal to $K$. The induced metric on the $K$-surface $S^\pm_K(C)$ is locally isometric to $\HH^{2\pm}_K$. Since $S^\pm_K(C) \subset \HH^3$ is a properly embedded disk, its induced metric is complete, and hence is globally isometric to $\HH^{2\pm}_K$. To continue, we need the following basic proposition. The proof, which is slightly technical, will be given later in this section.

\begin{prop}\label{prop:extend}
For a Jordan curve $C$ and $K \in [-1,0)$, any isometry $V:\HH^{2\pm}_K\to S^\pm_{K}(C)$ extends to a homeomorphism of $\overline{\HH^{2\pm}_K} = \HH^{2\pm}_K \cup \RP^1$ onto $S^\pm_K(C) \cup C \subset \overline{\HH^3}$.
\end{prop}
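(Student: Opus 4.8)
The plan is to realize the inclusion of the $K$-surface into $\HH^3$ as a quasi-isometric embedding of $\delta$-hyperbolic spaces, and then to apply the boundary extension theory recalled in Section~2. Write $\iota\co S^\pm_{K}(C)\hookrightarrow\HH^3$ for the inclusion and set $\widetilde V=\iota\circ V\co\HH^{2\pm}_K\to\HH^3$. Since $V$ is an isometry onto $S^\pm_K(C)$ for the intrinsic path metric, and any path lying in the surface has the same length whether measured intrinsically or in the ambient $\HH^3$, one inequality is immediate: $d_{\HH^3}(\widetilde V(x),\widetilde V(y))\le d_{\HH^{2\pm}_K}(x,y)$ for all $x,y$. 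Moreover $\widetilde V$ is proper, being the composition of an isometry with the proper embedding of $S^\pm_K(C)$ (Theorem~\ref{tm:K-surfaces-hyp}).

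The crux is the reverse coarse inequality $d_{\HH^{2\pm}_K}(x,y)\le A\,d_{\HH^3}(\widetilde V(x),\widetilde V(y))+A$, i.e.\ that the intrinsic distance is controlled by the ambient one; equivalently, that the surface has no ``necks'' where it returns extrinsically close to itself while remaining intrinsically far. This is exactly where the convexity of $S^\pm_K(C)$ enters. For $K\in(-1,0)$ the surface is smooth and, by Proposition~\ref{principal_curv}, its principal curvatures are bounded away from both $0$ and $\infty$; I would use this two-sided bound, together with the fact that an ambient geodesic joining two points of $S^\pm_K(C)$ stays within the convex region $\mathscr C_K(C)$ it bounds, to estimate the length of the back-projection of that geodesic onto the surface and thereby bound $d_{\HH^{2\pm}_K}(x,y)$. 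For the limiting case $K=-1$, where $S^\pm_{-1}(C)=\partial^\pm\CH(C)$ is merely pleated, the smooth curvature bound is unavailable and one must instead exploit the coarse convexity of the convex-hull boundary (nearest-point retraction estimates of Sullivan--Epstein--Marden type). I expect this coarse inequality to be the main obstacle, and the reason the authors flag the proof as technical: the pleated regime and the smooth regime genuinely require different tools.

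Granting the quasi-isometric embedding, the conclusion follows formally. Both $\HH^{2\pm}_K$ (of constant curvature $K<0$) and $\HH^3$ are proper geodesic $\delta$-hyperbolic spaces with visual boundaries $\RP^1$ and $\CP^1$ respectively, so $\widetilde V$ extends continuously to $\overline{\widetilde V}\co\overline{\HH^{2\pm}_K}\to\overline{\HH^3}$, restricting to a topological embedding $\partial\widetilde V\co\RP^1\to\CP^1$ of visual boundaries. It remains to identify the image. Since $\widetilde V$ is proper, $\partial\widetilde V(\RP^1)$ is precisely the accumulation set of $S^\pm_K(C)$ in $\partial\HH^3$: any accumulation point is a limit $\lim\widetilde V(x_n)$ with $x_n\to\xi\in\RP^1$, hence equals $\partial\widetilde V(\xi)$, and conversely. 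As $S^\pm_K(C)$ spans $C$ (Theorem~\ref{tm:K-surfaces-hyp}), this accumulation set is exactly $C$. Thus $\partial\widetilde V\co\RP^1\to C$ is an injective continuous surjection from a compact space onto a Hausdorff space, hence a homeomorphism, and $\overline{\widetilde V}$ is the asserted homeomorphism of $\overline{\HH^{2\pm}_K}$ onto $S^\pm_K(C)\cup C$.
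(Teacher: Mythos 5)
Your formal framework (boundary extensions of quasi-isometric embeddings between Gromov hyperbolic spaces, plus the identification of the boundary image with $C$ via properness and the spanning property) is fine, but the step you correctly identify as the crux --- the reverse coarse inequality $d_{\HH^{2\pm}_K}(x,y)\le A\,d_{\HH^3}(\widetilde V(x),\widetilde V(y))+A$ --- is not merely technical: it is \emph{false} for general Jordan curves, so the strategy cannot be completed. Concretely, let $\Omega^+$ be a Jordan domain that fails to be uniform, e.g.\ the unit disk with a sequence of thin rectangular notches of width $4^{-n}$ and depth $1/2$ attached at boundary points accumulating at $1$, or a thin strip spiralling into the origin along $r=1/\theta$. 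Take $z_n,w_n\in\Omega^+$ on the two sides of the $n$-th notch (resp.\ on consecutive turns of the spiral), at Euclidean distance from $\partial\Omega^+$ comparable to their mutual Euclidean distance. The points of $\partial^+\CH(C)$ (or of $S^+_K(C)$, which lies within bounded distance of $\CH(C)$ by Lemma~\ref{lm:within}) sitting over $z_n$ and $w_n$ are at Euclidean height comparable to $d(z_n,\partial\Omega^+)$, hence at \emph{bounded} distance in $\HH^3$; but their intrinsic distance on the surface is comparable, via the nearest-point retraction (Remark~\ref{rk:npr}, Proposition~\ref{lm:npr}), to the Poincar\'e distance $d_{\Omega^+}(z_n,w_n)\gtrsim 2\log(4^{n}/4)\to\infty$, since any path joining them must go around the tip of the notch. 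So the inclusion $S^\pm_K(C)\hookrightarrow\HH^3$ is not a quasi-isometric embedding. Note that convexity and the two-sided principal curvature bounds of Proposition~\ref{principal_curv} hold for \emph{every} Jordan curve, including these, so no argument based only on those hypotheses can establish your inequality.

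The paper's proof sidesteps this by never comparing the intrinsic metric with the ambient $\HH^3$ metric. Instead it compares it with the Poincar\'e metric of the conformal domain $\Omega^\pm_C$: the nearest-point retraction $r^\pm_{C,K}\co\Omega^\pm_C\to S^\pm_K(C)$ is uniformly bilipschitz for $K\in(-1,0)$ (Proposition~\ref{lm:npr}) and a uniform quasi-isometry for $K=-1$ (Bridgeman--Canary), so $V^{-1}\circ r^\pm_{C,K}\circ U^\pm_C$ is a quasi-isometry of $\HH^2$ to itself; its quasisymmetric boundary extension, combined with the Carath\'eodory extension of the Riemann map $U^\pm_C$, yields the extension of $V$. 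If you want to salvage your approach, you would have to restrict to quasicircles (where $\Omega^\pm_C$ is a uniform domain and the inclusion genuinely is a quasi-isometric embedding), but the proposition is needed, and is true, for arbitrary Jordan curves.
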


Now, let us assume the Jordan curve $C$ is normalized, meaning it is oriented and passes through $0,1,\infty$ in positive order, and fix $K \in [-1,0)$. Then there are unique isometries $V^\pm_{C,K}: \HH^{2 \pm}_K \to S^\pm_{K}(C)$ whose extension to the boundary, given by Proposition~\ref{prop:extend}, satisfies $\partial V^\pm_{C,K}(i) = i$ for $i = 0,1,\infty$. The gluing map associated to $C$ and $K$ is simply the \emph{comparison map} between the two maps $V^+_{C,K}$ and $V^-_{C,K}$: 
\begin{equation}\label{eqn:PhiCK}
\Phi_{C, K} = \cmp(V^-_{C,K}, V^+_{C,K}) := (\partial V^-_{C,K})^{-1}  \circ \partial V^+_{C,K}.
\end{equation}

The main goals of this section, in addition to Proposition~\ref{prop:extend}, are to prove the following two statements.

\begin{prop}\label{pr:main-properness}
 Let $K \in [-1, 0)$. Then for each $k > 1$, there exists a constant $k' > 1$ depending only on $k$ and $K$, so that for any (normalized) Jordan curve $C$:
 \begin{enumerate}
 \item \label{item:well-defined}
If $C$ is a $k$--quasicircles, then $\Phi_{C, K}$ is a $k'$--quasisymmetric map. In particular $\Phi_{C, K} \in \mathcal T$.
\item \label{item:properness}
If $\Phi_{C, K}$ is $k$--quasisymmetric, then $C$ is a $k'$--quasicircle.
\end{enumerate}
\end{prop}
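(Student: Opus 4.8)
The plan is to factor the gluing map through the conformal gluing map $\varphi_C$ of Section~\ref{sub:qc} and thereby reduce both statements to a single uniform estimate. Writing $g^\pm_C := (\partial U^\pm_C)^{-1}\circ \partial V^\pm_{C,K}\co \RP^1\to\RP^1$ for the boundary comparison between the conformal uniformization of $\Omega^\pm_C$ and the isometric uniformization of $S^\pm_K(C)$, one checks directly from the definitions of $\varphi_C$ and $\Phi_{C,K}$ that
\begin{equation*}
\Phi_{C,K} = (g^-_C)^{-1}\circ \varphi_C \circ g^+_C .
\end{equation*}
The key claim, which I will call the \emph{comparison estimate}, is that there is a constant $k_0=k_0(K)$, depending only on $K$ and in particular independent of $C$, such that $g^\pm_C$ is $k_0$-quasisymmetric for \emph{every} normalized Jordan curve $C$. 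Granting this, both parts follow from the multiplicativity of the quasisymmetric constant under composition (noted after Definition~\ref{def:k-qs}) and its invariance under inversion, together with Lemma~\ref{lem:ahl}: if $C$ is a $k$-quasicircle then $\varphi_C$ is $k$-quasisymmetric, so $\Phi_{C,K}$ is $k_0^2 k$-quasisymmetric, giving \eqref{item:well-defined}; conversely, if $\Phi_{C,K}$ is $k$-quasisymmetric then $\varphi_C = g^-_C\circ\Phi_{C,K}\circ (g^+_C)^{-1}$ is $k_0^2 k$-quasisymmetric, whence $C$ is a $k'$-quasicircle by the last equivalence in Lemma~\ref{lem:ahl}, giving \eqref{item:properness}.

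It remains to prove the comparison estimate, which is the heart of the argument. By Proposition~\ref{prop:QI-extend} it suffices to exhibit, for each value of $\pm$, a quasi-isometry $F^\pm_C\co \HH^{2\pm}_K \to \HH^{2\pm}$ with quasi-isometry constant depending only on $K$ whose boundary extension is $g^\pm_C$. I construct $F^+_C$ as the composition $F^+_C = (U^+_C)^{-1}\circ \Pi^+_C \circ V^+_{C,K}$ (and symmetrically for $-$), where $\Pi^+_C\co S^+_K(C)\to \Omega^+_C$ is a natural normal projection. For $K\in(-1,0)$ I take $\Pi^+_C$ to be the hyperbolic Gauss map, sending a point of $S^+_K(C)$ to the endpoint in $\CP^1$ of the outward unit normal geodesic ray; since $S^+_K(C)$ is a properly embedded convex disk asymptotic to $C$ (Theorem~\ref{tm:K-surfaces-hyp}), $\Pi^+_C$ is a homeomorphism onto $\Omega^+_C$ whose continuous extension to $C$ is the identity, so that the boundary extension of $F^+_C$ is exactly $g^+_C$. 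The point is then that $V^+_{C,K}$ is an isometry onto $(S^+_K(C),\I)\cong\HH^{2+}_K$, that $U^+_C$ is an isometry for the Poincar\'e metric on $\Omega^+_C$, and that the middle map $\Pi^+_C$ distorts the induced metric into the Poincar\'e metric by a uniformly bounded amount.

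This last distortion bound is where I expect the main difficulty to lie. It rests on Proposition~\ref{principal_curv}, which guarantees that the principal curvatures of $S^\pm_K(C)$ lie in a compact subinterval of $(0,\infty)$ depending only on $K$; by the Epstein-type formula expressing the pullback under the Gauss map of the conformal metric at infinity in terms of the shape operator $B$ (schematically through the operator $\Id+B$), this yields that $\Pi^+_C$ is uniformly bilipschitz from $(S^+_K(C),\I)$ to $\Omega^+_C$ equipped with the conformal metric induced at infinity, and one must further check that the latter metric is comparable, with constants depending only on $K$, to the genuine Poincar\'e metric of $\Omega^+_C$ \emph{uniformly over all $C$}. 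This uniformity over arbitrary, possibly very wild, Jordan curves is precisely what makes the converse statement \eqref{item:properness} nontrivial: for part \eqref{item:well-defined} alone one could instead argue by contradiction using the compactness of $k$-quasicircles (Lemma~\ref{lm:int-conv}) together with Lemma~\ref{lm:cmpqs}, but part \eqref{item:properness} ranges over all Jordan curves, where no such compactness is available, so a comparison estimate valid for every $C$ seems unavoidable.

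Finally, the limiting case $K=-1$ must be treated separately, since $\partial^\pm\CH(C)$ is only pleated, not smooth, and the Gauss map is unavailable. Here I replace $\Pi^\pm_C$ by the nearest-point retraction $r^\pm_C\co \Omega^\pm_C\to\partial^\pm\CH(C)$, whose inverse again extends to the identity on $C$, and invoke the fact (due to Epstein--Marden and Bridgeman--Canary) that the nearest-point retraction between the conformal boundary, with its Poincar\'e metric, and the boundary of the convex hull, with its induced hyperbolic metric, is a quasi-isometry with \emph{universal} constants, valid for every simply connected hyperbolic domain. With the comparison estimate established in both regimes, the proof is complete; as the excerpt anticipates, I would record the $K=-1$ and $K\in(-1,0)$ arguments separately, since the technical tools they require differ.
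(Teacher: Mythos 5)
Your proposal follows essentially the same route as the paper: the factorization $\Phi_{C,K}=(g^-_C)^{-1}\circ\varphi_C\circ g^+_C$ is exactly the paper's decomposition into comparison maps, and your ``comparison estimate'' is precisely Proposition~\ref{prop:extend-bis}, which the paper also proves via the normal projection between $S^\pm_K(C)$ and $\Omega^\pm_C$ (the nearest-point retraction, i.e.\ the inverse of your Gauss map) together with the uniform principal curvature bounds of Proposition~\ref{principal_curv}, and via the Bridgeman--Canary uniform quasi-isometry in the limiting case $K=-1$.

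The one step you flag as the main difficulty but leave open --- that the conformal metric induced at infinity by $\mathscr C_K(C)$ is comparable to the Poincar\'e metric of $\Omega^\pm_C$ with constants depending only on $K$, uniformly over all Jordan curves $C$ --- is Lemma~\ref{lem:biLip} in the paper, and it is closed as follows: by the curvature formula of Lemma~\ref{lm:atinfty}.\eqref{item:curvature} and the principal curvature bounds, the horospherical metric $\Istar_{\mathscr C_K(C)}$ is a complete metric on $\Omega^\pm_C$ with curvature pinched between $-M(K)$ and $-1/M(K)$, so Yau's Schwarz lemma (applied in both directions to the identity map between $\Istar$ and the complete hyperbolic metric $h^\pm$) yields the bilipschitz comparison with a constant depending only on the curvature pinching, hence only on $K$. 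With that ingredient supplied, your argument is complete and coincides with the paper's.
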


Statement~\ref{item:well-defined} shows that $\Phi_{\cdot, K}$ is a well-defined map taking normalized quasicircles in $\CP^1$ to the universal Teichm\"uller space $\mathcal T$.
Statement~\ref{item:properness} is a properness statement, showing that the quasisymmetric constant of $\Phi_{C, K}$ can not go to infinity unless the quasicircle constant for $C$ does. This will be a key ingredient for the proofs of Theorems~\ref{tm:induced-hyp}, \ref{tm:induced-hyp-K}, and~\ref{tm:III-hyp-K}.

\subsection{Comparison maps}\label{sec:comparison}
As the notion of comparison map, from Equation~\eqref{eqn:PhiCK} will come up again and again, let us introduce some notation and properties. 
We will often consider embeddings $f\co \overline{\HH^2} \to \overline{\HH^3}$   
restricting on $\RP^1=\partial \HH^2$ to a homeomorphism to some Jordan curve $C \subset \CP^1$. Given such an embedding, we will denote by $\partial f$ the restriction of $f$ to $\partial \HH^2$. Given two proper embeddings $f_1$ and $f_2$ whose boundary maps are both homeomorphisms from $\RP^1$ to the same Jordan curve $C_1 = C_2 = C$, the \textit{comparison map} between $f_1$ and $f_2$ is defined as
\[
\cmp(f_1, f_2)=(\partial f_1)^{-1}\circ \partial f_2 \co \RP^1 \to \RP^1.
\]
As an example, the gluing map between the upper and lower regions $\Omega^\pm_C$ of the complement of $C$ in $\CP^1$, from Section~\ref{sec:intro-hyp}, is just the comparison map $$\varphi_C = \cmp(U^-_C, U^+_C)$$
where $U^\pm_C: \HH^{2\pm} \to \Omega^\pm_C$ are the biholomorphisms whose extensions to $\partial \HH^{2\pm}$ satisfy $\partial U^\pm_C(i) = i$ for $i = 0,1,\infty$.

Clearly the comparison map is well defined and is a homeomorphism of $\RP^1$.
Moreover the following cocycle relations hold:
\[
\begin{array}{l}
\cmp(f_1, f_2)\circ\cmp(f_2, f_3)=\cmp(f_1, f_3)\,,\\
\cmp(f_1, f_2)^{-1} = \cmp(f_2, f_1), \\
\cmp(f_1, f_1)=\mathrm{Id}|_{\RP^1}\,.
\end{array}
\]

\subsection{Compactness statements following Labourie}

In this subsection, we give several useful compactness results for taking limits of $K$-surfaces. These will be proved using the following general result of Labourie about limits of surfaces in $\HH^3$:

\begin{theorem}[{Labourie~\cite[Thm D]{L1}}]\label{thm:lab-compact}
Let $f_n:S\to \HH^3$ be a sequence of immersions of a surface $S$ such that the pullback $f_n^*(h)$ of the hyperbolic metric $h$ converges smoothly to a metric $g_0$.
If the integral of the mean curvature is uniformly bounded, then a subsequence of $f_n$ converges smoothly to an isometric immersion
$f_\infty$ such that  $f_\infty^*(h)=g_0$.
\end{theorem}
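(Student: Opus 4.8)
The plan is to establish uniform $C^\infty$ bounds, on compact subsets of $S$, for the immersions $f_n$ after removing the ambient symmetry, and then to extract a convergent subsequence by Arzel\`a--Ascoli, identifying the limit through the fundamental theorem of surface theory. Because a sequence of immersions can only be expected to converge after normalizing away the noncompact symmetry (one can always post-compose with diverging isometries), I would first fix a basepoint $s_0 \in S$ and, using that $\isom(\HH^3)$ acts transitively on orthonormal frames of $\HH^3$, replace each $f_n$ by $\phi_n \circ f_n$ for a suitable $\phi_n \in \isom(\HH^3)$ so that $f_n(s_0)$, the image under $df_n(s_0)$ of a fixed $g_0$-orthonormal basis of $T_{s_0}S$, and the unit normal $N_n(s_0)$ are all independent of $n$.

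The heart of the argument, and the step I expect to be the main obstacle, is an a priori bound on the second fundamental forms $\II_n$, equivalently the shape operators $B_n$. The Gauss equation \eqref{hypcurv} reads $\det B_n = K_{g_n} + 1$, and since $g_n \to g_0$ smoothly the Gaussian curvatures $K_{g_n}$ converge smoothly, so $\det B_n$ is bounded in $C^\infty$; in particular the product of the principal curvatures is controlled pointwise. This alone does not bound the principal curvatures individually, since one could have $\kappa_1 \to \infty$ and $\kappa_2 \to 0$ with bounded product. To rule this out I would exploit the Codazzi equation, which asserts that $B_n$ is a Codazzi tensor for $g_n$: writing $B_n = H_n\,\Id + B_n^0$ and expressing the trace-free part $B_n^0$ in a $g_0$-isothermal coordinate as a Hopf differential $q_n\, dz^2$, the Codazzi equation becomes a $\bar\partial$-type equation $\partial_{\bar z} q_n = (\text{l.o.t.})\,\partial_z H_n$. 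Thus $q_n$ is almost holomorphic, and a concentration of $|B_n|$ would force a concentration of intrinsic or extrinsic curvature. A blow-up argument then yields a contradiction: rescaling at a putative point of blow-up would produce a complete nontrivial ``bubble'' surface, but the smooth convergence of the induced metrics forbids any intrinsic curvature concentration while the uniform bound on $\int_S |H_n|\, dA_n$ forbids the noncompact minimal-type necks; together these exclude bubbling and give a uniform pointwise bound on $B_n$. This elliptic/Gromov-compactness mechanism is precisely Labourie's theory of elliptic immersions and their pseudo-holomorphic Gauss lifts.

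With $|B_n|$ bounded pointwise, elliptic regularity for the Codazzi system --- reading $\partial_{\bar z} q_n = (\text{l.o.t.})\,\partial_z H_n$ as a first-order elliptic equation with $C^\infty$-controlled coefficients --- bootstraps to uniform $C^\infty$ bounds for $B_n$ on compact subsets of $S$. By Arzel\`a--Ascoli, a subsequence satisfies $B_n \to B_\infty$ in $C^\infty_{\mathrm{loc}}$, while $g_n \to g_0$. The Gauss and Codazzi equations pass to the limit, so the pair $(g_0, B_\infty)$ satisfies the integrability conditions of surface theory in $\HH^3$; the fundamental theorem of surface theory then produces an isometric immersion $f_\infty \co (S, g_0) \to \HH^3$ with shape operator $B_\infty$, unique given the normalization of the first step. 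Finally, each $f_n$ is recovered by integrating a frame ODE $dF_n = F_n\,\omega_n$, where the connection form $\omega_n$ is built algebraically from $g_n$ and $B_n$; since $\omega_n \to \omega_\infty$ in $C^\infty_{\mathrm{loc}}$ and the initial frames agree, continuous dependence of solutions of linear ODEs on their coefficients gives $f_n \to f_\infty$ in $C^\infty_{\mathrm{loc}}$, as desired.
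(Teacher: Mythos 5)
The paper does not prove this statement: it is imported verbatim from Labourie~\cite{L1} (Theorem D) and used as a black box, so there is no in-paper argument to compare against line by line. Your overall architecture --- normalize away the isometry group (which is indeed needed, since the statement is false without it: post-composing a fixed immersion with diverging isometries preserves both hypotheses), obtain an a priori bound on the shape operators, bootstrap, and recover $f_\infty$ from $(g_0, B_\infty)$ via Gauss--Codazzi and a frame ODE --- is the right skeleton, and you correctly identify that the real content lives in the bound on $B_n$ and that Labourie's pseudo-holomorphic Gauss-lift machinery is what ultimately delivers it.

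However, the central step as you have written it does not close, for two reasons. First, the sentence ``a concentration of $|B_n|$ would force a concentration of intrinsic or extrinsic curvature'' is exactly what fails: bounded $\det B_n$ (from Gauss) and bounded $K_{g_n}$ (from metric convergence) are perfectly compatible with $\kappa_1 \to \infty$, $\kappa_2 \to 0$, which is the degeneration you set out to exclude; moreover a genuinely holomorphic quadratic differential can have arbitrarily large norm with no curvature concentration anywhere, so quasi-holomorphicity of $q_n$ alone bounds nothing. Second, the elliptic bootstrap is circular: the right-hand side of your Codazzi equation is $\partial_z H_n$, a \emph{derivative} of the mean curvature, and the only control the hypotheses give on $H_n$ is an integral bound, so the ``$C^\infty$-controlled coefficients'' are not available until after the conclusion is known. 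What is missing is a precise account of where $\int_S H_n\, da \le C$ actually enters. In Labourie's proof it bounds the \emph{area of the Gauss lift} $s \mapsto (f_n(s), N_n(s))$ in the bundle of tangent planes (compare the parallel-surface area element $(\ch^2 t + \sh t\, \ch t\, H + \sh^2 t\, K_{ext})\,da$ appearing in the paper's Lemma 3.7: integrating $H$ and $K_{ext}$ controls the area of the lift), and the area bound is precisely the hypothesis of Gromov's compactness theorem for the pseudo-holomorphic lifts; bubbling and vertical degenerations are then excluded by the structure of the almost complex structure on the bundle, not by the heuristic you describe. Alternatively one can combine $\int H_n\, da \le C$ with $\det B_n$ bounded to get an $L^1$ bound on $|B_n|$ and then invoke a mean-value inequality for the quasi-holomorphic Hopf differential, but either way this is the load-bearing step and it is absent from your sketch.
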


\begin{remark}
Let us emphasise the local nature of  Theorem \ref{thm:lab-compact}: no global assumption, like compactness of $S$ or completeness of $f_n^*(h)$, is in fact needed.
\end{remark}

\begin{Proposition}\label{pr-compK}
Let $K \in (-1,0)$. Let $f_n:\HH^2_K\to\HH^3$ be a sequence of proper isometric embeddings. If there is a point $z\in\HH^2_K$ such that
$f_n(z)$ is contained in a compact subset of $\HH^3$ then a subsequence of $f_n$ converges smoothly on compact subsets to an isometric immersion $f:\HH^2_K\to\HH^3$. 
\end{Proposition}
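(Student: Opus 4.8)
The plan is to apply Labourie's compactness theorem (Theorem~\ref{thm:lab-compact}) locally, along a compact exhaustion of $\HH^2_K$, and then pass to a diagonal subsequence.

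First I would record what the hypotheses give. Since each $f_n$ is an isometric embedding, the pullback $f_n^*(h)$ is \emph{identically} the fixed metric $g_K$ of constant curvature $K$ on $\HH^2_K$; in particular it converges smoothly (being constant in $n$), so the metric hypothesis of Theorem~\ref{thm:lab-compact} is automatic. By the Gauss equation~\eqref{hypcurv}, the extrinsic curvature of each image $S_n = f_n(\HH^2_K)$ is the constant $K_{ext} = K+1 \in (0,1)$, so each $S_n$ is a complete, properly embedded, locally convex $K$-surface.

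The crucial step is a uniform bound on the second fundamental form. I would invoke the a priori principal curvature estimate for complete convex $K$-surfaces in $\HH^3$: there is a constant $C = C(K)$, depending only on $K$, such that the principal curvatures of any such surface lie in $[1/C, C]$ (compare Proposition~\ref{principal_curv}; since $\kappa_1\kappa_2 = K_{ext} = K+1$ is a fixed positive constant, such a two-sided bound amounts to an upper bound for a single principal curvature, and follows from completeness and convexity by a maximum-principle comparison with model surfaces). I expect this to be the main obstacle: every other ingredient is soft, whereas this is a genuine geometric a priori estimate. Granting it, the mean curvature of each $f_n$ is uniformly bounded; hence for any compact domain $\Omega \subset \HH^2_K$ the integral of the mean curvature of $f_n|_\Omega$ is bounded, as the area of $\Omega$ is measured with the fixed metric $g_K$.

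It remains to control the location of the surfaces and to assemble the limit. Because $f_n$ is an isometric immersion it is distance non-increasing, so for a compact domain $\Omega \ni z$ the image $f_n(\Omega)$ lies within $g_K$-distance $R_\Omega := \sup_{x\in\Omega} d_{g_K}(x,z)$ of $f_n(z)$; as $f_n(z)$ stays in a fixed compact set by hypothesis, $f_n(\Omega)$ stays in a fixed compact subset of $\HH^3$. Fixing a compact exhaustion $\Omega_1 \subset \Omega_2 \subset \cdots$ of $\HH^2_K$ with $z \in \Omega_1$, Theorem~\ref{thm:lab-compact} applies to the restrictions $f_n|_{\Omega_j}$ and yields, for each $j$, a subsequence converging smoothly to an isometric immersion of $\Omega_j$. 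A diagonal extraction produces a single subsequence converging smoothly on every $\Omega_j$; by uniqueness of limits these local limits agree on overlaps and glue to a smooth isometric immersion $f\co \HH^2_K \to \HH^3$ with $f^*h = g_K$, as desired. I would close by remarking that the basepoint hypothesis is precisely what prevents the $S_n$ from escaping to infinity and what pins down the limit rather than determining it merely up to an ambient isometry of $\HH^3$.
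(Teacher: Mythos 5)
Your overall scheme (apply Labourie's Theorem~\ref{thm:lab-compact} on a compact exhaustion, control the location of the images via the $1$-Lipschitz property and the basepoint hypothesis, then diagonalize) matches the paper's. The problem is the step you yourself flag as ``the main obstacle'': the uniform two-sided bound on the principal curvatures. You do not prove it, and the result you lean on, Proposition~\ref{principal_curv}, cannot be used here --- in the paper that proposition is \emph{deduced from} Proposition~\ref{pr-compK} by a blow-up argument, so invoking it (or an a priori estimate of the same nature) at this point is circular. The asserted ``maximum-principle comparison with model surfaces'' is a genuinely nontrivial estimate that would need its own proof; as written, the hypothesis of Labourie's theorem that you actually need (a uniform bound on $\int H\,da$ over compact pieces) is left unjustified.

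The paper sidesteps the pointwise estimate entirely: it only needs the \emph{integral} of the mean curvature over a bounded piece, and this follows softly from convexity. Since each $f_n$ is a proper locally convex isometric embedding, its restriction to a bounded open set $U\subset\HH^2_K$ of diameter $R$ is a convex embedding with extrinsic diameter at most $R$, and Lemma~\ref{menac:lm} gives $\int_U H\,da < \frac{1}{\sinh 1}\,A(R+1)$, where $A(\rho)$ is the area of a hyperbolic sphere of radius $\rho$. That lemma is proved by comparing $\int H\,da$ with the area of the parallel surface at distance $1/2$ (whose area element dominates $\sinh(2t)\,H\,da$) and bounding the latter area by that of a sphere of radius $R+1$ via the $1$-Lipschitz nearest-point retraction. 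If you replace your curvature estimate with this integral bound, the rest of your argument goes through and essentially coincides with the paper's proof.
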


A locally convex immersion $f\co S\to\HH^3$ is a \textit{convex embedding} if it is an embedding and $f(S)$ is contained in the boundary of $\CH(f(S))$. Notice that this is equivalent to asking that there is a convex set $\mathscr C$ such that $f$ takes values on the boundary of $\mathscr C$.
In particular if $f$ is a proper embedding, then it is convex if and only if it bounds a convex region of $\HH^3$.
We have that any proper locally convex embedding is in fact a convex embedding, and the restriction of a convex embedding to an open subset is still a convex embeddding.
Finally if $h\co S\to\HH^3$ is a convex embedding and $N$ is the normal pointing towards the concave side, then the map $h_t\co S\to\HH^3$, $h_t(x)=\exp_{h(x)}(tN(x))$  is a convex embedding.

To deduce Proposition~\ref{pr-compK} from Labourie's result, we have the following simple remark:

\begin{lemma}\label{menac:lm}
Let $h\co S\to\HH^3$ be a convex embedding and $R$ be the extrinsic diameter of $h(S)$. Denote by $H$ the mean curvature and by $da$ the area form induced by $h$. Then we have 
\[
   \int_{S}Hda<\frac{1}{\sh 1}A(R+1),
\]
where $A(\rho)$ denotes the area of the sphere of radius $\rho$ in the hyperbolic space.
\end{lemma}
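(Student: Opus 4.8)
The plan is to flow the surface outward by hyperbolic distance $1$ and compare the areas of $h(S)$ and its parallel surface against the area of a sphere. Set $\mathscr C := \CH(h(S))$, a \emph{compact} convex body (compact since $h(S)$ has finite extrinsic diameter $R$), so that $h(S) \subseteq \partial \mathscr C$. Let $S_t := h_t(S)$ be the image under the normal flow $h_t$ described just before the statement; this is the flow that keeps $h_t$ a convex embedding, i.e.\ the one moving points \emph{away} from $\mathscr C$, so $S_t$ lies on the boundary of the outer parallel body $\mathscr C_t := \{x : d(x,\mathscr C) \le t\}$, which is again convex.

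First I would record the parallel-surface Jacobian. Write $\kappa_1,\kappa_2 \ge 0$ for the principal curvatures of $h(S)$, non-negative by convexity with respect to the (outward) normal driving the flow. Then the area form of $S_t$ pulls back to
\[
da_t = (\ch t + \kappa_1 \sh t)(\ch t + \kappa_2 \sh t)\, da
\]
on $S$. This is the tube formula in $\HH^3$: along each unit-speed normal geodesic $t \mapsto \exp_{h(x)}(tN(x))$ the Jacobi field with initial value $e_i$ and covariant derivative $\kappa_i e_i$ at $t=0$ equals $(\ch t + \kappa_i \sh t)$ times the parallel transport of $e_i$ (the Jacobi equation in curvature $-1$ reduces to $J'' = J$), and since the principal frame stays orthogonal the area Jacobian is the product of the two factors; one checks it on a geodesic sphere of radius $r$, where $\kappa_i = \coth r$ and the formula returns $A(r+t)$. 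Discarding the two non-negative terms $\ch^2 1$ and $\kappa_1\kappa_2\sh^2 1$ in the expansion at $t=1$ gives $da_1 \ge (\kappa_1+\kappa_2)\,\ch 1\,\sh 1\, da = H\,\ch 1\,\sh 1\, da$, hence
\[
\area(S_1) \ge \ch 1 \, \sh 1 \int_S H\, da .
\]

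Next I would bound $\area(S_1)$ from above by $A(R+1)$. Fix $p \in h(S)$; since $h(S) \subseteq \overline{B(p,R)}$ we get $\mathscr C \subseteq \overline{B(p,R)}$ and therefore $\mathscr C_1 \subseteq \overline{B(p,R+1)}$, both convex bodies. The nearest-point projection $\HH^3 \to \mathscr C_1$ is $1$-Lipschitz by convexity and restricts to a surjection $\partial B(p,R+1) \to \partial \mathscr C_1$: each $q \in \partial \mathscr C_1$ is the image of the point where an outward-supporting normal ray at $q$ crosses $\partial B(p,R+1)$. A $1$-Lipschitz surjection is area non-increasing, so $\area(\partial \mathscr C_1) \le A(R+1)$, and since $S_1 \subseteq \partial \mathscr C_1$ we obtain $\area(S_1) \le A(R+1)$. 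Combining with the previous display,
\[
\ch 1 \, \sh 1 \int_S H\, da \le A(R+1),
\]
and because $\ch 1 > 1$ and $A(R+1) > 0$ this yields $\int_S H\, da \le \frac{1}{\ch 1\,\sh 1}A(R+1) < \frac{1}{\sh 1}A(R+1)$. (If $H$ is normalized as $\tfrac12(\kappa_1+\kappa_2)$ instead of $\kappa_1+\kappa_2$ the constant only improves, so the stated bound holds either way.)

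The main obstacle is less any single estimate than getting both area comparisons to point the right way at once: the lower bound on $\area(S_1)$ requires the flow to \emph{expand} area, which forces the outward direction and the sign $\kappa_i \ge 0$, while the upper bound requires $S_1$ to still bound a convex body inside a controlled ball — and both hold precisely for this outward flow. The one point deserving care is the surjectivity (hence area monotonicity) of the nearest-point projection onto $\partial \mathscr C_1$; the only regularity used is that $h$ is a twice (a.e.) differentiable convex embedding, which is satisfied by the smooth $K$-surfaces to which the lemma is applied.
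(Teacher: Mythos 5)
Your proof is correct and follows essentially the same route as the paper's: lower-bound the area of an outward parallel surface by a multiple of $\int_S H\,da$ via the tube formula, then upper-bound that area by $A(R+1)$ using the $1$-Lipschitz nearest-point retraction from a sphere of radius $R+1$ onto the boundary of a convex body containing the parallel surface. The only cosmetic difference is that you flow to time $t=1$ and keep just the cross term (getting the constant $\frac{1}{\ch 1\,\sh 1}$, which you then relax to $\frac{1}{\sh 1}$), whereas the paper flows to $t=1/2$ and uses $\sh(2t)=\sh 1$ directly.
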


\begin{proof}
First notice that the area element of the embedding $h_{t}\co S\to\HH^3$ is given by
\[
  (\ch^ 2 t+\sh t\ch t H +\sh^2 t)da>\sh(2t)Hda
\]
where $K_{ext}$ is the extrinsic curvature of the emebedding $h$.
So we have that
\[
    \int_{S}Hda<\frac{1}{\sh 1}\textrm{Area}(h_{1/2}(S))\,.
\]
On the other hand $\mathrm{diam}(h_{1/2}(S))\leq R+1$. Thus there is a closed ball $B$ of radius $R+1$ containing $h_{1/2}(S)$.
Consider now the retraction $r:B\to \CH(h_{1/2}(S))$. It is a $1$-Lipschitz map that restricts to a surjective map $\partial B\to\partial \CH(h_{1/2}(S))$.
Since $h_{1/2}(S)$ is contained in the boundary of its convex core, then its area is smaller than the area $\partial B$.
\end{proof}

\begin{proof}[Proof of Proposition~\ref{pr-compK}]
First we prove that $f_n$ uniformly converge on compact sets of $\HH^2_K$.
Gauss equation implies that the map $f_n$ is locally convex, so by properness assumption  $f_n$ is in fact a convex embedding.
Let $U$ be a bounded open subset in $\HH^2_K$ with diameter $R$. Notice that $f_n$ restricts to a convex isometric embedding of $U$.
In particular the extrinsic diameter of $U$ is bounded by $R$. By Lemma \ref{menac:lm} then the integral of the mean curvature of $f_n$ over $U$ 
is uniformly bounded.
By Theorem~\ref{thm:lab-compact} we conclude that, up to a subsequence, $f_n$ converges over $U$ to an isometric immersion.
\end{proof}

Here is a basic application of Proposition~\ref{pr-compK} that will be useful later on in the section.
\begin{Proposition}\label{principal_curv}
Let $K\in(-1,0)$.
 There is a constant $N=N(K)$ so that for any oriented Jordan curve $C$ in $\CP^1$, the principal curvatures of the $K$--surfaces $S^\pm_{K}(C)$ are contained in the interval $[1/N,N]$. Hence, the third fundamental form on $S^\pm_{K}(C)$ is complete.
\end{Proposition}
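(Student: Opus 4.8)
The plan is to reduce the whole statement to a single \emph{a priori} upper bound on the largest principal curvature, and then to obtain that bound by a compactness argument built on Proposition~\ref{pr-compK}. First I would record the consequence of the Gauss equation~\eqref{hypcurv}: since $S^\pm_K(C)$ has constant Gaussian curvature $K$, its extrinsic curvature is the \emph{fixed} positive number $K_{ext} = \kappa_1\kappa_2 = K+1 \in (0,1)$. By Theorem~\ref{tm:K-surfaces-hyp} the surface is locally convex and bounds a convex region, so with the correct choice of normal the shape operator is positive definite and both principal curvatures are positive. As their product is the constant $K+1$, it suffices to bound the larger principal curvature $\kappa_{\max}$ from above by a constant $N_0 = N_0(K)$: the smaller one is then automatically at least $(K+1)/N_0$, and one may take $N = N_0/(K+1)$ (which is $\geq N_0$ since $K+1 < 1$) so that both curvatures lie in $[1/N, N]$.

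To bound $\kappa_{\max}$, I would argue by contradiction. If no bound depending only on $K$ existed, there would be a sequence of oriented Jordan curves $C_n$ and points $p_n \in S^+_K(C_n)$ (the case of $S^-_K$ is identical) at which $\kappa_{\max}$ tends to $\infty$. Each $K$-surface is, being complete, globally isometric to $\HH^2_K$; using the homogeneity of $\HH^2_K$ I would choose an isometry $V_n \co \HH^2_K \to S^+_K(C_n)$ with $V_n(z_0) = p_n$ for a fixed basepoint $z_0 \in \HH^2_K$. Then, using the transitivity of $\isom^+(\HH^3) = \PSO(3,1)$ on orthonormal frames, I would postcompose with $g_n \in \isom^+(\HH^3)$ carrying $p_n$, together with its tangent plane and a principal frame, to a fixed point $x_0 \in \HH^3$ with fixed tangent data. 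This produces proper isometric embeddings $f_n = g_n \circ V_n \co \HH^2_K \to \HH^3$ with $f_n(z_0) = x_0$ for every $n$.

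Since $f_n(z_0)$ is the single point $x_0$, the hypotheses of Proposition~\ref{pr-compK} are satisfied, so a subsequence of $f_n$ converges smoothly on compact subsets to an isometric immersion $f_\infty \co \HH^2_K \to \HH^3$. Smooth convergence forces the shape operators of $f_n$ at $z_0$ to converge to the shape operator of $f_\infty$ at $z_0$, whose eigenvalues are finite. But $f_n$ differs from the inclusion $S^+_K(C_n) \hookrightarrow \HH^3$ only by isometries of the domain and the target, and these preserve principal curvatures; hence the eigenvalues of the shape operator of $f_n$ at $z_0$ equal the principal curvatures of $S^+_K(C_n)$ at $p_n$, which diverge. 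This contradiction gives the bound, so the principal curvatures lie in $[1/N,N]$ with $N = N(K)$. For the final assertion, writing the third fundamental form in a principal frame gives $\III(x,x) = \kappa_i^2\,\I(x,x)$, whence $\tfrac{1}{N^2}\I \le \III \le N^2\I$; since $\I$ is complete, the uniformly bi-Lipschitz metric $\III$ is complete as well.

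The step I expect to be the main obstacle is the double normalization that makes Proposition~\ref{pr-compK} applicable: one must move the basepoint to a fixed point of $\HH^3$ (and fix the tangent data) by isometries on both sides, and then exploit that these isometries leave the principal curvatures unchanged, so that the blow-up at $p_n$ is transported intact to $z_0$, where smooth convergence of the $f_n$ makes it impossible. Making precise that the smooth convergence furnished by Proposition~\ref{pr-compK} controls the \emph{extrinsic} geometry (the shape operator), and not merely the intrinsic metric, is the technical heart of the argument.
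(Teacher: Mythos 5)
Your proposal is correct and follows essentially the same route as the paper: reduce to an upper bound via the Gauss equation (the product of the principal curvatures is the constant $K+1$), normalize by isometries so the blow-up points sit at a fixed point of $\HH^3$, and derive a contradiction from the smooth subconvergence guaranteed by Proposition~\ref{pr-compK}, which controls the second fundamental forms. The extra details you supply (fixing tangent data, and the bi-Lipschitz comparison $\tfrac{1}{N^2}\I \le \III \le N^2\I$ for completeness of the third fundamental form) are correct elaborations of steps the paper leaves implicit.
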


\begin{proof}
As the product of principal curvatures is equal to $1+K$, it is sufficient to point out an upper bound for the principal curvatures.
Assume by contradiction that there is a sequence of Jordan curves $C_n$ and a sequence of points $x_n\in S^{\pm}_{K}(C_n)$ such that one principal curvature at $x_n$ is bigger than $n$. Up to applying an isometry of $\HH^3$, we may assume that $x_n=x_0$ is a fixed point in $\HH^3$.
Fix a sequence of isometries $f_n:\HH^2_K\to S^{\pm}_{K}(C_n)$ sending a fixed point $p_0\in\HH^2_K$ to $x_0$.
Proposition~\ref{pr-compK} implies that, after taking a subsequence, $f_n$ smoothly converges to an isometric immersion. But this contradicts the fact that
the second fundamental form of $f_n$ at $x_0$ is unbounded as $n \to \infty$.
\end{proof}

\subsection{The nearest point retraction and the horospherical metric at infinity}\label{sec:npr}

Given a closed convex set $\mathscr C$ in $\HH^3$, let $\overline{ \mathscr C}$ denote its closure in $\overline{\HH^3}$ and let $\partial_\infty \mathscr C := \overline{\mathscr C} \cap \CP^1$.
It is a classical fact that a natural $1$-Lipschitz retraction is defined 
\[
   r=r_{\mathscr C}:\HH^3\to \mathscr C
\]
sending $x$ to the nearest point of $\mathscr C$. This map restricts to a  $1$-Lipschitz map $r:\HH^3\setminus\mathscr C \to\partial\mathscr C$.
Moreover $r$ extends to a retraction of $\overline{\HH^3}$ onto $\overline{\mathscr C}$: for every $x\in\CP^1$,  $r(x)$ is the intersection point of the smallest horoball centered
at $x$ which meets $\mathscr C$.
It easy to show that the retraction behaves well under limits: if a sequence of convex subsets $\mathscr C_n$ is such that $\overline{\mathscr C_n}$ converges to $\overline{\mathscr C}$ in the Hausdorff topology on closed sets of $\overline{\HH^3}$, then
$r_{\mathscr C_n}$ uniformly converges to $r_{\mathscr C}$ on $\overline{\HH^3}$.

The closed convex set $\mathscr C$ induces a natural metric, called the \emph{horospherical metric}, on $\CP^1 \setminus \partial_\infty \mathscr C$. We now recall the definition.
Let $\mathbb B$ denote the space of horospheres in $\mathbb H^3$ and let $\pi:\mathbb B\to\CP^1$ denote the natural projection sending each horosphere to its center. There is a natural section $\sigma_{\mathscr C}: \CP^1 \setminus \partial_\infty \mathscr C \to \mathbb B$ of $\pi$ which maps a point $z$ to the horosphere centered at $z$ passing through $r(z)$ (tangent to $\partial \mathscr C$). 

An important feature of $\mathbb B$ is that it naturally identifies with the total space of the fiber bundle of conformal metrics over $\CP^1$, namely the bundle whose fiber above a point $x \in \CP^1$ is the one-dimensional space of inner products on $T_x \CP^1$ in the correct conformal class.
To see this, recall that any point $x\in\HH^3$ induces a Riemannian conformal metric, called a  visual metric, on $\CP^1$.
The visual metrics induced by two different points $x_1, x_2\in\HH^3$ agree at a point $z$ if and only if $x_1$ and $x_2$ lie on the same horosphere centered at $z$.
This defines a canonical identifcation between $\pi^{-1}(z)$ and the space of metrics on $T_z \CP^1$ compatible with the conformal structur. We remark that bigger horospheres correspond to smaller conformal factors.
The section $\sigma_{\mathscr C}$ therefore determines a conformal metric, which we will denote by $\Istar = \Istar_{\mathscr C}$ on $\CP^1\setminus \partial_\infty \mathscr C$.  The \emph{Thurston metric} on the complement $\CP^1 \setminus C$ of a Jordan curve $C$ is precisely $\Istar_{\mathscr C}$ for the case $\mathscr C = \CH(C)$ is the convex hull of $C$ (see \cite{bridgeman_canary}).

\begin{remark}\label{rk:mon}
Note that if
$\mathscr C_1 \subset \mathscr C_2$, then $\Istar_{\mathscr C_1}|_{\CP^1\setminus \partial_\infty \mathscr C_2}\leq \Istar_{\mathscr C_2}$.
Conversely if two convex sets $\mathscr C_1$ and $\mathscr C_2$  share the same ideal boundary $\partial_\infty \mathscr C_1 = \partial_\infty \mathscr C_2$, then $\Istar_{\mathscr C_1}\leq \Istar_{\mathscr C_2}$
only if $\mathscr C_1\subset\mathscr C_2$. In fact each convex set $\mathscr C$ can be reconstructed as the intersection of the exterior of the horospheres of $\sigma_{\mathscr C}$.
\end{remark}

We list here some properties of the horospherical metric $I^*_{\mathscr C}$, referring to \cite{horo} for details:
\begin{lemma}\label{lm:atinfty}\cite{horo}
\begin{enumerate}
\item\label{item:s-nbhd} If $\mathscr C_s$ is the set of points at distance less than or equal to $s$ from $\mathscr C$ then $\mathscr C_s$ is a convex set and  $\Istar_{\mathscr C_s}=e^{s}\Istar_{\mathscr C}$.
\item\label{item:retraction-formula} If $\partial \mathscr C$ is of class $C^2$, then $r_{\mathscr C}:\mathbf CP^1\setminus \partial_\infty \mathscr C \to \partial \mathscr C$ is a $C^1$-diffeomorphism and $(r^{-1}_{\mathscr C})^*(\Istar_{\mathscr C})=\I+2\II +\III$.
\item\label{item:curvature} If $\partial \mathscr C$ is smooth, then the curvature of $\Istar_{\mathscr C}$ at $z \in \CP^1 \setminus \partial_\infty \mathscr C$ is $$K^*(z)=\frac{K(r_{\mathscr C}(z))}{(1+\mu_1(r_{\mathscr C}(z)))(1+\mu_2(r_{\mathscr C}(z)))},$$ where $K$ is the intrinsic curvature of $\partial \mathscr C$ and $\mu_1$ and $\mu_2$ denote the principal curvatures.
\end{enumerate}
\end{lemma}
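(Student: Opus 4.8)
The plan is to treat the three items in turn, all resting on the identification, recalled just above, of the horosphere bundle $\mathbb B$ with the bundle of conformal metrics on $\CP^1$, together with the explicit computations of \cite{horo}.

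For item (1), I would first record that $\mathscr C_s$ is convex: since $\mathscr C$ is convex, the function $x \mapsto d(x,\mathscr C)$ is convex on $\HH^3$, so its sublevel set $\mathscr C_s = \{\, d(\cdot,\mathscr C)\le s\,\}$ is convex. A bounded-distance neighborhood does not change the accumulation set at infinity, so $\partial_\infty \mathscr C_s = \partial_\infty \mathscr C$ and both horospherical metrics live on the same set $\CP^1\setminus\partial_\infty\mathscr C$. Next I would identify the supporting horospheres. For $z \in \CP^1\setminus\partial_\infty\mathscr C$, the outward normal geodesic to $\partial\mathscr C$ at $r_{\mathscr C}(z)$ is precisely the geodesic ray to $z$ (the supporting horosphere $\sigma_{\mathscr C}(z)$ being centered at $z$), and $\partial\mathscr C_s$ is reached by flowing distance $s$ along this ray; hence $r_{\mathscr C_s}(z)$ is the point at distance $s$ from $r_{\mathscr C}(z)$ toward $z$, and $\sigma_{\mathscr C_s}(z)$ is the image of $\sigma_{\mathscr C}(z)$ under the geodesic flow of time $s$ toward the center $z$. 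Under the fibrewise identification of $\pi^{-1}(z)$ with conformal metrics on $T_z\CP^1$, this flow acts by a homothety; with the normalization of \cite{horo} the flow of time $s$ toward the center scales the conformal factor by $e^{s}$, and applying this at every $z$ yields $\Istar_{\mathscr C_s} = e^{s}\Istar_{\mathscr C}$.

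For item (2), assume $\partial\mathscr C$ is of class $C^2$. Convexity means $\partial\mathscr C$ has no focal points on the outward side: the normal Jacobi operator $\ch t\,\mathrm{Id} + \sh t\,B$ is invertible for all $t\ge 0$ since $B\ge 0$. Thus the normal exponential map is a $C^1$-diffeomorphism of $\partial\mathscr C \times [0,\infty)$ onto $\HH^3\setminus \mathrm{int}\,\mathscr C$, and sending each outward normal ray to its endpoint at infinity defines the hyperbolic Gauss map $G\co \partial\mathscr C \to \CP^1\setminus\partial_\infty\mathscr C$, a $C^1$-diffeomorphism inverse to $r_{\mathscr C}$. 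To compute $(r_{\mathscr C}^{-1})^*\Istar = G^*\Istar$, I would evaluate the visual metric at $z=G(p)$ from the observer $p=r_{\mathscr C}(z)$ and differentiate $G$: under a tangential displacement $x\in T_p\partial\mathscr C$ the endpoint at infinity of the normal ray moves by a combination of the motion of $p$ and the rotation of the normal $N$, governed by $Bx=-\triangledown_x N$, so in the conformal normalization $G$ pulls the conformal metric back to $\I((\mathrm{Id}+B)\cdot,(\mathrm{Id}+B)\cdot)$. Since $\II=\I(B\cdot,\cdot)$ and $\III=\I(B\cdot,B\cdot)$, this equals $\I+2\II+\III$, as claimed; I would cite \cite{horo} for the precise form of this computation.

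For item (3), assume $\partial\mathscr C$ smooth and use item (2): the metric $g:=(r_{\mathscr C}^{-1})^*\Istar = \I(\mathcal A\cdot,\mathcal A\cdot)$ with $\mathcal A=\mathrm{Id}+B$ is self-adjoint and positive definite (the principal curvatures $\mu_i\ge 0$ give $1+\mu_i\ge 1$). Exactly as in the curvature computation for $\III$ recalled in Section~\ref{sssc:surfaces}, the Codazzi equation $d^D B=0$ together with $d^D\mathrm{Id}=0$ gives $d^D\mathcal A=0$, so the Levi-Civita connection of $g$ is $\hat D_u v = \mathcal A^{-1}D_u(\mathcal A v)$, which is gauge-conjugate to $D$ by $\mathcal A$ and hence has the same curvature two-form; since the area form of $g$ is $\det\mathcal A$ times that of $\I$, the Gaussian curvature of $g$ is $K/\det\mathcal A = K/((1+\mu_1)(1+\mu_2))$, where $K$ is the intrinsic curvature of $\partial\mathscr C$. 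As $r_{\mathscr C}$ is an isometry from $(\CP^1\setminus\partial_\infty\mathscr C,\Istar)$ to $(\partial\mathscr C,g)$, the curvature of $\Istar$ at $z$ equals this value at $r_{\mathscr C}(z)$, which is the stated formula for $K^*(z)$. The routine parts are the convexity in (1) and the gauge bookkeeping in (3), the latter a verbatim analogue of \eqref{eq:KK*} with $B$ replaced by $\mathrm{Id}+B$; I expect the main obstacle to be the computation in (2), namely correctly normalizing the visual metric and differentiating the hyperbolic Gauss map to extract the factor $\mathrm{Id}+B$ (and, relatedly, pinning down the homothety constant in (1)), which are precisely the points where I would lean on the explicit formulas of \cite{horo}.
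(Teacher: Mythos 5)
The paper gives no proof of this lemma at all --- it is quoted directly from \cite{horo} --- so there is no internal argument to compare against. Your proposal is the standard argument and is essentially correct: convexity of $d(\cdot,\mathscr C)$ and horoball nesting for (1) (your nesting step should include the one-line check that the interior of the horoball obtained by shrinking $\sigma_{\mathscr C}(z)$ by $s$ is disjoint from $\mathscr C_s$, since its points have distance $>s$ to the complement of the original horoball and hence to $\mathscr C$); the hyperbolic Gauss map as the inverse of $r_{\mathscr C}$, with invertibility of $\cosh t\,\mathrm{Id}+\sinh t\, B$ from $B\geq 0$, for (2); and for (3) the gauge argument $\hat D_u v=\mathcal A^{-1}D_u(\mathcal A v)$ with $\mathcal A=\mathrm{Id}+B$ (self-adjoint, Codazzi since $d^D\mathrm{Id}=0$ by torsion-freeness, invertible since $\mu_i\geq 0$), which is a verbatim analogue of the derivation of \eqref{eq:KK*} and correctly yields $K/\det\mathcal A=K/((1+\mu_1)(1+\mu_2))$.

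The one point you should nail down rather than outsource to \cite{horo} is the homothety constant in (1), and here your hedge conceals a real discrepancy. Carrying out your own half-space computation: conjugating by $w\mapsto e^{-s}w$ shows that when the observer moves hyperbolic distance $s$ toward the center $z$, the visual metric at $z$, \emph{as a quadratic form}, scales by $e^{2s}$, not $e^{s}$. This is confirmed, independently of any normalization of the visual metric, by pushing item (2) through the equidistant flow: since $\I_s+2\,\II_s+\III_s=\I\bigl(e^{s}(\mathrm{Id}+B)\,\cdot\,,\,e^{s}(\mathrm{Id}+B)\,\cdot\,\bigr)=e^{2s}(\I+2\,\II+\III)$ and $r_{\mathscr C_s}$ agrees with $r_{\mathscr C}$ followed by the time-$s$ normal flow along the common normal rays, items (1) and (2) are mutually consistent only with the exponent $2s$ on quadratic forms; a sanity check on the ball $B(x_0,R)$, where $\I+2\,\II+\III=e^{2R}g_{S^2}$ equals the visual metric at $z$ seen from $r(z)$, confirms that (2) carries no hidden factor. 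So the ``$e^{s}$'' in the statement is correct only if read as scaling of the conformal length element (equivalently, it is a harmless typo of convention); nothing downstream is affected, since the only use of (1), in Lemma~\ref{lm:within} via Remark~\ref{rk:mon}, needs only monotone growth in $s$. Note also that the equidistant-flow identity above gives you a self-contained proof of (1) in the smooth case (the general convex case following from your nesting argument by approximation), so that the only step genuinely left to the citation is the differentiation of the Gauss map in (2).
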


\subsection{The nearest point retraction to $K$-surfaces and convex hulls}
Now consider a normalized Jordan curve $C$ and a value $K \in [-1,0)$. Then the nearest point retraction map $r = r_{\mathscr C_{C,K}}$ restricts to the maps $r^{\pm}_{C, K}\co\Omega^\pm_C\to S^{\pm}_{K}(C)$ on the upper and lower components $\Omega^\pm_C$ of the complement of $C$ in $\CP^1$. 
We equip each of $\Omega^\pm_C$ with the hyperbolic metric $h^\pm$ from uniformization.

\begin{prop}\label{lm:npr}
For any $K\in(-1,0)$, there is a constant $L = L(K)$ so that for any normalized Jordan curve $C$, the nearest point retraction maps $r^{\pm}_{C, K}\co\Omega^\pm_C\to S^{\pm}_{K}(C)$ are each $L$--bilipschitz taking the hyperbolic metric $h^\pm$ to the induced metric on $S^{\pm}_{C, K}$.
\end{prop}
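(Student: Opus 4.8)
The plan is to factor the retraction through the horospherical (Thurston-type) metric $\Istar_{\mathscr C_K(C)}$ at infinity associated to the smooth convex region $\mathscr C_K(C)$ bounded by $S^+_K(C)$ and $S^-_K(C)$. Since $S^\pm_K(C)$ is smooth and convex, part~\eqref{item:retraction-formula} of Lemma~\ref{lm:atinfty} applies and gives, on $\Omega^\pm_C$,
$$ (r^\pm_{C,K})^*\big(\I + 2\II + \III\big) = \Istar_{\mathscr C_K(C)}. $$
Writing $B$ for the shape operator of $S^\pm_K(C)$, one has $\I + 2\II + \III = \I\big((\mathrm{Id}+B)^2\,\cdot\,,\,\cdot\,\big)$, whose eigenvalues relative to $\I$ are $(1+\mu_i)^2$, where $\mu_1,\mu_2$ are the principal curvatures. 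By Proposition~\ref{principal_curv} we have $\mu_i \in [1/N,N]$ with $N = N(K)$, hence
$$ (1+N)^{-2}\,\Istar_{\mathscr C_K(C)} \ \le\ (r^\pm_{C,K})^*\I \ \le\ (1+1/N)^{-2}\,\Istar_{\mathscr C_K(C)} . $$
Thus $r^\pm_{C,K}$ is bilipschitz, with constant $N=N(K)$, as a map from $(\Omega^\pm_C,\Istar_{\mathscr C_K(C)})$ to $(S^\pm_K(C),\I)$. It therefore suffices to prove that $\Istar_{\mathscr C_K(C)}$ and the uniformization metric $h^\pm$ on $\Omega^\pm_C$ are bilipschitz with a constant depending only on $K$.

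For this comparison I would pass through the Thurston metric $\Istar_{\CH(C)}$ of the convex hull. On one side, $\CH(C) \subset \mathscr C_K(C)$ and the two sets share the ideal boundary $C$, so Remark~\ref{rk:mon} gives $\Istar_{\CH(C)} \le \Istar_{\mathscr C_K(C)}$. On the other side, the key geometric input (proved in the next paragraph) is that $\mathscr C_K(C)$ is contained in the $d(K)$-neighborhood of $\CH(C)$, where $d(K)$ is determined by $\cosh^2 d(K) = -1/K$; granting this, Remark~\ref{rk:mon} together with the scaling formula of part~\eqref{item:s-nbhd} of Lemma~\ref{lm:atinfty} yields
$$ \Istar_{\mathscr C_K(C)} \ \le\ \Istar_{(\CH(C))_{d(K)}} \ =\ e^{d(K)}\,\Istar_{\CH(C)} . $$
Hence $\Istar_{\mathscr C_K(C)}$ and $\Istar_{\CH(C)}$ are bilipschitz with constant $e^{d(K)}$. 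Finally, the Thurston metric $\Istar_{\CH(C)}$ is bilipschitz to $h^\pm$ with a \emph{universal} constant, by the classical theorem of Sullivan (Epstein--Marden, with explicit constants in Bridgeman and Bridgeman--Canary) that the nearest point retraction onto the convex hull of a simply connected domain is uniformly bilipschitz. Chaining the three comparisons produces the desired $L = L(K)$, the same argument applying verbatim on the upper and lower sides.

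The main obstacle is the uniform distance bound $\mathscr C_K(C) \subset (\CH(C))_{d(K)}$. To prove it, fix $w \in \mathscr C_K(C)$, set $D := \mathrm{dist}(w,\CH(C))$, let $y_0$ be the nearest point of $\CH(C)$ to $w$, let $P$ be the support plane of $\CH(C)$ at $y_0$ orthogonal to the geodesic $[y_0,w]$, and let $H\supset\CH(C)$ be the half-space bounded by $P$, so that $\mathrm{dist}(w,P)=D$. It then suffices to bound the distance from $P$ to the farthest point of $S^+_K(C)$. At a point $w^\ast$ where the distance-to-$P$ function restricted to $S^+_K(C)$ attains its supremum $D^\ast$ (or, since this supremum may be approached only at infinity, at a near-maximum point produced by the Omori--Yau maximum principle, which applies because $S^+_K(C)$ is complete with principal curvatures bounded by Proposition~\ref{principal_curv}), the surface $S^+_K(C)$ is tangent from its concave side to the equidistant surface $P_{D^\ast}$ at distance $D^\ast$ from $P$. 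Comparing second fundamental forms at the tangency with respect to the common outward normal, the fact that $S^+_K(C)$ lies on the concave side of $P_{D^\ast}$ forces $K_{ext}(S^+_K(C))(w^\ast) \le K_{ext}(P_{D^\ast}) = \tanh^2 D^\ast$; by the Gauss equation~\eqref{hypcurv} this reads $1+K \le \tanh^2 D^\ast = 1 - 1/\cosh^2 D^\ast$, whence $\cosh^2 D^\ast \le -1/K$, i.e.\ $D^\ast \le d(K)$. The two delicate points here are getting the direction of the maximum-principle inequality right (the surface sits on the concave side of the barrier, so it is \emph{less} extrinsically curved, which is what produces an upper rather than a lower bound on $D^\ast$) and justifying the maximum-principle step at infinity, for which the uniform curvature bounds of Proposition~\ref{principal_curv} are precisely what is needed.
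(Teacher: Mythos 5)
Your first step is exactly the paper's: the identity $(r^{-1})^*\Istar = \I+2\II+\III$ from Lemma~\ref{lm:atinfty} together with the principal curvature bounds of Proposition~\ref{principal_curv} shows $r^\pm_{C,K}$ is uniformly bilipschitz from $(\Omega^\pm_C,\Istar_{\mathscr C_K(C)})$ to $(S^\pm_K(C),\I)$. Where you genuinely diverge is in comparing $\Istar_{\mathscr C_K(C)}$ with $h^\pm$. The paper (Lemma~\ref{lem:biLip}) observes that both are complete metrics on $\Omega^\pm_C$ with curvature pinched between two negative constants depending only on $K$ (using Lemma~\ref{lm:atinfty}, part (3)) and invokes the Yau--Troyanov Schwarz lemma. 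You instead sandwich $\Istar_{\mathscr C_K(C)}$ between $\Istar_{\CH(C)}$ and $e^{d(K)}\Istar_{\CH(C)}$ via Remark~\ref{rk:mon} and the containment $\mathscr C_K(C)\subset (\CH(C))_{d(K)}$, then compare the Thurston metric with $h^\pm$. This is a legitimate alternative and in fact reverses the paper's logic: the paper deduces the neighborhood containment (Lemma~\ref{lm:within}) \emph{from} the bilipschitz comparison, so your independent barrier proof of the containment is essential to avoid circularity. Your route is more elementary and yields the sharp constant $d(K)=\mathrm{arccosh}\sqrt{-1/K}$ (exact for round circles), at the cost of having to justify a maximum principle at infinity; the paper's route is shorter once the Schwarz lemma is granted.

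There are, however, two genuine problems in the barrier step. First, the comparison inequality is stated backwards. At $w^\ast$ the surface $S^+_K(C)$ is tangent to $P_{D^\ast}$ and lies inside the convex region $\{x: \mathrm{dist}(x,H)\le D^\ast\}$ bounded by $P_{D^\ast}$ (it cannot lie on the concave side, since $D^\ast$ is the supremum of the distance). The inner of two internally tangent convex surfaces is the \emph{more} curved one, so with respect to the common inward normal $\II_{S^+_K(C)}\ge \II_{P_{D^\ast}}$, hence $K_{ext}(S^+_K(C))(w^\ast)\ge \tanh^2 D^\ast$, not $\le$. Note also that the inequality you wrote, $1+K\le \tanh^2 D^\ast = 1-1/\cosh^2 D^\ast$, actually gives $\cosh^2 D^\ast \ge -1/K$, a \emph{lower} bound on $D^\ast$; it is the correct inequality $1+K\ge\tanh^2 D^\ast$ that yields $\cosh^2 D^\ast\le -1/K$. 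Your final bound is right, but only because a sign error in the geometry cancels an algebra error. Second, the Omori--Yau principle requires the function $u=\mathrm{dist}(\cdot,P)$ to be bounded above on $S^+_K(C)$, which is not known a priori --- indeed, boundedness of $u$ is essentially the statement being proved, and a convex surface with ideal boundary in $\overline{\partial_\infty H}$ need not stay at bounded distance from $H$ without the curvature hypothesis. This can be repaired (for instance by a compactness argument in the spirit of Proposition~\ref{pr-compK}, or by a version of the weak maximum principle valid without an a priori bound), but as written the step does not go through.
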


\begin{remark}\label{rk:npr}
In the case $K = -1$, Bridgeman--Canary~\cite[Cor 1.3]{bridgeman_canary} show that the nearest point retractions $r^{\pm}_{C, K}\co\Omega^\pm_C\to S^{\pm}_{K}(C)$ are quasi-isometries with uniform constants independent of $C$.
\end{remark}

To prove the proposition, we first need a Lemma relating the conformal hyperbolic metric $h^\pm$ on $\Omega^\pm_C$ to the horospherical metric $\Istar_{\mathscr C_{K}(C)}$ on $\Omega^\pm_C$. 

\begin{lemma}\label{lem:biLip}
Let $K\in(-1,0)$. There is a constant $M=M(K)$ such that for any Jordan curve $C$, the conformal hyperbolic metric $h^\pm$ on $\Omega^\pm_C$ is $M$--bilipschitz to the horospherical metric $\Istar = \Istar_{\mathscr C_{K}(C)}$.
\end{lemma}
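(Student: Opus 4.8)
The plan is to compare $h^\pm$ and $\Istar := \Istar_{\mathscr C_{K}(C)}$ as two \emph{conformal} metrics on the simply connected domain $\Omega^\pm_C \subset \CP^1$: since both lie in the standard conformal class, the bilipschitz distortion between them is controlled by the ratio of their conformal factors. I would reduce the statement to the following curvature--comparison principle, a generalized form of the Ahlfors--Schwarz lemma: if $\sigma$ is a conformal metric on a Riemann surface with Gauss curvature $K_\sigma \le -1$ and $\tau$ is a \emph{complete} conformal metric on the same surface with $K_\tau \ge -1$, then $\sigma \le \tau$. Granting this, and recalling that $h^\pm$ is the complete metric of constant curvature $-1$ on $\Omega^\pm_C$, the lemma follows once I show that $\Istar$ is complete and has curvature pinched in a fixed interval $[-b,-a]$ with $0 < a \le b$ depending only on $K$.

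First I would establish the curvature pinching. The $K$--surfaces $S^\pm_K(C)$ are smooth, so $\partial \mathscr C_{K}(C)$ is smooth and Lemma~\ref{lm:atinfty}(\ref{item:curvature}) applies: for $z \in \Omega^\pm_C$,
\[
K^*(z) \;=\; \frac{K}{(1+\mu_1)(1+\mu_2)},
\]
where $\mu_1,\mu_2$ are the principal curvatures of $S^\pm_K(C)$ at $r(z)$ and the numerator is the (constant) intrinsic curvature $K$ of the $K$--surface. By Proposition~\ref{principal_curv}, $\mu_1,\mu_2 \in [1/N,N]$ with $N = N(K)$, so $(1+\mu_1)(1+\mu_2) \in [(1+1/N)^2,(1+N)^2]$; since $K<0$ this yields $-b \le K^*(z) \le -a < 0$ with $a := -K/(1+N)^2$ and $b := -K/(1+1/N)^2$, both functions of $K$ alone. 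Next, completeness of $\Istar$ follows from Lemma~\ref{lm:atinfty}(\ref{item:retraction-formula}): the map $r^{-1}$ is a $C^1$--diffeomorphism pulling $\Istar$ back to $\I+2\II+\III$, and since $S^\pm_K(C)$ is convex with positive principal curvatures we have $\I+2\II+\III \ge \I$, where the induced metric $\I$ on the properly embedded surface $S^\pm_K(C)$ is complete; hence $\I+2\II+\III$, and therefore $\Istar$, is complete.

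With these inputs the comparison is immediate. For the upper bound, the metric $a\,\Istar$ has curvature $K^*/a \le -1$, so applying the principle with $\sigma = a\,\Istar$ and the complete metric $\tau = h^\pm$ (which has curvature $-1 \ge -1$) gives $a\,\Istar \le h^\pm$, i.e. $\Istar \le \tfrac1a h^\pm$. For the lower bound, the complete metric $b\,\Istar$ has curvature $K^*/b \ge -1$, so applying the principle with $\sigma = h^\pm$ (which has curvature $-1 \le -1$) and $\tau = b\,\Istar$ gives $h^\pm \le b\,\Istar$, i.e. $\Istar \ge \tfrac1b h^\pm$. Thus $\tfrac1b h^\pm \le \Istar \le \tfrac1a h^\pm$, so $h^\pm$ and $\Istar$ are $M$--bilipschitz with $M = \sqrt{b/a}$, a constant depending only on $K$. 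The comparison principle itself is proved by a maximum principle: writing $u = \log(\sigma/\tau)$ in a local conformal coordinate one computes $\Delta_\tau u = -K_\sigma e^{2u} + K_\tau \ge e^{2u} - 1$, so at a point where $u$ attains its maximum one gets $e^{2u} \le 1$, i.e. $u \le 0$.

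The main obstacle is the lower bound $\Istar \ge \tfrac1b h^\pm$. Unlike the Ahlfors--Schwarz upper bound, it genuinely requires the \emph{completeness} of $\Istar$ --- this is the role of the generalized form of the lemma, where completeness of $\tau$ is what lets the maximum-principle argument run (via an exhaustion, or the Omori--Yau maximum principle, since $\tau$ has curvature bounded below). Establishing completeness of $\Istar$ through Lemma~\ref{lm:atinfty}(\ref{item:retraction-formula}) is therefore the crux. Once both bounds are in place, the \emph{uniformity} of $M$ over all Jordan curves $C$ is automatic: the pinching constants $a,b$ depend only on $K$ through the uniform bound $N=N(K)$ of Proposition~\ref{principal_curv}, hence so does $M$.
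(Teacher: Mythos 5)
Your proof is correct and follows essentially the same route as the paper's: pinch the curvature of $\Istar$ using Proposition~\ref{principal_curv} together with Lemma~\ref{lm:atinfty}.\eqref{item:curvature}, establish completeness of $\Istar$ via Lemma~\ref{lm:atinfty}.\eqref{item:retraction-formula}, and then compare the two complete conformal metrics of pinched negative curvature by a two-sided Schwarz-lemma argument. The only real difference is that the paper invokes this last comparison as a cited theorem of Yau (see Troyanov), whereas you sketch its proof directly via the generalized Ahlfors--Schwarz lemma and the Omori--Yau maximum principle; note only that the final constant should be $\max(\sqrt{b},1/\sqrt{a})$ rather than $\sqrt{b/a}$, which is immaterial since it still depends only on $K$.
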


\begin{proof}
  The principal curvatures of $S^{\pm}_{K}(C)$ are positive and less than a uniform constant $N = N(K)$ by Lemma \ref{principal_curv}. It follows by the formula in Lemma \ref{lm:atinfty}.\eqref{item:curvature} that the curvature of $\Istar$ is bigger than $-M$ and less than $-1/M$ for a constant $M >1$ depending only on $K$. By Lemma~\ref{lm:atinfty}.\eqref{item:retraction-formula}, $\Istar$ is complete. So $h^\pm$ and $\Istar$ are two complete metrics on $\Omega^\pm_C$ with pinched negative curvature. It follows from a theorem of Yau~\cite{yau:schwarz} (see the first theorem stated in Troyanov~\cite{troyanov:schwarz}) that the identity map taking one metric into the other is bilipschitz with bi Lipschitz constant bounded in terms of the curvature bounds. In this case the constant may be taken to be equal to $M$.

\end{proof}

\begin{proof}[Proof of Proposition~\ref{lm:npr}]
By Lemma~\ref{lm:atinfty}.\eqref{item:retraction-formula}, we have that $(r_{\mathscr C_K(C)}^{-1})^*(\Istar)=\I+2\II+\III$. It follows from Lemma~\ref{principal_curv} that 
 $r_{C, K}\co(\Omega^{\pm}_C, \Istar)\to (S^{\pm}_{K}(C), \I)$ is uniformly bilipschitz with constant $B = B(K)$ depending only on $K$.
It then follows from Lemma~\ref{lem:biLip} that $r_{C, K}\co(\Omega^{\pm}_C, h^\pm)\to (S^{\pm}_{K}(C), \I)$ is also uniformly bilipschitz with constant $L = L(K)$ depending only on $K$.
\end{proof}

We now use Proposition~\ref{lm:npr} to deduce Propositions~\ref{prop:extend}. In fact, we will prove the following stronger statement:
\begin{prop}\label{prop:extend-bis}
Fix $K \in [-1,0)$. Given a Jordan curve $C$, any isometry $V\co\HH^{2\pm}_K\to S^\pm_{K}(C)$ extends to a homeomorphism of $\overline{\HH^{2\pm}_K} = \HH^{2\pm}_K \cup \RP^1$ onto $S^\pm_K(C) \cup C \subset \overline{\HH^3}$. Further, the comparison map $\cmp(V, U^\pm_C)$ is $\alpha$-quasisymmetric for some constant $\alpha = \alpha(K)$ independent of $C$.
\end{prop}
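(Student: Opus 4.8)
The plan is to produce the boundary extension of the isometry $V$, and simultaneously identify the comparison map, by realizing both in terms of two maps whose boundary behavior is already under control: the Caratheodory extension $\partial U^\pm_C$ and the boundary extension of an explicit quasi-isometry of the hyperbolic plane. The bridge between them is the nearest point retraction $r^\pm_{C,K}$.

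First I would form the composition $\Xi := r^\pm_{C,K}\circ U^\pm_C \co \HH^{2\pm}\to S^\pm_K(C)$. Since $U^\pm_C$ is an isometry from $\HH^{2\pm}$ onto $(\Omega^\pm_C,h^\pm)$ and, by Proposition~\ref{lm:npr} (and by Remark~\ref{rk:npr} in the case $K=-1$), the retraction $r^\pm_{C,K}$ is a bilipschitz homeomorphism from $(\Omega^\pm_C,h^\pm)$ onto $(S^\pm_K(C),\I)$ with constant depending only on $K$, the map $\Xi$ is bilipschitz onto $(S^\pm_K(C),\I)$ with constant $L=L(K)$. Composing with the isometry $V^{-1}$ yields $G := V^{-1}\circ \Xi \co \HH^{2\pm}\to \HH^{2\pm}_K$, again $L$-bilipschitz. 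Because $\HH^{2\pm}_K$ is homothetic to $\HH^{2\pm}$ by a factor depending only on $K$, the map $G$ is, after this rescaling, an $A(K)$-quasi-isometry of the hyperbolic plane; hence by Proposition~\ref{prop:QI-extend} it extends to an $\alpha(K)$-quasisymmetric homeomorphism $\partial G\co \RP^1\to\RP^1$, with $\alpha$ depending only on $K$.

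Next I would show that $\Xi$ itself extends continuously to the boundary with $\partial \Xi = \partial U^\pm_C\co \RP^1\to C$. Here I use that $S^\pm_K(C)$ spans $C$ (Theorem~\ref{tm:K-surfaces-hyp}) together with the fact recalled in Section~\ref{sec:npr} that the nearest point retraction of $\mathscr C_K(C)$ extends to a continuous retraction $r\co\overline{\HH^3}\to\overline{\mathscr C_K(C)}$; being a retraction, it fixes $C=\partial_\infty\mathscr C_K(C)$ pointwise. Since $r^\pm_{C,K}$ is exactly the restriction of this extended retraction to $\Omega^\pm_C$, continuity at $C$ gives $r^\pm_{C,K}(z)\to c$ as $z\to c\in C$ within $\Omega^\pm_C$; combined with the Caratheodory extension $\partial U^\pm_C\co\RP^1\to C$ of $U^\pm_C$, this shows $\Xi$ extends continuously with $\partial\Xi=\partial U^\pm_C$.

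Finally I would assemble the pieces. Writing $V = \Xi\circ G^{-1}$ and using that both $\Xi$ and $G^{-1}$ extend continuously to the closed disk, we conclude that $V$ extends to a homeomorphism $\overline{\HH^{2\pm}_K}\to S^\pm_K(C)\cup C$ with $\partial V = \partial U^\pm_C\circ (\partial G)^{-1}$; since $\partial U^\pm_C\co\RP^1\to C$ and $(\partial G)^{-1}\co\RP^1\to\RP^1$ are homeomorphisms, this gives the first assertion. The comparison map then telescopes: $\cmp(V,U^\pm_C)=(\partial V)^{-1}\circ\partial U^\pm_C = \partial G\circ(\partial U^\pm_C)^{-1}\circ\partial U^\pm_C = \partial G$, which is $\alpha(K)$-quasisymmetric, giving the second assertion. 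I expect the only delicate step to be the boundary continuity of $\Xi$ — that is, verifying that the extended retraction is genuinely continuous up to $C$ and fixes it — since everything else is a formal combination of a bilipschitz estimate with the quasi-isometry boundary-extension theorem; this is precisely where the hypothesis that $S^\pm_K(C)$ is asymptotic to $C$, rather than merely properly embedded, enters.
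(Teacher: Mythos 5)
Your argument is essentially the paper's own proof: both factor the map $V^{-1}\circ r^\pm_{C,K}\circ U^\pm_C$ through the nearest-point retraction, use Proposition~\ref{lm:npr} (resp.\ Remark~\ref{rk:npr} for $K=-1$) to see it is a uniform bilipschitz map (resp.\ quasi-isometry), extend it to an $\alpha(K)$-quasisymmetric boundary homeomorphism via Proposition~\ref{prop:QI-extend}, and identify $\cmp(V,U^\pm_C)$ with that boundary map using that the retraction fixes $C$ pointwise. The only point to repair is that for $K=-1$ the retraction onto $\partial^\pm\CH(C)$ is surjective but not injective, so $G$ is not a homeomorphism and $V=\Xi\circ G^{-1}$ does not literally make sense; one instead argues as in the paper, choosing preimages $y_n$ with $G(y_n)=x_n$ and using that $\partial G$ is a homeomorphism of $\RP^1$ to obtain a well-defined continuous extension of $V$.
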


\begin{proof}[Proof of Proposition~\ref{prop:extend-bis}]
Consider first the case $K \in (-1,0)$. By Proposition~\ref{lm:npr}, the nearest point retraction maps $r^{\pm}_{C, K}\co\Omega^\pm_C\to S^{\pm}_{K}(C)$ are $L$--bilipschitz taking the conformal hyperbolic metric on $\Omega^\pm_C$ to the induced metric on $S^\pm_K(C)$.  Hence each map $V^{-1}\circ r^{\pm}_{C, K} \circ U^{\pm}_C\co\HH^2_K \to\HH^2$ is $L$--bilipschitz, so in particular it is quasiconformal with constant depending only on $K$. Thus each map admits a unique extension to a homeomorphism $\overline{\HH^2_K} \to \overline{\HH^2}$ which is $\alpha$-quasisymmetric at the ideal boundary for some constant $\alpha = \alpha(K)$. 
Similarly, if $K = -1$, by the Bridgeman--Canary result from Remark~\ref{rk:npr}, each map $r^\pm_{C, K}$ is a uniform quasi-isometry so that again  $V^{-1}\circ r^{\pm}_{C, -1} \circ U^{\pm}_C\co\HH^2 \to\HH^2$  admits a unique extension which is an $\alpha$-quasisymmetric homeomorphism with constant $\alpha$ independent of $C$.

Take a sequence $(x_n)$ in $\HH^2$ converging to $x_\infty\in\RP^1$.
Take a sequence $(y_n)$ in $\HH^2$ such that $r^\pm_{C, K}(U^\pm_C(y_n))=V(x_n)$, so that $x_n = V^{-1}\circ r^\pm_{C, K}\circ U^\pm_C(y_n)$. Since $x_n \to x_\infty$, and $V^{-1}\circ r^\pm_{C, K}\circ U^\pm_C$ is a homeomorphism $\RP^1 \to \RP^1$, we have that $y_n$ converges to some $y_\infty \in \partial \HH^2$. Hence the formula $\partial V(x_\infty) = r^\pm_{C, K}(U^\pm_C(y_\infty)) = U^\pm_C(y_\infty)$ defines the desired extension of $V$ to a homeomorphism of $\overline{\HH^{2\pm}_K} \to S^\pm_K(C) \cup C$. 
This concludes the proof of the first statement and the proof that Equation~\ref{eqn:PhiCK} well-defines a map $\Phi_{\cdot, K}\co \qcm \to \mathcal T$.

Observing that $\cmp(V, U^\pm_C) = (\partial V)^{-1} \circ r^\pm_{C, K} \circ U^\pm_C$, since the map $r^\pm_{C, K}$ is the identity on $C$, yields the second statement.
\end{proof}

Finally, we use Proposition~\ref{prop:extend-bis} to deduce Proposition~\ref{pr:main-properness}.
\begin{proof}[Proof of Proposition~\ref{pr:main-properness}]
 Simply decompose the map $\Phi_{C, K}$ as:
  \begin{align*}
\Phi_{C,K} &=  \cmp(V^-_{C, K}, V^+_{C, K})\\
&=
  \cmp(V^-_{C, K}, U^-_{C})\circ\cmp(U^-_{C}, U^+_{C})\circ(\cmp(V^+_{C, K}, U^+_{C}))^{-1}\\
  &=
  \cmp(V^-_{C, K}, U^-_{C})\circ \varphi_C \circ(\cmp(V^+_{C, K}, U^+_{C}))^{-1}.
  \end{align*}
  By Lemma~\ref{lem:ahl}, $C$ is a $k$--quasicircle if and only if $\varphi_C =\cmp(U^-_{C}, U^+_{C})$ is $k$--quasisymmetric. Hence, the basic properties of quasisymmetric maps under composition and inverse together with Proposition~\ref{prop:extend-bis} imply that $\Phi_{C, K}$ is $k'$--quasisymmetric for some constant $k'$ if and only if $\varphi_C$ is quasisymmetric for some constant $k$, and that $k'$ (resp. $k$) may be bounded in terms of $k$ (resp. $k'$) and the constant $\alpha(K)$ from~\ref{prop:extend-bis}.
\end{proof}

\section{Proofs of Theorems~\ref{tm:induced-hyp} and \ref{tm:induced-hyp-K}}

The proofs of Theorems~\ref{tm:induced-hyp} and~\ref{tm:induced-hyp-K} (and Theorem~\ref{tm:III-hyp-K} as well) will use the following general criterion for surjectivity of a map $\qcm \to \cT$.
We say that an element $t\in\cT$ is \emph{quasifuchsian} if there is a uniform lattice $\Gamma$ in $\PSL(2,\RR)$ such that $t\Gamma t^{-1}<\PSL(2,\mathbb R)$. Bonsante--Seppi~\cite{bon-sep} prove that the subset of quasifuchsian elements in $\mathcal T$ satisfies a certain density property. A slightly strengthened version of this statement (Proposition~\ref{pr:uniflim}), whose proof we defer until Section~\ref{approx}, implies the following: 

\begin{prop} \label{pr-surj}
Let $F\co\qcm \to \cT$ be a map satisfying the following conditions:
\begin{enumerate}[(i)]
\item\label{item:cont} If $(C_n)_{n\in \N}$ is a sequence of normalised $k$--quasicircles converging to a normalised $k$--quasicircle $C$, then $(F(C_n))_{n\in \N}$ 
converges uniformly to $F(C)$.
\item\label{item:prop} For any $k$, there exists $k'$ such that if $F(C)$ is a normalised $k$--quasisymmetric homeomorphism, then $C$ is a $k'$--quasicircle.
\item\label{item:qf} The image of $F$ contains all the quasifuchsian elements of $\cT$.
\end{enumerate}
Then $F$ is surjective.
\end{prop}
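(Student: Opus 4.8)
The plan is to fix an arbitrary target $t \in \cT$, realize it as a uniform limit of elements already known to lie in the image of $F$, and then extract a preimage by combining the properness hypothesis \ref{item:prop} with a compactness argument and the continuity hypothesis \ref{item:cont}. Concretely, suppose $t$ is a normalized $k$--quasisymmetric homeomorphism. The first step is to invoke the strengthened density statement (Proposition~\ref{pr:uniflim}), deferred to Section~\ref{approx}, to produce a sequence $t_n$ of \emph{quasifuchsian} elements of $\cT$ converging uniformly to $t$. The essential point of the strengthening is that the $t_n$ may be taken to be uniformly $k''$--quasisymmetric, with $k''$ depending only on $k$. This uniform control is exactly what makes the later steps work: without a uniform bound on the quasisymmetric constants of the approximants, the properness and compactness arguments below would not keep the preimages in a fixed compact family. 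This is where the genuine input of the proposition resides.

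Next, by hypothesis \ref{item:qf} each quasifuchsian element $t_n$ belongs to the image of $F$, so I would choose a normalized quasicircle $C_n \in \qcm$ with $F(C_n) = t_n$. Since every $t_n$ is $k''$--quasisymmetric, the properness hypothesis \ref{item:prop} supplies a single constant $k' = k'(k'')$ such that each $C_n$ is a $k'$--quasicircle. Thus the entire sequence $(C_n)$ lies in the family of normalized $k'$--quasicircles.

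The third step is compactness. Writing $C_n = g_n(\RP^1)$ for normalized $k'$--quasiconformal homeomorphisms $g_n$ of $\CP^1$ (Lemma~\ref{lem:ahl}), the standard compactness theorem for quasiconformal maps \cite[Theorem 5.2]{Lehto} gives, after passing to a subsequence, a uniform limit $g_n \to g$ which is again normalized and $k'$--quasiconformal; hence $C_n$ converges in the Hausdorff topology to the normalized $k'$--quasicircle $C = g(\RP^1)$, exactly as in the proof of Lemma~\ref{lm:int-conv}.

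Finally, I would apply the continuity hypothesis \ref{item:cont} to this Hausdorff--convergent sequence $C_n \to C$ of uniform $k'$--quasicircles, obtaining $F(C_n) \to F(C)$ uniformly. But $F(C_n) = t_n \to t$, and a uniform limit is unique, so $F(C) = t$. As $t \in \cT$ was arbitrary, $F$ is surjective. The only delicate ingredient is the uniform density of quasifuchsian elements used in the first step; the remaining steps form a routine "properness plus compactness gives closed image, density gives dense image" package, arranged so that all quasisymmetric and quasicircle constants stay uniform along the chosen approximating sequence.
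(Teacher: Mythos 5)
Your proposal is correct and follows essentially the same route as the paper's own proof: approximate the target by uniformly quasisymmetric quasifuchsian elements via Proposition~\ref{pr:uniflim}, pull back preimages using~\eqref{item:qf}, get a uniform quasicircle bound from~\eqref{item:prop}, extract a Hausdorff-convergent subsequence whose limit is again a $k'$-quasicircle (Lemma~\ref{lm:int-conv}), and conclude with~\eqref{item:cont}. You also correctly identify that the uniform quasisymmetric bound on the approximants is the crucial strengthening that makes the properness and compactness steps work.
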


Proposition~\ref{pr:main-properness}.\eqref{item:properness} is exactly the statement that, for each $K \in [-1,0)$, the map $\Phi_{\cdot, K}$ satisfies Condition~\eqref{item:prop} of Proposition~\ref{pr-surj}.
For each $K \in [-1,0)$, the map $\Phi_{\cdot, K}$ also satisfies Condition~\eqref{item:qf}.

For $K= -1$, this is due to Epstein and Marden \cite{ep-ma}, see also Sullivan \cite{sullivan_travaux} and Labourie \cite{L4}. For $K \in (-1, 0)$, this due to Labourie \cite{L6}, see the discussion at the end of Section~\ref{K_hyp}. So to prove Theorems~\ref{tm:induced-hyp} and \ref{tm:induced-hyp-K}, we will show that for each $K \in [-1,0)$, the map $\Phi_{\cdot, K}$ satisfies Condition~\eqref{item:cont}. This requires some care and is the subject of the remainder of this section:

\begin{prop}\label{pr:main-continuity}
  Let $K \in [-1, 0)$ and let $(C_n)_{n\in\mathbb N}$ be a sequence of normalized $k$--quasicircles, which converges in the Hausdorff sense to $C$. Then
 the sequence  $ \Phi_{C_n, K} = \cmp(V^-_{C_n, K}, V^+_{C_n, K})$ converges uniformly to
$\Phi_{C, K} = \cmp(V^-_{C, K}, V^+_{C, K})$.
\end{prop}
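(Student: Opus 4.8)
The plan is to reduce everything to the behaviour of the comparison maps $\psi^\pm_n := \cmp(V^\pm_{C_n,K}, U^\pm_{C_n})$. Recall from the proof of Proposition~\ref{pr:main-properness} the cocycle decomposition
\begin{equation*}
\Phi_{C_n, K} = \cmp(V^-_{C_n, K}, U^-_{C_n})\circ \varphi_{C_n} \circ \big(\cmp(V^+_{C_n, K}, U^+_{C_n})\big)^{-1} = \psi^-_n \circ \varphi_{C_n} \circ (\psi^+_n)^{-1}.
\end{equation*}
By Lemma~\ref{lem:ahl} each $\varphi_{C_n}$ is $k$--quasisymmetric and, by the corollary to Lemma~\ref{lm:int-conv}, $\varphi_{C_n}\to\varphi_C$ uniformly; by Proposition~\ref{prop:extend-bis} each $\psi^\pm_n$ is $\alpha(K)$--quasisymmetric. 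Since composition and inversion are continuous for the uniform topology on families of uniformly quasisymmetric normalized maps (an immediate consequence of the equicontinuity furnished by Lemma~\ref{lm:cmpqs}), it suffices to prove $\psi^\pm_n \to \psi^\pm := \cmp(V^\pm_{C,K}, U^\pm_C)$ uniformly on $\RP^1$. As $r^\pm_{C_n,K}$ is the identity on $C_n$, the map $\psi^\pm_n$ is the boundary extension of the normalized $L(K)$--bilipschitz self-map $g^\pm_n := (V^\pm_{C_n,K})^{-1}\circ r^\pm_{C_n,K}\circ U^\pm_{C_n}$ of the half-plane (Proposition~\ref{lm:npr}). By the compactness theory for quasiconformal maps (\cite[Theorem 5.2]{Lehto}), it is therefore enough to show $g^\pm_n \to g^\pm := (V^\pm_{C,K})^{-1}\circ r^\pm_{C,K}\circ U^\pm_C$ uniformly on compact subsets of the interior, since locally uniform convergence of uniformly quasiconformal maps upgrades to uniform convergence on the closed disk.

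For the interior convergence I would control the three factors separately. The maps $U^\pm_{C_n}\to U^\pm_C$ uniformly on $\overline{\HH^{2\pm}}$ by Lemma~\ref{lm:int-conv}. The remaining factors require convergence of the convex bodies, which is the technical heart. First I would show that
\begin{equation*}
\overline{\mathscr C_K(C_n)} \longrightarrow \overline{\mathscr C_K(C)}
\end{equation*}
in the Hausdorff topology on closed subsets of $\overline{\HH^3}$. To this end I squeeze $\mathscr C_K(C_n)$ between the convex hull and a fixed neighbourhood of it: by Lemma~\ref{lem:biLip} the horospherical metric $\Istar_{\mathscr C_K(C_n)}$ is $M(K)$--bilipschitz to the conformal hyperbolic metric $h^\pm$ on $\Omega^\pm_{C_n}$, while the Thurston metric $\Istar_{\CH(C_n)}$ of the $k$--quasicircle $C_n$ is comparable to $h^\pm$ with constant depending only on $k$ (Bridgeman--Canary, as in Remark~\ref{rk:npr}). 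Hence $\Istar_{\mathscr C_K(C_n)} \le M' \Istar_{\CH(C_n)}$ for a constant $M' = M'(k,K)$, and since both convex sets share the ideal boundary $C_n$, the monotonicity of Remark~\ref{rk:mon} together with the scaling $\Istar_{(\mathscr C)_s} = e^s\Istar_{\mathscr C}$ of Lemma~\ref{lm:atinfty}.\eqref{item:s-nbhd} yields the uniform two-sided containment
\begin{equation*}
\CH(C_n) \subseteq \mathscr C_K(C_n) \subseteq \big(\CH(C_n)\big)_{\log M'}.
\end{equation*}
Since $\overline{\CH(C_n)}\to\overline{\CH(C)}$ in $\overline{\HH^3}$ (convex hulls vary continuously), the same holds for the fixed neighbourhoods, and the squeeze gives the desired Hausdorff convergence of $\mathscr C_K(C_n)$. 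Consequently $r_{\mathscr C_K(C_n)}\to r_{\mathscr C_K(C)}$ uniformly on $\overline{\HH^3}$ (continuity of the nearest point retraction under Hausdorff limits), and the boundary surfaces $S^\pm_K(C_n)$ converge to $S^\pm_K(C)$ as sets.

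With the convex bodies under control, the base point needed for Labourie's compactness theorem becomes automatic: fixing $z_0\in\Omega^\pm_C$ and $p_0 \in \HH^{2\pm}_K$, the point $(V^\pm_{C_n,K})^{-1}(r_{\mathscr C_K(C_n)}(z_0)) = g^\pm_n((U^\pm_{C_n})^{-1}(z_0))$ stays in a fixed compact subset of $\HH^{2\pm}_K$ (equicontinuity of the normalized $\alpha(K)$--quasisymmetric maps $g^\pm_n$), so $V^\pm_{C_n,K}(p_0)$ lies within bounded hyperbolic distance of the convergent sequence $r_{\mathscr C_K(C_n)}(z_0)$ and hence remains in a compact subset of $\HH^3$. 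Proposition~\ref{pr-compK} then provides, along any subsequence, a further subsequence on which $V^\pm_{C_n,K}$ converges smoothly on compact sets to an isometric immersion of $\HH^{2\pm}_K$; its image is a complete $K$--surface contained in the Hausdorff limit $S^\pm_K(C)$, hence equal to it, and the normalization $\partial V^\pm_{C_n,K}(i)=i$ passes to the limit (Theorem~\ref{tm:K-surfaces-hyp} guaranteeing uniqueness of the spanning $K$--surface), so the limit is $V^\pm_{C,K}$. As the subsequential limit is always $V^\pm_{C,K}$, the full sequence converges smoothly on compact sets, whence $(V^\pm_{C_n,K})^{-1}\to (V^\pm_{C,K})^{-1}$ on compact subsets of the surface as well. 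Combining the three convergences gives $g^\pm_n\to g^\pm$ on interior compacta, which completes the argument through the reductions above. The case $K=-1$ follows the same scheme, using the Bridgeman--Canary quasi-isometry of Remark~\ref{rk:npr} in place of Propositions~\ref{lm:npr} and~\ref{pr-compK} and taking the convergence of the pleated boundaries $\partial^\pm\CH(C_n)$ directly from that of $\overline{\CH(C_n)}$. The main obstacle is precisely the uniform control of the convex regions $\mathscr C_K(C_n)$: the quasicircle hypothesis is what makes the horospherical metrics comparable, and hence confines each $\mathscr C_K(C_n)$ to a fixed neighbourhood of its convex hull; without it the $K$--surfaces could drift off toward infinity in the limit and the base point would be lost.
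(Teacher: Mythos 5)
Your argument follows the paper's own strategy almost exactly for $K\in(-1,0)$: the same cocycle decomposition through $\varphi_{C_n}$, the same use of the nearest point retraction to realize $\cmp(V^\pm_{C_n,K},U^\pm_{C_n})$ as the boundary value of a uniformly bilipschitz normalized self-map of the half-plane, the same reduction of uniform boundary convergence to interior convergence, and the same identification of the limit of $V^\pm_{C_n,K}$ via Labourie compactness (Proposition~\ref{pr-compK}) with the base point pinned down by the normalization (Lemma~\ref{lm:norm}). One ordering quibble: the squeeze $\CH(C_n)\subseteq\mathscr C_K(C_n)\subseteq(\CH(C_n))_{\log M'}$ by itself only shows that subsequential Hausdorff limits of $\mathscr C_K(C_n)$ are convex sets spanning $C$ and pinched near $\CH(C)$; identifying such a limit as $\mathscr C_K(C)$ already requires the Labourie step you perform afterwards, so the two steps have to be run together along a subsequence (this is how Lemma~\ref{lm:convS_K} is organized). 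The ingredients are all present, so this is repairable on the spot.

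The genuine gap is in the case $K=-1$, which you dispatch in one sentence. First, the maps $g^\pm_n$ are then only quasi-isometries (Remark~\ref{rk:npr}), not homeomorphisms of the disk, so the quasiconformal compactness upgrade from locally uniform interior convergence to uniform convergence on the closed disk does not apply; what is needed is the statement that pointwise convergence of uniform quasi-isometries forces uniform convergence of their boundary extensions (the paper's Lemma~\ref{lm:int-vs-bound}, proved with the Morse lemma), which in fact covers both cases at once. Second, and more seriously, the convergence of the isometric parametrizations $V^\pm_{C_n,-1}$ of the pleated boundaries does not follow ``directly from that of $\overline{\CH(C_n)}$'': Hausdorff convergence of the convex hulls gives convergence of the boundary surfaces as subsets of $\HH^3$, but to conclude that a subsequential limit of the $1$-Lipschitz maps $V^\pm_{C_n,-1}$ is again an isometry onto $\partial^\pm\CH(C)$ one must know that the intrinsic path distances on $\partial^\pm\CH(C_n)$ converge to the path distance on $\partial^\pm\CH(C)$. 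This is the content of the paper's Lemma~\ref{lm:HvsGH} and Lemma~\ref{lem:bend-conv}, whose proofs take real work, and Labourie's compactness theorem is unavailable here because the pleated boundaries are not smooth. Without an argument of this kind your scheme does not close for $K=-1$.
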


Several lemmas are needed. The following lemma is a compactness statement for bending maps which is the analog of Proposition \ref{pr-compK} for $K=-1$.

\begin{lemma}\label{lem:bend-conv}
 Let $(C_n)_{n\in\mathbb N}$ be a sequence of normalized $k$-quasicircles converging in the Hausdorff topology to a Jordan curve $C$.
Let $V_n:\mathbb H^2\to \partial^{\pm}\CH(C_n)$ be a sequence of convex isometric embeddings. 
Assume that there is a bounded sequence $\{x_n\}$ in $\mathbb H^2$  such that $V_n(x_n)$  is contained in a compact subset of
$\mathbb H^3$ then, up to passing to a subsequence, $V_n$ converges uniformly on compact subsets to an isometric embedding.
\end{lemma}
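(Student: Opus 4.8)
The plan is to adapt the strategy of Proposition~\ref{pr-compK} to the pleated (non-smooth) case $K=-1$, replacing the mean-curvature control used there with control on the \emph{bending}. There are three steps: an Arzel\`a--Ascoli argument producing a $1$-Lipschitz limit, an identification of the image of the limit inside $\partial^\pm\CH(C)$, and finally the delicate point of showing that the limit does not collapse lengths, i.e.\ that it is genuinely an isometric embedding.

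First I would run Arzel\`a--Ascoli. Since each $V_n$ is a Riemannian isometric embedding of $\HH^2$, it preserves the length of every path, hence $d_{\HH^3}(V_n(p),V_n(q))\le d_{\HH^2}(p,q)$; thus $\{V_n\}$ is equi-Lipschitz with constant $1$. By hypothesis the $x_n$ lie in a ball $\overline B_{\HH^2}(o,R_0)$ and the $V_n(x_n)$ lie in a fixed compact $\mathcal K\subset\HH^3$, so for $p$ in a compact $A:=\overline B_{\HH^2}(o,R)$ we get $d_{\HH^3}(V_n(p),\mathcal K)\le d_{\HH^2}(p,x_n)\le R+R_0$, and the sets $V_n(A)$ remain in a fixed compact subset of $\HH^3$. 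Equicontinuity together with pointwise precompactness gives, after a diagonal extraction over an exhaustion of $\HH^2$ by compacts, a subsequence converging uniformly on compact subsets to a $1$-Lipschitz map $V\co\HH^2\to\HH^3$.

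Next I would locate the image of $V$. One first checks that $\overline{\CH(C_n)}\to\overline{\CH(C)}$ in the Hausdorff topology on closed subsets of $\overline{\HH^3}$: any subsequential Hausdorff limit is convex and, since $C_n\to C$, has ideal boundary exactly $C$, hence by Krein--Milman it equals $\overline{\CH(C)}$. Because $V_n(\HH^2)\subset\partial^\pm\CH(C_n)\subset\overline{\CH(C_n)}$, the limit satisfies $V(\HH^2)\subset\partial^\pm\CH(C)$, the correct boundary component being preserved by the side convention. Note this uses that $C$ is again a $k$-quasicircle by Lemma~\ref{lm:int-conv}, so $\CH(C)$ is nondegenerate and $\partial^\pm\CH(C)$ is a genuine convex pleated surface isometric to $\HH^2$.

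The crux is the last step: ruling out length collapse in the limit. Lower semicontinuity of length under uniform convergence only yields $\mathrm{length}_{\HH^3}(V\circ c)\le \mathrm{length}_{\HH^2}(c)$, and a priori the pleated surfaces $V_n$ could \emph{over-bend}, so that $V$ folds and fails to be isometric. To exclude this I would use that each $V_n$ is the pleated surface determined by a measured bending lamination $\lambda_n$ on $\HH^2$ (the restriction to the convex set $V_n(\HH^2)$ of the bending lamination of $\partial^\pm\CH(C_n)$), and that the $k$-quasicircle hypothesis forces the bending measures to be uniformly bounded on compact sets. This uniform local bound on pleating is the essential input: it follows from the uniform quasi-isometry property of the nearest-point retraction for $k$-quasicircles (Remark~\ref{rk:npr}, Bridgeman--Canary~\cite{bridgeman_canary}; compare Epstein--Marden~\cite{ep-ma}). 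With it, $\{\lambda_n\}$ is precompact, so a subsequence converges to a measured geodesic lamination $\lambda_\infty$ on $\HH^2$; by continuity of the developing construction in the bending data, the associated maps converge on compacta to the pleated surface determined by $\lambda_\infty$, which must coincide with $V$. Since a pleated surface associated to a locally finite measured lamination keeps its first fundamental form equal to $\HH^2$, this identifies $V$ as a convex isometric embedding, and injectivity follows from convexity together with the fact that the image lies in the embedded surface $\partial^\pm\CH(C)$. I expect this bending-boundedness and lamination-compactness step to be the main obstacle, precisely because it is where the hypothesis that the $C_n$ are uniformly $k$-quasicircles (rather than arbitrary Jordan curves) is genuinely used.
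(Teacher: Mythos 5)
Your proof is essentially correct, but it takes a genuinely different route from the paper. The paper also begins with the $1$-Lipschitz/Arzel\`a--Ascoli extraction and the observation that the limit $V$ lands in $\partial^\pm\CH(C)$, but it then avoids bending theory entirely: the non-collapsing is obtained from a purely metric-geometric statement (Lemma~\ref{lm:HvsGH}) asserting that if convex bodies $\mathscr C_n\to\mathscr C$ in the Hausdorff topology of $\overline{\HH^3}$, then the induced path distances on $\partial\mathscr C_n$ converge to the path distance on $\partial\mathscr C$; since $d_{\HH^2}(x,y)=d_n(V_n(x),V_n(y))$ for all $n$, passing to the limit shows $V$ is distance-preserving, hence (by completeness of $\HH^2$) a surjective isometry. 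That lemma is proved from scratch by pushing an intrinsic geodesic of $\partial\mathscr C$ slightly off the convex side and comparing lengths, so the argument is self-contained and, notably, never uses the quasicircle constant beyond guaranteeing a nondegenerate limit. Your route instead parallels what the paper does for the $K=-1$ case in AdS (earthquakes and Lemma~\ref{lm:continearth}): you import the uniform bound on the bending measure of the dome, compactness of measured laminations of bounded Thurston norm, and continuity of the pleating construction in the bending data. This works, but two points deserve care. First, the uniform bending bound is not a consequence of the quasi-isometry property of the nearest-point retraction cited in Remark~\ref{rk:npr}; it is a separate theorem (Bridgeman, Epstein--Marden--Markovic), and in fact it holds universally for all Jordan curves, so this step is not where the $k$-quasicircle hypothesis is ``genuinely used'' --- that hypothesis mainly serves, via Lemma~\ref{lm:int-conv}, to keep the limit curve nondegenerate. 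Second, the pleating maps are determined by their laminations only up to normalization, so identifying the limit of the $V_n$ with the pleated map of the limit lamination requires an argument that the normalizing isometries converge (your hypothesis that $V_n(x_n)$ stays in a compact set supplies this, but it should be said). The trade-off is clear: the paper's argument is more elementary and self-contained, while yours leans on heavier external machinery but gives extra structural information (convergence of the bending data itself).
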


The proof of this lemma is based on the relation between the Hausdorff convergence of convex sets and the Gromov-Hausdorff convergences of the path metrics on the boundaries. While this fact is well known to experts we provide a short proof for the sake of completeness. 

\begin{lemma}\label{lm:HvsGH}
  Let $\mathscr C_n$ be a sequence of convex subsets in $\overline{\HH^3}$ converging to $\mathscr C$ in the Hausdorff topology of $\overline{\HH^3}$.  
 Let $S_n$ and $S$ denote, respectively, the boundary in $\HH^3$ of $\mathscr C_n$ and $\mathscr C$, and denote by $d_n$ and $d$ the corresponding path distances. If $x_n, y_n\in S_n$ are sequences of points converging to $x, y$ respectively we have $d(x,y)=\lim d_n(x_n, y_n)$.
\end{lemma}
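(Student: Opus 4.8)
The plan is to prove the two one-sided estimates $\liminf_n d_n(x_n,y_n)\ge d(x,y)$ (lower semicontinuity) and $\limsup_n d_n(x_n,y_n)\le d(x,y)$ (upper semicontinuity) separately. The constant input I would use throughout is the standard convex-geometry consequence of Hausdorff convergence that, for every $\delta>0$ and all large $n$, one has the sandwich $\mathscr C^{(-\delta)}\subset\mathscr C_n\subset\mathscr C^{(\delta)}$, where $\mathscr C^{(\delta)}$ (resp.\ $\mathscr C^{(-\delta)}$) denotes the closed outer (resp.\ inner) $\delta$-neighbourhood of $\mathscr C$; the inclusion $\mathscr C^{(-\delta)}\subset\mathscr C_n$ holds because a point whose $\delta$-ball lies in $\mathscr C$ sits inside a tetrahedron with vertices in $\mathscr C$, whose perturbed copies lie in $\mathscr C_n$ for $n$ large. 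I may assume $\mathscr C$ has nonempty interior, the degenerate cases (e.g.\ $\mathscr C$ totally geodesic) being immediate.

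For lower semicontinuity, pass to a subsequence realizing $L:=\liminf_n d_n(x_n,y_n)$, assume $L<\infty$, and choose paths $\gamma_n$ on $S_n$ from $x_n$ to $y_n$ with $\ell(\gamma_n)\le d_n(x_n,y_n)+1/n$. Reparametrized at constant speed on $[0,1]$ these are uniformly Lipschitz and stay in a fixed compact ball, so by Arzel\`a--Ascoli a subsequence converges uniformly to a Lipschitz path $\gamma$ from $x$ to $y$ with $\ell(\gamma)\le\liminf_n\ell(\gamma_n)\le L$. Since $\gamma_n(t)\in\mathscr C_n$ and $\mathscr C_n\to\mathscr C$ in Hausdorff distance, $\gamma(t)\in\mathscr C$. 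The key point is that $\gamma$ lies on $S=\partial\mathscr C$: if $\gamma(t_0)$ were interior, a whole ball around it would lie in $\mathscr C^{(-\delta)}\subset\mathscr C_n$ for large $n$ by the sandwich, forcing $\gamma_n(t_0)\in\mathrm{int}\,\mathscr C_n$ and contradicting $\gamma_n(t_0)\in S_n$. Hence $\gamma$ is a path on $S$ and $d(x,y)\le\ell(\gamma)\le L$.

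For upper semicontinuity, fix $\epsilon>0$ and a path $c$ on $S$ from $x$ to $y$ with $\ell(c)\le d(x,y)+\epsilon$. Fix $\delta>0$ and push $c$ outward to a path $c_\delta$ on $S^{(2\delta)}=\partial\mathscr C^{(2\delta)}$; for $\delta$ small enough $\ell(c_\delta)\le d(x,y)+2\epsilon$, since pushing a fixed rectifiable curve off a convex surface multiplies lengths by a factor tending to $1$ as $\delta\to0$. For $n$ large the sandwich gives $\mathscr C_n\subset\mathscr C^{(\delta)}$, so $c_\delta$ lies outside $\mathscr C_n$ and the $1$-Lipschitz nearest-point retraction $r_n=r_{\mathscr C_n}$ sends it to a path $r_n\circ c_\delta$ on $S_n=\partial\mathscr C_n$ of length $\le\ell(c_\delta)$. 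Thus $d_n(x_n',y_n')\le d(x,y)+2\epsilon$, where $x_n'=r_n(c_\delta(0))$ and $y_n'=r_n(c_\delta(1))$. Finally, since $r_n\to r_{\mathscr C}$ uniformly (Section~\ref{sec:npr}) and $c_\delta(0)$ projects to $x$ under $r_{\mathscr C}$, we get $x_n'\to x$ and $y_n'\to y$ in $\HH^3$; combined with $x_n\to x$, $y_n\to y$ and the local estimate below, this yields $d_n(x_n,x_n')\to0$ and $d_n(y_n,y_n')\to0$, so $\limsup_n d_n(x_n,y_n)\le d(x,y)+2\epsilon$, and letting $\epsilon\to0$ finishes the proof.

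The step I expect to be the main obstacle is the local equicontinuity used for endpoint matching: if $a_n,b_n\in S_n$ both converge in $\HH^3$ to the same boundary point, then $d_n(a_n,b_n)\to0$ with a modulus independent of $n$. I would prove this by fixing a basepoint $o$ deep in $\mathrm{int}\,\mathscr C$; by the sandwich, for large $n$ the point $o$ sits at uniformly controlled inner and outer radius from $S_n$, so the radial (geodesic) projection from $o$ identifies each $S_n$ with a fixed sphere by a homeomorphism whose bi-Lipschitz constant is bounded solely in terms of these radii (the outward normal to a convex surface makes an angle bounded away from $\pi/2$ with the radial direction). Ambient-close points on $S_n$ are then $d_n$-close uniformly in $n$, which is exactly what the matching requires. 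The conceptual difficulty throughout is that Hausdorff convergence is an extrinsic notion whereas $d_n$ is intrinsic; convexity is what bridges the two, entering through the $1$-Lipschitz retractions and their uniform convergence, the tetrahedron/sandwich argument, and the radial parametrization.
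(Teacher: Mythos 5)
Your overall skeleton coincides with the paper's: lower semicontinuity via Arzel\`a--Ascoli and lower semicontinuity of length applied to near-geodesics on $S_n$, and upper semicontinuity by producing, for each $\epsilon$, a path of length at most $d(x,y)+O(\epsilon)$ lying in the complement of $\mathscr C$ (hence of $\mathscr C_n$ for large $n$) and retracting it onto $S_n$ by the $1$-Lipschitz nearest-point projection. The gap is in the one step where you diverge from the paper: the construction of that exterior path. You push the near-geodesic $c$ along the outward normal onto the equidistant surface $\partial\mathscr C^{(2\delta)}$ and assert $\ell(c_\delta)\to\ell(c)$ because ``pushing a fixed rectifiable curve off a convex surface multiplies lengths by a factor tending to $1$.'' This is false for a general convex surface. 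The normal flow pulls the first fundamental form of the equidistant surface back to $\cosh^2\delta\,\I+2\cosh\delta\sinh\delta\,\II+\sinh^2\delta\,\III$, so pointwise $|dF_\delta(v)|\ge \sinh\delta\,|Bv|-\cosh\delta\,|v|$ and $\ell(c_\delta)\ge \sinh\delta\cdot\ell_{\III}(c)-\cosh\delta\cdot\ell_{\I}(c)$; a rectifiable curve on a convex surface can have \emph{infinite} $\III$-length (infinite total bending), in which case $\ell(c_\delta)=\infty$ for every $\delta>0$. For example, let $u=\sup_k L_k$ with $L_k(x,y)=\alpha_k x+(-1)^k b_k y-c_k$, where $\alpha_k$ is increasing and bounded, $b_k\le 1$, $\sum_k b_k=\infty$, and the $c_k$ are chosen so that the segment $y=0$ crosses the linearity regions in increasing order of $k$: then $u$ is convex and uniformly Lipschitz, yet $\nabla u$ has infinite total variation along the segment, so the graph (capped off and read in the Klein model) is a convex surface carrying a Lipschitz curve of infinite total bending. (At concentrated bending the normal ``push'' is moreover multivalued, and the connecting arcs contribute length $\sinh\delta$ times the bending angle, which is the same obstruction.) As written the step therefore fails; it could only be rescued for curves of finite $\III$-length, and showing that near-geodesics on arbitrary convex surfaces have this property is a nontrivial piece of Alexandrov--Pogorelov theory that you do not supply.

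The repair is easy but must avoid the normal direction. The paper covers $c$ by charts in which $S$ is a graph $u_3=f(u_1,u_2)$ and translates $c$ by $-s\,\partial/\partial u_3$; the Dominated Convergence Theorem gives $\ell(\gamma_s)\to\ell(c)$, and the translated curve lies in $\HH^3\setminus\mathscr C$, which is all that is needed since the retraction onto $\mathscr C_n$ is $1$-Lipschitz from any exterior point, not only from an equidistant surface. Alternatively, in the Klein model the affine radial dilation by $1+\delta$ from an interior point $o$ sends $\partial\mathscr C$ into the complement of $\mathscr C$ and is $(1+O(\delta))$-Lipschitz on compact sets, which patches your argument with no other changes. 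The rest of your write-up --- the sandwich $\mathscr C^{(-\delta)}\subset\mathscr C_n\subset\mathscr C^{(\delta)}$, the verification that the Arzel\`a--Ascoli limit actually lies on $S$, and the endpoint matching via uniform convergence of the retractions together with the uniformly bi-Lipschitz radial parametrization of the $S_n$ --- is correct and in places more careful than the paper's own treatment.
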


\begin{proof}
For each $n$, let $\alpha_n\co I_n\to S_n$ be an arc-length geodesic on $S_n$ joining $x_n$ to $y_n$. Notice that, as a function with values in $\HH^ 3$, $\alpha_n$ is $1$--Lipschitz, so, up to extracting a subsequence, we may assume $\alpha_n$ converges as $n \to \infty$ to a path $\alpha\co I\to S$. By the well-known properties of the length of curves, we have that $\ell(\alpha)\leq\liminf\ell(\alpha_n)$. This implies that, in general, if  $x_n\to x$ and $y_n\to y$, then $d(x, y)\leq \liminf d(x_n, y_n)$.

To prove the other inequality, fix $\epsilon>0$. We claim there is  a path $\gamma\co [0,1]\to\mathbb H^3\setminus\mathscr C$ such that $d_{\mathbb H^3}(x, \gamma(0))<\epsilon, d_{\mathbb H^3}(y, \gamma(1))<\epsilon$, 
$\ell(\gamma)<d(x,y)+\epsilon$, and $\gamma(0)$ (resp. $\gamma(1)$) is contained in the exterior half-space bounded by any support plane of $\mathscr C$ at $x$ (resp. at $y$). 

To prove the claim first consider the case where the intrinsic geodesic $\gamma_0$ joining $x$ to $y$ is contained in some coordinate chart $(U,(u_1,u_2,u_3))$ such that $S\cap U$ is the graph of a function $u_3=f(u_1,u_2)$ and $\{u_3>f(u_1,u_2)\}=\mathscr C\cap U$. We moreover suppose that the vector field $\frac{\partial\,}{\partial u_3}$ points inwards with respect to any support plane at both $x$ and $y$.
The arc $\gamma_0$ is Lipschitz so in coordinates we have $\gamma_0(t)=(u_1(t), u_2(t), u_3(t))$ with $u_i$ differentiable for a.e. $t$ and $|u_i'(t)|<C$ for some constant $C>0$.
Let us consider the arc $\gamma_s:[0,1]\to\mathbb H^3\setminus\mathscr C$ given by $\gamma_s(t)=(u_1(t), u_2(t), u_3(t)-s)$. Notice that at differentiable points of $\gamma_0(t)$, the speed of $\gamma_s(t)$ is given by $||\gamma_s'(t)||=\sqrt{\sum_{ij}g_{ij}(\gamma_s(t))u_i'(t)u_j'(t)}$, where $g_{ij}$ is the local expression of the hyperbolic metric. It turns out that $||\gamma_s'(t)||\to||\gamma_0'(t)||$ a.e. and as those functions are uniformly bounded by the Dominated Convergence Theorem we conclude 
that the length of $\gamma_s$ converges to the length of $\gamma_0$ as $s\to+\infty$.
Moreover the assumption on $\frac{\partial\,}{\partial u_3}$ at $x$ and $y$ implies that $\gamma_s(0)$ and $\gamma_s(1)$ are contained in the open cone $\mathcal C_x$ formed by the intersection of the exterior half-planes bounded by support planes at $x$ and $y$. So we can take $\gamma=\gamma_s$ for $s$ sufficiently small.

In the general case one subdivides the geodesic $\alpha$ joining $x$ to $y$ to some arcs $\alpha_1,\ldots, \alpha_N$ such that each $\alpha_i$ is contained in some chart as above. Then for each $i$ we have an arc $\gamma_i$ which satisfies the 
stated condition for $\epsilon/2N$. As  $\gamma_i(1)$ to $\gamma_{i+1}(0)$ are contained in the convex cone  $\mathcal C_x$, then the arc obtained by gluing the arcs $\alpha_i$ with the segments $[\gamma_i(1), \gamma_{i+1}(0)]$ is contained in the  concave side and its length is smaller than $d(x,y)+\epsilon$.

Now by compactness of $\gamma$, there is $n_0\in \mathbb{N}$ such that $\gamma$ is contained in $\mathbb H^3\setminus\mathscr C_n$ for $n\geq n_0$.
Up to taking a bigger $n_0$ if necessary, we may moreover assume that $d_{\mathbb H^3}(x_n,\gamma(0))<2\epsilon$ and $d_{\mathbb H^3}(y_n,\gamma(1))<2\epsilon$.
Moreover as support planes at $x_n$ converge to  support planes at $x$,  the segment $[x_n, \gamma(0)]$ is contained in $\overline{\HH^3\setminus\mathscr C_n}$ for $n$ sufficiently big. 
A similar statement  holds for the segments $[\gamma(1), y_n]$.

Then for $n$ big enough, the path $\gamma'$ obtained by gluing the geodesic segments $[x_n, \gamma(0)]$ and $[\gamma(1), y_n]$ to $\gamma$ is contained in $\overline{\HH^3\setminus\mathscr C_n}$ and connects $x_n$ to $y_n$.
Thus for $n\geq n_0$ we have $d_n(x_n, y_n)\leq\ell(\gamma')=4\epsilon+\ell(\gamma)<d(x,y)+5\epsilon$. The proof easily follows.
 \end{proof}

\begin{proof}[Proof of Lemma \ref{lem:bend-conv}]
We notice that the maps $V_n$ are $1$--Lipschitz with respect to the distance of $\HH^3$. As $V_n(x_n)$ is contained in a compact subset of $\HH^3$, we deduce that $V_n$ converges up to a subsequence to a map $V\co\HH^2\to\HH^3$, and the convergence is uniform on compact subsets of $\HH^2$.
Clearly the map $V$ takes value on $\partial^\pm\CH(C)$.

If $x,y\in\HH^2$ we have by definition
\[
    d_{\HH^2}(x,y)=d_n(V_n(x), V_n(y))\ \forall n\in\mathbb N\,,
\]
where $d_n$ denotes the intrinsic path distance on $\partial^\pm\CH(C_n)$. On the other hand, Lemma \ref{lm:HvsGH} states that $d_n(V_n(x), V_n(y))\to d(V(x), V(y))$ as $n\to+\infty$, where $d$ is the intrinsic path distance on $\partial^\pm\CH(C)$. So we deduce that $V\co\HH^2\to\partial^\pm\CH(C)$ is a distance preserving map. As $\HH^2$ is complete, $V$ turns out to be surjective, so it is a global isometry, as we wanted to prove.
\end{proof}

The next lemma will be useful to prove a Hausdorff convergence result (Lemma \ref{lm:convS_K}) for the $K$-surfaces spanning a converging sequence of Jordan curves.

\begin{lemma}\label{lm:within}
For any $K\in(-1,0)$, there exists $R = R(K)$ such that
for any Jordan curve $C$, the surface $S^{\pm}_{K}(C)$ is contained within the $R$-neighborhood of $\CH(C)$. 
\end{lemma}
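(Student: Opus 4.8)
The plan is to bound, uniformly in $C$, the extrinsic distance from $S^+_K(C)$ to $\CH(C)$ by a geometric maximum principle, comparing $S^+_K(C)$ against the equidistant surfaces of a supporting half-space of $\CH(C)$; the surface $S^-_K(C)$ is handled identically. Recall that $S^+_K(C)$ is a smooth, properly embedded, locally convex disk asymptotic to $C$ (Theorem~\ref{tm:K-surfaces-hyp}), bounding together with $S^-_K(C)$ a convex region containing $\CH(C)$; by the Gauss equation~\eqref{hypcurv} its extrinsic curvature is the constant $1+K\in(0,1)$, so with the normal pointing toward $\CH(C)$ its shape operator $B$ is positive definite with $\det B=1+K$. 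The comparison surfaces are the equidistant surfaces of totally geodesic planes: if $P$ is a geodesic plane and $P^{(t)}$ is the surface at distance $t$ on one side, then $P^{(t)}$ is umbilic with both principal curvatures equal to $\tanh t$, so it has constant extrinsic curvature $\tanh^2 t$ and bounds the convex $t$-neighborhood of the closed half-space bounded by $P$. I set $R(K):=\mathrm{artanh}\sqrt{1+K}$, the unique value with $\tanh^2 R(K)=1+K$.

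The core is the following barrier comparison. Let $\mathscr D$ be a closed convex set and suppose that $d(\cdot,\mathscr D)$ attains its maximum $m$ over a smooth convex surface $\Sigma$ (with $\det B=1+K$) at an interior point $p_*$. Let $q_*=r_{\mathscr D}(p_*)$ be the nearest point and $P$ a support plane of $\mathscr D$ at $q_*$ orthogonal to the geodesic $[q_*,p_*]$, with $\mathscr D\subseteq H^+$ for the bounded half-space $H^+$; then $d(p_*,H^+)=m$. Since $p_*$ maximizes the distance, $\Sigma$ lies in the convex $m$-neighborhood of $H^+$ and is internally tangent to its boundary $P^{(m)}$ at $p_*$. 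The inner of two internally tangent convex surfaces is the more curved one, so $B_\Sigma\ge B_{P^{(m)}}$ as quadratic forms at $p_*$; taking determinants yields $1+K=\det B_\Sigma\ge \tanh^2 m$, that is, $m\le R(K)$.

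It remains to produce such an interior maximizer for $\Sigma=S:=S^+_K(C)$ and $\mathscr D=\CH(C)$; this is the main obstacle. Rather than analyze the behaviour of $d(\cdot,\CH(C))$ as one approaches $C$ along $S$ — which is delicate precisely when $C$ meets the ideal boundary circle of a supporting plane, so that the distance need not tend to $0$ — I would argue by normalization and compactness. Put $M:=\sup_S d(\cdot,\CH(C))$ and pick $p_n\in S$ with $d(p_n,\CH(C))\to M$; apply isometries $\phi_n$ of $\HH^3$ carrying $p_n$ to a fixed basepoint $o$ and the outward normal to a fixed direction. By Proposition~\ref{pr-compK} the surfaces $\phi_n(S)=S^+_K(\phi_n C)$ subconverge smoothly on compact subsets to a complete $K$-surface $S_\infty\ni o$, while by Blaschke selection in the compact space $\overline{\HH^3}$ the convex sets $\phi_n(\CH(C))$ subconverge in the Hausdorff topology to a closed convex set $\mathscr D$. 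Lower semicontinuity of the distance under Hausdorff convergence gives $S_\infty\subseteq \mathscr D_M$ together with $d(o,\mathscr D)=M$, so $o$ is a genuine interior maximizer of $d(\cdot,\mathscr D)$ on $S_\infty$, and the barrier comparison of the previous paragraph applies verbatim at $o$ to give $M\le R(K)$; hence $S\subseteq \CH(C)_{R(K)}$.

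The one delicate point inside this compactness step — and the step I expect to require the most care — is the finiteness of $M$: if $M=\infty$ the limit obstruction $\mathscr D$ could recede entirely to the ideal boundary, removing the barrier. Here I would use that the limit $S_\infty$ is, by Proposition~\ref{pr-compK}, a genuine complete $K$-surface of curvature $K\in(-1,0)$, whose uniform principal-curvature bounds (Proposition~\ref{principal_curv}) prevent any degeneration; such a surface spans a nondegenerate Jordan curve at infinity and therefore sits at finite distance from a nonempty convex hull, which forces $\mathscr D\cap\HH^3\neq\emptyset$ and $M<\infty$. With finiteness in hand, the explicit constant $R(K)=\mathrm{artanh}\sqrt{1+K}$ is produced, which correctly degenerates to $0$ as $K\to-1^+$ and to $\infty$ as $K\to 0^-$.
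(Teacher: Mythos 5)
Your barrier comparison itself is sound and appealing: at an interior point where $d(\cdot,\mathscr D)$ attains its maximum $m$ on a smooth convex surface with $\det B=1+K$, internal tangency with the equidistant surface $P^{(m)}$ (umbilic with principal curvatures $\tanh m$) does force $1+K\ge \tanh^2 m$, and the constant $\operatorname{artanh}\sqrt{1+K}$ has the right limiting behaviour. The gap is exactly where you flag it, and the patch you offer does not close it. To rule out $M=\infty$ you assert that the blow-up limit $S_\infty$ ``spans a nondegenerate Jordan curve at infinity and therefore sits at finite distance from a nonempty convex hull.'' The second half of that sentence is precisely the qualitative content of Lemma~\ref{lm:within} applied to the limiting configuration, so the argument is circular; and the first half is not delivered by Proposition~\ref{pr-compK}, which only produces an isometric \emph{immersion} in the limit and gives no control on properness or on the asymptotic boundary (Proposition~\ref{principal_curv} gives curvature bounds, not that). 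Concretely, if $M=\infty$ then $\CH(\phi_nC)$ exits every ball around the basepoint $o$, so its Hausdorff limit $\mathscr D$ is a convex subset of $\overline{\HH^3}$ contained in $\partial\HH^3$, hence a single ideal point; nothing in your argument excludes a sequence of $K$-surfaces through $o$ whose ideal boundary curves collapse to a point, since a convex subset of $\HH^3$ with tiny ideal boundary can still contain points far from its convex hull. Tellingly, the paper's Lemma~\ref{lm:convS_K} --- the statement that limits of $K$-surfaces span the limit curve, which is what your compactness step needs --- explicitly invokes Lemma~\ref{lm:within} to rule out exactly this degeneration, so you cannot borrow it here.

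For contrast, the paper avoids compactness and maximum principles altogether and argues through the conformal metrics at infinity: Lemma~\ref{lem:biLip} bounds the horospherical metric $\Istar_{\mathscr C_K(C)}$ above by $M(K)\,h^\pm$, a cited comparison bounds the uniformizing hyperbolic metric $h^\pm$ above by twice the Thurston metric $\Istar_{\CH(C)}$, and then the scaling law $\Istar_{\mathscr C_s}=e^s\Istar_{\mathscr C}$ of Lemma~\ref{lm:atinfty} together with the monotonicity of Remark~\ref{rk:mon} converts the inequality $\Istar_{\mathscr C_K(C)}\le 2M\,\Istar_{\CH(C)}$ directly into the containment $\mathscr C_K(C)\subseteq \CH(C)_{\log(2M)}$. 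If you wish to keep your route, the missing ingredient is an independent proof that $\sup_{S^\pm_K(C)}d(\cdot,\CH(C))<\infty$ for each fixed Jordan curve $C$ (or some other device producing an interior maximizer, e.g.\ Omori--Yau, which again presupposes boundedness); as written, that step is not there.
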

\begin{proof}
As in Section \ref{sec:npr}, let $\Istar = \Istar_{\mathscr C_K(C)}$ denotes the horospherical metric induced by $\mathscr C_K(C)$ on $\Omega^{\pm}_{K}(C)$.
Then by Lemma~\ref{lem:biLip}, $\I^*\leq M h^\pm$, where $h^\pm$ is the complete hyperbolic metric in the conformal class of $\Omega^\pm$.
On the other hand, by Theorem 2.1 and Lemma 3.1 of \cite{herron},  $h^\pm <2 h^\pm_{Th}$, where $h_{Th}$ is the Thurston metric of $\Omega^{\pm}_{K}(C)$. Then the result follows from Remark \ref{rk:mon} and  Lemma \ref{lm:atinfty}.\eqref{item:s-nbhd}.
\end{proof}

\begin{lemma}\label{lm:convS_K}
Fix  $K\in[-1,0)$. Let $C_n$ be a sequence of Jordan curves converging to a Jordan curve $C$.
Then $\overline{S^{\pm}_{K}(C_n)}$ converges to $\overline{S^{\pm}_{K}(C)}$ in the Hausdorff topology of 
$\overline{\HH^3}$.
Further, if $V_n:\HH^{2\pm}_K\to S^{\pm}_{K}(C_n)$ is a sequence of isometries
sending a fixed point $x_0 \in \HH^{2\pm}$ into a bounded set of $\HH^3$, then, up to extracting a subsequence, $V_n$ converges smoothly on compact subsets of $\HH^{2\pm}_K$ to an isometry
$V:\HH^{2\pm}_K\to S^{\pm}_{K}(C)$.
\end{lemma}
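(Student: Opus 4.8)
The plan is to prove both assertions at once by a subsequence-and-uniqueness argument, taking all limits and identifying them with the objects attached to $C$ via Rosenberg--Spruck (Theorem~\ref{tm:K-surfaces-hyp}). Fix $K\in[-1,0)$ and let $R=R(K)$ be the constant of Lemma~\ref{lm:within}. The first input is that $C_n\to C$ in the Hausdorff topology forces $\overline{\CH(C_n)}\to\overline{\CH(C)}$, hence also $\overline{N_R(\CH(C_n))}\to\overline{N_R(\CH(C))}$ (continuity of the convex hull and of bounded metric neighborhoods). Combining Lemma~\ref{lm:within} with Theorem~\ref{tm:K-surfaces-hyp} gives the sandwich
\[
\CH(C_n)\subseteq \mathscr C_K(C_n)\subseteq N_R(\CH(C_n)),
\]
so by Blaschke selection a subsequence of the convex bodies $\mathscr C_K(C_n)$ converges in the Hausdorff topology of $\overline{\HH^3}$ to a closed convex set $\mathscr C_\infty$ with
\[
\CH(C)\subseteq \mathscr C_\infty\subseteq N_R(\CH(C)).
\]
Since a bounded neighborhood does not alter the ideal boundary, both extremes have ideal boundary $C$, whence $\partial_\infty\mathscr C_\infty=C$. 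Thus $\mathscr C_\infty$ is a convex body spanning the Jordan curve $C$, and its boundary in $\HH^3$ is the union of two properly embedded disks $\partial^\pm\mathscr C_\infty$, one on each side of $C$.

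Next I would extract the limit isometry. For the unconditional first assertion we may always choose isometries $V_n\co\HH^{2\pm}_K\to S^\pm_K(C_n)$ with $V_n(x_0)$ in a fixed compact set, because $S^\pm_K(C_n)\subseteq N_R(\CH(C_n))$ and $\CH(C_n)\to\CH(C)$ force these surfaces to meet a fixed compact subset of $\HH^3$; for the second assertion such $V_n$ are given. When $K\in(-1,0)$, each $V_n$ is a proper isometric embedding, so Proposition~\ref{pr-compK} produces a subsequence converging smoothly on compact subsets to an isometric immersion $V\co\HH^{2\pm}_K\to\HH^3$. When $K=-1$, the $V_n$ are convex isometric embeddings into $\partial^\pm\CH(C_n)$, and Lemma~\ref{lem:bend-conv} directly gives a subsequential uniform-on-compacts limit $V$ which is a global isometry onto $\partial^\pm\CH(C)=S^\pm_{-1}(C)$; together with the convergence $\partial^\pm\CH(C_n)\to\partial^\pm\CH(C)$ coming from $\CH(C_n)\to\CH(C)$ (the two sides being kept apart by $C_n\to C$), this settles $K=-1$. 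Passing to a common subsequence I may assume simultaneously $\mathscr C_K(C_n)\to\mathscr C_\infty$ and $V_n\to V$; since $V_n(\HH^{2\pm}_K)=S^\pm_K(C_n)\subseteq\partial\mathscr C_K(C_n)$, the image $\Sigma:=V(\HH^{2\pm}_K)$ lies in $\partial\mathscr C_\infty$ on the $\pm$ side.

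The heart of the argument, for $K\in(-1,0)$, is to identify $\Sigma$ with $S^\pm_K(C)$. The surface $\Sigma$ is complete (its domain is) and strictly locally convex, since $K_{ext}=1+K>0$; I would check that $V$ is therefore a proper convex embedding, and that, being a complete boundaryless surface contained in the boundary disk $\partial^\pm\mathscr C_\infty$, it is open and closed in that disk, so $\Sigma=\partial^\pm\mathscr C_\infty$. Hence $\Sigma$ is a complete $K$-surface whose ideal boundary is $\partial_\infty\partial^\pm\mathscr C_\infty=C$ and which lies on the $\pm$ side of $C$; the uniqueness in Theorem~\ref{tm:K-surfaces-hyp} then forces $\Sigma=S^\pm_K(C)$ and $\mathscr C_\infty=\mathscr C_K(C)$. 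In particular $V$ is an isometry onto $S^\pm_K(C)$, which is the second assertion, and the uniform confinement to $N_R(\CH(C_n))$ together with the smooth convergence $V_n\to V$ upgrades to Hausdorff convergence $\overline{S^\pm_K(C_n)}\to\overline{S^\pm_K(C)}$ in $\overline{\HH^3}$. Because the limit $\mathscr C_K(C)$, equivalently $S^\pm_K(C)$, does not depend on the chosen subsequence, the standard subsequence principle promotes the Hausdorff statement to the full sequence.

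I expect the main obstacle to be exactly this identification step: showing that the limit immersion $V$ is a genuine proper convex embedding and that its image exhausts the entire boundary disk $\partial^\pm\mathscr C_\infty$ (rather than a proper subsurface), so that its ideal boundary is all of $C$ and Rosenberg--Spruck uniqueness can be invoked. The sandwich $\CH(C)\subseteq\mathscr C_\infty\subseteq N_R(\CH(C))$ is precisely what pins the ideal boundary of the limit to be $C$; what remains is the soft but essential topological point that a complete boundaryless convex surface sitting inside a convex boundary disk must coincide with it.
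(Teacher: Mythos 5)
Your proposal is correct and follows essentially the same route as the paper's proof: Lemma~\ref{lm:within} pins the ideal boundary of the subsequential Hausdorff limit of the convex domains to be $C$, Proposition~\ref{pr-compK} (resp.\ Lemma~\ref{lem:bend-conv} for $K=-1$) produces the limit isometric immersion, and completeness of $\HH^{2\pm}_K$ forces its image to be the whole boundary disk, identified with $S^\pm_K(C)$ by the uniqueness in Theorem~\ref{tm:K-surfaces-hyp}. The only cosmetic difference is that the paper phrases the final exhaustion step as ``a local isometry from a complete surface onto a simply connected surface is a covering, hence an isometry,'' where you use the equivalent open-and-closed argument.
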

\begin{proof}
In the case $K = -1$, $S^\pm_K(C_n) = \CH(C_n)$ is the convex hull of $C_n$. It is clear that $\overline{\CH(C_n)}$ converges to $\overline{\CH(C)}$ in the Hausdorff topology of $\overline{\HH^3}$ (for example by considering the projective model of $\HH^3$).
The convergence statement $V_n \to V$ in this case follows from Lemma~\ref{lem:bend-conv}.

Assume $K \in (-1,0)$. 
Let us argue in the case $\pm = +$, since the other case is the same.
We may assume without loss of generality that the closure in $\overline{\HH^3}$ of the convex domain bounded by $S^{+}_{K}(C_n)$ converges towards
a convex domain $\mathscr K_\infty$. Clearly $\Omega^{-}_{C}\cup C$ is contained in $\mathscr K_\infty$.  On the other hand 
Lemma \ref{lm:within} implies that no point of $\Omega^{+}_{C}$ is contained in $\mathscr K_\infty$. 
So the boundary of $\mathscr K_\infty \cap \HH^3$ is a convex topological disk $S_\infty$ that spans $C$.
Consider a sequence of isometric embeddings $V_n:\HH^{\pm}_K\to S^{\pm}_{K}(C_n)$ normalized so that $V_n(x_0)$ is converging in $\HH^3$.
By Proposition~\ref{pr-compK}, up to extracting a subsequence, we may assume that
the sequence of maps $V_n$ converges to a local isometric immersion $V_\infty:\HH^{2\pm}_K\to \HH^3$.
Clearly $V_\infty$ takes values in $S_\infty$. As $\HH^{2\pm}_K$ is complete the map $V_\infty$ is a covering. As $S_\infty$ is simply connected, $V_\infty$ must be an isometry. So $S_\infty$ is a $K$--surface spanning $C$ and both statements of the lemma are proved.
\end{proof}

In order to prove Proposition~\ref{pr:main-continuity}, we must show that for Jordan curves $C_n$ converging in the Hausdorff topology to a Jordan curve $C$, the comparison maps $\Phi_{C_n, K} = \cmp(V^-_{C_n, K}, V^+_{C_n, K})$ converge uniformly to the comparison map $\Phi_{C, K} = \cmp(V^-_{C, K}, V^+_{C, K})$. While Lemma~\ref{lm:convS_K} gives a tool for showing that that the maps $V^\pm_{C_n, K}$ converge to $V^\pm_{C, K}$, uniform convergence on compact sets is a priori not enough to control convergence of the extensions $\partial V^\pm_{C_n, K}$ to $\RP^1$. Instead of dealing with this directly, we will again, as in the proof of Proposition~\ref{pr:main-properness}, use the nearest point retraction map to control the terms of the factorization $\Phi_{C_n,K} =  \cmp(V^-_{C_n, K}, U^-_{C_n})\circ\cmp(U^-_{C_n}, U^+_{C_n})\circ(\cmp(V^+_{C_n, K}, U^+_{C_n}))^{-1}$.

\begin{lemma}\label{lm:tt}
Under the hypothesis of Proposition \ref{pr:main-continuity}, the maps 
$\cmp(V^\pm_{C_n, K}, U^{\pm}_{C_n})$  uniformly  converge
to $\cmp(V^\pm_{C, K}, U^{\pm}_{C})$.
\end{lemma}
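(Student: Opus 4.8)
The plan is to factor each comparison map through the nearest point retraction, exactly as in the proof of Proposition~\ref{pr:main-properness}, and then upgrade interior convergence to boundary convergence by a normal families argument. Write $g_n^\pm := \cmp(V^\pm_{C_n, K}, U^\pm_{C_n})$ and $g^\pm := \cmp(V^\pm_{C, K}, U^\pm_{C})$ for the maps we wish to compare, and set
\[
P_n^\pm := (V^\pm_{C_n, K})^{-1}\circ r^\pm_{C_n, K}\circ U^\pm_{C_n}\co \HH^{2\pm}\to\HH^{2\pm}_K, \qquad P^\pm := (V^\pm_{C, K})^{-1}\circ r^\pm_{C, K}\circ U^\pm_{C}.
\]
Since $r^\pm_{C_n,K}$ is the identity on $C_n$, the boundary extension of $P_n^\pm$ is precisely $g_n^\pm$, and likewise $\partial P^\pm = g^\pm$ (this is the identity $\cmp(V,U^\pm_C) = (\partial V)^{-1}\circ r^\pm_{C,K}\circ U^\pm_C$ used in Proposition~\ref{prop:extend-bis}). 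Because $U^\pm_{C_n}$ is an isometry for the hyperbolic uniformization metric on $\Omega^\pm_{C_n}$, $V^\pm_{C_n,K}$ is an isometry, and $r^\pm_{C_n,K}$ is $L$--bilipschitz with $L=L(K)$ by Proposition~\ref{lm:npr} when $K\in(-1,0)$ (respectively a uniform quasi-isometry when $K=-1$, by Remark~\ref{rk:npr}), each $P_n^\pm$ is a uniform quasi-isometry that is moreover normalized; hence its extension $\overline{P_n^\pm}$ is a normalized $\alpha$--quasiconformal self-homeomorphism of $\overline{\HH^{2\pm}}$ with $\alpha=\alpha(K)$. By the Lehto compactness underlying Lemma~\ref{lm:cmpqs}, the family $\{\overline{P_n^\pm}\}$ is precompact in the uniform topology, so it suffices to identify all subsequential limits: if every subsequence of $(\overline{P_n^\pm})$ has a further subsequence converging uniformly to $\overline{P^\pm}$, then $\overline{P_n^\pm}\to\overline{P^\pm}$ uniformly and, restricting to $\RP^1$, $g_n^\pm\to g^\pm$ uniformly.

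Next I would establish convergence of the building blocks on the interior. By Lemma~\ref{lm:convS_K}, $\overline{S^\pm_K(C_n)}\to\overline{S^\pm_K(C)}$ in the Hausdorff topology, so the convex regions $\mathscr C_K(C_n)$ converge to $\mathscr C_K(C)$ and, by the good behaviour of the nearest point retraction under Hausdorff limits (Section~\ref{sec:npr}), $r_{\mathscr C_K(C_n)}\to r_{\mathscr C_K(C)}$ uniformly on $\overline{\HH^3}$. Combined with the uniform convergence $\overline{U^\pm_{C_n}}\to\overline{U^\pm_{C}}$ of Lemma~\ref{lm:int-conv}, this gives $r^\pm_{C_n, K}\circ U^\pm_{C_n}\to r^\pm_{C, K}\circ U^\pm_{C}$ uniformly on compact subsets of $\HH^{2\pm}$. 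Moreover, since $\{\overline{P_n^\pm}\}$ is a precompact family of homeomorphisms fixing $0,1,\infty$, for each fixed $y\in\HH^{2\pm}$ the points $P_n^\pm(y)$ stay in a compact subset of the interior $\HH^{2\pm}_K$.

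The main obstacle is to control the isometries $V^\pm_{C_n,K}$, whose normalization is imposed at the boundary rather than at an interior base point; this is where the auxiliary maps $P_n^\pm$ do the work. For fixed $y$, the point $r^\pm_{C_n,K}(U^\pm_{C_n}(y)) = V^\pm_{C_n,K}(P_n^\pm(y))$ converges, hence is bounded in $\HH^3$; since $P_n^\pm(y)$ stays in a compact subset of $\HH^{2\pm}_K$ and $V^\pm_{C_n,K}$ is $1$--Lipschitz into $\HH^3$ for the intrinsic metric, it follows that $V^\pm_{C_n,K}(x_0)$ is bounded in $\HH^3$ for the fixed base point $x_0$. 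Lemma~\ref{lm:convS_K} then applies: along any subsequence we may extract a further subsequence along which $V^\pm_{C_n,K}$ converges on compacta to an isometry $V\co\HH^{2\pm}_K\to S^\pm_K(C)$, necessarily of the form $V = V^\pm_{C,K}\circ\phi$ with $\phi\in\PSL(2,\RR)$. Along this subsequence, for each $y$ any limit point $w$ of $P_n^\pm(y)$ satisfies $V(w)=r^\pm_{C,K}(U^\pm_C(y))$, forcing $w = \phi^{-1}(P^\pm(y))$; thus $P_n^\pm\to\phi^{-1}\circ P^\pm$ pointwise. By precompactness of $\{\overline{P_n^\pm}\}$ the extensions converge uniformly to $\overline{\phi^{-1}\circ P^\pm}$, so $g_n^\pm\to\phi^{-1}|_{\RP^1}\circ g^\pm$. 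But each $g_n^\pm$ fixes $0,1,\infty$, hence so does the limit; since $g^\pm$ also fixes these points, $\phi^{-1}|_{\RP^1}$ fixes $0,1,\infty$ and therefore $\phi = \mathrm{Id}$. Consequently every subsequence of $(g_n^\pm)$ has a further subsequence converging uniformly to $g^\pm$, which yields $g_n^\pm\to g^\pm$ uniformly and completes the proof.
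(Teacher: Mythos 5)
Your proposal follows the same strategy as the paper's proof (which is carried out under the heading ``Proof of Proposition~\ref{pr:main-continuity}''): factor $\cmp(V^\pm_{C_n,K},U^\pm_{C_n})$ as the boundary extension of the interior map $P_n^\pm=(V^\pm_{C_n,K})^{-1}\circ r^\pm_{C_n,K}\circ U^\pm_{C_n}$, use Lemma~\ref{lm:convS_K} together with the uniform convergence of $r^\pm_{C_n,K}\circ U^\pm_{C_n}$ to extract subsequential interior limits, pin down the ambiguity $\phi\in\PSL(2,\RR)$ in the limiting isometry via the normalization at $0,1,\infty$, and then pass from interior to boundary. The decomposition, the boundedness argument for $V^\pm_{C_n,K}(x_0)$, and the subsequence bookkeeping are all correct and match the paper.

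The one step that does not hold as stated is the claim that the family $\{\overline{P_n^\pm}\}$ of extensions to the closed disk is precompact in the uniform topology; you use this twice, first to show $P_n^\pm(y)$ stays in a compact subset of the interior, and then to upgrade pointwise interior convergence to uniform convergence of the boundary maps. For $K\in(-1,0)$ the claim can be salvaged, since there the $P_n^\pm$ are uniformly bilipschitz, hence uniformly quasiconformal and normalized, and Lehto-type compactness does apply on the closed disk. But for $K=-1$ the maps $P_n^\pm$ are only uniform quasi-isometries (Remark~\ref{rk:npr}), and because of the additive constant in the quasi-isometry inequality they are not an equicontinuous family on the interior; so precompactness of $\{\overline{P_n^\pm}\}$ on $\overline{\HH^{2\pm}}$ is not available, and Lemma~\ref{lm:cmpqs} only gives precompactness of the boundary restrictions, which by itself does not identify their limit with the boundary extension of the interior limit. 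The paper supplies exactly the two substitutes you need: Lemma~\ref{lm:norm} shows that a normalized $A$-quasi-isometry sends a fixed interior point into a fixed compact set, and Lemma~\ref{lm:int-vs-bound} (a Morse-lemma argument) shows that pointwise convergence of normalized $A$-quasi-isometries forces uniform convergence of their boundary extensions. Replacing your precompactness claim by these two lemmas makes your argument essentially identical to the paper's.
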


In the proof of this Lemma we will use two well-known facts about quasi-isometries of the hyperbolic plane.
We provide a short proof of these facts for the sake of completeness.

\begin{lemma}\label{lm:int-vs-bound}
Let $f_n: \HH^2 \to \HH^2$ be a sequence of $A$--quasi-isometries converging pointwise to $f: \HH^2 \to \HH^2$ (in fact the limit $f$ need only be defined on a dense subset). 
Then $f$ is an $A$-quasi-isometry and $\partial f_n$ uniformly converges to $\partial f$.
\end{lemma}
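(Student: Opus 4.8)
The plan is to first pass the quasi-isometry inequalities to the limit, and then to prove convergence of the boundary extensions in two stages: pointwise convergence, obtained by combining the interior pointwise convergence with the stability of quasigeodesics in $\HH^2$, followed by an upgrade to uniform convergence via the compactness of uniformly quasisymmetric maps (Lemma \ref{lm:cmpqs}).

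First I would check that $f$ is an $A$-quasi-isometry. The quasi-isometric embedding inequalities pass to the limit immediately: if $x_1,x_2$ lie in the dense subset on which $f$ is defined, then $d(f(x_1),f(x_2)) = \lim_n d(f_n(x_1),f_n(x_2))$ by continuity of the distance function, and each term lies in $[\tfrac1A d(x_1,x_2)-A,\; A\, d(x_1,x_2)+A]$, hence so does the limit. Coarse density of $f$ follows from coarse density of the $f_n$: given $y$, choose $x_n$ with $d(f_n(x_n),y)\le A$; the lower quasi-isometry bound together with $f_n(o)\to f(o)$ forces $d(o,x_n)$ to stay bounded, so a subsequence of the $x_n$ converges and, using the uniform coarse-Lipschitz bound and the pointwise convergence, one produces a point whose $f$-image lies within a distance controlled by $A$ of $y$. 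Thus $f$ is a quasi-isometry, so by Proposition~\ref{prop:QI-extend} the extension $\partial f$ exists and is $k$-quasisymmetric, while each $\partial f_n$ is $k$-quasisymmetric for a single constant $k=k(A)$.

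The heart of the argument is pointwise convergence $\partial f_n(\xi)\to \partial f(\xi)$ for each $\xi\in\RP^1$. Fix a basepoint $o$ and a geodesic ray from $o$ to $\xi$, with points $p_m$ at distance $m$ from $o$. Because $f_n$ and $f$ are $A$-quasi-isometries, the sequences $m\mapsto f_n(p_m)$ and $m\mapsto f(p_m)$ are $A$-quasigeodesic rays; by stability of quasigeodesics (the Morse lemma), with a constant depending only on $A$, they tend to $\partial f_n(\xi)$ and $\partial f(\xi)$ respectively, with Gromov products $(f_n(p_m)\mid \partial f_n(\xi))_{f_n(o)}$ and $(f(p_m)\mid \partial f(\xi))_{f(o)}$ bounded below by $\tfrac1A m - C$ for some $C=C(A,\delta)$. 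For fixed $m$, interior pointwise convergence gives $f_n(p_m)\to f(p_m)$ and $f_n(o)\to f(o)$, whence $(f_n(p_m)\mid f(p_m))_{f(o)}\to d(f(o),f(p_m))\ge \tfrac1A m - A$. Applying the $\delta$-hyperbolic inequality $(a\mid c)\ge \min\{(a\mid b),(b\mid c)\}-2\delta$ twice, to the four points $\partial f_n(\xi),\, f_n(p_m),\, f(p_m),\, \partial f(\xi)$, and absorbing the bounded basepoint shift $d(f_n(o),f(o))\to 0$, I obtain $\liminf_n (\partial f_n(\xi)\mid \partial f(\xi))_{f(o)}\ge \tfrac1A m - C'$ for every $m$. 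Letting $m\to\infty$ forces this Gromov product to tend to $+\infty$, which is precisely the statement that $\partial f_n(\xi)\to\partial f(\xi)$ in the visual topology on $\RP^1$. I expect this passage, from interior pointwise convergence to boundary convergence via the uniform Morse constant, to be the main obstacle, since it is the only step where the coarse geometry of $\HH^2$ genuinely enters.

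Finally I would upgrade pointwise to uniform convergence using Lemma~\ref{lm:cmpqs}. Since the $\partial f_n$ are $k$-quasisymmetric for a common $k$, that lemma guarantees that every subsequence admits a further subsequence which either converges uniformly to a $k$-quasisymmetric homeomorphism or degenerates, collapsing to a constant off a single point. The degenerate alternative is incompatible with the pointwise convergence established above, whose limit $\partial f$ is a homeomorphism; hence every subsequence has a further subsequence converging uniformly, and its uniform limit must agree with the pointwise limit $\partial f$. A standard subsequence argument then gives $\partial f_n\to\partial f$ uniformly on $\RP^1$, which completes the proof.
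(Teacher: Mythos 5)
Your proposal is correct. The first two steps track the paper's proof closely: the quasi-isometry inequalities pass to the limit, and pointwise convergence of the boundary maps is extracted from the Morse lemma with a uniform constant $D=D(A)$ -- the paper phrases this as ``$f_n(\ell)$ lies in a $D$-neighborhood of the ray $[f_n(x_0),\partial f_n(\zeta))$, so any subsequential limit $\eta$ of $\partial f_n(\zeta)$ must equal $\partial f(\zeta)$,'' whereas you run the same idea through Gromov products; these are equivalent formulations. Where you genuinely diverge is the upgrade from pointwise to uniform convergence. The paper argues by contradiction inside $\HH^2$: if uniformity fails, there are $\zeta_n$ for which the rays $[f(x_0),\partial f_n(\zeta_n))$ and $[f(x_0),\partial f(\zeta_n))$ make an angle bounded away from zero, and tracking a point $x_n$ at a large fixed distance $R$ along $[x_0,\zeta_n)$ produces, via the Morse lemma, points $f_n(x_n)$ and $f(x_n)$ that are far apart, contradicting interior pointwise convergence. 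You instead invoke the compactness dichotomy of Lemma~\ref{lm:cmpqs} for uniformly quasisymmetric maps: every subsequence of $(\partial f_n)$ either sub-converges uniformly or degenerates to a constant off one point, the degenerate alternative is ruled out by the pointwise limit being a homeomorphism, and the subsequence principle finishes. Your route is softer and shorter, at the cost of importing the quasiconformal compactness machinery (and the uniform bound $k=k(A)$ from Proposition~\ref{prop:QI-extend}); the paper's is self-contained within the coarse geometry of $\HH^2$. One cosmetic point in your favor: you actually address the $A$-density of the limit $f$, which the paper's definition of quasi-isometry requires but whose proof the paper elides.
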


\begin{proof}
Passing to the limit in the sequence of estimates
\[
      \frac{1}{A} d(x, y)-A < d(f_n(x), f_n(y)) < A d(x,y)+A,
\]
we get that $f$ is an $A$--quasi-isometry.

In order to prove the second statement, let $\ell = [x_0, \zeta)$ be a geodesic ray in $\HH^2$ with final point $\zeta\in\RP^1$ and initial point $x_0\in\HH^2$. 
 By the Morse lemma, $f_n(\ell)$ is contained in
a $D$--neighborhood of the geodesic ray $[f_n(x_0), \partial f_n(\zeta))$ for some $D$ depending only on the quasi-isometry constant $A$.  Up to a subsequence assume that $\partial f_n(\zeta)$ converges to $\eta\in\RP^1$. Then, passing to the limit, every point of $f(\ell)$ is within distance $D$ from the ray joining $f(x_0)$ to $\eta$.  Thus $\eta = \partial f(\zeta)$. 
This shows that $\partial f_n(\zeta)$ converges to $\partial f(\zeta)$ for any $\zeta$.

Uniform control of the convergence $\partial f_n(\zeta) \to \partial f(\zeta)$ as $\zeta$ varies in $\RP^1$ is obtained as follows. Without loss in generality we may assume $f_n(x_0) = f(x_0)$ is constant. 
Suppose uniform convergence fails. Then there exists points $\zeta_n \in \RP^1$ so that the ray from $[f(x_0), \partial f_n(\zeta_n))$ makes a positive angle bounded away from zero with the ray $[f(x_0), \partial f(\zeta_n))$. 
Let $x_n$ be the point along $[x_0, \zeta_n)$ at distance $R$ from $x_0$, and pass to a subsequence so that $x_n \to x$. 
If $d(x_n, x) < \epsilon$, then the distance $d(f_n(x), f(x))$ is bounded below by $d(f_n(x_n), f(x_n)) - 2A - 2 \epsilon A$ . 
However, by the Morse Lemma, $f_n(x_n)$ lies in a uniform neighborhood of the ray $[f(x_0), f_n(\zeta_n))$ and $f(x_n)$ lies in a uniform neighborhood of  $[f(x_0), f(\zeta_n))$. If $R > 0$ is large enough, then $f_n(x_n)$ and $f(x_n)$ are each far away from $f(x_0)$ and hence are far from each other since the rays  $[f(x_0), \partial f_n(\zeta_n))$ and $[f(x_0), \partial f(\zeta_n))$ make a positive angle bounded away from zero. This contradicts that $f_n(x)$ converges to $f(x)$.
\end{proof}

For the following lemma, recall that a quasi-isometry $f: \HH^2 \to \HH^2$ is called normalized if its extension to the boundary $\partial f: \RP^1 \to \RP^1$ satisfies that $\partial f(i) = i$ for $i = 0,1,\infty$.
\begin{lemma}\label{lm:norm}
For any constant $A>1$ and for any $x\in\mathbb H^2$ there exists a compact region $Q$ of $\;\HH^2$ such that if $f$ is a normalized 
$A$-quasi-isometry of $\HH^2$, then $f(x)\in Q$.
\end{lemma}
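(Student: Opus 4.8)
The plan is to reduce the problem to controlling the image of a single canonical point, and then to propagate that bound to $x$ using the quasi-isometry inequality. Let $c\in\HH^2$ be the incenter of the ideal triangle with vertices $0,1,\infty$, and let $\gamma_{01},\gamma_{0\infty},\gamma_{1\infty}$ denote its three sides, i.e.\ the complete geodesics joining the indicated pairs of ideal points. By construction $c$ lies at a fixed universal distance $r_0$ from each of the three sides. Since $f$ is a normalized $A$-quasi-isometry, its boundary extension $\partial f$ (a homeomorphism of $\RP^1$ by Proposition~\ref{prop:QI-extend}) fixes $0,1,\infty$, so for each pair the image $f(\gamma_{ij})$ is an $A$-quasigeodesic with the \emph{same} pair of ideal endpoints as $\gamma_{ij}$.

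First I would invoke the Morse Lemma, exactly as in the proof of Lemma~\ref{lm:int-vs-bound}: there is a constant $D=D(A)$ so that $f(\gamma_{ij})$ is contained in the $D$-neighborhood of $\gamma_{ij}$, for each of the three sides. Combining this with the quasi-isometry estimate applied to the nearest-point projection $p_{ij}\in\gamma_{ij}$ of $c$ (so that $d(c,p_{ij})=r_0$) gives
$$ d\bigl(f(c),\gamma_{ij}\bigr)\le d\bigl(f(c),f(p_{ij})\bigr)+D\le A r_0+A+D=:D', $$
a bound depending only on $A$. Thus $f(c)$ lies within distance $D'$ of all three sides simultaneously.

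The key geometric point, and the step that needs the most care, is that the set $Q_0:=\{y\in\HH^2:d(y,\gamma_{ij})\le D'\text{ for }ij\in\{01,0\infty,1\infty\}\}$ is compact. Indeed, if a sequence $y_n\in Q_0$ escaped every compact set, then a subsequence would converge in $\overline{\HH^2}$ to a boundary point $\xi\in\RP^1$; but staying within bounded distance of a geodesic forces $\xi$ to be one of that geodesic's two ideal endpoints, and no point of $\RP^1$ is a common endpoint of all three of $\gamma_{01},\gamma_{0\infty},\gamma_{1\infty}$ (each ideal vertex $0,1,\infty$ belongs to exactly two of them). This contradiction shows $Q_0$ is bounded, hence, being closed, compact; and $f(c)\in Q_0$.

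Finally I would propagate the bound from $c$ to the arbitrary point $x$. Using the quasi-isometry inequality once more,
$$ d\bigl(f(x),c\bigr)\le d\bigl(f(x),f(c)\bigr)+d\bigl(f(c),c\bigr)\le A\,d(x,c)+A+\operatorname{diam}\bigl(Q_0\cup\{c\}\bigr), $$
which depends only on $A$ and on $x$ (through the fixed distance $d(x,c)$). Hence $f(x)$ lies in the closed ball $Q:=\overline{B}\bigl(c,\;A\,d(x,c)+A+\operatorname{diam}(Q_0\cup\{c\})\bigr)$, a compact region depending only on $A$ and $x$, as required. I expect the only genuinely delicate point to be the compactness of $Q_0$: everything else is a direct application of the Morse Lemma and the defining inequality of an $A$-quasi-isometry, with all constants manifestly uniform in $f$.
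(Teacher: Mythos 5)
Your proof is correct and uses essentially the same argument as the paper: apply the Morse Lemma to the three sides of the ideal triangle with vertices $0,1,\infty$ (whose endpoints are fixed by $\partial f$), combine with the quasi-isometry inequality, and conclude via the compactness of the intersection of bounded neighborhoods of the three sides. The only cosmetic difference is that you route the estimate through the incenter $c$ before transferring to $x$, whereas the paper bounds $d(f(x),l)$ directly for each edge $l$.
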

\begin{proof}
For any edge $l$ of the ideal triangle $T$ with vertices $0,1,\infty$ pick a point $x_l$.
By the Morse Lemma, as $\partial f(i)=i$, we have that $d(f(x_l), l)<D$ for some $D$ depending only on $A$.
Then we have $d(f(x), l)\leq d(f(x), f(x_l))+D\leq A\ d(x,x_l)+A+D =: M$.
Hence $f(x)$ is contained in $Q=\{w\in\HH^2| d(w, l)\leq M, \textrm{ for every edge } l \textrm{ of }T\}$, which is compact.
\end{proof}

\begin{proof}[Proof of Proposition~\ref{pr:main-continuity}]
Let $r_n^\pm = r_{C_n, K}\co\Omega^\pm_{C_n}\to S^{\pm}_{K}(C_n)$ denote the nearest-point retraction map from Section~\ref{sec:npr}.
By Lemma \ref{lm:npr} or Remark \ref{rk:npr}, the maps 
$f_n:=(V^\pm_{C_n, K})^{-1}\circ r^\pm_n\circ U^{\pm}_{C_n}:\HH^2_K\to\HH^2_K$ are normalized $A$--quasi-isometries for some constant $A=A(K)$. The extension to the boundary $\partial f_n = \cmp(V^\pm_{C_n, K}, U^{\pm}_{C_n})\co \RP^1 \to \mathbb \RP^1$ 
is, by Proposition~\ref{prop:QI-extend}, a normalized $H$-quasisymmetric homeomorphism for some fixed value of $H$.
By Lemma \ref{lm:int-vs-bound}, it suffices to show that $f_n$ converges pointwise to  
the map $(V^\pm_{C, K})^{-1}\circ r^\pm \circ U^{\pm}_{C}:\HH^2\to\HH^2$, where $r^\pm$ is the nearest-point retraction on
the convex set bounded by $S^{\pm}_{K}(C)$.

We first prove that, up to taking a subsequence,  $V^\pm_{C_n, K}$ converges to an isometry $V:\HH^2_K\to S^{\pm}_{K}(C)$.
Fix $x\in\HH^2$ and consider the sequence $x_n=f_n(x)$. By Lemma \ref{lm:norm} it is a bounded sequence in $\HH^2$ and $V^\pm_{C_n, K}(x_n)=r_n^\pm \circ U^{\pm}_{C_n}(x)$ which converges to $r\circ U^{\pm}_C(x)$. By Lemma \ref{lm:convS_K}, we may pass to a subsequence so that $V^\pm_{C_n, K}$ converges to an isometry $V:\HH^2_K\to S^{\pm}_{K}(C)$ uniformly on compact subsets.

Taking the limit of the identity $V^\pm_{C_n, K}\circ f_n(x)=r_n\circ U^{\pm}_{C_n}(x)$ and using that $f_n(x)$ is bounded in $\HH^2$, 
we get that $f_n(x)$  pointwise converges to $V^{-1}\circ r\circ U^{\pm}_{C}(x)$. By Lemma \ref{lm:int-vs-bound} the boundary map
$\partial f_n$ converges uniformly to $\partial V^{-1}\circ r\circ c^{\pm}_{C}$.
Hence $\partial V^{-1}\circ r\circ U^{\pm}_{C}$ is a normalized quasisymmetric map, hence we must have $\partial V(i)=i$ for $i=0,1,\infty$, which means that $V=V^{\pm}_{C, K}$
and the proof is complete.
\end{proof}

The proof of Theorems~\ref{tm:induced-hyp} and~\ref{tm:induced-hyp-K} are now complete modulo the proof of the general surjectivity criterion Proposition~\ref{pr-surj}, to be given in Section~\ref{approx}.

\section{The third fundamental form and Theorem~\ref{tm:III-hyp-K}}

Given a normalized quasicircle $C\subset \CP^1$ and $K\in(-1,0)$, we have a natural map
\[
  \nu=\nu^{\pm}_{C, K}\co S^{\pm}_{K}(C)\to \dS^3
\]
sending $x$ to the dual point of the support plane of $S^{\pm}_{K}(C)$ at $x$. The results recalled in Section \ref{sec:polar} show that the pull-back by $\nu$ of the de Sitter metric is $\III$. It follows that $\nu$ is a spacelike immersion of constant curvature $K^*=\frac{K}{K+1}\in(-\infty, 0)$, see Section \ref{sssc:surfaces}. Moreover by the global convexity of $S^{\pm}_{K}(C)$ we have that $\nu$ is an embedding whose image we denote by $S^{*\pm}_{K}(C)$. It follows from Proposition \ref{principal_curv} (the bound on the principal curvatures of $S^\pm_K(C)$) that the map $\nu\co S^{\pm}_{K}(C)\to S^{*\pm}_{K}(C)$ is $A^*$--bilipschitz for some constant $A^*$ depending only on $K$. In particular the induced metric on $S^{*\pm}_{K}(C)$ is complete. Finally we remark that $S^{*\pm}_{K}(C)$ is properly embedded. In fact it bounds the domain made of points that are dual to planes disjoint from the convex set $\mathscr C^{\pm}_{C, K}$ bounded by $S^{\pm}_{K}(C)$ in $\HH^3$.

The following simple geometric argument shows that if $x_n\in S^{\pm}_{K}(C)$ converges to $x\in C$, then $\nu(x_n)$ converges to $x\in C$. Since $x_n$ is diverging in $S^\pm_{K}(C)$, the sequence $\nu(x_n)$ is diverging in $S^{*\pm}_{K}(C)$ as $\nu$ is bilipschitz. As $S^{*\pm}_{K}(C)$ is properly embedded, up to passing to a subsequence, we may assume that $\nu(x_n)$ converges to a point $x'$ in $\CP^1 = \partial \mathbb{dS}^3$. This implies that the support planes $P_n$ of $S^{\pm}_{K}(C)$ at $x_n$ converge to $x'$ in the Hausdorff topology of $\overline{\HH^3}$. As $x$ is a limit of a sequence of points $x_n\in P_n$, we  deduce that $x=x'$.

Now let $V^{*\pm}\co\HH^{2\pm}_{K^*}\to S^{*\pm}_{K}(C)$ be any isometry. The map $f:=(V^{\pm}_{C, K})^{-1}\circ \nu^{-1}\circ V^*\co\HH^{2\pm}_{K^*}\to \HH^{2\pm}_{K}$ is $A^*$--bilipshitz, so it extends to a $H^*$-quasisymmetric homeomorphism $\partial f: \RP^1 \to \RP^1$, where the constant $H^*$ depends only on $K$. On the other hand we have $V^{*\pm} = \nu\circ V^{\pm}_{C, K}\circ f$. Observe that $\nu$ extends to the identity over $C$, while $V^{\pm}_{C, K}$ extends to a map from $\RP^1$ to $C$. Hence $V^{*\pm}$ extends at the boundary to a homeomorphism $\partial V^{*\pm}\co \RP^1 \to C$ and moreover
\[
  \cmp( V^{\pm}_{C, K},  V^{*\pm})= \partial f ~.    
\]
is $H^*$-quasisymmetric for some constant $H^*$ depending only on $K$.

We denote by $V^{*\pm}_{C, K}$ the isometry between $\HH^{2\pm}_{K^*}$ and $S^{*\pm}_{K}(C)$ normalized so that $\partial V^{*\pm}_{C, K}(i)=i$ for $i=0,1,\infty$. We then define the dual gluing map as $$\Phi^*_{C, K}=\cmp(V^{*-}_{C, K},V^{*+}_{C, K}).$$ 
Notice that
\begin{equation}\label{gl:eq}
   \Phi^*_{C, K}=\cmp(V^{*-}_{C, K}, V^-_{C, K})\circ \Phi_{C, K}\circ \cmp(V^{+}_{C, K}, V^{*+}_{C, K})~.
\end{equation}
Since $\cmp(V^{*-}_{C, K}, V^-_{C, K})$ and $\cmp(V^{+}_{C, K}, V^{*+}_{C, K})$ are $H^*$-quasisymmetric,
$\Phi^*_{C, K}$ is quasisymmetric if and only if $\Phi_{C, K}$ is and the quasisymmetric constant for $\Phi^*_{C, K}$ is bounded in terms of the quasisymmetric constant for $\Phi_{C, K}$ independent of $C$. Hence, Proposition~\ref{pr:main-properness} implies the analogous statement in this setting:

\begin{prop}\label{pr:main-properness-III}
 Let $K \in (-1, 0)$. Then for each $k > 1$, there exists a constant $k' > 1$ depending only on $k$ and $K$, so that for any (normalized) Jordan curve $C$:
 \begin{enumerate}
 \item \label{item:well-definedIII}
If $C$ is a $k$--quasicircles, then $\Phi^*_{C, K}$ is a $k'$--quasisymmetric map. In particular $\Phi^*_{C,K} \in \mathcal T$.
\item \label{item:propernessIII}
If $\Phi^*_{C, K}$ is $k$--quasisymmetric, then $C$ is a $k'$--quasicircle.
\end{enumerate}
\end{prop}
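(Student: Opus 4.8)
The plan is to deduce both statements formally from the factorization~\eqref{gl:eq}, reducing everything to Proposition~\ref{pr:main-properness}, which already controls the middle factor $\Phi_{C,K}$. The one geometric input I need is the fact, recorded in the discussion preceding the statement, that the two outer comparison maps $\cmp(V^{*-}_{C,K},V^-_{C,K})$ and $\cmp(V^+_{C,K},V^{*+}_{C,K})$ are $H^*$-quasisymmetric for a constant $H^*=H^*(K)$ independent of $C$. This is exactly where the content sits: it follows from Proposition~\ref{principal_curv}, which bounds the principal curvatures of $S^\pm_K(C)$ away from $0$ and $\infty$ and hence makes the duality map $\nu$ uniformly $A^*$-bilipschitz, so that $(V^\pm_{C,K})^{-1}\circ\nu^{-1}\circ V^*$ is a bilipschitz map between hyperbolic planes and thus extends to a boundary map of uniformly controlled quasisymmetric constant.

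Granting this, for statement~\eqref{item:well-definedIII} I would argue as follows. If $C$ is a $k$-quasicircle, Proposition~\ref{pr:main-properness}.\eqref{item:well-defined} gives that $\Phi_{C,K}$ is $k''$-quasisymmetric with $k''=k''(k,K)$. Reading~\eqref{gl:eq} and using the fact, recorded after Definition~\ref{def:k-qs}, that the composition of a $k_1$- and a $k_2$-quasisymmetric homeomorphism is $k_1k_2$-quasisymmetric, I conclude that $\Phi^*_{C,K}$ is $(H^*)^2k''$-quasisymmetric; setting $k'=(H^*)^2k''$ gives a constant depending only on $k$ and $K$. For statement~\eqref{item:propernessIII} I would invert the factorization. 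Using the cocycle relations of Section~\ref{sec:comparison}, one solves~\eqref{gl:eq} for the middle term:
\[
  \Phi_{C,K}=\cmp(V^-_{C,K},V^{*-}_{C,K})\circ\Phi^*_{C,K}\circ\cmp(V^{*+}_{C,K},V^+_{C,K}).
\]
Since the inverse of a $k$-quasisymmetric map is $k$-quasisymmetric, the two outer factors here are again $H^*$-quasisymmetric. Hence if $\Phi^*_{C,K}$ is $k$-quasisymmetric, the same composition bound shows $\Phi_{C,K}$ is $(H^*)^2k$-quasisymmetric, and then Proposition~\ref{pr:main-properness}.\eqref{item:properness} yields that $C$ is a $k'$-quasicircle with $k'$ depending only on $(H^*)^2k$ and $K$, hence only on $k$ and $K$.

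I do not expect a genuine obstacle in this argument, since all the analysis has already been absorbed into Proposition~\ref{pr:main-properness} and into the uniform bilipschitz bound for $\nu$. The only thing requiring care is the bookkeeping of quasisymmetric constants under composition and inversion, together with the verification that $H^*$ is genuinely independent of $C$ --- which is precisely what Proposition~\ref{principal_curv} supplies, since the curvature bounds there hold with a constant $N=N(K)$ uniform over all Jordan curves $C$.
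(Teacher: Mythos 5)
Your proposal is correct and follows essentially the same route as the paper: the paper also derives Proposition~\ref{pr:main-properness-III} by combining the factorization~\eqref{gl:eq} with the uniform $H^*$-quasisymmetry of the outer comparison maps (which rests on the uniform principal curvature bounds of Proposition~\ref{principal_curv} making the duality map $\nu$ uniformly bilipschitz), and then reducing both directions to Proposition~\ref{pr:main-properness}. Your explicit bookkeeping of the constants under composition and inversion is exactly the argument the paper leaves implicit.
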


Hence, the gluing map for the third fundamental form determines a well-defined function 
$$\Phi^*_{\cdot, K}\co \qcm \to \cT$$
which satisfies the condition~\eqref{item:prop} of the surjectivity criterion Proposition \ref{pr-surj}. In fact, $\Phi^*_{\cdot, K}$ also satisfies condition~\eqref{item:qf} of Proposition \ref{pr-surj}:

\begin{prop}\label{pr:III-qF}
 Let $K \in (-1, 0)$. Then for every quasifuchsian quasisymmetric homeomorphism $t\in \cT$ , there exists a quasicircle $C \in \qcm$ such that $\Phi^*_{C, K} = \cmp(V^{*-}_{C, K},V^{*+}_{C, K}) = t$.
\end{prop}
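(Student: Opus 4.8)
The plan is to realize $t$ as the dual gluing map of a \emph{quasifuchsian} quasicircle, using the already-known solution of the third fundamental form version of Thurston's problem for quasifuchsian manifolds (recalled at the end of Section~\ref{K_hyp}), and then to fix the normalization so that the gluing map is literally $t$ rather than merely a conjugate of it. The key input is a realization theorem; the rest is equivariance bookkeeping together with a normalization trick that exploits the fact that $t$ fixes $0,1,\infty$.

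First I would encode $t$ by a pair of marked Fuchsian representations. Since $t$ is quasifuchsian there is a uniform lattice $\Gamma<\PSL(2,\R)$ with $t\Gamma t^{-1}<\PSL(2,\R)$; I fix an isomorphism $\rho^+\co\pi_1\Sigma\to\Gamma$ and set $\rho^-(\gamma):=t\,\rho^+(\gamma)\,t^{-1}$, a Fuchsian representation onto $t\Gamma t^{-1}$, so that by construction $t$ intertwines $\rho^+$ and $\rho^-$. Viewing $\rho^\pm$ as marked hyperbolic structures on $\Sigma$ and rescaling to constant curvature $K^*=K/(K+1)$, I would apply the realization theorem of Labourie~\cite{L4} (with uniqueness due to Schlenker~\cite{hmcb}): there is a quasifuchsian representation $\hat\rho\co\pi_1\Sigma\to\PSL(2,\C)$ whose convex cocompact structure on $\Sigma\times[0,1]$, bounded in the $K$-surface foliation of Theorem~\ref{tm:K-surfaces-hyp}, has the prescribed constant curvature $K^*$ metrics as the third fundamental forms of its two boundary $K$-surfaces. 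Lifting to $\HH^3$ and dualizing (Section~\ref{sec:polar}) produces the limit quasicircle $\tilde C$ of $\hat\rho$ together with $\hat\rho$-equivariant isometries $\tilde V^\pm\co\HH^2_{K^*}\to S^{*\pm}_K(\tilde C)$ satisfying $\tilde V^\pm\circ\rho^\pm(\gamma)=\hat\rho(\gamma)\circ\tilde V^\pm$.

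Next I would identify the unnormalized gluing map and then normalize. Taking boundary extensions (Proposition~\ref{prop:extend}) and using equivariance, the comparison map $h:=(\partial\tilde V^-)^{-1}\circ\partial\tilde V^+\co\RP^1\to\RP^1$ intertwines $\rho^+$ and $\rho^-$ exactly as $t$ does. Such an intertwiner is unique: if $h_1,h_2$ both conjugate $\rho^+$ to $\rho^-$, then $h_2^{-1}h_1$ commutes with the cocompact group $\Gamma$, hence fixes the attracting fixed point of every hyperbolic element of $\Gamma$, and these are dense in $\RP^1$, so $h_2^{-1}h_1=\mathrm{Id}$. Therefore $h=t$. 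Since $t$ fixes $0,1,\infty$ and $h=t$, the two boundary maps agree at these points: $\partial\tilde V^+(i)=\partial\tilde V^-(i)$ for $i=0,1,\infty$. Let $g\in\PSL(2,\C)$ be the unique Möbius transformation carrying $\partial\tilde V^+(0),\partial\tilde V^+(1),\partial\tilde V^+(\infty)\in\tilde C$ to $0,1,\infty$, and set $C:=g\,\tilde C\in\qcm$. Then $g\circ\tilde V^\pm$ are isometries $\HH^2_{K^*}\to S^{*\pm}_K(C)$ whose boundary extensions fix $0,1,\infty$, so by uniqueness of the normalized isometry they equal $V^{*\pm}_{C,K}$, whence $\Phi^*_{C,K}=(\partial(g\tilde V^-))^{-1}\circ\partial(g\tilde V^+)=(\partial\tilde V^-)^{-1}\circ\partial\tilde V^+=t$.

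The main obstacle is the input in the second step, namely the existence of a quasifuchsian $\hat\rho$ realizing the prescribed constant curvature $K^*$ third fundamental forms on the boundary $K$-surfaces of its convex core; this is precisely the Labourie--Schlenker theorem for convex cocompact hyperbolic structures on $\Sigma\times[0,1]$, and everything else is formal. The one point that genuinely requires care is that this theorem must be invoked in its \emph{marked} (equivariant) form, so that one recovers $t$ itself and not merely a conjugate: it is exactly the triviality of the centralizer of the cocompact group $\Gamma$ together with the coincidence $\partial\tilde V^+(i)=\partial\tilde V^-(i)$ at $i=0,1,\infty$ that upgrades the equivariant equality $h=t$ to the normalized identity $\Phi^*_{C,K}=t$.
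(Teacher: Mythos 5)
Your proof is correct and follows essentially the same route as the paper: both reduce the statement to the realization theorem for prescribed (constant curvature $K^*$) third fundamental forms on the boundary of a convex subset of a quasifuchsian manifold, which is \cite[Theorem 0.2]{hmcb} --- note that Labourie~\cite{L4} treats induced metrics, so the existence statement for third fundamental forms should be attributed to \cite{hmcb} rather than to \cite{L4}. The paper invokes this in a single line and leaves the equivariance and normalization bookkeeping implicit; your verification that the intertwiner of $\rho^+$ and $\rho^-$ is unique (via triviality of the centralizer of a cocompact lattice) and that the normalization can be arranged because $t$ fixes $0,1,\infty$ is precisely the content that makes that one-line citation legitimate.
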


\begin{proof}[Proof of Proposition \ref{pr:III-qF}]
   This is a special case of \cite[Theorem 0.2]{hmcb}, which ensures that given any two metrics $h_-, h_+$ of constant curvature $K^*\in (-\infty,0)$ on a closed surface of genus at least $2$, there exists a unique quasifuchsian manifold containing a convex subset $\mathscr C$ whose boundary is the disjoint union of two smooth, strictly convex surfaces whose third fundamental forms are given by $h_-$ and $h_+$. (Note that Theorem 0.2 of \cite{hmcb} is more general since it also applies to smooth metrics of varying curvature larger than $-1$.)
\end{proof}

Hence to apply the surjectivity criterion Proposition \ref{pr-surj} to prove Theorem~\ref{tm:III-hyp-K}, we have left to show that $\Phi^*_{\cdot, K}$ satisfies Condition~\eqref{item:cont}:
\begin{prop}\label{pr:III-continuity}
  Let $K \in (-1, 0)$. Given a sequence of uniformized $k$--quasicircles  $(C_n)_{n\in \N}$ which converges, in the Hausdorff topology, to a $k$--quasicircle $C$, $(\Phi^*_{C_n, K})_{n\in \N}$  converges uniformly as $n \to \infty$ to $\Phi^*_{C, K}.$
\end{prop}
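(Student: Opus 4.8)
The plan is to deduce the claim from the factorization \eqref{gl:eq},
\[
\Phi^*_{C_n, K}=\cmp(V^{*-}_{C_n, K}, V^-_{C_n, K})\circ \Phi_{C_n, K}\circ \cmp(V^{+}_{C_n, K}, V^{*+}_{C_n, K}),
\]
together with the already-established continuity of $\Phi_{\cdot,K}$ (Proposition~\ref{pr:main-continuity}). Each factor above is quasisymmetric with a constant depending only on $k$ and $K$, so composition and inversion are continuous for uniform convergence inside this family (Lemma~\ref{lm:cmpqs}). Hence it suffices to prove that, for each sign, the dual comparison maps $\cmp(V^{\pm}_{C_n, K}, V^{*\pm}_{C_n, K})$ converge uniformly to $\cmp(V^{\pm}_{C, K}, V^{*\pm}_{C, K})$; the outer factors of \eqref{gl:eq} are these maps or their inverses.

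Fix a sign, write $\nu_n=\nu^\pm_{C_n,K}$ for the duality map and set $f_n := (V^{\pm}_{C_n, K})^{-1}\circ \nu_n^{-1}\circ V^{*\pm}_{C_n, K}\co \HH^{2\pm}_{K^*}\to \HH^{2\pm}_{K}$, so that $\partial f_n = \cmp(V^{\pm}_{C_n, K}, V^{*\pm}_{C_n, K})$. The key observation is that $f_n$ is precisely the isometry between $\HH^{2\pm}_{K^*}$ and the disk $\HH^{2\pm}_K$ equipped with the pulled-back third fundamental form $\sigma_n := (V^{\pm}_{C_n, K})^*\III$ of $S^\pm_K(C_n)$. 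Indeed, writing $W_n := \nu_n\circ V^{\pm}_{C_n, K}$, one has $f_n = W_n^{-1}\circ V^{*\pm}_{C_n, K}$, while $W_n$ pulls back the de Sitter metric to $\sigma_n$ (Section~\ref{sec:polar}); thus $f_n^*\sigma_n$ is the standard constant-curvature-$K^*$ metric. By Section~\ref{sssc:surfaces} the metric $\sigma_n$ has curvature $K^*$, and it is complete by Proposition~\ref{principal_curv}; moreover each $f_n$ is a normalized $A$-quasi-isometry for $A=A(K)$, since $\partial f_n$ is a normalized $H^*$-quasisymmetric map. By Lemma~\ref{lm:int-vs-bound} it therefore suffices to prove that $f_n$ converges pointwise to $f := (V^{\pm}_{C, K})^{-1}\circ \nu^{-1}\circ V^{*\pm}_{C, K}$, the corresponding isometry for $C$.

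To establish this, recall that in the proof of Proposition~\ref{pr:main-continuity} the isometries $V^{\pm}_{C_n, K}$ were shown to converge to $V^{\pm}_{C, K}$; by the Labourie-type compactness Proposition~\ref{pr-compK} (and Theorem~\ref{thm:lab-compact}) the convergence is smooth on compact subsets of $\HH^{2\pm}_K$. Consequently the second, and hence third, fundamental forms converge, so $\sigma_n\to\sigma:=(V^{\pm}_{C, K})^*\III$ smoothly on compact sets, with $\sigma$ again a complete metric of curvature $K^*$. As each $f_n$ is a normalized isometry onto $(\HH^{2\pm}_K,\sigma_n)$, Lemma~\ref{lm:norm} keeps $f_n(x_0)$ in a fixed compact set, and an Arzel\`a--Ascoli argument extracts a subsequence converging smoothly on compact sets to an isometry $\HH^{2\pm}_{K^*}\to(\HH^{2\pm}_K,\sigma)$. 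Its boundary extension is the uniform limit of the normalized $H^*$-quasisymmetric maps $\partial f_n$, hence normalized (Lemma~\ref{lm:cmpqs}); comparing with $f$, another normalized isometry onto the same target, the two differ by an isometry of $\HH^{2\pm}_{K^*}$ whose boundary extension fixes $0,1,\infty$, hence by the identity. Thus every subsequence has a further subsequence converging to $f$, giving $f_n\to f$ pointwise and, by Lemma~\ref{lm:int-vs-bound}, $\partial f_n\to\partial f$ uniformly. Feeding this back through \eqref{gl:eq} with Proposition~\ref{pr:main-continuity} completes the argument.

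The main obstacle is the upgrade from uniform to \emph{smooth} convergence of the $V^{\pm}_{C_n, K}$, which is exactly what forces $\sigma_n\to\sigma$ and hence transports the convergence to the dual surfaces; this is where the full strength of Labourie's compactness theorem, rather than mere Hausdorff convergence of the surfaces, is essential. A secondary point requiring care is pinning down the limiting isometry through the normalization at $0,1,\infty$, so that all subsequential limits are forced to agree with $f$.
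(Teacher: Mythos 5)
Your proof is correct, and its skeleton is the same as the paper's: reduce via the factorization \eqref{gl:eq} and Proposition~\ref{pr:main-continuity} to the uniform convergence of $\cmp(V^{\pm}_{C_n,K},V^{*\pm}_{C_n,K})$, introduce the normalized uniformly bilipschitz maps $f_n=(V^{\pm}_{C_n, K})^{-1}\circ \nu_n^{-1}\circ V^{*\pm}_{C_n, K}$, control them with Lemmas~\ref{lm:norm} and~\ref{lm:int-vs-bound}, and pin down the subsequential limit by the normalization at $0,1,\infty$.

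The one genuine difference is how the limit of $f_n$ is identified. The paper argues extrinsically: it writes $V^{*\pm}_{C_n,K}=\nu_n\circ V^{\pm}_{C_n,K}\circ f_n$, invokes the compactness theorem for non-degenerating isometric immersions $\HH^2_{K^*}\to\dS^3$ (\cite[Theorem 5.6]{these}) to conclude the pointwise limit $V^*_\infty$ is an isometric immersion onto $S^{*\pm}_K(C)$, and deduces $V^*_\infty=V^{*\pm}_{C,K}$. You argue intrinsically: you view $f_n$ as the normalized isometry from $\HH^{2\pm}_{K^*}$ onto $(\HH^{2\pm}_K,\sigma_n)$ with $\sigma_n=(V^{\pm}_{C_n,K})^*\III$, use the smooth (Labourie) convergence $V^{\pm}_{C_n,K}\to V^{\pm}_{C,K}$ to get $\sigma_n\to\sigma$ on compact sets, and identify the limit as the normalized isometry onto $(\HH^{2\pm}_K,\sigma)$. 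This buys you independence from the de Sitter compactness result at the cost of leaning harder on the smoothness of the convergence in Proposition~\ref{pr-compK}/Theorem~\ref{thm:lab-compact} (which does hold for $K\in(-1,0)$, so the substitution is legitimate). One small point of hygiene: the uniform quasi-isometry property of $f_n$ should be cited as the $A^*$-bilipschitz bound coming from Proposition~\ref{principal_curv} (as established at the start of this section), rather than inferred from the quasisymmetry of $\partial f_n$ -- Proposition~\ref{prop:QI-extend} only produces \emph{some} quasi-isometric extension of a quasisymmetric boundary map, not a bound on the given interior map $f_n$ itself.
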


\begin{proof}
 The proof is based on the same circle of ideas as the proof of Proposition \ref{pr:main-continuity}.
 First, by Equation \eqref{gl:eq} and Proposition \ref{pr:main-continuity} it is sufficient to check that the sequence  $\cmp(V^{*\pm}_{C_n, K}, V^{\pm}_{C_n, K})$ converges uniformly to $\cmp(V^{*\pm}_{C, K}, V^{\pm}_{C, K})$.
 The Hausdorff convergence $\overline{S^\pm_K(C_n)} \to \overline{S^\pm_K(C)}$ guaranteed by Lemma \ref{lm:convS_K} implies that similarly $\overline{S^{*\pm}_{K}}(C_n)$ converges in the Hausdorff topology to $\overline{S^{*\pm}_{K}(C)}$. Moreover, letting $\nu_n = \nu^\pm_{C_n, K}\co S^\pm_{K}(C_n) \to S^{*\pm}_{K}(C_n)$ be the duality map, we have that $\nu_n\circ V^{\pm}_{C_n, K}$ converges to $\nu\circ V^{\pm}_{C, K}$ since the convergence of $S^{\pm}_{K}(C_n)$ to $S^{\pm}_{K}(C)$ implies the convergence of their support planes.
 
On the other hand, 
 $$f_n=(V^{\pm}_{C_n, K})^{-1}\circ \nu^{-1}\circ V^{*\pm}_{C_n, K}\co\HH^{2\pm}_{K^*}\to \HH^{2\pm}_{K}$$ 
is a sequence of normalized  $A^*$--bilipschitz homeomorphisms. Hence by Lemmas~\ref{lm:norm} and~\ref{lm:int-vs-bound}, we can pass to a subsequence so that $f_n$ converges to a normalized $A^*$--quasi-isometry $f$ (in fact also a bilipschitz homeomorphism) and the extensions to the boundary converge uniformly: $\partial f_n \to \partial f$. It follows that $V^{*\pm}_{C_n, K} = \nu_n\circ V^{\pm}_{C_n, K}\circ f_n$ converges pointwise to the map $V^*_\infty=\nu\circ V^{\pm}_{C, K}\circ f$. However, by~\cite[Theorem 5.6]{these}, a sequence of isometric immersions $\HH^{2}_{K^*}\to\dS^3$ which does not degenerate converges smoothly  and uniformly on compact subsets to an isometric immersion, hence $V^*_\infty$ is an isometric immersion which extends to a homeomorphism $\partial V^*_{\infty}\co \RP^1 \to C$ fixing $0,1,\infty$. We deduce that $V^*_\infty=V^{*\pm}_{C, K}$.

Finally we have
\[
   \cmp(V^{*\pm}_{C, K}, V^{\pm}_{C, K})=\partial f=\lim\partial f_n=\lim\cmp(V^{*\pm}_{C_n, K}, V^{\pm}_{C_n, K})
\]
 and we conclude.
\end{proof}

\section{Preliminaries II: Anti de Sitter geometry} \label{sc:prelim_ads}

We now turn to the Lorentzian half of the paper. In this section we give some preliminaries on anti de Sitter geometry in dimension three and Einstein geometry in dimension two. We will follow the description used by Bonsante and Seppi \cite{bon_are} and refer the interested reader to \cite{idealpolyhedra} for a definition closer to the one we used in Section \ref{sec:hyp-3} using Hermitian matrices with  entries in the algebra of pseuso-complex numbers.

\subsection{Anti de Sitter space and isometries of $\Hyp^2$} \label{prel ads}

Let $\Hyp^2$ be the hyperbolic plane, which is the unique complete, simply connected Riemannian surface without boundary of constant curvature $-1$. We denote its visual boundary by $\partial\Hyp^2$, and the Lie group of its orientation-preserving isometries by $\isom(\Hyp^2)$. The Killing form $\kappa$ on the Lie algebra $\mathfrak{isom}(\Hyp^2)$ of $\isom(\Hyp^2)$, namely
$$\kappa(v,w)=\mathrm{tr}(\mathrm{ad}(v)\circ\mathrm{ad}(w))\;\; \forall v, w \in  \mathfrak{isom}(\Hyp^2),$$
is $\mathrm{Ad}$-invariant, and so defines a bi-invariant pseudo-Riemannian metric on $\isom(\Hyp^2)$ (still denoted by $\kappa$) which has signature $(2,1)$. 

\begin{defi}
  We define the anti de Sitter space $\AdS^3$ of dimension $3$ to be the Lie group $\isom(\Hyp^2)$ endowed with the bi-invariant metric $g_{\AdS^3}=\frac{1}{8}\kappa$. 
\end{defi}

The space $\AdS^3$ is orientable, and, under the normalization above, it has constant sectional curvature equal to $-1$. Since the metric on $\AdS^3$ has signature $(2,1)$, tangent vectors are partitioned into three types: \emph{spacelike}, \emph{timelike}, or \emph{lightlike}, according to whether the inner product is positive, negative, or null, respectively. In any tangent space, the lightlike vectors form a cone that partitions the timelike vectors into two components. Locally there is a continuous map which assigns the label \emph{future-pointing} or \emph{past-pointing} to timelike vectors. The space $\AdS^3$ is \emph{time-orientable}, meaning that the labeling of timelike vectors as future or past may be done consistently over the entire manifold. We will choose the time orientation of $\AdS^3$ as follows. Let $R_t$ be a rotation of positive angle $t$ around any point $x\in\Hyp^2$ with respect to the orientation of $\Hyp^2$. Given $\gamma \in \isom(\Hyp^2)$, we require the timelike vectors which are tangent to the differentiable curve 
$$t\in[0,\epsilon)\longrightarrow R_t\circ \gamma\,,$$
to be future-pointing. Moreover, we fix an orientation of $\AdS^3$ so that if $v, w$ are linearly independent spacelike elements of the tangent space $\mathfrak{isom}(\Hyp^2)$ to the identity in $\AdS^3 = \isom(\HH^2)$, then the triple $\{v,w,[v,w]\}$ is a positive basis of  $\mathfrak{isom}(\Hyp^2)$.

By construction, the group of orientation-preserving, time-preserving isometries of $\AdS^3$ satisfies:
$$\isom(\AdS^3)\cong\isom(\Hyp^2)\times\isom(\Hyp^2)\,,$$
where the left action on $\isom(\Hyp^2)$ is given by:
$$(\alpha,\beta)\cdot \gamma=\alpha\circ\gamma\circ \beta^{-1}\,.$$

The boundary at infinity of anti de Sitter space satisfies: 
$$\partial\AdS^3 \cong \partial\Hyp^2\times \partial\Hyp^2,$$
where a sequence $\gamma_n\in\isom(\Hyp^2)$ converges to a pair $(p,q)\in \partial\Hyp^2\times \partial\Hyp^2$ if there exists a point $x\in\Hyp^2$  such that 
\begin{equation} \label{defi convergence boundary}
\gamma_n(x)\to p\qquad \gamma_n^{-1}(x)\to q\,.
\end{equation}
Note that this condition does not depend on the point $x \in \Hyp^2$ chosen. The union $\AdS^3 \cup \partial \AdS^3$ is homeomorphic to a compact solid torus. The action of  $\isom(\Hyp^2)\times\isom(\Hyp^2)$ on $\AdS^3$ continuously extends to  the product action on $\partial\Hyp^2\times \partial\Hyp^2$ as follows: if $p,q\in\partial \Hyp^2$ and $\alpha,\beta \in \isom(\Hyp^2)$, then
$$(\alpha,\beta)\cdot(p,q)=(\alpha(p),\beta(q))\,.$$ The boundary at infinity $\partial\AdS^3$ is endowed with a conformal Lorentzian structure, in such a way that the group $\isom(\AdS^3)$ acts on $\partial\AdS^3$ by conformal transformations. The null lines of $\partial\AdS^3$ correspond to the lines $\partial\Hyp^2\times\{\star\}$ and $\{\star\} \times\partial\Hyp^2$. 

Since the exponential map  at the identity for the Levi-Civita connection of the bi-invariant metric coincides with the Lie group exponential map, the geodesics through the identity are precisely the $1$--parameter subgroups of $\isom(\Hyp^2)$. In particular elliptic subgroups correspond to timelike geodesics through the identity. Using the action of the isometry group, timelike geodesics have the form:
$$L_{x,x'}=\{\gamma\in\isom(\Hyp^2):\gamma(x')=x\}\,,$$ where $x, x' \in \Hyp^2$. These geodesics are closed and have length $\pi$. In addition, with this definition, the isometry group acts on timelike geodesics as follows: if $(\alpha,\beta) \in \isom(\Hyp^2)\times\isom(\Hyp^2)$, then
\begin{equation} \label{transf rule timelike geodesic}
(\alpha,\beta)\cdot L_{x,x'}=L_{\alpha(x), \beta(x')}\,.
\end{equation}

\begin{figure}[htb]
\centering
\includegraphics[height=7cm]{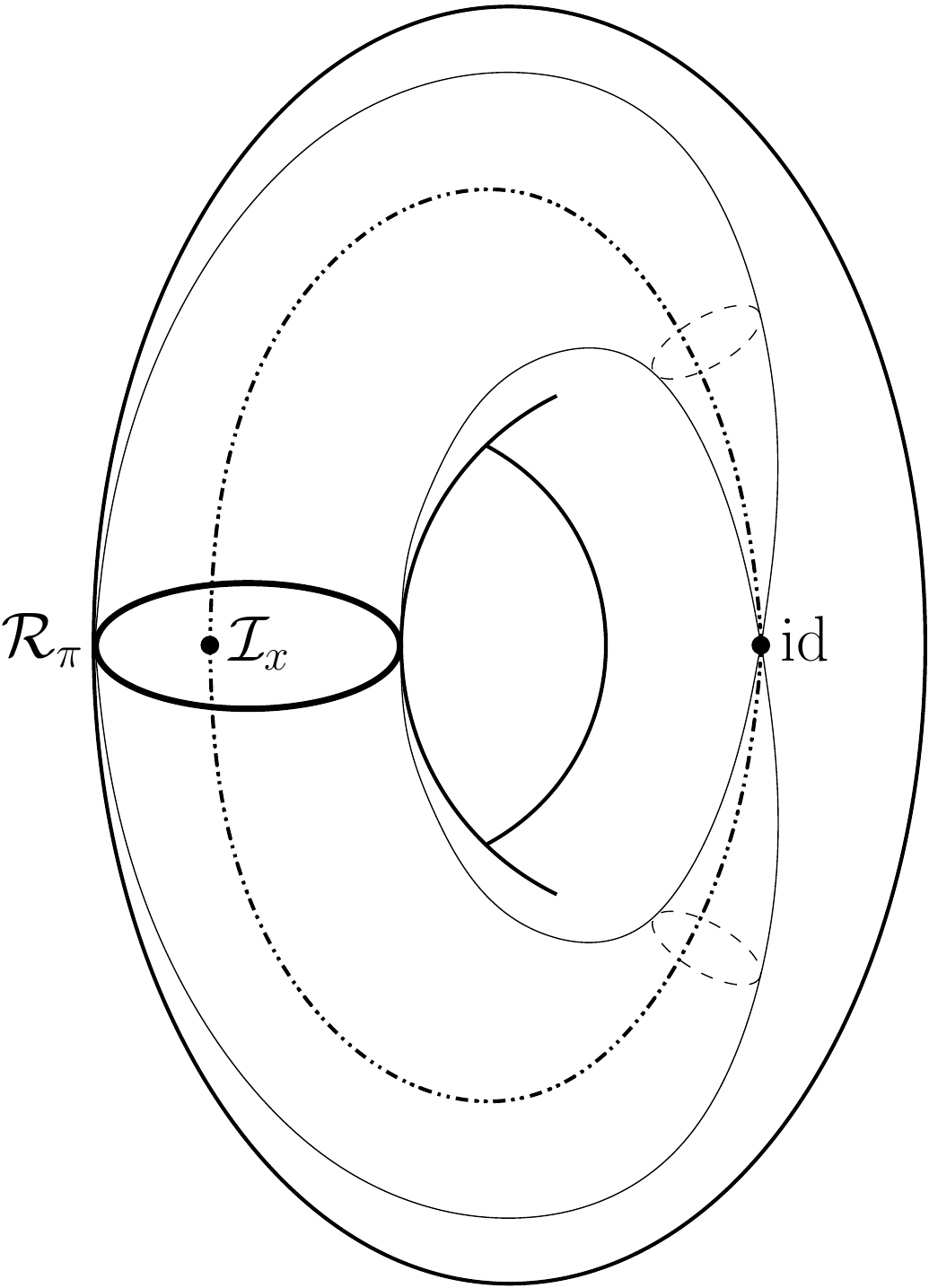}
\caption{A picture of $\AdS^3$ with the totally geodesic plane $\mathcal R_\pi$ defined by the midpoints of timelike geodesics from the identity, and its boundary at infinity corresponding to the tangency points between the double lightcone from the identity and $\partial\AdS^3$.} 
\label{fig:torus}
\end{figure}

Let $\mathcal{I}_x$ be the involutional rotation of angle $\pi$ around a point $x\in\Hyp^2$. These rotations $\mathcal{I}_x$ are the antipodal points to the identity in the geodesics $L_{x,x}$, and form a totally geodesic plane
$$\mathcal R_\pi=\{\mathcal I_x\,:\,x\in\Hyp^2\}\,.$$ 
See also Figure \ref{fig:torus}. By using the definition in Equation \eqref{defi convergence boundary}, one can check that its boundary at infinity $\partial \mathcal R_\pi$ is the diagonal in $\partial\Hyp^2\times \partial\Hyp^2$:
$$\partial \mathcal R_\pi=\{(p,p)\,:\,p\in\partial\Hyp^2\}\subset\partial\Hyp^2\times \partial\Hyp^2\,.$$
Given any point $\gamma$ of $\AdS^3$, the points which are connected to $\gamma$ by two timelike segments of length $\pi/2$ (whose union form a closed timelike geodesic) form a totally geodesic plane, called the \emph{dual} plane and denoted $\gamma^\perp$. For example, $\mathcal R_\pi=(\mathrm{id})^\perp$ and, more generally, if $(\alpha,\beta) \in \isom(\AdS^3)$ sends $\mathrm{id}$ to $\gamma$, then $\gamma^\perp=(\alpha,\beta)\cdot \mathcal R_\pi$. Then $\gamma^\perp=(\gamma,1)\cdot \mathcal R_\pi=(1,\gamma^{-1})\cdot \mathcal R_\pi$. Note that the restriction of the metric to $\gamma^\perp$ is Riemannian, and so the totally geodesic plane $\gamma^\perp$ is spacelike. In fact, any totally geodesic spacelike plane arises as the dual plane $\gamma^\perp$ to some point $\gamma \in \AdS^3$. Note also that the boundary at infinity of a totally geodesic spacelike plane $\gamma^\perp$ corresponds to the graph of the homeomorphism of $\partial\Hyp^2$ induced by $\gamma^{-1}$.

\subsection{The projective model}\label{projec}

In this section we will describe a more concrete realization of $\AdS^3$, by considering the upper-half space model of $\HH^2$: identify $\HH^2$ with
$$\Hyp^2=\{z\in\C:\im(z)>0\}\,,$$
endowed with the Riemannian metric ${|dz|^2}/{\im(z)^2}$. This metric makes every biholomorphism of the upper half-plane an isometry, so in this model $\isom(\Hyp^2)$ is naturally identified to $\PSL(2,\R)$ and the visual boundary $\partial\Hyp^2$ is identified with the real projective line $\RP^1$. 
The  Lie algebra $\mathfrak{sl}(2,\R)$ is the vector space of $2$-by-$2$ matrices with real entries and zero trace. In this model the anti de Sitter metric $g_{\AdS^3}$ at the identity is given by:
\begin{equation}\label{eq:minnie}(g_{\AdS^3})_{\mathrm{id}}( m,m')=\frac{1}{2}\tr(mm')\;\;\forall m, m' \in \mathfrak{sl}(2,\R).\end{equation}

On the space $\mathcal{M}_2(\R)$ of real $2$-by-$2$ matrices consider the quadratic form $q(M) = -\det M$. Its polarization, denoted $\langle \cdot, \cdot \rangle$, has signature $(2,2)$, providing an identification $\mathcal{M}_2(\R) = \R^{2,2}$. The restriction of
$\langle \cdot, \cdot \rangle$ to $\SL(2,\mathbb R)$ corresponds exactly to the double cover of the metric $g_{\AdS^3}$. In fact, left and right multiplication by elements in $\SL(2,\R)$ preserve $q$, so that the restriction of $\langle \cdot, \cdot \rangle$ to $\SL(2,\R)$ is a bi-invariant metric. In addition, at the identity Equation \eqref{eq:minnie} shows that it coincides with $g_{\AdS^3}$.
So we can identify $\AdS^3$ with the projective special linear group
$$\PSL(2,\R) = \{A\in \mathcal{M}_2(\R) \mid q(A)=-1\}/\{\pm 1\}$$
endowed with the pseudo-Riemannian metric which descends from $\langle \cdot, \cdot \rangle$. Note that the non-zero elements of the vector space $\mathcal{M}_2(\R)$ of $2 \times 2$ real matrices considered up to multiplication by a real number can be identified with the projective space $\RP^3$. So the space $\PSL(2,\RR)$ is naturally embedded in $\RP^3$. It will be sometimes useful to work in the coordinates,
\begin{equation} \label{coordinates sl2r}
(x_1,x_2,x_3,x_4)\in\R^4\longrightarrow M:=\begin{pmatrix} x_1-x_3 & -x_2+x_4 \\   x_2+x_4 & x_1+x_3  \end{pmatrix}\,,
\end{equation}
in which the quadratic form is given by 
\begin{align*}
q(M) = -\det(M) =-x_1^2-x_2^2+x_3^2+x_4^2
\end{align*}
In these coordinates $\AdS^3$ is the region of $\RP^3$ defined by $-x_1^2-x_2^2+x_3^2+x_4^2 < 0$. See Figure~\ref{fig:ruled} for a picture in the affine chart $x_4 = 1$.

Remarkably, the geometry of $\AdS^3$ is compatible with the geometry of $\RP^3$ in the sense that geodesics for the pseudo-Riemannian metric are precisely the intersections with $\PSL(2,\RR)$ of projective lines in $\RP^3$ and totally geodesic planes are the intersections with $\PSL(2,\RR)$ of projective planes, see e.g. \cite{fillastre-seppi}. Further, isometries of $\AdS^3$ are the restrictions of projective transformations and the isometry group $\isom \AdS^3$ naturally identifies with the subgroup of the projective linear transformations $\PGL(4, \RR)$ that preserves $\PSL(2,\RR)$, which is precisely the orthogonal group of the bilinear form $\langle \cdot, \cdot \rangle$, a copy of $\PO(2,2)$.

The embedding of $\AdS^3$ as an open set in $\RP^3$ naturally determines a compactification of $\AdS^3$, whose boundary at infinity is a copy of the \emph{Einstein space} $\Ein^{1,1}$, defined as the projectivized null cone of the quadratic form $q$.
In our framework $\Ein^{1,1}$ is precisely the space of $2 \times 2$ matrices which have rank one, up to scale. Such a matrix $A$ is of the form
\begin{align*}
A = \begin{bmatrix} a \\ b \end{bmatrix} \begin{bmatrix} d & -c \end{bmatrix}
\end{align*}
and is determined by its image, the point $a/b \in \RP^1$, and its kernel, the point $c/d \in \RP^1$. Hence $\Ein^{1,1}$ naturally identifies with a copy of $\RP^1 \times \RP^1$ and the action of $\PSL(2,\RR) \times \PSL(2,\RR)$ is factor-wise by M\"obius transformations.  Hence the identification of $\AdS^3 = \isom(\HH^2)$ with $\PSL(2,\RR)$ induces an identification of the ideal boundary $\partial \AdS^3$, defined above as $\partial \HH^2 \times \partial \HH^2$ with the projective space boundary  $\Ein^{1,1} \cong \RP^1 \times \RP^1$ in the obvious way. 
One pleasant feature of the projective model of $\AdS^3$ is that the product structure on $\partial \AdS^3$ is seen directly as the well-known double ruling of the hyperboloid $\Ein^{1,1}$ by projective lines. See Figure~\ref{fig:ruled}. The lines $\RP^1\times\{\star\}$ are referred to as the \emph{left ruling} and the lines $\{\star\} \times \RP^1$ are referred to as the \emph{right ruling}.
The form $\langle \cdot, \cdot \rangle$ defines a conformal Lorentzian structure on $\Ein^{1,1}$ whose lightlike directions are precisely the directions of the two rulings.
Directions in $\Ein^{1,1} = \RP^1 \times \RP^1$ for which both coordinates are increasing or both are decreasing are spacelike directions, while directions for which one coordinate is increasing and one is decreasing are timelike.

\begin{figure}[htb]
\centering
\includegraphics[height=5cm]{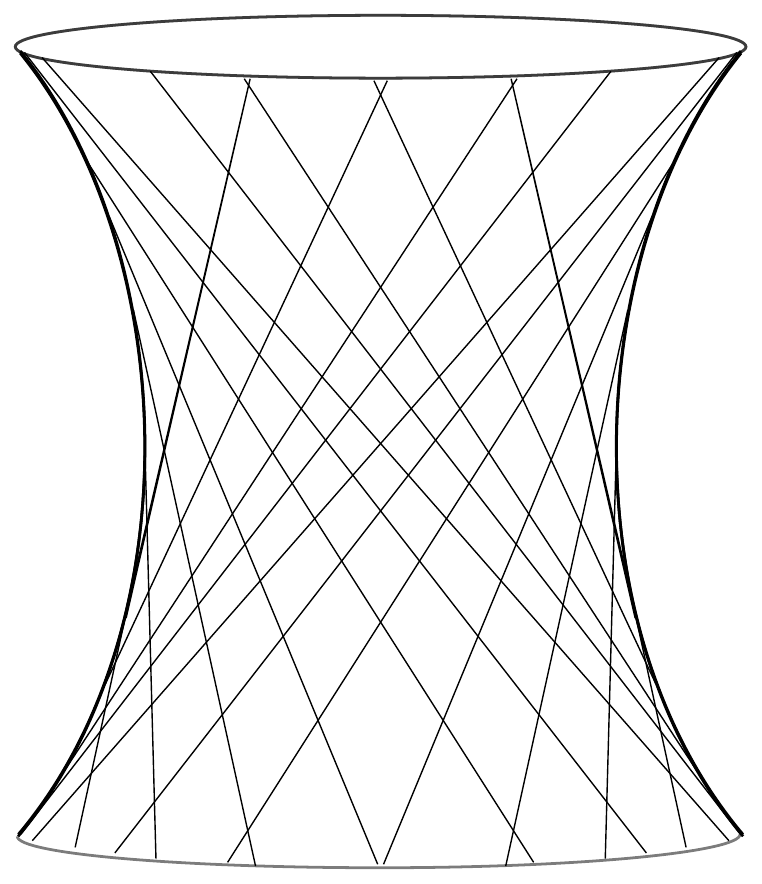}
\caption{In an affine chart, $\AdS^3$ is the interior of a one-sheeted hyperboloid. The null lines of $\partial\AdS^3$ coincide with the rulings of the hyperboloid. 
The intersection with the horizontal plane $z=0$ is a totally geodesic hyperbolic plane, in the Klein model. \label{fig:ruled}}
\end{figure}

Finally, we remark that the duality between points $\gamma$ of $\AdS^3$ and totally geodesic spacelike planes $\gamma^\perp$ described above is realized by the bilinear form $\langle \cdot, \cdot \rangle$, in the sense that $\gamma^\perp$ is precisely (the intersection with $\AdS^3$ of) the projectivization of the  subspace of $\RR^4$ defined by the equation $\langle \gamma, \cdot \rangle = 0$.
Similarly, any totally geodesic timelike plane, i.e. one whose signature is $(1,1)$, is realized as the orthogonal space to a point of projective space lying outside the closure of $\AdS^3$. A totally geodesic plane for which the restriction of the metric is degenerate is called a null plane, or a lightlike plane, and is realized as the orthogonal space to a point $(p,q)$ of $\partial \AdS^3 = \RP^1 \times \RP^1$. In this case the projective plane defined by $\langle (p,q), \cdot \rangle = 0$ is tangent to $\Ein^{1,1}$ at $(p,q)$ and the intersection of $\Ein^{1,1}$ with this projective plane, which is the boundary at infinity of $(p,q)^\perp$, is the union of a line $\{p\} \times \RP^1$ of the left ruling and a line $\RP^1 \times \{q\}$ of the right ruling.

\subsection{Achronal/acausal meridians and quasicircles in $\partial \AdS^3$}

Given a continuous curve $C$ in $\Ein^{1,1} = \partial\AdS^3$, we say that $C$ is \emph{achronal}, respectively \emph{acausal}, if for every point $p$ of $C$, 
there exists a neighborhood $U$ of $p$ in $\Ein^{1,1}$ such that $U\cap C$ is contained in the complement of the regions of $U$ which are connected to $p$ by timelike curves, respectively timelike and lightlike curves. A Jordan curve $C$ is called an \emph{achronal meridian}, respectively an \emph{acausal meridian}, if it is achronal, respectively acausal, and bounds a disk in $\AdS^3$.

\begin{lemma}[\cite{mess}]\label{lem:graph}
Any acausal meridian in $\Ein^{1,1} = \RP^1 \times \RP^1$ is the graph $\Gamma(f)$ of an orientation-preserving homeomorphism $f: \RP^1 \to \RP^1$. 
\end{lemma}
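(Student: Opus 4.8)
The plan is to exploit the product structure $\Ein^{1,1} \cong \RP^1 \times \RP^1$ together with its causal structure, for which the lightlike directions are exactly the two rulings and a tangent direction is spacelike precisely when both of its $\RP^1$--coordinates vary in the same sense. Write $\pi_1, \pi_2 : \RP^1 \times \RP^1 \to \RP^1$ for the two projections, and orient both factors so that ``spacelike'' means ``both coordinates increasing''. First I would extract the local structure of $C$ from acausality. Fix $p = (a,b) \in C$ and let $U$ be the neighbourhood provided by the definition of acausal, so that $U \cap C \setminus \{p\}$ avoids every point of $\Ein^{1,1}$ that is lightlike- or timelike-related to $p$; in the product coordinates this says $U \cap C \setminus \{p\}$ lies in the open spacelike region $\{(u,v) : (u-a)(v-b) > 0\}$. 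Applying this at every point of $C$ shows that no two sufficiently close points of $C$ share a first (or second) coordinate, since they would otherwise lie on a common ruling line and be lightlike-related; hence each $\pi_i|_C$ is locally injective, and by invariance of domain in dimension one it is a local homeomorphism. Moreover the spacelike region being the ``NE/SW'' quadrant forces $C$ to be, near $p$, the graph of a strictly increasing map between the coordinates, so $\pi_1|_C$ and $\pi_2|_C$ are locally orientation-preserving.

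Next I would pass from local to global. Since $C$ is a Jordan curve it is homeomorphic to a circle and compact, so each $\pi_i|_C : C \to \RP^1$ is a covering map; because it is everywhere locally orientation-preserving, its degree $d_i$ is a strictly positive integer, and hence $[C] = (d_1, d_2)$ in $H_1(\Ein^{1,1}) = \Z \oplus \Z$ with $d_1, d_2 \geq 1$. In particular $C$ is never null-homotopic, which already rules out small contractible acausal loops. Now I invoke the meridian hypothesis. Since $\AdS^3 \cup \partial\AdS^3$ is a solid torus with boundary torus $\Ein^{1,1}$, the simple closed curves on the boundary that bound a disk inside are exactly the meridians, all isotopic to one another. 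The spacelike totally geodesic plane $\mathcal{R}_\pi$ is such a disk, and its ideal boundary is the diagonal $\partial\mathcal{R}_\pi = \{(p,p) : p \in \RP^1\} = \Gamma(\mathrm{id})$, which therefore represents the meridian class $(1,1)$. As $C$ is an acausal meridian it bounds a disk in $\AdS^3$, so $[C] = \pm(1,1)$; combined with $d_1, d_2 \geq 1$ this forces $d_1 = d_2 = 1$.

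Finally, degree-one coverings of the circle are homeomorphisms, so both $\pi_1|_C$ and $\pi_2|_C$ are homeomorphisms onto $\RP^1$, and $f := \pi_2 \circ (\pi_1|_C)^{-1}$ is a homeomorphism with $C = \Gamma(f)$; the local increasing property established above shows $f$ is orientation-preserving. The step I expect to require the most care is the interface between the purely local acausality condition and the global topology: specifically, correctly identifying the meridian class as $(1,1)$ via $\mathcal{R}_\pi$, and matching the orientation conventions on the two factors so that ``spacelike'' indeed translates into ``$f$ increasing''. The covering-degree bookkeeping — ensuring $d_i \geq 1$ rather than merely $d_i \neq 0$ — also deserves attention, since it is precisely what excludes contractible loops and pins down the sign, giving $f$ a genuine homeomorphism rather than a higher-degree monotone map such as the $(1,2)$ curve.
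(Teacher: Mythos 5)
The paper does not actually prove this lemma --- it is quoted from Mess with a citation and no argument --- so there is nothing internal to compare against. Your proposal is, as far as I can check, a complete and correct proof, and it is the standard one: acausality localizes to the statement that nearby points of $C$ lie in the open quadrants $\{(u-a)(v-b)>0\}$, which makes both factor projections locally injective, hence (by one-dimensional invariance of domain and compactness of $C$) coverings of $\RP^1$ that are coherently locally monotone, so $[C]=(d_1,d_2)$ with $d_1,d_2\ge 1$ after orienting $C$ suitably; the disk-bounding hypothesis then kills $[C]$ in $H_1(\overline{\AdS^3})$, and your identification of the meridian class as $\pm(1,1)$ via the totally geodesic disk $\mathcal R_\pi$ with $\partial\mathcal R_\pi=\Gamma(\mathrm{id})$ forces $d_1=d_2=1$, so both projections are homeomorphisms and $f=\pi_2\circ(\pi_1|_C)^{-1}$ is an increasing, hence orientation-preserving, homeomorphism. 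The two points you flag as delicate are exactly the right ones, and you resolve both: the causal cone of a point must be read as the union of the future and past cones (so that the reachable set is $\{(u-a)(v-b)\le 0\}$ and the two rulings through $p$ are genuinely excluded by acausality), and the meridian hypothesis is what rules out $(1,d)$ torus knots such as the $(1,2)$ curve, which are acausal simple closed curves but not graphs. This is essentially Mess's argument, so there is no gap to report.
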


An acausal meridian $C = \Gamma(f)$ is \emph{normalized} if it contains the points $(0,0), (1,1),$ and $(\infty, \infty)$, or in other words the homeomorphism $f$ satisfies that $f(0) = 0, f(1) = 1, f(\infty) = \infty$. We call an acausal meridian $\Gamma(f)$ a \emph{quasicircle in $\Ein^{1,1}$} if $f$ is quasisymmetric. If further $f$ is $k$-quasisymmetric for some $k$, then we call $\Gamma(f)$ a $k$-quasicircle. Let $\qcm(\Ein^{1,1})$ denote the space of all normalized quasicircles $\Gamma(f)$ in $\Ein^{1,1}$. Then $\qcm(\Ein^{1,1})$ is in natural bijection with the universal Teichm\"uller space $\cT$.

The compactness statement Lemma \ref{lm:cmpqs}, for quasisymmetric maps, may be rephrased as follows to give an $\Ein^{1,1}$-analogue of the compactness statement Lemma~\ref{lm:int-conv} for quasicircles in $\CP^1$.
 \begin{lemma}\label{lm:cmpqcads}
 Let $C_n= \Gamma(f_n)$ be a sequence of $k$-quasicircles in $\Ein^{1,1} = \RP^1 \times \RP^1$. Then there is a subsequence which converges in the Hausdorff topology to either a $k$-quasicircle $\Gamma(f_\infty)$ or to the union  $\{p\} \times \RP^1 \cup \RP^1 \times \{q\}$ of a line of the left ruling and a line of the right ruling.
 \end{lemma}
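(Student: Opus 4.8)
The plan is to deduce the statement directly from the compactness dichotomy Lemma~\ref{lm:cmpqs} for quasisymmetric maps, translating its two alternatives into the two alternatives asserted here through the graph correspondence $f \mapsto \Gamma(f)$. Since each $C_n = \Gamma(f_n)$ is a $k$-quasicircle, each $f_n$ is a $k$-quasisymmetric homeomorphism of $\RP^1$, so Lemma~\ref{lm:cmpqs} applies and, after passing to a subsequence, yields one of two cases.

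In the non-degenerate case, $f_n$ converges uniformly to a $k$-quasisymmetric homeomorphism $f_\infty$. Here I would note that uniform convergence of the maps forces Hausdorff convergence of the graphs: every point $(x, f_n(x)) \in \Gamma(f_n)$ is within $\|f_n - f_\infty\|_\infty$ of $(x, f_\infty(x)) \in \Gamma(f_\infty)$ in the second coordinate, and symmetrically, so $\Gamma(f_n) \to \Gamma(f_\infty)$ in the Hausdorff topology of the compact space $\Ein^{1,1} \cong \RP^1 \times \RP^1$. Since the ambient space is compact, Hausdorff convergence is a purely topological notion, independent of the choice of compatible metric. The limit $\Gamma(f_\infty)$ is a $k$-quasicircle, as required.

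The degenerate case is the substantive one. Lemma~\ref{lm:cmpqs} provides points $p, q \in \RP^1$ with $f_n \to q$ uniformly on compact subsets of $\RP^1 \setminus \{p\}$ and $f_n^{-1} \to p$ uniformly on compact subsets of $\RP^1 \setminus \{q\}$. I would show $\Gamma(f_n)$ converges in the Hausdorff topology to $L := \{p\} \times \RP^1 \cup \RP^1 \times \{q\}$ by checking the two Kuratowski inclusions. For $L \subseteq \liminf \Gamma(f_n)$: for each $x \neq p$ the graph points $(x, f_n(x)) \to (x, q)$, so the (closed) set $\liminf \Gamma(f_n)$ contains the closure $\RP^1 \times \{q\}$; symmetrically, $(f_n^{-1}(y), y) \to (p, y)$ for $y \neq q$ gives $\{p\} \times \RP^1 \subseteq \liminf \Gamma(f_n)$. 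For $\limsup \Gamma(f_n) \subseteq L$: if graph points $(x_n, f_n(x_n)) \to (a,b)$ with $a \neq p$, then eventually $x_n$ lies in a fixed compactum avoiding $p$, so uniform convergence there forces $f_n(x_n) \to q$ and hence $b = q$; thus every accumulation point satisfies $a = p$ or $b = q$.

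The only delicate point, and the closest thing to an obstacle, lies in this degenerate case: one genuinely needs both the control on $f_n$ and on $f_n^{-1}$, since a single one-sided statement recovers only one ruling line of $L$, and one must separately rule out spurious accumulation points off $L$. Both follow from the uniform-on-compacta convergence exactly as above, so no input beyond Lemma~\ref{lm:cmpqs} is required.
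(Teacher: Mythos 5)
Your proposal is correct and follows exactly the route the paper intends: the paper states Lemma~\ref{lm:cmpqcads} as a direct rephrasing of the compactness dichotomy Lemma~\ref{lm:cmpqs} under the graph correspondence $f\mapsto\Gamma(f)$, offering no further proof, and your argument simply supplies the (routine but correctly handled) details of that translation, including the two Kuratowski inclusions in the degenerate case.
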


\subsection{Convexity in $\AdS^3$}

We work now with the projective model (the $\PSL(2,\RR)$ model) of $\AdS^3$.
Recall that a subset of projective space is said to be \emph{convex} if it is contained and convex in some affine chart; in other words, any two points of the subset are connected inside the subset by a unique projective segment.
A subset of projective space is said to be \emph{properly convex} if its closure is convex.
Unlike the projective model of hyperbolic space $\HH^3$, the anti de Sitter space $\AdS^3$ not a convex subset of the projective space $\RP^3$, and the basic operation of taking convex hulls is not well defined.
Nonetheless, the notion of convexity in $\AdS^3$ makes sense: we shall say that a subset $\mathscr C$ of $\AdS^3$ is \emph{convex} if it is convex as a subset of $\RP^3$ or, from an intrinsic point of view, if any two points of~$\mathscr C$ are connected inside~$\mathscr C$ by a unique segment which is geodesic for the pseudo-Riemannian metric.
We shall say that $\mathscr C$ is \emph{properly convex} if its closure in $\RP^3$ is convex.
A projective plane $P$ is called a \emph{support plane} to a convex set $\mathscr C$ at a point $p \in \partial \mathscr C$ if $\mathscr C \cap P$ contains $p$ but contains no point of the interior of $\mathscr C$. In the case that $\mathscr C$ is contained in a plane, we call $P$ a support plane if it contains $\mathscr C$.

\begin{defi} \label{defi locally convex}
Let $\mathscr C$ be any convex subset of $\AdS^3$.
Then any connected region $S$ of the boundary $\partial \mathscr C$ such that the support planes to $\mathscr C$ at points of $S$ are all spacelike (resp. non timelike) is called a \emph{locally convex spacelike (resp. nowhere timelike)} surface. 
\end{defi}

Let $\mathscr C \subset \AdS^3$ be a properly convex subset and let $S \subset \partial \mathscr C$ be a locally convex spacelike  surface.
Then the outward pointing normals to the supporting hyperplanes at points of $S$ are timelike and must have consistent time orientation since we assume $S$ connected. If all of the outward normals point to the future, then we call $S$ \emph{past convex}, and if all of the outward normals point to the past, then we call $S$ \emph{future convex}.

In the case that $S$ is future convex, then $\mathscr C$ is contained in the future half-space of any support plane to $S$ (where to make sense of this, we restrict to an affine chart containing $\mathscr C$). Similarly, if $S$ is past convex, then $\mathscr C$ is contained in the past halfspace of any support plane to $S$.

We now introduce the natural convex sets associated to an acausal meridian.

\begin{prop}[\cite{BeBo}]\label{prop:AdS-CH}
Let $C \subset \Ein^{1,1} = \partial \AdS^3$ be an acausal meridian. Then:
\begin{enumerate}
\item There is a unique minimal closed properly convex subset $\CH(C) \subset \AdS^3$ which accumulates on $C$. It is called the convex hull of $C$.
\item There is a unique maximal open convex subset $E(C) \subset \AdS^3$ which contains $\CH(C)$. It is called the invisible domain (or sometimes the domain of dependence) of $C$. The invisible domain $E(C)$ is dual to the convex hull $\CH(C)$ in the sense that $x \in \CH(C)$ if and only if the dual plane $x^\perp$ is disjoint from $E(C)$ and $y \in E(C)$ if and only if the dual plane $y^\perp$ is disjoint from $\CH(C)$.
\item The boundary $\partial \CH(C)$ is the union of two disks $\partial^+ \CH(C)$ and $\partial^- \CH(C)$ each of which is a locally convex spacelike surface. One, which we call  $\partial^+ \CH(C)$ is past convex, and the other, which we call  $\partial^- \CH(C)$, is future convex. 
\item The $\AdS^3$ metric induces path metrics on $\partial^+ \CH(C)$ and $\partial^- \CH(C)$ each of which is locally isometric to the hyperbolic plane.
\end{enumerate}
\end{prop}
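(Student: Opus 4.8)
The plan is to carry out everything in the projective model of Section~\ref{projec}, where $\AdS^3 = \{q<0\}$ and $\Ein^{1,1} = \{q=0\}$ sit inside $\RP^3$ and every object is governed by the polarity $x \mapsto x^\perp$ of the form $\langle\cdot,\cdot\rangle$ of signature $(2,2)$. I would construct the invisible domain $E(C)$ first, since it is what makes the convex hull well defined. For each ideal point $\xi = (p,q) \in C$, the dual plane $\xi^\perp$ is the null plane tangent to $\Ein^{1,1}$ at $\xi$, with $\partial(\xi^\perp) = (\{p\}\times\RP^1)\cup(\RP^1\times\{q\})$; it bounds the open region $\Omega_\xi \subset \AdS^3$ of points that see $\xi$ in spacelike directions, namely one side of $\xi^\perp$, detected by the sign of $\langle\,\cdot\,,\xi\rangle$. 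I would then set $E(C) := \bigcap_{\xi\in C}\Omega_\xi$. Being an intersection of half-spaces in any affine chart, $E(C)$ is convex; the essential point, where acausality of $C$ enters, is that the family of null planes $\{\xi^\perp\}_{\xi\in C}$ is in convex position, so that $\overline{E(C)}$ is properly convex and $\partial_\infty E(C) = C$. Maximality is then automatic: a convex set strictly larger than $E(C)$ would contain a point lying in the causal cone of some $\xi\in C$, destroying invisibility.

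Once $\overline{E(C)}$ is known to be properly convex it lies in an affine chart $A \cong \R^3$, and since $C = \partial_\infty E(C) \subset \overline{E(C)}$, the curve $C$ lies in $A$ as well. This resolves the only delicate point in part~(1): I would define $\CH(C)$ to be the ordinary convex hull of $C$ taken in $A$, intersected with $\AdS^3$. It is properly convex by construction and accumulates exactly on $C$; minimality and uniqueness are formal, since any closed convex set accumulating on $C$ contains each segment joining two points of $C$, hence the hull. The polar duality of part~(2), namely $y \in E(C) \Leftrightarrow y^\perp\cap\CH(C) = \emptyset$ and $x \in \CH(C) \Leftrightarrow x^\perp\cap E(C) = \emptyset$, I would obtain from the symmetry $\langle x,y\rangle = \langle y,x\rangle$ together with the involutivity of projective polar duality for properly convex sets: the inclusion $\CH(C)\subset E(C)$ holds because each $\Omega_\xi$ is convex with $\xi$ on its ideal boundary and so contains the hull, while the two equivalences are dual forms of the single relation recording on which side of $z^\perp$ a point lies.

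For part~(3), acausality does the decisive work. I would show every support plane of $\CH(C)$ at a boundary point is spacelike: a timelike or lightlike support plane $P$ meets $\Ein^{1,1}$ in a curve along which $P$ must touch $C = \partial_\infty\CH(C)$ in two points that are timelike or lightlike related, contradicting that $C$ is acausal. Hence $\partial\CH(C)$ is nowhere timelike, and the outward timelike normals split it into the future-pointing locus and the past-pointing locus, giving the two spacelike disks $\partial^+\CH(C)$ (past convex) and $\partial^-\CH(C)$ (future convex); each is a graph over a spacelike plane and so is a topological disk. Part~(4) then mirrors the hyperbolic case verbatim: each $\partial^\pm\CH(C)$ is a spacelike pleated surface, totally geodesic away from a bending lamination, and a totally geodesic spacelike plane in $\AdS^3$ is an isometric copy of $\HH^2$ because $\AdS^3$ has curvature $-1$. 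Since bending along spacelike geodesics preserves the induced path metric, the Gauss equation gives intrinsic curvature $-1$ on the flat pieces and the path metric is locally isometric to $\HH^2$.

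The main obstacle is the proper convexity of $E(C)$ in the first step, together with the spacelike-ness of $\partial\CH(C)$ in part~(3): these are the two places where acausality, as opposed to mere achronality, is indispensable and where the double ruling structure of $\Ein^{1,1}$ must be exploited carefully. Once these are in hand, the convex-hull construction, the polar duality, and the local hyperbolicity of the boundary follow by the same formal arguments used in the hyperbolic setting earlier in the paper.
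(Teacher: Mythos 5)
The paper does not prove this proposition at all: it is imported from Benedetti--Bonsante \cite{BeBo} (see also Mess and Barbot), so there is no internal argument to compare against. Your outline does follow the standard construction from that literature --- build the invisible domain from invisibility conditions at each ideal point, deduce proper convexity, take the hull in an adapted affine chart, dualize via $\langle\cdot,\cdot\rangle$, use acausality to exclude non-spacelike support planes, and read off the hyperbolic path metric from the pleated structure --- and the overall architecture is the right one.

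However, the step you flag as the main obstacle is also the step where the mechanism you propose does not work as stated. You describe $\Omega_\xi$ as ``one side of $\xi^\perp$, detected by the sign of $\langle\,\cdot\,,\xi\rangle$,'' and conclude that $E(C)$ is convex because it is ``an intersection of half-spaces in any affine chart.'' There are two problems. First, a projective plane does not separate $\RP^3$ and the sign of $\langle\,\cdot\,,\xi\rangle$ is not defined on projective classes, so ``one side of $\xi^\perp$'' only makes sense after passing to a cover or fixing compatible lifts along all of $C$ simultaneously. Second, and more seriously, $\AdS^3$ is itself not a convex subset of $\RP^3$, and in the affine chart complementary to the lightlike plane $\xi^\perp$ the quadric $\Ein^{1,1}$ becomes a doubly ruled saddle, so the relevant region $\Omega_\xi$ is bounded by a saddle surface rather than by an affine plane; it is not a half-space and not convex on its own. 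Convexity of $E(C)$ is therefore not formal: it is precisely where acausality of the \emph{whole} curve enters, via the lemma that the projective segment joining two ideal points $\xi,\eta\in\Ein^{1,1}$ lies in $\overline{\AdS^3}$ if and only if $\langle\xi,\eta\rangle\le 0$ for suitable lifts, i.e.\ if and only if $\xi$ and $\eta$ are not chronologically related. That lemma, which also produces the affine chart in which $\CH(C)$ can be taken, is what you need and do not state. A secondary gap: part (4) is dispatched as ``mirroring the hyperbolic case verbatim,'' but that presupposes that $\partial^\pm \CH(C)$ is a pleated surface --- a union of totally geodesic spacelike faces meeting along a bending lamination --- and this structure must first be extracted from convexity together with the spacelikeness of the support planes before the Gauss-equation argument applies. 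Neither gap is a wrong turn (both are filled in \cite{BeBo}), but as written the proposal asserts rather than proves the two facts on which everything else rests.
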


\subsection{Width of the convex hull}

We now give a useful criterion for an acuasal meridian $C$ in $\Ein^{1,1}$ to be a quasicircle in terms of the geometry of its convex hull. The following definition is due to Bonsante--Schlenker~\cite{maximal}.

\begin{defi}
Let $C\subset \partial \AdS^3$ be an achronal meridian. The {\em width} $w(C)$ of $C$ is the supremum of the time distance between a point of $\partial^- \CH(C)$ and $\partial^+ \CH(C)$. 
\end{defi} 

For reference, note that complete timelike geodesics in $\AdS^3$ are copies of $\RP^1$ which have timelike length $\pi$ in our normalization. Hence, the maximum time distance between two points of $\AdS^3$ is $\pi/2$, and hence the width $w(C)$ of an achronal meridian trivially satisfies $w(C) \leq \pi/2$. 
In fact, equality is realized in this bound for any achronal meridian containing a lightlike segment, see Claim 3.23 of~\cite{maximal}. Here is an important example.

\begin{example}[The rhombus curve]\label{ex:rhombus}
Let $a < a'$ and $b < b'$ be points of $\RP^1$
Let $\rhombus$ denote the piecewise linear curve consisting of four segments, two horizontal and two vertical, which connect the points $(a,b), (a',b), (a',b'), (a,b')$ in  $\Ein^{1,1} = \RP^1 \times \RP^1$ in this cyclic order using positive intervals of $\RP^1$. Note that all such curves, for varying values of $a,a',b,b'$ are equivalent by isometry of $\AdS^3$. We call $\rhombus$ the \emph{rhombus} example. The convex hull $\CH(\rhombus)$ is a tetrahedron and is easily seen to have width $\pi/2$. Indeed the spacelike line connecting $(a,b)$ to $(a',b')$ is dual to the spacelike line connecting $(a',b)$ to $(a,b')$ in the sense that any point on one is timelike distance $\pi/2$ from any point on the other. See Figure~\ref{fig:rhombus}.
\end{example} 

\begin{figure}
 
 \def\svgwidth{4.5cm}
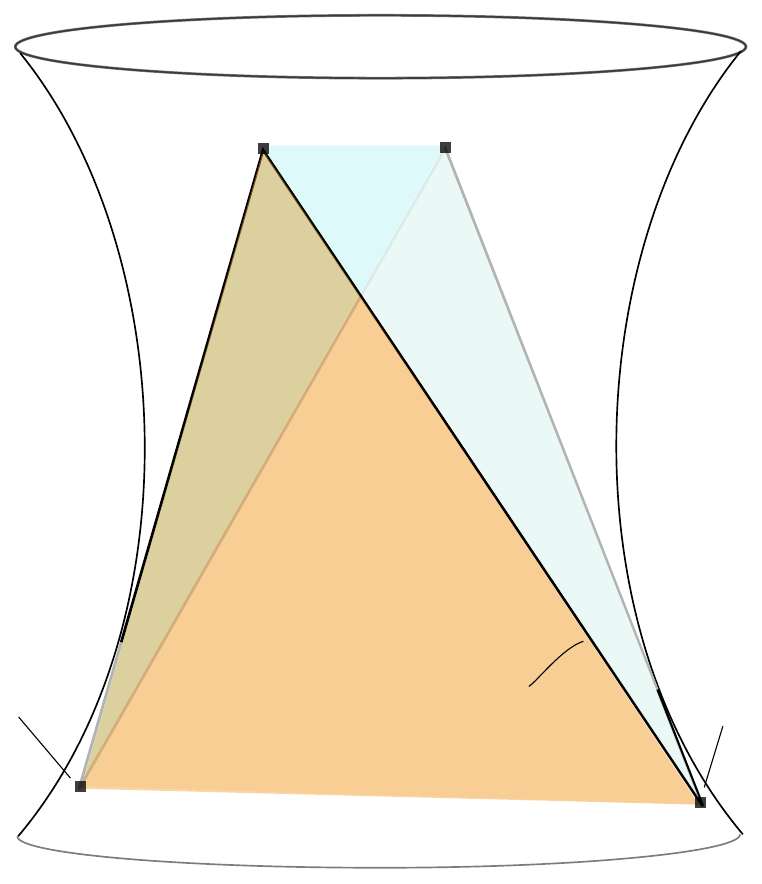

\caption{The convex hull of the achronal curve known as the \emph{rhombus} $\rhombus$ is a tetrahedron with two future oriented triangular faces (blue) and two past oriented triangular faces (orange). The tetrahedron has four edges contained in $\Ein^{1,1}$ which make up $\rhombus$ and two spacelike edges, one lying at the interface between the two future oriented triangles and one lying at the interface between the two past oriented edges. The two spacelike edges are dual.} \label{fig:rhombus}
\end{figure}

The following useful tool is a (very small) extension of Theorem 1.12 of~\cite{maximal}. 
In fact the methods of Theorem 1.12, which involve studying the relationship between the maximal surface spanning an acausal meridian and the associated minimal Lagrangian map of the hyperbolic plane, may be easily extended to prove this statement. We give a different proof here. 

\begin{prop}[Bonsante--Schlenker \cite{maximal}] \label{pr:w_ads}
Let $C\subset \partial \AdS^3$ be an acausal meridian. Then $C$ is a quasicircle if and only if $w(C)<\frac{\pi}{2}$.
Further, if $C_n$ is a sequence of quasicircles whose optimal quasisymmetric constant diverges to infinity, then there exist isometries $\phi_n \in \isom_0 \AdS^3$ so that $\phi_n C_n$ converges to the rhombus $\rhombus$, so that in particular $w(C_n) \to \pi/2$.
\end{prop}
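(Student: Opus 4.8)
The plan is to reduce both the dichotomy and the degeneration statement to a single \emph{renormalization lemma}: a sequence of quasicircles with exploding quasisymmetric constant can be pushed, by ambient isometries, to converge in the Hausdorff topology to the rhombus $\rhombus$. First I would record the elementary ingredients. Width is invariant under $\isom_0\AdS^3$ and satisfies $w\le\frac\pi2$ always; by Example~\ref{ex:rhombus}, $w(\rhombus)=\frac\pi2$; width is monotone under inclusion of convex hulls, so that $w(\CH(S))\le w(\CH(C))=w(C)$ for every finite $S\subset C$; and, by the $\AdS^3$ analogue of Lemma~\ref{lm:HvsGH}, Hausdorff convergence of acausal meridians entails convergence of their convex hulls, with the width of a tetrahedron depending continuously on its four vertices.

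\emph{The renormalization lemma (the engine).} Suppose $C_n=\Gamma(f_n)$ have optimal quasisymmetric constant $k_n\to\infty$. By Proposition~\ref{qs_cross} there are symmetric quadruples $Q_n$ in the domain circle with $\cro f_n(Q_n)$ escaping every interval $[-M,-1/M]$; such a quadruple locates the point where $f_n$ is maximally distorted and calibrates the rate of distortion. Using the factorwise action of $\isom_0\AdS^3=\PSL(2,\R)\times\PSL(2,\R)$ and the invariance of the cross ratio in each factor, I would choose $\phi_n=(\alpha_n,\beta_n)$ to be an \emph{anisotropic zoom}: a pair of hyperbolic transformations whose attracting/repelling data are adapted to $Q_n$ and whose expansion rates in the two factors are tuned to the ratio $\cro f_n(Q_n)$. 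The graphs $\phi_n C_n=\Gamma(g_n)$ are monotone, so a Hausdorff-subsequential limit exists and is an achronal staircase. The explosion of the cross ratio forces the limit to contain a genuine corner, namely a horizontal together with a vertical lightlike ray, emanating from the image of the distortion point; meanwhile, because both factors of $\phi_n$ degenerate, every point of $C_n$ away from the zoom centre is pushed toward the opposite vertex and onto the two rulings through it, which furnish the two remaining sides. Thus the whole curve converges to a four-sided box, i.e. to a rhombus, and all such lie in one $\isom_0\AdS^3$-orbit. I expect the \textbf{main obstacle} to be precisely this step: calibrating the anisotropic zoom so that the local corner survives at unit scale while the far part collapses cleanly onto the rulings, and upgrading convergence of the marked quadruple to Hausdorff convergence of the entire curves.

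Granting the lemma, the degeneration statement is immediate: isometry-invariance of width together with $w(\rhombus)=\frac\pi2$ gives $w(C_n)=w(\phi_nC_n)\to\frac\pi2$; concretely, choosing four points $S_n$ of $\phi_nC_n$ near the corners of $\rhombus$ yields $w(\phi_nC_n)\ge w(\CH(S_n))\to w(\rhombus)=\frac\pi2$ by tetrahedron-width continuity, while $w\le\frac\pi2$ always. The same mechanism gives one direction of the dichotomy: if $C=\Gamma(f)$ is acausal but \emph{not} a quasicircle, then $f$ is not quasisymmetric, so Proposition~\ref{qs_cross} supplies quadruples $Q_m$ with $\cro f(Q_m)$ escaping; zooming the single curve $C$ produces $\phi_m C\to\rhombus$, whence $w(C)=w(\phi_mC)\to\frac\pi2$, i.e. $w(C)=\frac\pi2$. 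Contrapositively, $w(C)<\frac\pi2$ implies $C$ is a quasicircle.

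For the converse, $C$ a $k$-quasicircle $\Rightarrow w(C)<\frac\pi2$, I would argue by compactness and duality rather than by zooming. No acausal meridian attains the value $\frac\pi2$: if $p^-\in\partial^-\CH(C)$ and $p^+\in\partial^+\CH(C)$ had $d(p^-,p^+)=\frac\pi2$ then $p^+\in(p^-)^\perp$, whereas $p^-\in\CH(C)$ forces $(p^-)^\perp\cap E(C)=\emptyset$ by Proposition~\ref{prop:AdS-CH}(2), contradicting $p^+\in\CH(C)\subset E(C)$. So if $w(C)=\frac\pi2$ there is a maximizing sequence $p^\pm_m$ with $d(p^-_m,p^+_m)\to\frac\pi2$; I normalize by isometries sending $p^-_m\mapsto\mathrm{id}$ and, using the residual diagonal $\PSL(2,\R)$ that stabilizes $\mathrm{id}$ and acts transitively on $\mathcal R_\pi=\mathrm{id}^\perp$, arranging $p^+_m\to\mathcal I_{x_0}\in\mathcal R_\pi$. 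The curves $\phi_m C=\Gamma(g_m)$ are uniformly $k$-quasisymmetric, so by Lemma~\ref{lm:cmpqcads} a subsequence converges either to a $k$-quasicircle or to a union of two rulings. In the first case the limiting convex hull contains $\mathrm{id}$ and $\mathcal I_{x_0}\in\mathrm{id}^\perp$ with $d=\frac\pi2$ now \emph{attained}, contradicting the previous step. The ruling case is excluded by linear algebra in $\R^{2,2}$: since $\mathrm{id}$ and $\mathcal I_{x_0}$ are timelike-separated at distance $\frac\pi2$, they span a negative-definite plane whose orthogonal complement is spacelike and hence contains no null vector, so the null plane that is the convex hull of a ruling union cannot contain both. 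This contradiction completes the direction, and with the earlier parts the whole proposition.
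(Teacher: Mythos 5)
Your overall architecture matches the paper's for the degeneration statement and for the implication ``$C$ not a quasicircle $\Rightarrow w(C)=\pi/2$'': both reduce to renormalizing a sequence with exploding quasisymmetric constant so that it Hausdorff-converges to $\rhombus$, and both extract the distortion data from symmetric quadruples via Proposition~\ref{qs_cross}. The genuine gap is exactly where you flag it: the ``anisotropic zoom'' $\phi_n=(\alpha_n,\beta_n)$ is never constructed, and the heuristic that tuning the two expansion rates to $\cro f_n(Q_n)$ makes the whole curve converge to a four-sided box is the hard part, not a calibration detail. A single pair of degenerating M\"obius maps applied to the graphs will generically produce a limit with at most one ``jump'' --- either a union of two ruling lines, or an achronal meridian containing one lightlike segment --- whereas $\Gamma(g_n)\to\rhombus$ requires $g_n$ to converge to a two-valued step function, i.e.\ two jumps. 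The paper resolves this with a two-stage construction plus a diagonal argument: first conjugate by $(g_n,h_n)$ adapted to the quadruple so that the renormalized curves limit onto a normalized achronal meridian $C_\infty$ containing a maximal lightlike segment of one ruling; then apply a second family of hyperbolic elements fixing that ruling line, which stretches the segment to a full line minus a point while collapsing the rest of $C_\infty$ onto the opposite ruling line, yielding $\rhombus$; finally extract a diagonal subsequence. That two-stage structure (or an equivalent) is the missing idea in your lemma. Your derivation of $w(C_n)\to\pi/2$ from the Hausdorff convergence (four points of $\phi_nC_n$ near the corners, plus monotonicity of width under inclusion of convex hulls) is a reasonable substitute for the paper's appeal to lower semicontinuity of the width.

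For the converse implication ``$C$ a quasicircle $\Rightarrow w(C)<\pi/2$'' you take a genuinely different route, and it works. You first show the supremum defining $w(C)$ is never attained for an acausal meridian, using Proposition~\ref{prop:AdS-CH}(2): $p^+\in(p^-)^\perp$ would place $p^+$ in $(p^-)^\perp\cap E(C)=\emptyset$. You then renormalize a maximizing sequence, invoke Lemma~\ref{lm:cmpqcads}, rule out the ruling-union limit by the signature argument (a negative-definite plane cannot sit inside the degenerate hyperplane $n^\perp$ for $n$ null), and contradict non-attainment in the quasicircle case. The paper instead classifies the achronal meridians attaining width $\pi/2$ as rhombi and then observes that renormalized copies of $C$ converging to $\rhombus$ force the quasisymmetric constant of $C$ to be unbounded by Lemma~\ref{lm:cmpqs}. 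The two arguments use the same duality input; yours isolates a clean non-attainment lemma and avoids the classification, at the cost of needing Hausdorff continuity of $C\mapsto\CH(C)$ along the maximizing sequence (routine, but it should be stated --- Lemma~\ref{lm:HvsGH} as cited concerns path metrics on the boundaries, not convergence of the hulls themselves).
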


We provide a proof for convenience, although the ideas are contained in~\cite{maximal}.

\begin{proof}
Suppose $C$ is an achronal meridian for which $w(C) = \pi/2$. 
The first thing to observe is that if the supremum in the definition of $w(C)$ is realized, then there is exactly one possibility for $C$, up to isometry, namely $C = \rhombus$ is the rhombus example (Example~\ref{ex:rhombus}). To see this, let $x \in \partial^+ \CH(C)$ and $y \in \partial^- \CH(C)$. Then $x$ is in the interior of the convex hull of points $c_1, \ldots, c_k \in C$ and $y$ is in the interior of the convex hull of points $d_1, \ldots, d_m \in C$. If $x$ and $y$ have timelike separation at distance $\pi/2$, then the dual plane $y^\perp$ to $y$ supports $\CH(C)$ at $x$ and hence $y^\perp$ contains $c_1, \ldots, c_k$. Similarly, the dual plane $x^\perp$ to $x$ contains $d_1, \ldots, d_m$. Hence $c_i$ is orthogonal to $d_j$ for all $1 \leq i \leq k$ and $1 \leq j \leq m$ and the only possibility is that $k = m =2$ and $C = \rhombus$ is a piecewise linear curve consisting of four lightlike segments connecting a $c_i$ to $d_j$ with two such segments from each of the two rulings.

If $w(C) = \pi/2$ but the supremum is not realized, then there are sequences $x_n \in \partial^+ \CH(C)$ and $y_n \in \partial^- \CH(C)$ for which the separation between $x_n$ and $y_n$ is timelike with distance converging to $\pi/2$ as $n \to \infty$. Then let $\varphi_n = (g_n, h_n) \in \PSL(2,\RR) \times \PSL(2,\RR)$ be such that $\varphi_n x_n$ and $\varphi_n y_n$ converge to  points $x_\infty$ and $y_\infty$ in $\AdS^3$ at timelike distance $\pi/2$. Then $\varphi_n C$ converges, up to taking a subsequence, to some limiting achoronal meridian $C_\infty$ whose width is $w(C_\infty) = \pi/2$ and the supremum is realized by $x_\infty$ and $y_\infty$ as in the previous paragraph, hence $C_\infty = \rhombus$ is composed of four lightlike segments. It then follows from Lemma~\ref{lm:cmpqs} that $\varphi_n C$, and hence $C$ does not have bounded quasicircle constant, and hence $C$ is not a quasicircle.

Conversely, let $f_n\co \RP^1 \to \RP^1$ be a sequence of orientation-preserving homeomorphisms whose optimal quasisymmetric constant diverges to infinity, where we allow for the case that some or all of the $f_n$ fail to be quasisymmetric (e.g. we allow $f_n = f$ to be some fixed orientation-preserving homeomorphism that is not quasisymmetric). Let $C_n = \Gamma(f_n)$ be the corresponding acausal meridians. Then there is a sequence of symmetric four-tuples of points $a_n, b_n, c_n, d_n$ (symmetric means $cr(a_n,b_n,c_n,d_n) = -1$) for which the image of the four-tuple $f(a_n), f(b_n), f(c_n), f(d_n)$  has cross-ratio (a negative number) converging to $0$ or $\infty$. For each $n$, let $g_n \in \PSL(2,\RR)$ map the points $a_n, b_n, c_n, d_n$ to $0,1,\infty, -1$ respectively, and let $h_n \in \PSL(2,\RR)$ map the points $f(a_n), f(b_n), f(c_n)$ to $0,1, \infty$. Then for each $n$, $f_n' := h_n f_n g_n^{-1}$ fixes the points $0,1,\infty$ (i.e. $f_n'$ is normalized). As $n \to \infty$, $f_n'(-1)$ converges to $0$ or $\infty$. Hence, after extracting a subsequence, the graph $C_n' = \Gamma(f_n')$ limits to an acausal meridian $C_\infty$ which contains either the lightlike interval from $(-1,0)$ to $(0,0)$ (in a line of the left ruling) or the lightlike interval from $(\infty,\infty)$ to $(-1, \infty)$ (also in a line of the left ruling). 

Next, consider any achronal meridian $C'$ (for example $C' = C_\infty$ above) which contains a maximal lightlike segment of the left ruling of the form $\{ (x,b): a \leq x \leq a'\} \subset \RP^1 \times \RP^1$ for some fixed $b \in \RP^1$. Let $G_n \in \PSL(2,\RR)$ be a hyperbolic element with repelling fixed point $b$, attracting fixed point some $b' \neq b$, and translation length going to infinity. Then $(G_n, \mathrm{id}) C'$ converges to $\rhombus$.
In particular, applying this to $C' = C_\infty$, we have a sequence $(G_n, \mathrm{id})$ so that $(G_n, \mathrm{id})C_\infty \to \rhombus$.
Finally, choosing a subsequence $(G_{k(n)}, \mathrm{id})$ as necessary, we have $(G_{k(n)}g_n,h_n) C_n \to \rhombus$. 
Since the width is a lower semicontinuous function of the achronal meridian, and $w(\rhombus) = \pi/2$, we have that $w(C_n)$ converges to $\pi/2$.
\end{proof}

We note that Proposition~\ref{pr:w_ads} does not have an analogue in hyperbolic geometry: Quasicircles in $\CP^1$ are not characterized by finiteness of the width of the convex hull in $\HH^3$. See~\cite{width-paper} for more about the behavior of the width of the convex hull in hyperbolic geometry.

\subsection{Differential geometry of surfaces embedded in $\AdS^3$}

Given a smooth spacelike immersed surface $S$ in $\AdS^3$, the \textit{first fundamental form} $\I$  on $S$ is defined as the pull-back of the anti de Sitter metric, which is Riemannian because $S$ is spacelike, while the \textit{second fundamental form} $\II$ is the normal part of the restriction of the Levi Civita connection of $\AdS^3$ to the tangent bundle of $S$. As in the Riemannian setting, the shape operator is defined as $B = \I^{-1}\II$, or, more explicitly, it is defined by the relation $\II(v,w)=\I(Bv, w)$ for any $v,w\in T_x S$. The \textit{third fundamental form} $\III$ is then defined as $\III(v,w)=\I(Bv, Bw)$. The eigenvalues of $B$ are called principal curvatures.
The Gauss equation in the anti de Sitter setting is $K = -1 -\det B$, where $K$ denotes the intrinsic curvature of $\I$. Notice that in this case the locally strictly convex surfaces
(i.e. those for which $\II$ is definite) are exactly the surfaces with intrinsic curvature less than $-1$.

\subsection{Polar duality for surfaces in $\AdS^3$} \label{ssc:polar-ads}

We will be using below a notion of polar duality for smooth, convex, spacelike surfaces in $\AdS^3$. This duality is well-known in the hyperbolic setting, where a convex surface in $\HH^3$ is dual to a convex, spacelike surface in the de Sitter space $\dS^3$, see Section \ref{sec:polar}, and conversely. However this phenomenon is also well-known in a much more general setting that includes $\AdS^3$, see e.g. \cite[Prop 3.3]{shu} or \cite[Section 5]{fillastre-seppi}. We recall its definition and key properties here, for completeness, in the specific case of $\AdS^3$.

A simple way to define this duality is to use the description of the double cover $\widetilde{\AdS^3}$ of $\AdS^3 = \PSL(2, \R)$ as $\widetilde{\AdS^3} = \SL(2, \R)$, see Section \ref{projec}.

Let $V\co\Sigma\to \widetilde{\AdS^3}$ be a smooth, spacelike oriented embedding of an oriented surface into the double cover of $\AdS^3$. At each point $x\in \Sigma$, we can consider the oriented unit normal $n_x$ to $\mathrm{Im}(dV_x)$. Since $V$ is spacelike, $n_x$ is timelike and $n_x$, considered as a unit timelike vector in $\mathrm{M}_2(\R) = \R^{2,2}$, is also contained in $\widetilde{\AdS^3}$. The map $V^*\co\Sigma \to\widetilde{\AdS^3}$ defined by $x\mapsto n_x$ is the (polar) dual map to $V$.

By construction, if $x\in \Sigma$ and $v\in T_x\Sigma$ then $dV^*_x(v)=\nabla_vn=-Bv$, where $\nabla$ is the  Levi-Civita connection of $\tilde{\AdS^3}$ and $B$ is the shape operator of $\Sigma$. As a consequence, if $B$ is non-degenerate at every point of $\Sigma$, then $V^*$ is a smooth embedding. Moreover, the metric $I^*$ induced by $V^*$ is by construction equal to the third fundamental form of $V$:
$$ I^*(v,w)=\langle \nabla_vn,\nabla_wn\rangle = \langle Bv,Bw\rangle = \III(v,w)~. $$
It follows that, as soon as $V$ has non-degenerate shape operator, $V^*$ is spacelike, because its induced metric is positive definite.

For all $x\in \Sigma$, the image $\mathrm{Im}(dV_x)$ can be identified, as a vector subspace of $\mathrm{M}_2(\R) = \R^{2,2}$, with $\mathrm{Im}(dV^*_x)$. By definition, the vector subspace normal to $\mathrm{Im}(dV_x)$ is spanned by $V(x)$ and $V^*(x)$, which are also orthogonal and of unit norm. It follows that $V(x)$ is also a unit normal to $\mathrm{Im}(dV^*_x)$ in $T_{V^*(x)}\widetilde{\AdS^3}$. As a consequence, with the correct choice of orientation, $V$ is the embedding dual to $V^*$ by the same polar duality (thus the term ``duality''). It also follows from this argument that $B^*$, the pull-back by $V^*$ of the shape operator of $V^*(\Sigma)$, is equal to $B^{-1}$.

Note, finally, that exactly as for the polar duality in hyperbolic space, and for the same reasons, if the curvature of the induced metric of $V$ is equal to $K$ (and $B$ is non-degenerate), then the curvature of the metric induced by $V^*$ is equal to $\frac{K}{\det(B)}$. Since $K=-1-\det(B)$ by the Gauss formula in $\AdS^3$, it follows that the curvature of the metric induced by $V^*$ is equall to $K^*=-\frac{K}{K+1}$.

\subsection{Earthquakes and measured geodesic laminations}

We conclude this (second) preliminaries section with some facts about earthquake maps of the hyperbolic plane which are relevant for the AdS geometry constructions in Section~\ref{sec:leftright} and also for the proof, to be given finally in Section~\ref{approx}, of the surjectivity criterion Proposition~\ref{pr-surj} used in the proofs of Theorems~\ref{tm:induced-hyp}, \ref{tm:induced-hyp-K} and~\ref{tm:III-hyp-K} and its AdS analogue Proposition~\ref{pr-surj-ads}, which will be needed for Theorems~\ref{tm:induced-ads} and~\ref{tm:induced-ads-K}.

A {\em (geodesic) lamination} $\mathcal L$ on $\HH^2$ is a closed subset of  $\HH^2$ which is the union of disjoint geodesics, called the leaves of $\mathcal L$. A {\em measured (geodesic) lamination} is a pair $(\mathcal L, \lambda)$ consisting of a lamination $\mathcal L$ on $\HH^2$ and a transverse measure $\lambda$, namely a measure defined on each arc $c$ with support in $c \cap \mathcal L$ which is invariant under a homotopy of $c$ that respects the leaves.  
We denote $\cML(\HH^2)$ the space of all measured (geodesic) laminations on $\HH^2$. The Thurston norm of $\lambda$, denoted by $||\lambda||_{\mathrm{Th}}$, is defined as the supremum of $\iota(\lambda, c)$ over all geodesic segments $c$ of length $1$ in $\mathbb H^2$, where $\iota(\lambda, c)$ is the total measure of $\mathcal L \cap c$ with respect to $\lambda$. Clearly the Thurston norm can be $+\infty$. We say that a measured geodesic lamination is \textit{bounded} if its Thurston norm is finite. 
 We may alternatively think of a measured geodesic lamination as a locally finite measure $\lambda$ on the space $\mathcal G$ of geodesics in $\HH^2$ whose support is simple, meaning no two geodesics in the support cross each other.
 The weak-* topology on $\cML(\HH^2)$ refers to the topology induced by embedding $\cML(\HH^2)$ into the dual of the compactly supported continuous functions on $\mathcal G$ via integration:
 a sequence of measured geodesic laminations $\lambda_n \in \ML(\HH^2)$ weak-* converges to $\lambda$ if for any compactly supported continuous function $f$ we have
\[
    \int_{\mathcal G} f(l)d\lambda_n(l)\rightarrow\int_{\mathcal G} f(l)d\lambda(l)~.
\]

A \textit{left (resp. right) earthquake along a geodesic lamination} $\mathcal L$ (called the fault locus) is a possibly discontinuous bijective map $E\co\mathbb H^2\to\mathbb H^2$ such that 
\begin{itemize}
\item the restriction of $E$ to any stratum $F$ of $\mathcal L$ (that is, a geodesic of $\mathcal L$ or a connected component of $\HH^2 \setminus \mathcal L$) extends to a global isometry $A(F)$ of $\mathbb H^2$,
\item for any  pair of strata $F,F'$ the comparison map $A(F)^{-1}A(F')$ is a hyperbolic transformation whose axis weakly separates $F$ from $F'$, and which moves $F'$ on the left (resp. right) as seen from $F$.
\end{itemize}

By Thurston's work (Proposition III.1.6.1 of \cite{th-earth}) it is possible to associate to every earthquake $E$ a measured geodesic lamination $\lambda$ whose support is the fault locus of $E$, and which encodes the amount of shearing of $E$. We will refer to $\lambda$ as the shearing measure associated to $E$. The measured  lamination  $\lambda$  determines $E$, up to post-composition by elements in $\PSL(2,\mathbb R)$ (Proposition III.1.6.1 of \cite{th-earth}).

Thurston proved that, though $E$ itself may not be continuous, $E$ extends uniquely to an orientation-preserving homeomorphism of the ideal boundary at infinity, which we denote by $E|_{\RP^1}$.  A key result of Thurston theory is that any orientation-preserving homeomorphism of the circle is the extension of an earthquake which is essentially unique (see Theorem III.1.3.1 of \cite{th-earth}). We say that $E$ is normalised if $E|_{\RP^1}$ is a normalised homeomorphism.

The following technical lemma will play an important role in our proof.

\begin{lemma}[Lemma II.3.11.5 of \cite{can_not}]\label{lm:continearth}
If the shearing measures  $\lambda_n$ of a sequence of normalised earthquakes $E_n$ weakly-* converges to
the shearing locus $\lambda_\infty$ of an earthquake $E_\infty$, then $E_n|_{\RP^1}$ uniformly converges to $E_\infty|_{\RP^1}$, while $E_n$ pointwise converges to $E_\infty$ off from weighted leaves of $\lambda_\infty$.
\end{lemma}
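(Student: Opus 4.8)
The plan is to establish the interior statement first — pointwise convergence $E_n \to E_\infty$ off the weighted leaves of $\lambda_\infty$ — and then bootstrap it to uniform convergence of the boundary extensions using the monotonicity of circle homeomorphisms. Recall that an earthquake is reconstructed from its shearing measure by a \emph{product integral} (a shearing cocycle): fixing a base point $x_0 \in \HH^2$ lying off all the laminations and declaring the comparison isometry of its stratum to be the identity, the isometry $A(F_x)$ carrying the base stratum to the stratum $F_x$ containing a point $x$ is obtained by integrating the hyperbolic translations dictated by $\lambda$ along a transversal arc $c$ from $x_0$ to $x$. Since $\lambda$ determines $E$ only up to post-composition by $\PSL(2,\R)$, I would first work with these \emph{based} earthquakes $\hat E_n$ (with $A(F_{x_0}) = \mathrm{id}$) and only restore the normalisation $\hat E_n|_{\RP^1}(i) = i$, $i = 0,1,\infty$, at the end.

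For the interior convergence, fix $x$ off the weighted leaves of $\lambda_\infty$ and choose the transversal $c = [x_0, x]$ so that neither endpoint lies on an atom of $\lambda_\infty$. Weak-$*$ convergence $\lambda_n \to \lambda_\infty$ on $\mathcal G$ then yields weak convergence of the transverse measures restricted to $c$, together with the ``spatial'' convergence that the mass of $\lambda_n$ concentrates near the support of $\lambda_\infty$. Partitioning $c$ into short subarcs, on each of which the crossing leaves are nearly parallel (their endpoints on $\RP^1$ lie in small intervals), the partial product of translations along a subarc is approximated by a single hyperbolic translation whose length is the total transverse mass; weak convergence of the transverse measures controls these masses, near-parallelism controls the axes, and so the finite products — and hence the product integrals — converge. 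This gives $\hat E_n(x) \to \hat E_\infty(x)$.

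I would then restore the normalisation. The based earthquakes differ from the normalised ones by post-composition with Möbius maps $g_n \in \PSL(2,\R)$. Having established interior convergence, the boundary extensions $\hat E_n|_{\RP^1}$ converge pointwise on the dense set of endpoints of geodesic rays avoiding the weighted leaves; since these are orientation-preserving homeomorphisms of $\RP^1$ converging to the continuous homeomorphism $\hat E_\infty|_{\RP^1}$ on a dense set, monotonicity upgrades this to uniform convergence. In particular the normalising maps $g_n$, being determined by three converging boundary values, converge to some $g_\infty$, and composing yields uniform convergence $E_n|_{\RP^1} \to E_\infty|_{\RP^1}$ together with pointwise convergence $E_n \to E_\infty$ off the weighted leaves.

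The main obstacle is the product-integral estimate: passing from weak-$*$ convergence of the measures to convergence of the shearing isometries. Weak convergence alone permits mass to migrate among many nearby leaves, so the argument relies essentially on the lamination structure — the leaves crossing a short transversal are nearly parallel and turn by a controlled total amount — to guarantee that redistributing mass among them perturbs the composed translation only slightly. Handling the atoms (weighted leaves) of $\lambda_\infty$, where pointwise convergence genuinely fails and one must argue off the leaves, and ensuring the transversal avoids atoms at its endpoints, are the delicate points; the monotonicity step and the $\PSL(2,\R)$ normalisation are then comparatively routine.
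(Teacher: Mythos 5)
A preliminary remark on the comparison you asked for: the paper does not prove this lemma at all --- it is imported wholesale as Lemma II.3.11.5 of \cite{can_not} and used as a black box --- so there is no in-paper argument to measure your proposal against. Your architecture (based product integrals/shearing cocycles, interior convergence off the atoms of $\lambda_\infty$, boundary convergence on a dense set, monotonicity of circle homeomorphisms, restoration of the three-point normalisation) is the standard route of the cited source, and your treatment of the interior step is sound in outline: disjoint geodesics crossing a short transversal are indeed nearly parallel, weak-$*$ convergence does control both the transverse masses and the localisation of the mass of $\lambda_n$ near the support of $\lambda_\infty$ on the compact set of geodesics meeting a fixed subarc, and the exclusions at atoms are placed where they belong.

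The genuine gap is the sentence ``the boundary extensions $\hat E_n|_{\RP^1}$ converge pointwise on the dense set of endpoints of geodesic rays avoiding the weighted leaves.'' Pointwise (even locally uniform) convergence of $\hat E_n$ on $\HH^2$ does not imply convergence of the boundary extensions at a single point of $\RP^1$: take $h_n$ equal to the identity on $B(x_0,n)$ and to a fixed nontrivial rotation about $x_0$ outside $B(x_0,n+1)$; then $h_n\to\mathrm{id}$ pointwise but $\partial h_n$ is constant and nontrivial. To pass to the boundary you need uniformity in $n$ of the convergence $\hat E_n(x)\to\partial\hat E_n(\xi)$ as $x\to\xi$, i.e.\ control of the tail of the product integral along a ray toward $\xi$. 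Since the hypothesis is only weak-$*$ convergence --- no uniform bound on the Thurston norms of the $\lambda_n$ is assumed --- you cannot get this from uniform quasisymmetry or equicontinuity, and the mass of $\lambda_n$ crossing the tail of a ray need not be bounded. The mechanism that saves the statement is structural: the cocycle at a stratum on the near side of a leaf $l$ of $\lambda_n$ is independent of $\lambda_n$ restricted to the far side of $l$, and $\partial\hat E_n$ carries the arc of $\RP^1$ subtended by $l$ onto the arc subtended by its image geodesic; hence mass of $\lambda_n$ that escapes in $\mathcal G$ is carried by leaves whose subtended arcs shrink to points, and it perturbs the boundary homeomorphism by a uniformly small amount. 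Making this precise is where most of the work of the cited lemma lives, and your sketch does not engage with it. A secondary quibble: your ``dense set'' need not be dense --- the weighted leaves of $\lambda_\infty$ can be dense in its support, in which case no ray from $x_0$ avoids them all --- so even the intermediate statement needs reformulating (e.g.\ convergence at endpoints of strata, or an argument at every point of $\RP^1$ directly). The monotonicity upgrade and the convergence of the normalising M\"obius maps are, as you say, routine once the dense pointwise boundary convergence is actually in hand.
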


Gardiner, Hu and Lakic \cite{Ga-Hu} and Saric \cite[Theorem 1]{sa} proved the following characterization of quasisymmetric homeomorphisms in terms of earthquakes.

\begin{prop}\cite{Ga-Hu, sa}\label{pr:bounlam}
 An orientation-preserving homeomorphism of the circle $f$ 
 is quasisymmetric if and only if the shearing measure  $\lambda$ of the earthquake extension $E$ of  $f$ is bounded. More precisely for any $M$ there is $C=C(M)$ such that if $f$ is $M$-quasisymmetric then $||\lambda||_{\mathrm{Th}}\leq C$. 

Conversely any bounded lamination is the shearing measure of an earthquake $E$ such that $E|_{\RP^1}$ is quasisymmetric. In fact  for any $C$ there is $M=M(C)$ such that  if $||\lambda||_{\mathrm{Th}}\leq C$ then $E:\HH^2\to\HH^2$ is an $M$--quasi-isometry.
\end{prop}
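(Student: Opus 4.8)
The plan is to prove the two quantitative implications separately, using the cross-ratio characterization of quasisymmetry (Proposition~\ref{qs_cross}) and the quasi-isometric characterization (Proposition~\ref{prop:QI-extend}) as the two bridges between the analytic notion of quasisymmetry and the geometric data of the earthquake. Thurston's theory, recalled above, guarantees that $E$ is determined by $\lambda$ up to post-composition by $\PSL(2,\R)$, so both $\|\lambda\|_{\mathrm{Th}}$ and the quasisymmetric constant of $f = E|_{\RP^1}$ are well-defined invariants of $f$. The biconditional then follows formally from the two quantitative halves.

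First I would treat the implication that bounded shearing forces $E$ to be a quasi-isometry, which is the more directly geometric direction. The idea is to estimate $d_{\HH^2}(E(x), E(y))$ along a geodesic segment $[x,y]$ transverse to $\mathcal L$. Writing $A(F)$ for the isometry that $E$ restricts to on a stratum $F$, and letting $F_0, F_1$ be the strata containing $x,y$, one has $d_{\HH^2}(E(x), E(y)) = d_{\HH^2}(x, g\,y)$, where $g = A(F_0)^{-1} A(F_1)$ is the comparison isometry obtained by the path-ordered composition of the elementary shears across $[x,y]$. The total shear is exactly $\iota(\lambda, [x,y])$, which by definition of the Thurston norm is bounded by $\|\lambda\|_{\mathrm{Th}}\cdot(\ell([x,y])+1) \leq C\,(d_{\HH^2}(x,y)+1)$. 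Subdividing $[x,y]$ into unit segments and accumulating the displacement produced by each elementary shear yields an estimate of the form $d_{\HH^2}(E(x),E(y)) \leq A\,d_{\HH^2}(x,y) + A$ with $A = A(C)$; the lower bound follows by applying the same argument to the inverse earthquake, which is an earthquake along the same lamination with the same transverse measure. Proposition~\ref{prop:QI-extend} then upgrades the quasi-isometry to an $M(C)$-quasisymmetric boundary map, giving the converse direction of the statement.

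For the implication that $M$-quasisymmetry bounds $\|\lambda\|_{\mathrm{Th}}$, I would bound $\iota(\lambda, c)$ for an arbitrary unit geodesic segment $c$. The key is the classical shear-coordinate relation: the $\lambda$-measure of the portion of $c$ lying between two strata $F, F'$ is, up to universal additive and multiplicative constants, the logarithm of the cross-ratio of the four ideal endpoints of the boundary leaves of $F$ and $F'$. Applying the earthquake and passing to the ideal boundary, this cross-ratio is transformed by $f$; since $f$ is $M$-quasisymmetric, Proposition~\ref{qs_cross} bounds the distortion of the cross-ratio of every symmetric quadruple, which caps the measure accumulated between consecutive leaves. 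Integrating this local bound over the leaves of $\mathcal L$ crossing the unit segment $c$ produces a bound $\iota(\lambda, c) \leq C(M)$ uniform in $c$, i.e. $\|\lambda\|_{\mathrm{Th}} \leq C(M)$.

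The main obstacle will be making the correspondence between the shearing measure and cross-ratio distortion genuinely quantitative and uniform, rather than merely qualitative. Two points require care. First, the shearing measure may have a diffuse (non-atomic) part supported on an uncountable lamination, so the ``logarithm of a cross-ratio'' relation must be formulated as an integral estimate valid for the measure itself and not just for finitely many weighted leaves. Second, one must control the displacement $d_{\HH^2}(x, g\,y)$ of the comparison isometry $g$ uniformly in the position of $y$ relative to the axes of its constituent shears, since a hyperbolic isometry displaces points off its axis by more than its translation length. Ensuring that the constants $C(M)$ and $M(C)$ depend only on the quasisymmetry or norm bound, and not on the particular lamination, is where the bulk of the technical work lies.
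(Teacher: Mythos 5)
First, a point of order: the paper does not prove Proposition~\ref{pr:bounlam} at all --- it is imported verbatim from \cite{Ga-Hu} and \cite{sa} and used as a black box. So there is no in-paper proof to compare against; what you have written is a sketch of the argument from the cited literature, and it should be judged on those terms.

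Your treatment of the direction ``$\|\lambda\|_{\mathrm{Th}}\leq C$ implies $E$ is an $M(C)$--quasi-isometry'' is the standard strategy and is essentially sound: one bounds $d(E(x),E(y))$ by $d(x,y)+\iota(\lambda,[x,y])$ up to controlled error, using that the leaves contributing to the comparison isometry all cross $[x,y]$, and you correctly flag the off-axis displacement issue as the point needing care. One secondary gap: ``apply the same argument to the inverse earthquake'' is not free, because the inverse of a left earthquake along $\lambda$ is a right earthquake along the \emph{image} lamination $E(\mathcal L)$ with the transported measure, and bounding the Thurston norm of that transported measure in terms of $\|\lambda\|_{\mathrm{Th}}$ requires an argument (and is dangerously close to circular if done carelessly). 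The lower quasi-isometry bound is genuinely the harder half in the literature.

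The real gap is in the direction ``$M$-quasisymmetric implies $\|\lambda\|_{\mathrm{Th}}\leq C(M)$.'' Your key step asserts that the $\lambda$-measure deposited between two strata $F,F'$ is, up to universal constants, the logarithm of the cross-ratio of the ideal endpoints of their boundary leaves. Read literally this is false: the cross-ratio of those four ideal points is determined by the \emph{positions} of the leaves (essentially by $d(F,F')$), whereas the transverse measure is independent data --- one can put any weight on a fixed pair of geodesics. The correct statement is that the \emph{translation length of the comparison isometry} $A(F)^{-1}A(F')$ is comparable to the measure between $F$ and $F'$, and that this translation length must then be detected from the boundary homeomorphism $f=\partial E$ via its action on suitably chosen symmetric quadruples, to which Proposition~\ref{qs_cross} applies. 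Choosing those quadruples uniformly, handling the diffuse part of $\lambda$, and summing the resulting local bounds over a unit transversal without loss is precisely the content of Hu's theorem relating the earthquake measure to the cross-ratio distortion norm; it is not a routine integration of a pointwise identity. As written, this half of your argument rests on an identity that does not hold, so the quantitative bound $\|\lambda\|_{\mathrm{Th}}\leq C(M)$ is not established.
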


\section{Gluing maps in AdS geometry}

Here we carefully define the gluing maps $\Psi_{\cdot}, \Psi_{\cdot, K}, \Psi_{\cdot, K}^*: \qcm(\Ein^{1,1}) \to \cT$ from the introduction, filling in the technical results needed for the definitions. We will also give a critical estimate needed for the proofs of Theorems~\ref{tm:induced-ads}, \ref{tm:induced-ads-K}, and~\ref{tm:III-ads-K}.
We will follow the same outline as in the setting of hyperbolic geometry with some notable differences along the way.

Let $C \subset \Ein^{1,1}$ be an acausal meridian. Then, recall from Lemma~\ref{lem:graph} that $C = \Gamma(f)$ is the graph of an orientation-preserving homeomorphism $f: \RP^1 \to \RP^1$, where here we identify $\Ein^{1,1}$ with the product $\RP^1 \times \RP^1$ as in Section~\ref{prel ads}.
Recall from Proposition~\ref{prop:AdS-CH} that there are convex subsets $\CH(C) \subset E(C) \subset \AdS^3$, where the convex hull $\CH(C)$ is the unique smallest 
 closed convex subset accumulating on $C$ at infinity, and the invisible domain $E(C)$ is the unique maximal open convex subset containing $\CH(C)$. 
The boundary $\partial \CH(C)$ of $\CH(C)$ consists of two spacelike convex properly embedded disks that span $C$. We call the component of $\partial \CH(C)$ for which the outward normal is future oriented the \emph{future boundary component} and denote it $\partial^+ \CH(C)$. Similarly, the other boundary component, whose outward pointing normal is past oriented, is called the \emph{past boundary component} and denoted $\partial^- \CH(C)$. Note that the surfaces $\partial^\pm \CH(C)$ are not smooth, but rather are each bent along a geodesic lamination.

Now assume that $C = \Gamma(f)$ is a quasifuchsian quasicircle, meaning $f: \RP^1 \to \RP^1$ is a quasifuchsian quasisymmetric homeomorphism, conjugating one surface group $\pi_1 \Sigma \xrightarrow[\cong]{\rho_R} G_1 < \PSL(2,\RR)$ to another $\pi_1 \Sigma \xrightarrow[\cong]{\rho_L} G_2 < \PSL(2,\RR)$, then the invisible domain $E(C)$ is a maximal open convex domain of discontinuity for the $(\rho_L, \rho_R)$ action of $\pi_1 \Sigma$  on $\AdS^3$ and $\pi_1 \Sigma \backslash \CH(C)$ is called the convex core of the maximal globally hyberbolic spatially compact spacetime $\pi_1 \Sigma \backslash E(C)$, see \cite{mess}.
In this case, Barbot-B\'eguin-Zeghib~\cite{BBZ} proved that the complement of $\pi_1 \Sigma \backslash \CH(C)$ in $\pi_1 \Sigma \backslash E(C)$ admits a foliation by $K$--surfaces, i.e. surfaces whose Guass curvature is constant equal to~$K$.
The following result of Bonsante--Seppi generalizes this result to the context of interest here and also generalizes Rosenberg--Spruck's Theorem~\ref{tm:K-surfaces-hyp} from the hyperbolic setting.

\begin{theorem}[Barbot-B\'eguin-Zeghib~\cite{BBZ}, Bonsante--Seppi~\cite{bon_are}] \label{tm:K-surfaces-ads}
Let $C\subset \Ein^{1,1}$ be an acausal meridian, and let
$K \in (-\infty,-1)$. Then:
\begin{enumerate}
\item There are exactly two properly embedded $K$--surfaces in $\AdS^3$ spanning $C$. These are each
homeomorphic to disks, are disjoint, and bound a closed properly convex region $\mathscr C_K(C)$ in $\AdS^3$ which contains a neighborhood of the convex hull $\CH(C)$.
\item Further the $K$--surfaces spanning $C$, for $K\in (-\infty,-1)$, form a foliation of $E(C) \setminus \CH(C)$. 
\item Moreover, $C$ is a quasicircle if and only if 
any spanning $K$-surface has bounded principal curvatures. 
\end{enumerate}
\end{theorem}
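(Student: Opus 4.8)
The plan is to derive all three parts from the Gauss equation, a maximum principle, and the width criterion of Proposition~\ref{pr:w_ads}, treating existence/uniqueness, the foliation, and the quasicircle characterization in turn. The starting observation is that the Gauss equation $K = -1 - \det B$ forces any spacelike $K$-surface with $K \in (-\infty,-1)$ to satisfy $\det B = -1 - K > 0$, so its second fundamental form is definite; after fixing the time orientation this is a locally convex spacelike surface in the sense of Definition~\ref{defi locally convex}, and being properly embedded and spacelike it bounds a convex region. The two natural barriers are then provided by Proposition~\ref{prop:AdS-CH}: the convex hull $\CH(C)$ is the innermost convex object spanning $C$ and the invisible domain $E(C)$ the outermost one, so the past-convex $K$-surface $S^+_K$ must lie in the future component of $E(C)\setminus\CH(C)$, trapped between the pleated surface $\partial^+\CH(C)$ and the lightlike boundary $\partial^+ E(C)$ of the invisible domain (and symmetrically for $S^-_K$).

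For existence I would solve the constant-curvature (Monge--Amp\`ere type) equation for the surface written as a graph over a spacelike totally geodesic plane, using $\partial^+\CH(C)$ as a convex sub-barrier and $\partial^+ E(C)$ as a lightlike super-barrier to extract the a priori $C^0$ and uniform-spacelikeness (gradient) estimates. When $C=\Gamma(f)$ is invariant under a GHMC representation this is precisely the theorem of Barbot--B\'eguin--Zeghib~\cite{BBZ}, who solve the equation on the compact quotient; for a general acausal meridian one works directly in $E(C)$ as in Bonsante--Seppi~\cite{bon_are}, the barrier bounds giving interior curvature estimates on compact pieces, whence the AdS compactness theorem for spacelike immersions of bounded second fundamental form (the analogue of Proposition~\ref{pr-compK}, cf.~\cite[Theorem 5.6]{these}) furnishes a smooth limiting surface of the prescribed constant curvature.

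Uniqueness and the foliation (parts (1)--(2)) follow from the maximum principle. Given two $K$-surfaces $S,S'$ on the same side of $\CH(C)$, I would flow $S'$ along its timelike normal until the last moment of contact with $S$; at a first interior tangency the surfaces are ordered and tangent, so their shape operators are comparable, which contradicts $\det B = \det B' = -1-K$ unless $S=S'$. The same comparison applied to curvatures $K_1 < K_2$ shows that the more negatively curved surface lies strictly closer to $\CH(C)$; together with continuity in $K$ and the limiting behaviour $S^\pm_K \to \partial^\pm\CH(C)$ as $K\to -1^-$ and $S^\pm_K \to \partial^\pm E(C)$ as $K\to-\infty$, this yields a monotone sweep-out and hence a foliation of $E(C)\setminus\CH(C)$.

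The main obstacle is part (3), the AdS analogue of Proposition~\ref{principal_curv}. For the ``only if'' direction I would argue by contradiction: if $C$ is a quasicircle but the principal curvatures of a spanning $K$-surface blow up along points $x_n$, normalize by isometries $\phi_n \in \isom_0\AdS^3$ fixing $\phi_n(x_n)$; the compactness theorem then forces the normalized surfaces to degenerate, and through the polar duality of Section~\ref{ssc:polar-ads} and the lightlike geometry of $\partial E(C)$ this drives $\phi_n C$ toward the rhombus $\rhombus$, so $w(\phi_n C) \to \pi/2$, contradicting $w(C) < \pi/2$ via Proposition~\ref{pr:w_ads}. For the ``if'' direction, a uniform bound on the principal curvatures keeps $S^+_K$ and $S^-_K$ a definite timelike distance from $\partial E(C)$, bounding $w(C)$ away from $\pi/2$, so Proposition~\ref{pr:w_ads} gives that $C$ is a quasicircle. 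Throughout, the delicate point is to secure uniform curvature and spacelikeness estimates away from the cocompact setting, and in part (3) to identify the degenerate limit precisely with $\rhombus$.
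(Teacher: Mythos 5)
First, a point of orientation: the paper does not prove Theorem~\ref{tm:K-surfaces-ads} at all --- it is quoted as background from Barbot--B\'eguin--Zeghib and Bonsante--Seppi, so there is no in-paper proof to compare against. Your sketch of parts (1)--(2) (convexity from the Gauss equation, barriers $\partial^\pm\CH(C)$ and $\partial^\pm E(C)$, existence by solving the constant-curvature equation, uniqueness and the monotone sweep-out by the maximum principle) is a reasonable reconstruction of the cited arguments at the level of a plan, though of course the analytic content (a priori spacelikeness and curvature estimates in the non-cocompact setting) is exactly what those papers supply and is not reproduced here.

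There is, however, a genuine flaw in your argument for the ``only if'' direction of part (3). You normalize the blow-up points by isometries $\phi_n$ and claim the degeneration ``drives $\phi_n C$ toward the rhombus $\rhombus$, so $w(\phi_n C)\to\pi/2$.'' This cannot happen: if $C$ is a $k$-quasicircle then every $\phi_n C$ is again a $k$-quasicircle, and by Lemma~\ref{lm:cmpqcads} the only possible Hausdorff limits of such a sequence are $k$-quasicircles or unions $\{p\}\times\RP^1\cup\RP^1\times\{q\}$ of two ruling lines; the rhombus is neither (it arises only as a limit of quasicircles whose quasisymmetric constants diverge, as in Proposition~\ref{pr:w_ads}). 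Moreover $w(\phi_n C)=w(C)$ for every $n$, since width is an isometry invariant, so the contradiction you aim for is not even well posed. The correct mechanism is the one the paper itself uses for the quantitative refinement in Lemma~\ref{lm:unif-bound-II-ads}: normalize so that both the point $x_n$ \emph{and} the tangent plane $T_{x_n}S$ are fixed, observe that the limit surface then has a spacelike support plane at the fixed point and so cannot be a lightlike plane, and invoke the compactness statement for $K$-surfaces spanning $k$-quasicircles (Proposition~\ref{pr:cmpads}) to get smooth subconvergence, which contradicts the blow-up of the second fundamental form. Your ``if'' direction is also underpowered as stated: the asserted link between a principal-curvature bound, the timelike distance from $S^\pm_K$ to $\partial E(C)$, and the width of $\CH(C)$ (which is defined via the convex hull, not the $K$-surfaces) is never established. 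The standard route, already available in the paper's toolkit, is Proposition~\ref{prop:bilip}: bounded principal curvatures make the left and right projections $\Pi_l,\Pi_r$ bilipschitz, so $\Pi_r\circ\Pi_l^{-1}$ is a quasi-isometry of $\HH^2$ whose boundary extension is $f$ by Lemma~\ref{lm:propconv}, whence $f$ is quasisymmetric by Proposition~\ref{prop:QI-extend}.
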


The last point of Theorem \ref{tm:K-surfaces-ads} shows that in the AdS setting the bound of the curvatures of a $K$-surface reflects some regularity property of the asymptotic boundary.
By contrast Lemma \ref{principal_curv} shows there is a uniform bound on the principal curvatures of  a  properly embedded $K$-surface in $\Hyp^3$ spanning any Jordan curve.  

As we did for $\partial \CH(C)$, we label the two $K$-surfaces bounding $\mathscr C_K(C)$ according to the time orientation of the outward pointing normal vectors, with $\Sa_K^+(C)$ denoting the $K$-surface having future pointing outward normals and $\Sa_K^-(C)$ the $K$-surface with past pointing outward normal vectors. Note that as $K \to -1^-$, $\Sa_K^\pm(C)$ converges to the top/bottom boundaries $\partial^\pm \CH(C)$ of the convex hull $\CH(C)$. Hence, we will sometimes use the convention $\Sa_{-1}^\pm = \partial^\pm \CH(C)$, even though these surfaces are not technically considered $K$-surfaces since they are not smooth. Further we orient the surfaces $\Sa_K^{\pm}$ using the outward pointing normal and the orientation on $\AdS^3$.

For $K \in (-\infty,-1]$, let $\HH^{2\pm}_K$ be a copy of $\HH^{2\pm}$ equipped with the conformal metric that has constant curvature equal to $K$. The induced metric on the $K$-surface $\Sa^\pm_K(C)$ is locally isometric to $\HH^{2\pm}_K$. We note however that, in contrast to the setting of hyperbolic geometry, the metric on $\Sa^\pm_K(C)$ need not be complete without further assumption on the acausal meridian $C$.

\begin{prop}
\label{prop:extend-ads}
Let $C = \Gamma(f) \subset \Ein^{1,1}$ be a quasicircle (i.e. $f$ is quasisymmetric) and let  $K \in (-\infty,-1]$. Then the induced metric on $\Sa^\pm_K(C)$ is complete, hence globally isometric to $\HH^{2\pm}_K$. 
Further any isometry $V:\HH^{2\pm}_K\to S^\pm_{K}(C)$ extends to a homeomorphism of $\overline{\HH^{2\pm}_K} = \HH^{2\pm}_K \cup \RP^1$ onto $S^\pm_K(C) \cup C \subset \overline{\AdS^3}$.
\end{prop}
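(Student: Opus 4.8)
The plan is to follow the template of the hyperbolic Proposition~\ref{prop:extend-bis} as closely as possible, but to replace the horospherical/conformal boundary machinery of Section~\ref{sec:npr} --- which has no Lorentzian counterpart --- by the projective graph model of Section~\ref{projec}. The quasicircle hypothesis will be used through two channels: Proposition~\ref{pr:w_ads} gives $w(C) < \frac{\pi}{2}$, and Theorem~\ref{tm:K-surfaces-ads} gives a uniform bound on the principal curvatures of $\Sa^\pm_K(C)$. Since the Gauss equation in $\AdS^3$ reads $K = -1 - \det B$, we have $\det B = -(K+1) > 0$ for $K < -1$, so the two principal curvatures share a sign and have fixed product; an upper bound therefore forces a lower bound, and the principal curvatures lie in an interval $[1/N, N]$ with $N = N(K,k)$ depending only on $K$ and the quasisymmetric constant $k$ of $f$.

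First I would establish completeness, which is the genuinely new point: in the hyperbolic setting properness of the embedding gave completeness for free, whereas here the induced metric may be incomplete for a general acausal meridian. Working in the affine chart $x_4 = 1$ of the projective model, $\AdS^3$ is the interior of a one-sheeted hyperboloid and every spacelike surface projects to a graph over a fixed totally geodesic spacelike plane $P_0 \cong \HH^2$ (the Klein disk). The bound $w(C) < \frac{\pi}{2}$ keeps $\Sa^\pm_K(C)$ away from the lightlike boundary of the invisible domain $E(C)$ and makes it \emph{uniformly spacelike}, i.e. its tangent planes make an angle bounded away from the lightlike directions; together with the fact that $\Sa^\pm_K(C)$ is properly embedded and spans $C = \Gamma(f)$, this exhibits it as a uniformly spacelike graph over all of $P_0$. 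The induced metric on such a graph is bilipschitz to the complete Klein hyperbolic metric, hence complete, so $\Sa^\pm_K(C)$ is globally isometric to $\HH^{2\pm}_K$. For the limiting case $K = -1$, where $\Sa^{\pm}_{-1} = \partial^\pm\CH(C)$ is pleated rather than smooth, the same conclusion holds because $w(C) < \frac{\pi}{2}$ bounds the bending lamination of $\partial^\pm \CH(C)$, combined with the bending-map compactness of Lemma~\ref{lem:bend-conv}.

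For the boundary extension I would reuse the mechanism of Proposition~\ref{prop:extend-bis}. Let $\pi^\pm_K \co \Sa^\pm_K(C) \to P_0$ denote the projection to the Klein disk; uniform spacelikeness and the curvature bounds make it bilipschitz from $(\Sa^\pm_K(C), \I)$ to the Klein hyperbolic plane with constant depending only on $K$ and $k$ (for $K = -1$ one uses that the bending map is a quasi-isometry in place of this bilipschitz projection). Hence, for an isometry $V \co \HH^{2\pm}_K \to \Sa^\pm_K(C)$, the composition $\pi^\pm_K \circ V \co \HH^{2\pm}_K \to P_0$ is bilipschitz, in particular quasiconformal, and therefore extends to a quasisymmetric homeomorphism of $\RP^1$. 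Since the projection $\pi^\pm_K$ extends continuously to a homeomorphism of $C = \Gamma(f)$ onto $\partial P_0 = \RP^1$, reading off $\partial V$ exactly as in the proof of Proposition~\ref{prop:extend-bis} yields the desired extension of $V$ to a homeomorphism $\overline{\HH^{2\pm}_K} \to \Sa^\pm_K(C) \cup C$. Equivalently, one may argue geometrically: each geodesic ray of $\HH^{2\pm}_K$ maps under $V$ to an inextensible, infinite-length, properly embedded ray of $\Sa^\pm_K(C)$, which by the curvature bounds cannot spiral and so converges to a unique point of $C$; continuity and injectivity of the resulting map then give the homeomorphism.

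The main obstacle is the completeness step. Unlike in $\HH^3$, a properly embedded spacelike $K$-surface in $\AdS^3$ need not be complete --- its metric may be any region of $\HH^2$ bounded by disjoint geodesics --- so one cannot run a compactness argument of the type of Proposition~\ref{pr-compK} without \emph{first} knowing the domain is all of $\HH^{2\pm}_K$. The crux is thus to convert the quasicircle hypothesis into quantitative uniform spacelikeness of $\Sa^\pm_K(C)$, i.e. to show that $w(C) < \frac{\pi}{2}$ genuinely rules out the surface limiting onto a lightlike plane. This is precisely the content linking the equivalent characterizations in Theorem~\ref{tm:K-surfaces-ads} and Proposition~\ref{pr:w_ads}, and I would extract the needed estimate from the Bonsante--Seppi analysis in~\cite{bon_are} (see also~\cite{maximal}).
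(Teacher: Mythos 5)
Your overall architecture matches the paper's: use the quasicircle hypothesis to get bounded principal curvatures, find a bilipschitz (or quasi-isometric) projection from $\Sa^\pm_K(C)$ onto a complete copy of $\HH^2$ that extends the natural identification of $C$ with $\RP^1$, and then read off completeness and the boundary extension exactly as in Proposition~\ref{prop:extend-bis}. But the projection you chose creates a gap. The paper does not project vertically onto a Klein disk; it uses the left/right projections $\Pi_l, \Pi_r$ of Section~\ref{sec:leftright}, defined via the timelike geodesics orthogonal to the surface. The point of that choice is Proposition~\ref{prop:bilip}: by the Krasnov--Schlenker formula $\Pi_l^*(g_{\mathrm{hyp}}) = \I((E+J_\I B)\cdot,(E+J_\I B)\cdot)$, the bilipschitz constant of $\Pi_l$ is controlled \emph{pointwise} by $\det B$ and $\tr(B^2)$, i.e.\ by the principal curvature bounds alone, and the global surjectivity onto $\HH^2$ with continuous extension of $\pi_l$ to $C$ is Lemma~\ref{lm:propconv}. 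Your vertical projection instead requires \emph{uniform spacelikeness} -- a first-order, global lower bound on the angle between the tangent planes and the light cone -- which does not follow pointwise from bounded principal curvatures, and which you attribute to $w(C)<\pi/2$ without deriving it. That implication is true but is essentially the hard content of the Bonsante--Seppi analysis; as written, your "crux" step is deferred to the literature rather than proved, whereas the paper's route makes it unnecessary.

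The $K=-1$ case of your proposal is wrong as stated. Lemma~\ref{lem:bend-conv} is a statement about convex hulls in $\HH^3$ and cannot be invoked for $\partial^\pm\CH(C)\subset\AdS^3$, and the claim that $w(C)<\pi/2$ "bounds the bending lamination" is neither established nor the operative mechanism. What the paper actually uses is the Mess/Benedetti--Bonsante identification (Proposition~\ref{prop:mess}) of $\Pi_l^\pm\circ\VCK{C,-1}^\pm$ with earthquake maps whose shearing laminations $\lambda_l^\pm$ satisfy that $2\lambda_l^\pm$ is the shearing measure of the earthquake extending $f$, combined with the Gardiner--Hu--Lakic/\v{S}ari\'c theorem (Proposition~\ref{pr:bounlam}): $f$ is quasisymmetric if and only if that measure has finite Thurston norm. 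Boundedness of $\lambda_l^+$ is what forces the domain $\mathcal U^+$ to be all of $\HH^2$ (hence completeness of $\partial^+\CH(C)$, which genuinely fails for general acausal meridians) and simultaneously makes the earthquake a quasi-isometry, giving the boundary extension. Your proposal never invokes this quasisymmetric--bounded-lamination correspondence, which is the essential input in the pleated case.
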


The proof will require several tools developed later in this section.

Now, let us assume the meridian $C$ is normalized, meaning it is oriented and passes through $(0,0),(1,1),(\infty,\infty)$ 
in positive order, and fix $K \in (-\infty,-1]$. Then, assuming Proposition~\ref{prop:extend-ads}, there are unique isometries $\VCK{C,K}^\pm: \HH^{2 \pm}_K \to S^\pm_{K}(C)$ whose extension to the boundary satisfies that $\partial \VCK{C,K}^\pm(i) = (i,i)$ for $i = 0,1,\infty$. The gluing map associated to $C$ and $K$ is simply the comparison map (see Section~\ref{sec:comparison}) between the two maps $\VCK{C,K}^+$ and $\VCK{C,K}^-$: 
\begin{equation}\label{eqn:PsiCK}
\Psi_{C, K} = \cmp(\VCK{C,K}^-, \VCK{C,K}^+) := (\partial \VCK{C,K}^-)^{-1}  \circ \partial \VCK{C,K}^+.
\end{equation}

The main goal of this section is to prove that $\Psi_{C,K}$ is a quasisymmetric homeomorphism so that $\Psi_{\cdot, K}: \qcm(\Ein^{1,1}) \to \cT$ is well-defined. In fact, we prove the stronger statement:

\begin{prop}\label{prop:well-defined-ads}
Let $K \in (-\infty, -1]$. Then for each $k > 1$, there exists a constant $k' > 1$ depending only on $k$ and $K$, so that for any (normalized) $k$-quasicricle $C = \Gamma(f)$, the map $\Psi_{C, K}$ is $k'$--quasisymmetric. In particular $\Psi_{C, K} \in \mathcal T$.
\end{prop}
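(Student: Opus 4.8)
The plan is to follow the strategy of Proposition~\ref{pr:main-properness}, expressing $\Psi_{C,K}$ through boundary extensions of quasi-isometries of $\HH^2$, but with the nearest-point retraction (which has no good Lorentzian analogue) replaced by the left developing map coming from the product structure $\isom_0\AdS^3\cong\PSL(2,\RR)\times\PSL(2,\RR)$. Denote by $\pi_L,\pi_R\co\RP^1\times\RP^1\to\RP^1$ the two projections. By Proposition~\ref{prop:extend-ads} the isometry $V^\pm_{C,K}$ extends to a homeomorphism $\partial V^\pm_{C,K}\co\RP^1\to\Gamma(f)$; setting $\ell^\pm=\pi_L\circ\partial V^\pm_{C,K}$ and $r^\pm=\pi_R\circ\partial V^\pm_{C,K}$, these are normalized homeomorphisms of $\RP^1$ with $r^\pm=f\circ\ell^\pm$, and an elementary computation gives
\[ \Psi_{C,K}=\cmp(V^-_{C,K},V^+_{C,K})=(\ell^-)^{-1}\circ\ell^+. \]
Hence it suffices to show that $\ell^+$ and $\ell^-$ are each $\alpha$-quasisymmetric for a constant $\alpha=\alpha(k,K)$; then $\Psi_{C,K}$ is $\alpha^2$-quasisymmetric and we take $k'=\alpha^2$.

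To control $\ell^\pm$ in the case $K\in(-\infty,-1)$, where $S^\pm_K(C)$ is smooth, I would use the left Gauss map $G_L\co S^\pm_K(C)\to\HH^2$, which is equivariant for the left factor and extends continuously to $\pi_L$ on the ideal boundary. A standard computation, valid for any spacelike surface via the Gauss--Codazzi equations of $\AdS^3$, identifies the pullback $G_L^{*}g_{\HH^2}$ with the metric $\I((\mathrm{Id}+JB)\cdot,(\mathrm{Id}+JB)\cdot)$, where $B$ is the shape operator and $J$ the $\pi/2$-rotation of the induced metric $\I$; this metric has constant curvature $-1$. Since $\det(\mathrm{Id}+JB)=1+\det B=-K>0$, the operator $\mathrm{Id}+JB$ is invertible, and whenever the principal curvatures lie in a fixed interval $[1/N,N]$ this metric is $L(N,K)$-bilipschitz to $\I$. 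Using completeness from Proposition~\ref{prop:extend-ads}, $G_L$ maps $(S^\pm_K(C),\I)$ $L$-bilipschitzly onto $\HH^2$, so $G_L\circ V^\pm_{C,K}\co\HH^2_K\to\HH^2$ is an $L$-bilipschitz homeomorphism, hence an $A(N,K)$-quasi-isometry; by Proposition~\ref{prop:QI-extend} and uniqueness of boundary extensions, its boundary value is the $\alpha$-quasisymmetric map $\ell^\pm$.

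It then remains to bound the principal curvatures uniformly. As $\det B=-1-K$ is fixed, it suffices to bound the larger one, and I would obtain this from the geometry of the convex hull: by the quantitative form of Proposition~\ref{pr:w_ads} every $k$-quasicircle satisfies $w(C)\le w_0(k)<\pi/2$, and by the Bonsante--Seppi estimates underlying Theorem~\ref{tm:K-surfaces-ads}(3) (\cite{bon_are}) the principal curvatures of a spanning $K$-surface are controlled in terms of $K$ and $w(C)$, giving $N=N(k,K)$. The limiting case $K=-1$, where $S^\pm_{-1}(C)=\partial^\pm\CH(C)$ is pleated rather than smooth, is handled in the same way, with $G_L$ replaced by the left bending map of the pleated surface, whose quasi-isometry constant is governed by the Thurston norm of the bending lamination; boundedness of this norm for $k$-quasicircles again follows from the width bound together with Proposition~\ref{pr:bounlam}.

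The crux of the argument is the machinery of the second and third paragraphs: constructing the left developing map with the correct boundary behavior, verifying the metric-distortion formula, and --- most importantly --- extracting the uniform principal-curvature (resp.\ bending-lamination) bound. These estimates replace the bilipschitz control of the nearest-point retraction (Proposition~\ref{lm:npr} and Remark~\ref{rk:npr}) used in hyperbolic space. The essential Lorentzian difficulty, emphasized in Theorem~\ref{tm:K-surfaces-ads}(3), is that $K$-surfaces in $\AdS^3$ do not enjoy an unconditional curvature bound as in Proposition~\ref{principal_curv}: the bound must be drawn out of the quasicircle hypothesis through the width of the convex hull, so that the uniform constant $k'$ is genuinely tied to global convex geometry rather than to a purely local compactness argument.
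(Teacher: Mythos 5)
Your decomposition $\Psi_{C,K}=(\ell^-)^{-1}\circ\ell^+$ with $\ell^\pm=\pi_l\circ\partial V^\pm_{C,K}$, and the reduction to showing that the left projections $\Pi_l^\pm\circ V^\pm_{C,K}$ are uniform quasi-isometries via the Krasnov--Schlenker formula $\Pi_l^*(g_{\mathrm{hyp}})=\I((E+J_\I B)\cdot,(E+J_\I B)\cdot)$, is exactly the paper's argument (Proposition~\ref{prop:bilip} together with Lemmas~\ref{lem:unif-qi-ads} and~\ref{lem:unif-bilip-ads}). The one place where your write-up is not self-contained is the uniform principal-curvature bound $N=N(k,K)$ for $K<-1$: you derive it from a uniform width bound $w(C)\le w_0(k)<\pi/2$ together with ``the Bonsante--Seppi estimates underlying Theorem~\ref{tm:K-surfaces-ads}(3).'' That item is stated purely qualitatively (bounded principal curvatures if and only if the boundary is a quasicircle), and no quantitative dependence of the curvature bound on the width is recorded anywhere in the paper; the paper instead proves the uniform bound as Lemma~\ref{lm:unif-bound-II-ads} by a compactness argument --- if $k$-quasicircles $C_n$ carried points $p_n$ of diverging principal curvature, one normalizes $p_n$ and its tangent plane by isometries and invokes Proposition~\ref{pr:cmpads} (resting on Lemma~\ref{lm:continuityboundary} and Lemma~\ref{lm:cmpqcads}) to get smooth convergence to a limiting $K$-surface, a contradiction. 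Your width-based route could likely be made rigorous, but as written it leans on an unstated quantitative estimate that you would have to extract from \cite{bon_are} or prove by exactly this kind of compactness argument. Similarly, for $K=-1$ the Thurston-norm bound on the bending lamination does not come from the width: it comes from Mess's relation (Proposition~\ref{prop:mess}) that the left earthquake extending $f$ has shearing measure $2\lambda^+_l$, combined with Proposition~\ref{pr:bounlam} applied to the $k$-quasisymmetric map $f$ itself. With those two substitutions your argument coincides with the paper's.
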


\subsection{The left and right projections}\label{sec:leftright}

Here we introduce natural projection maps associated to a convex spacelike surface is $\AdS^3$ and prove Proposition~\ref{prop:extend-ads}.

Recall from Section~\ref{prel ads} that any timelike geodesic in $\AdS^3 = \PSL(2,\R)$ is of the form $L_{x,x'} = \{A \in \PSL(2,\R): Ax' = x\}$. Hence a smoothly embedded spacelike surface $S$ determines a map $\Pi: S \to \HH^2 \times \HH^2$ defined by $\Pi(s) = (\Pi_l(s),\Pi_r(s))$ where $L_{\Pi_l(s),\Pi_r(s)}$ is the unique timelike geodesic orthogonal to $S$ at $s$. 
The maps $\Pi_l$ and $\Pi_r$ are local diffeomorphisms when the surface $S$ is locally convex (and in fact more generally when the intrinsic curvature does not vanish). Further, Krasnov--Schlenker~\cite{minsurf}  computed the pull-back of the hyperbolic metric through $\Pi_l$ and $\Pi_r$ in terms of the embedding data of the immersion (see also Lemma 3.16 in \cite{maximal}):
\begin{equation}\label{eq:leftright}
\begin{array}{l}
\Pi_l^*(g_{\mathrm{hyp}})(v, w)=\I((E+J_\I B)v, (E+J_\I B)w)\, \\
\Pi_r^*(g_{\mathrm{hyp}})(v, w)=\I((E-J_\I B)v, (E-J_\I B)w),
\end{array}
\end{equation}
where $E$ denotes the identity operator, and $J_\I$ is the complex structure over $TS$ induced by $\I$. (Note that while both $J_\I$ and $B$ depend on the choice
of an orientation on $S$, the product is independent of the orientation).
Here is a useful lemma about these projections.
\begin{lemma}[Lemma 3.18 and Remark 3.19 of \cite{maximal}]\label{lm:propconv}
Assume that $S\subset\AdS^3$ is a properly embedded spacelike convex surface whose accumulation set in $\partial\AdS^3=\Ein^{1,1} = \RP^1 \times \RP^1$ is an acausal meridian $C = \Gamma(f)$. Then 
the maps $\Pi_l$ and $\Pi_r$ are diffeomorphisms onto $\HH^2$ that continuously extend the canonical projections $\pi_l, \pi_r:C\to\mathbb \RP^1$ defined by $\pi_l(x,f(x)) = x$ and $\pi_r(x,f(x)) = f(x)$.
\end{lemma}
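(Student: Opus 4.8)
The plan is to handle the interior and the ideal boundary separately: the interior claim is essentially the invertibility of an operator, while the boundary claim is the geometric heart of the statement.

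First I would check that $\Pi_l$ and $\Pi_r$ are local diffeomorphisms. Since $S$ is spacelike and convex, after fixing the outward normal the shape operator $B = \I^{-1}\II$ satisfies $\det B = -1 - K \geq 0$ by the Gauss equation in $\AdS^3$, where $K \leq -1$ is the intrinsic curvature of $\I$. Working in an $\I$-orthonormal frame in which $J_\I$ is the standard rotation by $\pi/2$, a direct computation gives $\det(E \pm J_\I B) = 1 + \det B = -K > 0$ at every point of $S$. Hence, by the pull-back formulas \eqref{eq:leftright}, the tensors $\Pi_l^*(g_{\mathrm{hyp}})$ and $\Pi_r^*(g_{\mathrm{hyp}})$ are genuine (positive definite) Riemannian metrics, the operators $E \pm J_\I B$ are invertible, and consequently $\Pi_l, \Pi_r\co S \to \HH^2$ are immersions between surfaces, i.e. local diffeomorphisms.

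Next, to promote each of $\Pi_l, \Pi_r$ to a global diffeomorphism onto $\HH^2$, I would show it is \emph{proper}. A proper local diffeomorphism onto the connected, simply connected surface $\HH^2$ is a covering map, and the only covering of $\HH^2$ is trivial, so properness immediately yields that the map is a diffeomorphism. Because $S$ is properly embedded with $\overline S \cap \partial\AdS^3 = C$, a sequence $s_n$ leaving every compact subset of $S$ converges, after extraction, to a point $(x, f(x)) \in C$. Thus properness of $\Pi_l$ is \emph{equivalent} to the assertion that $\Pi_l(s_n) \to x \in \partial\HH^2$ whenever $s_n \to (x, f(x))$, and symmetrically $\Pi_r(s_n) \to f(x)$; but these two statements are exactly the claimed continuous extensions by $\pi_l$ and $\pi_r$. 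Hence the global diffeomorphism claim and the boundary-extension claim reduce to a single analysis of the footpoints near infinity.

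This boundary analysis is the main work and the expected obstacle. The key input is the dual description of support planes: the spacelike tangent plane $P_s$ to $S$ at $s$ is the dual plane $\gamma_s^\perp$ of the point $\gamma_s \in \AdS^3$ lying at timelike distance $\pi/2$ from $s$ along the normal geodesic $L_{\Pi_l(s),\Pi_r(s)}$, and its ideal boundary $\partial P_s$ is the graph $\Gamma(\gamma_s^{-1})$ of the M\"obius map $\gamma_s^{-1}$. By convexity and proper embedding, as $s \to (x, f(x)) \in C$ the planes $P_s$ converge to a support plane of $\CH(C)$ at the ideal point $(x, f(x))$, which is forced to be the null plane tangent to $\Ein^{1,1}$ there, with boundary $\{x\}\times\RP^1 \cup \RP^1\times\{f(x)\}$. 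Consequently the maps $\gamma_s^{-1}$ degenerate, their attracting and repelling fixed points approaching $f(x)$ and $x$; equivalently $\gamma_s \to x$, and likewise $s \to x$, locally uniformly on $\overline{\HH^2}\setminus\{f(x)\}$. From the relations $s(\Pi_r(s)) = \gamma_s(\Pi_r(s)) = \Pi_l(s)$ and the north--south dynamics just described, one gets the dichotomy that either $\Pi_r(s)$ stays away from the repelling point $f(x)$, whence $\Pi_l(s) = s(\Pi_r(s)) \to x$, or $\Pi_l(s)$ stays away from $x$, whence $\Pi_r(s) = s^{-1}(\Pi_l(s)) \to f(x)$. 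The remaining, and most delicate, step is to control both footpoints simultaneously: using that $S$ is a spacelike convex graph and that $C = \Gamma(f)$ is acausal (so the graphs $\Gamma(\gamma_s^{-1})$ are monotonically nested relative to $C$), I would rule out the degenerate possibility that the normal geodesic $L_{\Pi_l(s),\Pi_r(s)}$ collapses onto a single ruling line or remains in a fixed compact region of $\AdS^3$, thereby forcing $\Pi_l(s) \to x$ and $\Pi_r(s) \to f(x)$ together. It is precisely this simultaneous control, where acausality is indispensable, that I expect to be the crux of the argument.
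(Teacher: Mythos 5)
The paper does not actually prove this lemma: it is imported from \cite{maximal} (Lemma 3.18 and Remark 3.19), so there is no internal argument to compare yours against, and I will assess your reconstruction on its own terms. Your first two steps are correct. The computation $\det(E\pm J_\I B)=1+\det B=-K\geq 1$ together with the pull-back formulas \eqref{eq:leftright} does show that $\Pi_l,\Pi_r$ are local diffeomorphisms, and the reduction ``boundary extension $\Rightarrow$ properness $\Rightarrow$ covering of the simply connected $\HH^2$ $\Rightarrow$ global diffeomorphism'' is sound (properness is \emph{implied by}, not equivalent to, the extension statement, but that is the direction you need).

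The third step contains two genuine gaps. First, the assertion that the tangent planes $P_s$ are ``forced'' to converge to the null plane tangent to $\Ein^{1,1}$ at $\xi=(x,f(x))$ is unjustified, and it fails within the hypotheses of the lemma: if $S$ is itself a totally geodesic spacelike plane $\gamma_0^\perp$ (so $C=\Gamma(\gamma_0^{-1})$, an admissible acausal meridian, and $S$ an admissible properly embedded convex spacelike surface), then $P_s\equiv S$ and the dual point $\gamma_s\equiv\gamma_0$ never leaves a single interior point of $\AdS^3$, so the north--south degeneration of $\gamma_s$ on which your strategy rests does not occur; more generally nothing you have said prevents $\gamma_{s_n}$ from subconverging to an interior point or to a boundary point other than $\xi$. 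Second, and decisively, the step you yourself label ``the crux'' is exactly the content of the lemma and is not carried out. The identity $s(\Pi_r(s))=\Pi_l(s)$ together with $s\to\xi$ gives only the dichotomy you state; it does not exclude, say, $\Pi_r(s_n)\to f(x)$ while $\Pi_l(s_n)$ accumulates at an interior point of $\HH^2$ or at a boundary point different from $x$. (What it does exclude for free is both footpoints staying in a compact subset of $\HH^2$, since then $L_{\Pi_l(s_n),\Pi_r(s_n)}$ would converge to a compact circle $L_{p,q}\subset\AdS^3$, contradicting $s_n\to\partial\AdS^3$.) Ruling out the remaining bad cases is precisely where convexity and acausality must be brought to bear, and your proposed mechanism for doing so depends on the degeneration claim that is false in general. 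As written, the proposal establishes the local statement and reduces the global one to the hard step, which it leaves open; that step is the lemma.
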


In fact, it is further true that:
\begin{prop}\label{prop:bilip}
Let $S\subset\AdS^3$ be a smooth properly embedded spacelike convex surface whose accumulation set $C = \Gamma(f)$ in $\partial\AdS^3=\Ein^{1,1} = \RP^1 \times \RP^1$ is a quasicircle and whose principle curvatures are bounded in absolute value between $D$ and $1/D$. Then the projection maps $\Pi_l$ and $\Pi_r$ are bilipschitz diffeomorphisms with bilipschitz constant depending only on $D$. 
\end{prop}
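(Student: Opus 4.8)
The plan is to combine Lemma~\ref{lm:propconv}, which already gives that $\Pi_l$ and $\Pi_r$ are diffeomorphisms onto $\HH^2$, with a pointwise estimate on their differentials coming from the Krasnov--Schlenker formula~\eqref{eq:leftright}. Since $(S,\I)$ and $\HH^2$ (with its hyperbolic metric $g_{\mathrm{hyp}}$) are length spaces and the projections are diffeomorphisms, it is enough to bound, uniformly in terms of $D$ alone, the singular values of the operators $E+J_\I B$ and $E-J_\I B$: by~\eqref{eq:leftright} these operators (after composition with $\I$) compute the pull-back metrics $\Pi_l^* g_{\mathrm{hyp}}$ and $\Pi_r^* g_{\mathrm{hyp}}$, so a two-sided bound on their singular values is exactly a bilipschitz bound on $\Pi_l,\Pi_r$ at the infinitesimal level.

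To carry this out, fix $s\in S$ and choose an $\I$--orthonormal frame $e_1,e_2$ of $T_sS$ diagonalizing the shape operator, so that $B=\mathrm{diag}(\kappa_1,\kappa_2)$ while $J_\I e_1 = e_2$ and $J_\I e_2 = -e_1$. In this frame
\[
E + J_\I B = \begin{pmatrix} 1 & -\kappa_2 \\ \kappa_1 & 1 \end{pmatrix}, \qquad (E+J_\I B)^*(E+J_\I B) = \begin{pmatrix} 1+\kappa_1^2 & \kappa_1-\kappa_2 \\ \kappa_1-\kappa_2 & 1+\kappa_2^2 \end{pmatrix},
\]
where the adjoint is taken with respect to $\I$. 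The squared singular values $\sigma_\pm^2$ of $E+J_\I B$ are the eigenvalues of the symmetric matrix on the right, whose trace is $2+\kappa_1^2+\kappa_2^2$ and whose determinant is $(1+\kappa_1\kappa_2)^2$. An identical computation, with the signs of the off-diagonal entries reversed, gives the same trace and determinant for $E-J_\I B$.

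Now the curvature hypotheses enter. By the Gauss equation $K=-1-\det B$ in $\AdS^3$, convexity forces $\kappa_1\kappa_2=\det B=-1-K>0$, and the assumption $\kappa_1,\kappa_2\in[D,1/D]$ gives $\kappa_1\kappa_2\in[D^2,D^{-2}]$ together with $\kappa_1^2+\kappa_2^2\le 2D^{-2}$. Hence, using that for a $2\times 2$ positive definite matrix the larger eigenvalue is at most the trace and the smaller is the determinant divided by the larger, we obtain
\[
\sigma_+^2 \le 2 + 2D^{-2}, \qquad \sigma_-^2 = \frac{(1+\kappa_1\kappa_2)^2}{\sigma_+^2} \ge \frac{(1+D^2)^2}{2+2D^{-2}},
\]
and the same bounds hold for $E-J_\I B$. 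These are two-sided bounds on the singular values depending only on $D$, so $\Pi_l$ and $\Pi_r$ are bilipschitz diffeomorphisms from $(S,\I)$ onto $\HH^2$ with constant depending only on $D$; the passage from the infinitesimal bounds to a bilipschitz bound for the path metrics is standard, since $\Pi_l,\Pi_r$ are diffeomorphisms and lengths of curves are thereby distorted by a factor in a fixed interval.

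The computation itself is routine; the only points requiring care are that the product $J_\I B$, and hence the entire estimate, is independent of the choice of orientation on $S$ (as noted below~\eqref{eq:leftright}), and that the determinant $1+\kappa_1\kappa_2$ is bounded away from zero. This latter fact is precisely where convexity (giving $\kappa_1\kappa_2>0$) and the lower curvature bound $D$ are used: they keep $\sigma_-$ away from $0$, i.e.\ prevent the projections from degenerating. I do not expect any serious obstacle beyond bookkeeping; the real content of the proposition is the reduction, via~\eqref{eq:leftright}, to this single pointwise linear-algebra estimate.
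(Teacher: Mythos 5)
Your proposal is correct and follows essentially the same route as the paper: both reduce, via the Krasnov--Schlenker formula~\eqref{eq:leftright}, to bounding the eigenvalues of $(E\pm J_\I B)^*(E\pm J_\I B)$, and both do so by controlling its trace ($2+\tr(B^2)$) and its determinant ($(1+\det B)^2=K^2$, bounded away from $0$ and $\infty$ by the Gauss equation together with the two-sided bound on the principal curvatures). Your explicit diagonalizing-frame computation is just a more hands-on version of the paper's trace/determinant argument.
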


\begin{proof}
Let us prove the statement for $\Pi_l$ as the statement for $\Pi_r$ follows similarly. 
By  Formula~\eqref{eq:leftright} it is sufficient to prove that the eigenvalues of $(E+J_IB)^*(E+J_IB)$ are bounded from below and from above
by positive constants (here $A^*$ denotes the $\I$-adjoint of $A$).

A direct computation shows that $\det(E+JB)= 1+\det(B) = - K$, where the second equality is the Gauss formula in $\AdS^3$. On the one hand $K \leq -1$ since $S$ is convex, and on the other hand, as $-K$ is the product of the (absolute values of the) principle curvatures, we have that $-K$ is bounded above. Hence $\det(E+JB)$, and  hence $\det(E+JB)^*$ and the product $\det(E+J_IB)^*(E+J_IB)$ are each bounded from above and below.
Moreover, 
$$ \tr((E+JB)^*(E+JB))=2+\tr(B^2)~. $$
Again, since the eigenvalues of $B$ (the principle curvatures) are uniformly bounded, we conclude that the eigenvalues of $(E+J B)^*(E+J B)$
are bounded as desired.
\end{proof}

We now prove the $K < -1$ case of Proposition~\ref{prop:extend-ads}.
\begin{proof}[Proof of Proposition~\ref{prop:extend-ads} for $K < -1$.]
Suppose $C = \Gamma(f)$ is a quasicircle and let $K \in (-\infty, -1)$. Let us work with $\Sa^+_K(C)$, the arguments for $\Sa^-_K(C)$ are the same. By Theorem~\ref{tm:K-surfaces-ads}, the principle curvatures of the surfaces $\Sa^+_K(C)$ are uniformly bounded.  Hence by Proposition~\ref{prop:bilip}, the left projection $\Pi_l$ associated to $\Sa^+_K(C)$ is a bilipschitz diffeomorphism taking the induced metric on $\Sa^+_K(C)$ to the hyperbolic metric on $\HH^2$. Hence the induced metric is complete, hence globally isometric to $\HH^{2+}_K$. 

Next, let $V\co \HH^{2+}_K \to \Sa^+_K(C)$ be any orientation-preserving isometry. Then $\Pi_l \circ V$ is a bilipschitz diffeomorphism hence it extends uniquely to a homeomorphism $\partial (\Pi_l \circ V)\co \partial \HH^{2 +}_K \to \RP^1$ of the ideal boundary. We then observe that $V$ extends to $\partial V\co \partial \HH^{2+}_K \to C$ at the boundary by the formula 
$$\partial V = \pi_l^{-1}\circ \partial (\Pi_l \circ V).$$
It follows  from Lemma~\ref{lm:propconv} that $V \cup \partial V$ is a homeomorphism from $\HH^{2+}_K \cup \partial \HH^{2+}_K$ to $S^+_K(C) \cup C$.
\end{proof}

Let us now turn the $K = -1$ case of Proposition~\ref{prop:extend-ads}.
The surfaces $\Sa^\pm_{-1} = \partial^\pm \CH(C)$ are not smooth. Rather they are totally geodesic surfaces bent along geodesic laminations. Nonetheless, Mess~\cite{mess} in the equivariant case, and Benedetti--Bonsante~\cite{BeBo} in general case, described how to interpret the left and right projections above as earthquake maps. In general, if $C$ is any acausal meridian in $\Ein^{1,1}$, the induced metric on the future (resp. past) boundary $\partial^+ \CH(C)$ (resp. $\partial^- \CH(C)$) of the convex hull $\CH(C)$ is isometric to an open subset $\mathcal U^+$ (resp. $\mathcal U^-$) of the hyperbolic plane which is bounded by a (possibly empty) set of disjoint geodesics. An isometry $\VCK{C,-1}^+: \mathcal U^+ \to \partial^+\CH(C)$ (resp. $\VCK{C,-1}^-: \mathcal U^- \to \partial^-\CH(C)$) is given by a bending map, with bending determined by a measured geodesic lamination $\lambda^+$ of $\mathcal U^+$ (resp. $\lambda^-$ of $\mathcal U^-$). The projection maps $\Pi_l^+$ and $\Pi_r^+$ (resp. $\Pi_l^-$ and $\Pi_r^-$) for $\partial^+ \CH(C)$ (resp. for $\partial^- \CH(C)$) are well-defined only at the dense set of points along which $\partial^+ \CH(C)$ (resp. $\partial^- \CH(C)$) is $C^1$; this is the complement of the leaves of $\lambda^+$ (resp. of $\lambda^-$) which have positive measure. Mess observed the following relationship between the bending measure and earthquakes.

\begin{prop}[Mess~\cite{mess}, Benedetti-Bonsante~\cite{BeBo}]\label{prop:mess}
The compositions $$\Pi_l^+ \circ \VCK{C,-1}^+: \mathcal U^+ \to \HH^2,\;\; 
\Pi_r^+ \circ \VCK{C,-1}^+: \mathcal U^+ \to \HH^2$$
$$\Pi_l^- \circ \VCK{C,-1}^-: \mathcal U^+ \to \HH^2,\;  \text{ and } \;
\Pi_r^- \circ \VCK{C,-1}^-: \mathcal U^+ \to \HH^2$$ 
are surjective earthquake maps, with $\Pi_l^+ \circ \VCK{C,-1}^+: \mathcal U^+ \to \HH^2$ (resp $\Pi_r^- \circ \VCK{C, -1}^-: \mathcal U^- \to \HH^2$) shearing to the left along $\lambda^+$ (resp. along $\lambda^-$) and 
$\Pi_l^- \circ\VCK{C,-1}^-: \mathcal U^- \to \HH^2$ (resp. $\Pi_r^+ \circ \VCK{C,-1}^+: \mathcal U^+ \to \HH^2$) shearing to the right along $\lambda^-$ (resp. along $\lambda^+$). The diagram below records this information:
$$\xymatrix{ & & \mathcal U^+ \ar[d]^{\VCK{C,-1}^+} \ar@/^/[ddrr]^{E^l_{\lambda^+}} \ar@/_/[ddll]_{E^r_{\lambda^+}}
              \\
  & & \partial^+ \CH(C) \ar[dll]^{\Pi_l^+} \ar[drr]_{\Pi_r^+} 
                 &      \\
  \HH^2 & & & & \HH^2  \\
 & & \partial^- \CH(C) \ar[ull]_{\Pi_l^-}  \ar[urr]^{\Pi_r^-} & \\
&  & \mathcal U^- \ar[u]_{\VCK{C,-1}^-} \ar@/_/[uurr]_{E^r_{\lambda^-}} \ar@/^/[uull]^{E^l_{\lambda^-}} &            }
$$
Further, letting  $\lambda^+_l = \Pi_l^+ \circ \VCK{C,-1}^+(\lambda^+)$ (resp.  $\lambda^-_l = \Pi_l^- \circ \VCK{C,-1}^-(\lambda^-)$), the earthquake map $E^l_{2\lambda_l^+}: \HH^2 \to \HH^2$ (resp. the right earthquake map $E^r_{2\lambda_l^-}: \HH^2 \to \HH^2$) is the unique left (resp. right) earthquake map extending the homeomorphism $f: \RP^1 \to \RP^1$ to the hyperbolic plane.
\end{prop}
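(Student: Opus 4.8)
The plan is to reduce everything to the piecewise–totally–geodesic (plaque) structure of $\partial^\pm\CH(C)$ and then read off the earthquake data leaf by leaf, using the dictionary from Section~\ref{prel ads} between points of $\AdS^3$ and spacelike totally geodesic planes. First I would analyze a single plaque. Let $F$ be a stratum of $\lambda^+$; then $\VCK{C,-1}^+$ maps $F$ isometrically into a spacelike totally geodesic plane $P_F = \gamma_F^\perp$ for some $\gamma_F \in \isom(\HH^2) = \PSL(2,\R)$. Writing $\gamma_F^\perp = (\gamma_F,1)\cdot\mathcal R_\pi$, the timelike geodesics orthogonal to $P_F$ are exactly the fibers $L_{\gamma_F(x),x}$, so both projections restrict to isometries onto $\HH^2$ and satisfy $\Pi_l^+|_{P_F} = \gamma_F\circ \Pi_r^+|_{P_F}$; this is the infinitesimal content of the fact that $\partial(\gamma_F^\perp)$ is the graph of the M\"obius map $\gamma_F^{-1}$. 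Composing with the isometry $\VCK{C,-1}^+|_F$ then shows that $\Pi_l^+\circ \VCK{C,-1}^+$ and $\Pi_r^+\circ \VCK{C,-1}^+$ each restrict to an isometry of $\HH^2$ on every stratum, which is the first defining property of an earthquake.

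The second step is the comparison across a weighted leaf $\ell$ of $\lambda^+$. The two plaques adjacent to $\ell$ lie in planes $\gamma^\perp$ and $\gamma'^\perp$ which both contain the geodesic $\VCK{C,-1}^+(\ell)$; since a spacelike plane $\delta^\perp$ contains this geodesic if and only if $\delta^{-1}$ fixes the two endpoints of the corresponding $\HH^2$–geodesic $g$, the set of planes through $\ell$ is exactly $\{\delta^\perp : \delta \text{ is a translation along } g\}$. Hence $\gamma'\gamma^{-1}$ is a hyperbolic translation along $g$, and the plaque-wise relation $\Pi_l^+ = \gamma\,\Pi_r^+$ forces the comparison map between the two strata of $\Pi_r^+\circ \VCK{C,-1}^+$ (resp. of $\Pi_l^+\circ \VCK{C,-1}^+$) to be a hyperbolic translation along the image geodesic $\Pi_r^+(\VCK{C,-1}^+(\ell))$ (resp. $\Pi_l^+(\VCK{C,-1}^+(\ell))$), of equal magnitude but opposite sense. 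This is precisely the earthquake axiom, and the convexity of $\partial^+\CH(C)$ fixes the sense, yielding that $\Pi_l^+\circ \VCK{C,-1}^+$ shears to the left and $\Pi_r^+\circ \VCK{C,-1}^+$ to the right, as in the diagram. Surjectivity and the continuous boundary extension follow from Lemma~\ref{lm:propconv} on the open $C^1$ locus; in the quasicircle case $\mathcal U^\pm = \HH^2$ and completeness is Proposition~\ref{prop:extend-ads}, while the general (possibly incomplete) case is handled as in~\cite{mess, BeBo}.

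To identify the earthquake extending $f$, I would use that $\partial \VCK{C,-1}^+\co\RP^1\to C=\Gamma(f)$ intertwines the two projections with the coordinate maps $\pi_l,\pi_r$ of $\Ein^{1,1}=\RP^1\times\RP^1$, so that on the circle $f = \partial(\Pi_r^+ \VCK{C,-1}^+)\circ\partial(\Pi_l^+ \VCK{C,-1}^+)^{-1}$. Because the two projections shear by equal weight in opposite senses across each leaf, the net shear of this composition across a leaf of $\lambda^+$ is the sum of the two, i.e. exactly double; thus $f$ is realized by the left earthquake whose measure is $2\lambda_l^+$, where $\lambda_l^+ = \Pi_l^+\circ \VCK{C,-1}^+(\lambda^+)$ is the shearing measure of $\Pi_l^+\circ \VCK{C,-1}^+$. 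Uniqueness of the earthquake representative of $f$ is Thurston's theorem (Theorem III.1.3.1 of~\cite{th-earth}), which closes the argument; the past boundary is identical after exchanging the roles of left and right.

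I expect the main obstacle to be the bookkeeping in this last step: relating the bending angle of a leaf to the magnitude of the induced shear, and, above all, verifying the handedness so that the composition of the right map $\Pi_r^+\VCK{C,-1}^+$ with the inverse of the left map $\Pi_l^+\VCK{C,-1}^+$ is again a genuine earthquake, and a left one, with measure exactly $2\lambda_l^+$ rather than some other multiple or a map with a more complicated fault structure. A secondary difficulty is making the plaque-by-plaque computation rigorous at the level of transverse measures, since the smooth formula~\eqref{eq:leftright} only applies off the fault locus; I would address this either by taking limits of support planes directly or by approximating $\partial^\pm\CH(C)$ by smooth convex surfaces and passing to the limit using Lemma~\ref{lm:continearth}.
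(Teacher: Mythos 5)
First, a point of reference: the paper does not actually prove Proposition~\ref{prop:mess} --- it is quoted from Mess and Benedetti--Bonsante --- so there is no in-paper argument to compare against. Your outline reproduces the standard proof from those references: the plaque-wise identity $\Pi_l^+ = \gamma_F\circ \Pi_r^+$ on a support plane $\gamma_F^\perp$ (which is correct, and correctly tied to the fact that $\partial(\gamma_F^\perp)$ is the graph of $\gamma_F^{-1}$), the observation that two support planes through a common bending line differ by a coset of translations, and the boundary identity $f = \partial(\Pi_r^+\circ \VCK{C,-1}^+)\circ \partial(\Pi_l^+\circ \VCK{C,-1}^+)^{-1}$ via the fact that $\Pi_l^\pm,\Pi_r^\pm$ extend $\pi_l,\pi_r$. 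That is the right skeleton.

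The genuine gap is the step you dismiss as ``bookkeeping,'' which is in fact the substance of the proposition. The identity $\Pi_l^+=\gamma\circ\Pi_r^+$ on each plaque shows only that the two comparison maps across a leaf $\ell$ differ by (a conjugate of) $\gamma^{-1}\gamma'$, a translation along the projected leaf; it does not show that each comparison map is \emph{individually} a hyperbolic translation along that leaf, nor that the total shear $\gamma^{-1}\gamma'$ splits evenly between the left and right projections. Indeed, a naive reading of your step 2 runs into trouble: since $\VCK{C,-1}^+$ is continuous across $\ell$, one might wrongly conclude the comparison map of $\Pi_r^+\circ\VCK{C,-1}^+$ is trivial. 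The resolution is that $\Pi_r^+$ is genuinely multivalued on the bending line --- for $p\in\gamma^\perp$ one has $\Pi_r^+(p)=\mathrm{Fix}(\gamma^{-1}p)$ and $\Pi_l^+(p)=\mathrm{Fix}(p\gamma^{-1})$, so the limits of $\Pi_r^+\circ\VCK{C,-1}^+$ from the two sides of $\ell$ are computed with different support planes and disagree along $\ell$ itself --- and one must carry out this computation to see that each one-sided discrepancy is a translation along the projected leaf of length equal to the bending weight, with opposite senses for $\Pi_l^+$ and $\Pi_r^+$. This is exactly where the factor $2$ in $E^l_{2\lambda_l^+}$ comes from, so it cannot be left implicit. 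Secondarily, the plaque argument only treats discrete laminations; the passage to general bending measures requires the approximation argument you gesture at (and Lemma~\ref{lm:continearth} controls only the boundary values, so some care is needed to conclude that the limit of the approximating earthquakes has shearing measure exactly $\lambda^+$). With those two points supplied, the argument is the one in the cited sources.
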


We will also need an analogue of Lemma~\ref{lm:propconv}.
Although the statement of Lemma~\ref{lm:propconv} from~\cite{maximal} is in the smooth category, the proof does not use smoothness and can be easily adapted to show the following.
\begin{lemma}\label{lem:boundary-project}
The (earthquake) projection maps $\Pi_l^+$ and $\Pi_r^+$ associated to the surface $\partial^+ \CH(C)$ extend respectively the canonical projections $\pi_l, \pi_r:C\to\mathbb \RP^1$ defined by $\pi_l(x,f(x)) = x$ and $\pi_r(x,f(x)) = f(x)$.
The projection maps $\Pi_l^-$ and $\Pi_r^-$ associated to the surface $\partial^- \CH(C)$ extend $\pi_l$ and $\pi_r$ respectively.
\end{lemma}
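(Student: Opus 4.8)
The plan is to adapt the proof of Lemma~\ref{lm:propconv} from \cite{maximal}, observing that its argument is purely about support planes and normal timelike geodesics and never uses that the surface is smooth. Concretely, the statement to prove is that whenever a sequence $s_n$ in $\partial^+\CH(C)$ converges in $\overline{\AdS^3}$ to a point $\xi = (p,q) \in C = \Gamma(f)$ (so $q = f(p)$), one has $\Pi_l^+(s_n) \to p$ and $\Pi_r^+(s_n)\to q$ in $\RP^1 = \partial\HH^2$, and symmetrically for $\partial^-\CH(C)$. Since the $C^1$ locus of the pleated surface $\partial^+\CH(C)$ (the complement of the atomic leaves of the bending lamination $\lambda^+$) is dense and meets every neighbourhood of $\xi$, it suffices to prove the limit along points $s_n$ at which $\Pi_l^+, \Pi_r^+$ are defined; the only modification to the smooth argument is that the support plane at such a point need no longer be unique, so I would fix, for each $n$, an arbitrary spacelike support plane $P_n$ to $\CH(C)$ at $s_n$ and run the argument with it.

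The geometric dictionary I would use is the following. By the polar duality recalled in Section~\ref{prel ads}, the spacelike support plane $P_n$ is dual to a point $\gamma_n \in \AdS^3 = \PSL(2,\R)$ with $P_n = \gamma_n^\perp$, and its boundary $\partial P_n = \Gamma(\gamma_n^{-1})$ is the graph of the M\"obius transformation $\gamma_n^{-1}$. The timelike geodesic normal to $\partial^+\CH(C)$ at $s_n$ is the elliptic one-parameter coset $L_{\Pi_l^+(s_n),\Pi_r^+(s_n)}$, and since $\gamma_n$ is the polar point of $P_n$ it lies on this normal geodesic together with $s_n$. Writing $x = \Pi_l^+(s_n)$ and $x' = \Pi_r^+(s_n)$, the defining relations $s_n x' = x$ and $\gamma_n x' = x$ give that $\Pi_r^+(s_n)$ is the interior fixed point of the elliptic element $\gamma_n^{-1}s_n$ and $\Pi_l^+(s_n)$ is the fixed point of $s_n\gamma_n^{-1}$.

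To pass to the limit I would first show that $\partial P_n = \Gamma(\gamma_n^{-1})$ accumulates at $\xi$, that is $\gamma_n^{-1}(p)\to q$. This follows because $P_n$ supports the convex set $\CH(C)$, so on $\Ein^{1,1}=\RP^1\times\RP^1$ the M\"obius graph $\Gamma(\gamma_n^{-1})$ is weakly nested with $C=\Gamma(f)$; as $s_n\in P_n$ tends to the boundary point $\xi$, any subsequential limit of the $P_n$ is a plane whose closure contains $\xi$, forcing $\gamma_n^{-1}(p)\to q$. On the other hand $s_n\to\xi$ means, by the convergence criterion~\eqref{defi convergence boundary}, that $s_n(x_0)\to p$ and $s_n^{-1}(x_0)\to q$, i.e. $s_n$ has north--south dynamics with attracting point $p$ and repelling point $q$. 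Combining these, the composition $\gamma_n^{-1}s_n$ sends almost every point towards $q$, so the elliptic fixed points $\Pi_r^+(s_n)$ must escape to the boundary and converge to $q$; symmetrically $s_n\gamma_n^{-1}$ has attracting point $p$, whence $\Pi_l^+(s_n)\to p$. The same computation with the time orientation reversed handles $\partial^-\CH(C)$, so that $\Pi_l^-,\Pi_r^-$ extend $\pi_l,\pi_r$ as well.

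The main obstacle is the possible degeneration of the support planes $P_n$: a priori they could converge to a lightlike plane tangent to $\Ein^{1,1}$ at $\xi$, in which case $\gamma_n$ escapes to $\partial\AdS^3$ and the clean north--south argument above must be replaced by a more careful limiting analysis. In the case of present interest, where $C=\Gamma(f)$ is a quasicircle, this is ruled out cleanly: by Proposition~\ref{pr:w_ads} the width satisfies $w(C)<\pi/2$, which bounds the tilt of the support planes uniformly away from lightlike, so that along any subsequence the $P_n$ converge to a genuine spacelike plane and the argument of the previous paragraph applies verbatim. For a general acausal meridian one instead extracts a subsequential limit of $(P_n,\gamma_n)$ in $\overline{\AdS^3}$ and checks directly that the fixed points of $s_n\gamma_n^{-1}$ and $\gamma_n^{-1}s_n$ still converge to $p$ and $q$; alternatively, one may invoke that $\Pi_l^+,\Pi_r^+$ are earthquake maps (Proposition~\ref{prop:mess}) and hence automatically extend to homeomorphisms of $\RP^1$, reducing the problem to identifying that boundary extension, which the support-plane computation pins down on the dense set of $C^1$ points.
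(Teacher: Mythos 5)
Your proposal follows the paper's own route exactly: the paper's entire proof of this lemma is the remark that the proof of Lemma~\ref{lm:propconv} in \cite{maximal} never uses smoothness and adapts verbatim, and what you have written is precisely that adaptation, with the correct identification of $\Pi_r^+(s_n)$ and $\Pi_l^+(s_n)$ as the fixed points of $\gamma_n^{-1}s_n$ and $s_n\gamma_n^{-1}$ and a sound limiting argument based on the convergence criterion \eqref{defi convergence boundary}. The only step stated more quickly than it deserves is that $w(C)<\pi/2$ keeps the support planes uniformly spacelike, but you flag this explicitly, and it is the standard non-degeneracy statement for quasicircles, which is the only case the paper actually uses.
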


Let us now prove the $K =-1$ case of Proposition~\ref{prop:extend-ads}.
\begin{proof}[Proof of Proposition~\ref{prop:extend-ads} for $K=-1$]
Let us focus on the future surface $\partial^+ \CH(C)$. The argument for the past surface $\partial^- \CH(C)$ is the same.
We denote by $\lambda_l^+$ the image of the bending lamination $\lambda^+$ through the projection $\Pi^+_l$. Similarly we define $\lambda^+_r$, $\lambda^-_l$, and $\lambda^-_r$.
In the context of Proposition~\ref{prop:mess}, Proposition~\ref{pr:bounlam} implies that $f : \RP^1 \to \RP^1$ is quasisymmetric if and only if $2 \lambda_l^+$ (or $2 \lambda_l^-$) is a bounded lamination. In particular, if $f$ is quasisymmetric then $\lambda_l^+$ is a bounded measured lamination and hence $\mathcal U^+ = E^r_{\lambda_l^+}(\HH^2)$ is all of $\HH^2$. Hence $\partial^+ \CH(C)$ is complete. Let $\VCK{C,-1}^+: \HH^2 \to \partial^+ \CH(C)$ be an isometry. Then the composition $\Pi_l^+ \circ \VCK{C,-1}^+$ is an earthquake map with bounded shearing measure $\lambda^+$. Hence $\Pi_l^+ \circ \VCK{C,-1}^+$ extends uniquely to a quasisymmetric homeomorphism $\partial (\Pi_l^+ \circ \VCK{C,-1}^+): \RP^1 \to \RP^1$. By Lemma~\ref{lem:boundary-project}, this map factors as $\partial (\Pi_l^+ \circ \VCK{C,-1}^+) = \pi_l \circ \partial \VCK{C,-1}^+$, where $\pi_l: C \to \RP^1$ is the canonical left projection taking $(x,f(x)) \in C$ to $x \in \RP^1$ and $\partial \VCK{C,-1}^+: \RP^1 \to C$ is a homeomorphism extending~$\VCK{C,-1}^+$.
\end{proof}

Proposition~\ref{prop:extend-ads} is now proved. We next turn our focus to Proposition~\ref{prop:well-defined-ads}, 
For $K=-1$ the  proof is a based on the following stronger result.

\begin{lemma}\label{lem:unif-qi-ads}
For each $k > 1$, there exists $A > 1$ so that the earthquake maps $\Pi_l^\pm \circ \VCK{C,-1}^\pm: \HH^2 \to \HH^2$ associated to the boundary components $\partial^\pm \CH(C)$ of the convex hull $\CH(C)$ of any $k$-quasicircle $C$ are $A$--quasi-isometries.
\end{lemma}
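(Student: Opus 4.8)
The plan is to realize each map $\Pi_l^\pm \circ \VCK{C,-1}^\pm$ as an earthquake of $\HH^2$ via Proposition~\ref{prop:mess}, and then to convert the quasisymmetry of $f$ into a uniform Thurston-norm bound on the associated shearing measures using the Gardiner--Hu--Lakic and Saric criterion (Proposition~\ref{pr:bounlam}).

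First I would fix a $k$-quasicircle $C = \Gamma(f)$, so that $f\co \RP^1 \to \RP^1$ is $k$-quasisymmetric, and recall from the proof of Proposition~\ref{prop:extend-ads} in the case $K = -1$ that the quasisymmetry of $f$ forces $\mathcal U^\pm = \HH^2$. Thus $\Pi_l^\pm \circ \VCK{C,-1}^\pm \co \HH^2 \to \HH^2$ are genuine earthquake maps of the entire hyperbolic plane. By Proposition~\ref{prop:mess}, $\Pi_l^+ \circ \VCK{C,-1}^+$ is a left earthquake whose image shearing measure on $\HH^2$ is $\lambda_l^+$, while the unique \emph{left} earthquake extending $f$ itself is $E^l_{2\lambda_l^+}$, which carries exactly twice this shearing. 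Symmetrically, $\Pi_l^- \circ \VCK{C,-1}^-$ is a right earthquake with image shearing measure $\lambda_l^-$, and the unique \emph{right} earthquake extending $f$ is $E^r_{2\lambda_l^-}$.

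Next I would feed the quasisymmetry of $f$ into the forward direction of Proposition~\ref{pr:bounlam}: since $f$ is $k$-quasisymmetric, the shearing measure of its earthquake extension has Thurston norm bounded by a constant $C = C(k)$ depending only on $k$. Applying this to the left earthquake extension $E^l_{2\lambda_l^+}$ and to the right earthquake extension $E^r_{2\lambda_l^-}$ yields $\| 2\lambda_l^+ \|_{\mathrm{Th}} \le C(k)$ and $\| 2\lambda_l^- \|_{\mathrm{Th}} \le C(k)$, hence $\| \lambda_l^\pm \|_{\mathrm{Th}} \le C(k)/2$. The maps $\Pi_l^\pm \circ \VCK{C,-1}^\pm$ are precisely the earthquakes carrying these halved shearing measures, so the converse direction of Proposition~\ref{pr:bounlam} furnishes a constant $A = A\big(C(k)/2\big)$, depending only on $k$, for which $\Pi_l^\pm \circ \VCK{C,-1}^\pm$ are $A$-quasi-isometries.

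The step requiring the most care is the measure bookkeeping underlying the factor of two: the individual projection earthquake $\Pi_l^+ \circ \VCK{C,-1}^+$ carries shearing measure $\lambda_l^+$, whereas the \emph{full} earthquake extension $E^l_{2\lambda_l^+}$ of $f$ carries $2\lambda_l^+$. Because an earthquake distorts hyperbolic lengths, the Thurston norm of the intrinsic bending measure $\lambda^\pm$ on $\partial^\pm \CH(C)$ and that of its pushforward $\lambda_l^\pm$ on the fixed target $\HH^2$ need not agree a priori, so one must consistently record every measure on the target copy of $\HH^2$ (equivalently, argue with the inverse earthquake, whose domain lamination is exactly $\lambda_l^\pm$). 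Proposition~\ref{prop:mess} is precisely what licenses this, expressing both the projection maps and the earthquake extension of $f$ in terms of the single lamination $\lambda_l^\pm$ living on $\HH^2$, where the quasisymmetry hypothesis on $f$ exerts direct control.
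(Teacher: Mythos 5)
Your proposal is correct and follows essentially the same route as the paper's proof: identify $\Pi_l^\pm\circ \VCK{C,-1}^\pm$ as earthquakes via Proposition~\ref{prop:mess}, use the fact that the full left (resp.\ right) earthquake extension of $f$ has shearing measure $2\lambda_l^+$ (resp.\ $2\lambda_l^-$) together with the forward direction of Proposition~\ref{pr:bounlam} to bound $\|\lambda_l^\pm\|_{\mathrm{Th}}$ in terms of $k$, and then apply the converse direction of Proposition~\ref{pr:bounlam} to conclude the uniform quasi-isometry constant. Your extra care in distinguishing the bending measure $\lambda^\pm$ on $\partial^\pm\CH(C)$ from its pushforward $\lambda_l^\pm$ on the target $\HH^2$ is a point the paper glosses over notationally, but the substance of the argument is the same.
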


\begin{proof}
Let $C = \Gamma(f)$ be a normalized $k$-quasicircle. The earthquake map $\Pi^+_l \circ \VCK{C,-1}^+$ has shearing measured lamination denoted $\lambda^+$ and the earthquake map $\Pi^-_l \circ \VCK{C,-1}^-$ has shearing lamination denoted $\lambda^-$. By Proposition~\ref{prop:mess}, the left earthquake map extending the $k$-quasisymmetric homeomorphism $f$ has shearing measure $2\lambda^+$. So by Proposition~\ref{pr:bounlam}, $\|2\lambda^+\|_{Th}$, and hence also $\|\lambda^+\|_{Th}$, is bounded in terms of~$k$. Similarly, $\|\lambda^-\|_{Th}$ is bounded in terms of~$k$. It then follows, again by Proposition~\ref{pr:bounlam}, that each of $(\Pi_l^- \circ \VCK{C,-1}^-)$, and  $(\Pi_l^+ \circ \VCK{C,-1}^+)$ is an $A$--quasi-isometry for some constant $A > 1$ depending only on $k$. \end{proof}

\begin{proof}[Proof of Proposition~\ref{prop:well-defined-ads} ($K = -1$):]
Let $\VCK{C, K}^\pm:\Hyp^{2\pm}\to\partial_{\pm}\CH(C)$ be orientation-preserving isometries which extend to homeomorphisms $\partial\VCK{C, -1}^{\pm}:\RP^1\to C$, guaranteed to exist by Proposition \ref{prop:extend-ads}.
Further, by pre-composing with isometries, we may adjust so that each $\VCK{C, -1}^{\pm}$ is normalized, in the sense that $\partial\VCK{C, -1}^{\pm}(i)=(i,i)$ for $i=0,1,\infty$.
Then $\Psi_{C, -1}=(\partial\VCK{C,-1}^-)^{-1}\circ\partial\VCK{C,-1}^+$ extends the composition $(\Pi_l^-\circ\VCK{C,-1}^-)^{-1}\circ(\Pi_l^+\circ\VCK{C,-1}^+)$ of right earthquake maps. 
Hence by Lemma \ref{lem:unif-qi-ads} $\Psi_{C,-1} = (\partial \VCK{C,-1}^-)^{-1} \circ \partial \VCK{C,-1}^+$ is $k'$-quasisymmetric for some $k'$ depending only on~$k$.
\end{proof}

 The $K < -1$ case will require several compactness arguments.

\subsection{Compactness statements}
As in the setting of hyperbolic geometry, we will use compactness arguments in $\AdS^3$ frequently.

\begin{lemma}\label{lm:continuityboundary}
Let $S_n$ be a sequence of properly embedded convex spacelike disks spanning a sequence of $k$-quasicircles $C_n$.
Then if $C_n$ converges to the union $\{p\} \times \RP^1 \cup \RP^1 \times \{q\}$ of a line of the left ruling and a line of the right ruling, then $S_n$ converge to the lightlike plane with boundary at infinity $\{p\} \times \RP^1 \cup \RP^1 \times \{q\}$.
If $C_n$ converges to a $k$-quasicircle $C$, then, up to a subsequence, $S_n$ converges to a locally convex properly embedded surface spanning the curve $C$.
\end{lemma}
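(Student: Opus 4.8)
The plan is to handle both assertions through one compactness setup and then identify the limit in each case. Because $\overline{\AdS^3} = \AdS^3 \cup \Ein^{1,1}$ is a compact solid torus, the Hausdorff topology on its closed subsets is compact, so after passing to a subsequence I may assume that $\overline{S_n}$ converges to a closed set $\Sigma \subset \overline{\AdS^3}$, and simultaneously that the closed convex regions $\mathscr C_n$ cut out on the concave side of $S_n$ converge to a closed convex set $\mathscr K$ (a Hausdorff limit of convex sets is convex, since convexity is inherited inside any fixed affine chart of $\RP^3$). Two containments will drive the whole argument: each $S_n$, being spacelike and convex, lies in the closure $\overline{E(C_n)}$ of the invisible domain of $C_n$, while the convex region it bounds contains the convex hull $\CH(C_n)$, so that $\CH(C_n) \subseteq \mathscr C_n \subseteq \overline{E(C_n)}$.

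First I would record the needed Hausdorff-continuity of the canonical convex sets attached to the curves. For a sequence of $k$-quasicircles, Proposition~\ref{pr:w_ads} gives a uniform width bound $w(C_n) \le w_0 < \pi/2$ in the non-degenerate case, from which I expect $\CH(C_n) \to \CH(C)$; the convergence $\overline{E(C_n)} \to \overline{E(C)}$ then follows from the duality between convex hull and invisible domain in Proposition~\ref{prop:AdS-CH}. In the degenerate case I would instead note that $D = \{p\}\times\RP^1 \cup \RP^1 \times\{q\}$ is exactly the boundary at infinity of the lightlike plane $P = (p,q)^\perp$, and that $\CH(D) = \overline P$ because the minimal convex set accumulating on the ideal boundary of a totally geodesic plane is that plane itself; the same duality then yields $\overline{E(C_n)} \to \overline P$.

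For the degenerate case the conclusion is now almost immediate: the sandwich $\CH(C_n) \subseteq \mathscr C_n \subseteq \overline{E(C_n)}$ together with $\CH(C_n) \to \overline P$ and $\overline{E(C_n)} \to \overline P$ forces $\mathscr C_n \to \overline P$. Since $\overline P$ is a two-dimensional convex set, it has empty interior in $\overline{\AdS^3}$ and hence coincides with its own boundary, so both spacelike boundary components of $\mathscr C_n$, and in particular $S_n$, converge to $\overline P$. This is precisely the assertion that $S_n \to P$.

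For the non-degenerate case the sandwich passes to the limit as $\CH(C) \subseteq \mathscr K \subseteq \overline{E(C)}$. This pins down the behaviour at infinity: from $\partial_\infty S_n = C_n \to C$ I get $C \subseteq \Sigma \cap \Ein^{1,1}$, while $\Sigma \subseteq \overline{E(C)}$ and the fact that the invisible domain accumulates exactly on $C$ give $\Sigma \cap \Ein^{1,1} \subseteq C$; hence $\partial_\infty \Sigma = C$. Because $C$ is a genuine quasicircle, $\CH(C)$ is non-degenerate, so $\mathscr K$ has nonempty interior and $S := \Sigma \cap \AdS^3 = \partial\mathscr K \cap \AdS^3$ is a genuine disk; it is locally convex because support planes converge, and properly embedded because $\Sigma$ is closed with $\Sigma \cap \Ein^{1,1} = C$. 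The hard part throughout is exactly the control of the behaviour at infinity --- the same issue flagged for the hyperbolic analogue, where interior (uniform-on-compacts) convergence does not by itself control the boundary extensions --- and the proposal sidesteps it by trapping each $S_n$ between $\CH(C_n)$ and $\overline{E(C_n)}$ and using the Hausdorff continuity of these two canonical convex sets rather than analysing the surfaces' boundary maps directly.
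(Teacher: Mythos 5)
Your sandwich strategy is a genuinely different route from the paper's proof, which lifts each $S_n$ to the graph of a uniformly $2$-Lipschitz function $u_n$ on $\overline{\HH^2}$ via the covering $\overline{\HH^2}\times\RR\to\overline{\AdS^3}$, extracts a limit by Arzel\`a--Ascoli (so that boundary behaviour is controlled automatically, since the convergence is uniform up to $\partial\HH^2$), and identifies the degenerate limit by a gradient argument along the lightlike rays of $P$. As written, however, your argument has a genuine gap precisely in the degenerate case, which is the harder half of the lemma. The set $D=\{p\}\times\RP^1\cup\RP^1\times\{q\}$ is not a Jordan curve (it is a wedge of two circles meeting at $(p,q)$) and not an achronal meridian, so Proposition~\ref{prop:AdS-CH} does not apply and ``$\CH(D)$'' is undefined; worse, the closure of the lightlike plane $(p,q)^\perp$ contains two entire projective lines and hence is not contained in any affine chart, so it is not a properly convex subset of $\RP^3$ in the sense used throughout the paper. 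This matters because the elementary Hausdorff-continuity of the convex-hull operation that your sandwich implicitly relies on holds only for sets lying in a \emph{common} affine chart in which they are uniformly properly convex, and in the degeneration $C_n\to D$ those charts necessarily blow up. So the two convergences your argument needs, $\CH(C_n)\to\overline{P}$ and $\overline{E(C_n)}\to\overline{P}$, are essentially as hard as the statement being proved and require an independent argument; the paper supplies one by showing that the limit graph function $u_\infty$ satisfies $\|du_\infty\|_{\mathrm{Euc}}\le\|du_P\|_{\mathrm{Euc}}$ while agreeing with $u_P$ at the endpoints of each lightlike ray of $P$, which forces $u_\infty=u_P$.

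Even in the non-degenerate case two steps are asserted rather than proved. First, $\partial^{\pm}\CH(C_n)$ are themselves properly embedded convex spacelike disks spanning $C_n$, so the convergence $\CH(C_n)\to\CH(C)$ that you ``expect'' is of exactly the same nature as the lemma; it can be established independently (e.g.\ by using the uniform width bound of Proposition~\ref{pr:w_ads} to place all the $\overline{E(C_n)}$ in a fixed compact part of a fixed affine chart and then invoking the continuity of the projective convex hull), but that reduction has to be carried out, and the inclusion $S_n\subset\overline{E(C_n)}$ also needs justification. Second, knowing that the convex bodies $\mathscr C_n$ converge does not by itself identify the Hausdorff limit of the surfaces $S_n$ with the corresponding boundary component of the limit body, nor show that this limit is a properly embedded disk rather than a degenerate or collapsed set; some quantitative description of the surfaces themselves --- such as the uniform Lipschitz graph representation the paper uses --- is still needed to close that step.
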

\begin{proof}
Using coordinates \eqref{coordinates sl2r} on $\AdS^3$, notice that we can identify the Klein model of hyperbolic plane $\Hyp^2$  with the intersection of $\AdS^3$ with the projective plane given by the equation $x_1=0$. In these coordinates, $\Hyp^2$ is entirely contained in the affine chart $x_2\neq 0$.

We consider the universal covering $\pi:\overline{\HH^2}\times\RR\to\overline{\AdS^3}$ given by $\pi([0:1:x_3:x_4], t)=[\sin t: \cos t: x_3: x_4]$. The lift of the surface $S_n$  is properly embedded and spans the lift of the curve $C_n$. (Note that $C_n$ lifts because it bounds a disk in $\AdS^3$.) By Proposition 3.2 of \cite{maximal}, this lift is a global graph of a function
$u_n:\overline{\HH^2}\to \mathbb R$.  We can choose the lift so that $u_n[0:1:0:0]\in[0,\pi)$. The function $u_n$ is in general Lipschitz continuous and satisfies a gradient estimates $||du_n||^2_{\mathrm{Hyp}}<\phi^2$, where 
$\phi([0:1:x_3:x_4])=(1+x_3^2+x_4^2)$ almost everywhere. In particular considering the Euclidean metric on $\HH^2$ coming from the Poincar\'e model, the functions $u_n$ are $2$-Lipschitz
(see the discussion at page 295 of \cite{maximal}).
It follows that, up to a subsequence, $u_n$ converges on $\overline{\HH^2}$ to a Lipschitz function $u_\infty$, which satisfies the inequality $||du_\infty||^2\leq\phi$ almost everywhere. This shows that the accumulation points of the limit of $S_n$ is the limit $C$ of $C_n$.

It remains to show that if $C$ is the boundary of a lightlike plane $P$, then necessarily $S_n$ converges to $P$. The lift of $P$ on the universal covering is the graph of a function $u_P$ that satisfies $||du_P||^2_{\mathrm{hyp}}=\phi^2$. Let $u_\infty$ be the limit of $u_n$. Notice that $u_\infty= u_P$ on $\partial\HH^2$. Notice that the gradient of $u$ in the Euclidean metric of the Poincar\'e model is proportional to the gradient  of $u$ in the hyperbolic metric. So we conclude that $||du_\infty||_{\mathrm{Euc}}<||du_P||_{\mathrm{Euc}}$.

Let $c$ be the projection on $\HH^2$ of a lightlike ray of $P$. The curve $c$
connects two points $x_-$ and $x_+$ of $\partial \HH^2$, moreover it is parallel to the gradient of $u_P$.
Let $c(t)$ with $t\in[0,1]$ be a parameterization of such a line. Notice that $du_P(\dot c)=||du_P||_{\mathrm{Euc}}||\dot c||\geq du_\infty(\dot c)$ with the equality that
holds iff $du_P=du_\infty$ at $c(t)$.
On the other hand as $u_\infty$ is equal to $u_P$ at $x_-$ and $x_+$ we deduce that $\int_0^1 (du_P(\dot c(t))-du_\infty(\dot c(t))) dt=0$, thus
$du_P=du_\infty$. This implies that the graph of $u_\infty$ contain the line $c$

Since $P$ is foliated by lightlike rays, we conclude that $u_P=u_\infty$ everywhere.
\end{proof}

\begin{prop}\label{pr:cmpads}
Fix $k>1$ and $K<-1$.
Let $C_n = \Gamma(f_n)$ be a sequence of $k$-quasicircles defined by $k$-quasisymmetric homeomorphims $f_n:\RP^1\to\RP^1$. Then, up to taking a subsequence, either $\Sa^{\pm}_{K}(C_n)$ converges to a lightlike plane, or $C_n$ converges to a $k$-quasicircle $C$ and $\Sa^\pm_{K}(C_n)$ converges to $\Sa^\pm_{K}(C)$.

Moreover in the latter case, if $V_n:\HH^2_K\to\Sa^\pm_{K}(C_n)$ is a sequence of isometries, then it has a subsequence which smoothly converges uniformly on compact subsets to an isometry $V_\infty\co\HH^2_K\to\Sa_{K}(C)$ provided that there exists a bounded sequence $x_n\in\HH^2$ such that $V_n(x_n)$ is bounded in $\AdS^3$.
\end{prop}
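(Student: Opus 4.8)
The plan is to follow the template of Lemma~\ref{lm:convS_K} from the hyperbolic setting, inserting the AdS-specific dichotomy between a non-degenerate limit and degeneration to a lightlike plane. First I would apply the quasicircle compactness statement Lemma~\ref{lm:cmpqcads} to the sequence $C_n = \Gamma(f_n)$: after passing to a subsequence, either $C_n$ converges in the Hausdorff topology to a $k$--quasicircle $C = \Gamma(f_\infty)$, or it converges to a union $\{p\}\times\RP^1 \cup \RP^1\times\{q\}$ of one leaf of each ruling. Since by Theorem~\ref{tm:K-surfaces-ads} each $\Sa^\pm_K(C_n)$ is a properly embedded convex spacelike disk spanning $C_n$, I can feed these surfaces directly into Lemma~\ref{lm:continuityboundary}. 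In the degenerate case that lemma immediately yields that $\Sa^\pm_K(C_n)$ converges to the lightlike plane with boundary at infinity $\{p\}\times\RP^1\cup\RP^1\times\{q\}$, which is the first alternative in the statement.

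In the non-degenerate case, Lemma~\ref{lm:continuityboundary} gives, after a further subsequence, a locally convex properly embedded limiting surface $S_\infty$ spanning $C$, and the remaining work is to identify $S_\infty$ with $\Sa^\pm_K(C)$ and to upgrade Hausdorff convergence to smooth convergence of the isometric parametrizations. Here I would invoke the analogue for $\AdS^3$ of the compactness theorem \cite[Theorem 5.6]{these} for isometric immersions into $\dS^3$ (the counterpart of Proposition~\ref{pr-compK}): a sequence of isometric immersions $\HH^2_K\to\AdS^3$ which does not degenerate subconverges smoothly on compact subsets to an isometric immersion. The crucial point is verifying non-degeneration, equivalently controlling the principal curvatures of $\Sa^\pm_K(C_n)$. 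Unlike the hyperbolic case, where Proposition~\ref{principal_curv} produces such a bound for \emph{all} Jordan curves from convexity alone, in $\AdS^3$ the principal curvatures of a spanning $K$--surface are governed only through the quasicircle hypothesis, by the last assertion of Theorem~\ref{tm:K-surfaces-ads}. I would secure non-degeneration from the fact that the Hausdorff limit $S_\infty$ spans the acausal meridian $C$ and is therefore genuinely spacelike on compact sets: a curvature blow-up of a $K$--surface (where $\det B = -K-1$ is fixed, so one principal curvature tends to $0$ as the other diverges) forces a tangent direction to become lightlike, which is incompatible with convergence to the spacelike surface $S_\infty$. If necessary this can be quantified through the width estimate Proposition~\ref{pr:w_ads}, which keeps $k$--quasicircles uniformly away from the degenerate rhombus configuration $\rhombus$.

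Granting non-degeneration, I would take the isometries $V_n\co \HH^2_K \to \Sa^\pm_K(C_n)$ with $x_n$ bounded in $\HH^2$ and $V_n(x_n)$ bounded in $\AdS^3$, apply the compactness theorem to extract a smooth limit $V_\infty\co\HH^2_K\to\AdS^3$, and then run the covering argument exactly as in the proofs of Lemma~\ref{lm:convS_K} and Proposition~\ref{pr-compK}: since $\HH^2_K$ is complete and $V_\infty$ is a local isometric immersion with image in the complete, simply connected surface $S_\infty$, the map $V_\infty$ is a global isometry onto $S_\infty$. Consequently $S_\infty$ is a complete $K$--surface spanning $C$ with the correct ($\pm$) convexity, so by the uniqueness clause of Theorem~\ref{tm:K-surfaces-ads} it must equal $\Sa^\pm_K(C)$. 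This simultaneously proves the Hausdorff convergence $\Sa^\pm_K(C_n) \to \Sa^\pm_K(C)$ and the smooth convergence $V_n \to V_\infty$ asserted in the ``moreover'' part.

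I expect the principal obstacle to be precisely the non-degeneration step, since this is where the AdS geometry genuinely departs from the hyperbolic picture: one must exclude the collapse of the spacelike $K$--surfaces onto a lightlike plane, drawing on the acausal (spacelike) nature of the limiting meridian $C$ rather than on convexity alone. A secondary technical point, which I would check carefully, is that future/past convexity passes to the Hausdorff limit, so that the labeling is preserved, namely $\Sa^+_K(C_n)\to\Sa^+_K(C)$ and $\Sa^-_K(C_n)\to\Sa^-_K(C)$ respectively; this follows from the consistent time-orientation of the outward normals being a closed condition under the convergence supplied by Lemma~\ref{lm:continuityboundary}.
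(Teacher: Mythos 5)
Your outline reproduces the architecture of the paper's proof: Lemma~\ref{lm:continuityboundary} supplies both the degenerate alternative (convergence to a lightlike plane) and, in the non-degenerate case, a locally convex properly embedded Hausdorff limit $S_\infty$ spanning $C$; the compactness theorem for isometric immersions (Th\'eor\`eme 5.6 of \cite{these}, which the paper applies directly to $\AdS^3$) is then used to upgrade to smooth convergence of the parametrizations; and the completeness-plus-covering argument identifies the limit with $\Sa^\pm_K(C)$. You also correctly isolate the crux, namely excluding the degenerate alternative of that compactness theorem.

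However, the justification you offer for that crux does not hold up as stated. The inference ``$S_\infty$ spans the acausal meridian $C$, therefore $S_\infty$ is genuinely spacelike'' is false in general: the boundary components $\partial^\pm E(C)$ of the invisible domain are convex, nowhere-timelike surfaces spanning the acausal quasicircle $C$ which are ruled by lightlike segments, so acausality of the boundary curve does not force spacelikeness of a convex spanning surface. Since a priori the limit of the $\Sa^\pm_K(C_n)$ is only known to lie in $\overline{E(C)}\setminus \mathrm{int}(\CH(C))$, it could conceivably touch the lightlike part of $\partial E(C)$, and the width estimate of Proposition~\ref{pr:w_ads} (which controls the timelike separation of the two boundary components of the convex hull) does not directly rule this out. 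The paper closes this gap differently, by exploiting the precise structure of the degenerate alternative in Th\'eor\`eme 5.6: if the immersions degenerate, there is a complete geodesic of $\HH^2_K$ carried to a spacelike geodesic $\Gamma\subset\AdS^3$ along which the third fundamental form blows up transversally, which forces the limit surface to be contained in the union of the two past-directed (resp.\ future-directed) lightlike half-planes bounded by $\Gamma$; the boundary at infinity of that configuration contains lightlike segments, contradicting the fact that the asymptotic boundary of $S_\infty$ is the acausal quasicircle $C$. With your non-degeneration step replaced by this argument (which uses only the tool you already cite), the rest of your proposal goes through as written.
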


\begin{proof}
We prove the statement for $\Sa^+_{K}(C_n)$. The other case is obtained by reversing time orientation.
 By Lemma \ref{lm:continuityboundary} we need to prove that if $C_n$ converges to a quasicircle $C$, 
then the limit $\Sa_\infty$ of $\Sa^+_{K}(C_n)$ has constant curvature $K$. 

By Lemma~\ref{lm:continuityboundary}, $\Sa_\infty$ is a convex surface different from a lightlike plane.
It turns out that there exists a sequence of points $p_n\in\Sa^+_{K}(C_n)$ converging to $p_\infty\in\Sa_\infty$ such that the tangent plane of $\Sa^+_{K}(C_n)$ at $p_n$  converges to a spacelike plane. We fix a sequence of isometries $\sigma'_n:\HH^2_K\to\Sa^+_{K}(C_n)$ sending a fixed point $x_0$ to $p_n$. We can then apply Th\'eor\`eme 5.6 of \cite{these} which shows that either $\sigma'_n$ converges smoothly to a isometric embedding $\sigma'_\infty$, or there exists a 
complete geodesic $\gamma\subset \HH^2_K$ which is sent by $\sigma$ to a spacelike geodesic $\Gamma\subset \AdS^3$. Moreover, the integral of the mean curvature of 
$\sigma_n$ tends to $\infty$ in the neighborhood of any point of $\gamma$. This implies that the length for the third fundamental form of $\sigma_n$ of any geodesic segment 
transverse to $\gamma$ goes to $\infty$ as $n\to \infty$. (This is, in fact, a direct consequence of Lemme 5.4 of \cite{these}.) As a consequence, the limit surface $\Sa_\infty$ 
must be contained in the union of the past-directed lightlike half-planes bounded by $\Gamma$. On the other hand, this contarsicts the fact that the boundary points of $\Sa_\infty$ form
a quasicircle. By Lemma \ref{lm:continuityboundary} the asymptotic boundary of $\Sa_\infty$ is $C$, so that $\Sa_\infty=\Sa^+_{K, C}$. 

Thus $\sigma'_n$ converges to an isometric immersion $\sigma'_\infty:\HH^2_K\to\AdS^3$ whose image is contained in $\Sa^+_{K, C}$. Since $\HH^2$ is complete, $\sigma'_\infty$ is a proper embedding, so $\Sa_\infty=\sigma'_\infty(\HH^2)$ and the conclusion follows.

The last part of the proof is proved in a similar way.
\end{proof}

We have seen in Proposition \ref{principal_curv} that in the setting of hyperbolic geometry, the principal curvatures of a properly embedded $K$-surface spanning any Jordan curve are uniformly bounded by a constant independent of the Jordan curve (but depending on $K$).
By contrast in the AdS setting Bonsante and Seppi proved that the principal curvature of a properly embedded $K$-surface are bounded if and only if the curve at infinity is a quasicircle \cite{bon_are}.
The following lemma refines this statement to give a bound on the principal curvatures of a $K$-surface in $\AdS^3$ spanning a $k$-quasicircle which only depends on $K$ and $k$.
 
\begin{lemma} \label{lm:unif-bound-II-ads}
Let $K\in (-\infty,-1)$, and let $k>1$. There exists a constant $D(K,k)>0$ such that any properly embedded $K$--surface in $\AdS^3$ with boundary at infinity a $k$--quasicircle has principal curvatures at most $D(K,k)$. 
\end{lemma}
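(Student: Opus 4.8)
The plan is to argue by contradiction, combining the compactness statement Proposition~\ref{pr:cmpads} with a normalization of both a point and a tangent plane on the surface. Suppose the bound fails for some fixed $K\in(-\infty,-1)$ and $k>1$. Then there is a sequence of $k$--quasicircles $C_n=\Gamma(f_n)$ in $\Ein^{1,1}$ and points $x_n\in \Sa^\pm_K(C_n)$ at which the larger principal curvature exceeds $n$; since the Gauss equation in $\AdS^3$ forces $\det B=-(K+1)>0$ to be constant, a bound on the larger principal curvature automatically bounds the smaller one away from zero, so it suffices to treat this case. Because $\isom_0\AdS^3$ acts transitively on pairs consisting of a point of $\AdS^3$ together with a spacelike $2$--plane through it (the stabilizer of a point acting as $\SO_0(2,1)$ on the timelike normal directions, a copy of $\HH^2$), I would choose isometries $\phi_n\in\isom_0\AdS^3$ with $\phi_n(x_n)=x_0$ a fixed interior point and $d\phi_n\big(T_{x_n}\Sa^\pm_K(C_n)\big)=P_0$ a fixed spacelike plane through $x_0$. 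Since the action on $\Ein^{1,1}=\RP^1\times\RP^1$ is factorwise by M\"obius maps, the curves $C_n'=\phi_n C_n$ remain $k$--quasicircles, and $S_n:=\phi_n\Sa^\pm_K(C_n)=\Sa^\pm_K(C_n')$ is a $K$--surface passing through $x_0$ with tangent plane $P_0$ whose larger principal curvature at $x_0$ still exceeds $n$.

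Next I would apply Proposition~\ref{pr:cmpads} to the sequence $C_n'$. Up to a subsequence, either $S_n$ converges to a lightlike plane, or $C_n'$ converges to a $k$--quasicircle $C$ and $S_n$ converges to $\Sa^\pm_K(C)$. The completeness of the induced metric on each $S_n$ (Proposition~\ref{prop:extend-ads}) provides isometries $V_n\co\HH^{2\pm}_K\to S_n$, normalized so that $V_n(p_0)=x_0$ for a fixed $p_0\in\HH^{2\pm}_K$; then $V_n(p_0)$ is bounded, so the ``moreover'' clause of Proposition~\ref{pr:cmpads} applies in the second alternative. The main obstacle is to rule out the first, degenerate alternative, and this is exactly where the tangent plane normalization is used: if $S_n$ converged to a lightlike plane $P$, then $x_0\in P$, and since the $S_n$ are convex spacelike surfaces while $P$ is a smooth totally geodesic surface with a unique support plane at $x_0$, the standard fact that $C^0$--convergence of convex surfaces forces convergence of support planes at points of smoothness of the limit would give $T_{x_0}S_n\to T_{x_0}P$. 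But $T_{x_0}P$ is lightlike while $T_{x_0}S_n=P_0$ is a fixed spacelike plane, a contradiction; hence the degenerate alternative cannot occur.

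Therefore $S_n\to\Sa^\pm_K(C)$ and, by Proposition~\ref{pr:cmpads}, the normalized isometries $V_n$ converge smoothly on compact subsets to an isometry $V_\infty\co\HH^{2\pm}_K\to\Sa^\pm_K(C)$. Smooth convergence entails convergence of the second fundamental forms, so in particular the principal curvatures of $S_n$ at $x_0=V_n(p_0)$ converge to the finite principal curvatures of $\Sa^\pm_K(C)$ at $V_\infty(p_0)$. This contradicts the assumption that the larger principal curvature at $x_0$ exceeds $n\to\infty$, and the existence of the uniform bound $D(K,k)$ follows. I expect the only delicate point to be the convexity argument ruling out the lightlike degeneration; it can be made precise either through the elementary theory of convergence of convex bodies and their support planes in an affine chart, or by feeding the fixed spacelike frame at $p_0$ directly into Th\'eor\`eme 5.6 of \cite{these}, the compactness engine behind Proposition~\ref{pr:cmpads}, in order to land directly in its smooth-convergence alternative.
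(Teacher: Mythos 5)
Your proposal is correct and follows essentially the same route as the paper's proof: a contradiction argument in which both the point of large principal curvature and the tangent plane there are normalized by isometries, so that the degenerate (lightlike-plane) alternative of Proposition~\ref{pr:cmpads} is excluded and smooth convergence to a limiting $K$--surface contradicts the blow-up of the second fundamental form. The paper states the exclusion of the lightlike limit in one line; your elaboration via convergence of support planes of convex surfaces is exactly the intended justification.
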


\begin{proof} 
Suppose by contradiction that there exists a sequence of $k$-quasicircles $C_n$, and a sequence of points $p_n\in\Sa^{\pm}_{K}(C_n)$ 
such that the largest principal curvature at $p_n$  diverges. Replacing if necessary $C_n$ by its image through an isometry of $\AdS^3$, we may assume that $p_n$ is a fixed point in $\AdS^3$, and
$T_{p_n}\Sa^{\pm}_{K}(C_n)$ is a fixed spacelike plane. Hence no subsequence of $\Sa^{\pm}_{K}(C_n)$ converges to a lightlike plane. By applying Proposition \ref{pr:cmpads} we conclude that $\Sa^{\pm}_{K}(C_n)$ smoothly converges to a $K$-surface. On the other hand, this contradicts the assumption
that the principal curvatures are not bounded at $p_n$.
\end{proof}

We now prove Proposition~\ref{prop:well-defined-ads} for $K < -1$. As for $K=-1$ we will first prove a stronger statement.

\begin{lemma}\label{lem:unif-bilip-ads}
For each $K < -1$ and $k > 1$, there exists $M > 1$ so that the 
diffeomorphisms $\Pi_l^\pm \circ \VCK{C,K}^\pm: \HH^2_K \to \HH^2$ associated to the $K$-surfaces $\Sa^\pm_K(C)$ spanning any $k$-quasicircle $C$ are $M$--bilipschitz.
\end{lemma}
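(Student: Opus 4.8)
The plan is to assemble this bilipschitz bound from the pointwise estimate for the projections in Proposition~\ref{prop:bilip} together with the uniform curvature bound in Lemma~\ref{lm:unif-bound-II-ads}, the point being that once the principal curvatures of $\Sa^\pm_K(C)$ are pinched between two positive constants depending only on $K$ and $k$, the projections become uniformly bilipschitz and composing with an isometry changes nothing. First I would record that the induced metric $\I$ on $\Sa^\pm_K(C)$ has constant curvature $K$, so by the AdS Gauss equation $K = -1 - \det B$ the determinant of the shape operator satisfies $\det B = -(K+1) > 0$, a fixed positive constant. Since $K < -1$, the surface $\Sa^\pm_K(C)$ is locally strictly convex, so $\II$ is definite and the two principal curvatures $\mu_1, \mu_2$ have the same sign; in particular $|\mu_1|\,|\mu_2| = |K+1|$.

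Next I would invoke Lemma~\ref{lm:unif-bound-II-ads}, which supplies a constant $D(K,k)$ bounding the principal curvatures of any $K$-surface spanning a $k$-quasicircle in absolute value. Combining the resulting upper bound $\max(|\mu_1|,|\mu_2|) \le D(K,k)$ with the fixed product $|\mu_1|\,|\mu_2| = |K+1|$ forces the lower bound $\min(|\mu_1|,|\mu_2|) \ge |K+1|/D(K,k) > 0$. Hence there is a constant $D' = D'(K,k) > 1$ such that both principal curvatures of $\Sa^\pm_K(C)$ lie, in absolute value, in the interval $[1/D', D']$, uniformly over all $k$-quasicircles $C$.

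With this uniform pinching established, Proposition~\ref{prop:bilip} applies directly to $S = \Sa^\pm_K(C)$ (which is smooth, properly embedded, spacelike, convex, and spans the quasicircle $C$), and gives that the projections $\Pi_l^\pm, \Pi_r^\pm \co (\Sa^\pm_K(C), \I) \to \HH^2$ are bilipschitz diffeomorphisms with bilipschitz constant depending only on $D'$, hence only on $K$ and $k$. Finally, since $\VCK{C,K}^\pm \co \HH^2_K \to (\Sa^\pm_K(C), \I)$ is a global isometry (by Proposition~\ref{prop:extend-ads} the induced metric is complete and globally isometric to $\HH^{2\pm}_K$), the composition $\Pi_l^\pm \circ \VCK{C,K}^\pm$ has the same bilipschitz constant as $\Pi_l^\pm$, so it is $M$--bilipschitz for a constant $M = M(K,k)$, as claimed.

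The argument is really just a bookkeeping assembly of the two cited results, so I do not anticipate a serious obstacle. The only step requiring a moment's care is the deduction of the lower bound on the principal curvatures, but this is immediate once one notes that $\det B = -(K+1)$ is constant; without the Gauss equation fixing the product, the upper bound from Lemma~\ref{lm:unif-bound-II-ads} alone would not suffice to control the bilipschitz distortion of $\Pi_l^\pm$.
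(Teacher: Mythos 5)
Your proof is correct and follows essentially the same route as the paper: apply the uniform bound on principal curvatures from Lemma~\ref{lm:unif-bound-II-ads}, feed it into Proposition~\ref{prop:bilip} to get a uniform bilipschitz constant for $\Pi_l^\pm$, and note that precomposing with the isometry $\VCK{C,K}^\pm$ changes nothing. Your extra step deducing the lower bound on the principal curvatures from $\det B = -(K+1)$ is a worthwhile detail (the statement of Proposition~\ref{prop:bilip} asks for a two-sided pinching, while Lemma~\ref{lm:unif-bound-II-ads} only states an upper bound), but it does not change the substance of the argument.
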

\begin{proof}
By Lemma~\ref{lm:unif-bound-II-ads}, the principle curvatures of the surfaces $\Sa^+_K(C)$ and $\Sa^-_K(C)$ are uniformly bounded by a constant depending only on $K$ and~$k$.  Hence by Proposition~\ref{prop:bilip}, the left projections $\Pi_l^+$  and $\Pi_l^-$ associated to $\Sa^+_K(C)$ and $\Sa^-_K(C)$ respectively are $M$--bilipschitz diffeomorphisms taking the induced metrics on $\Sa^+_K(C)$ and $\Sa^-_K(C)$ to the hyperbolic metric on $\HH^2$, where $M$ depends on $K$ and~$k$.  
Hence $\Pi_l^+ \circ \VCK{C, K}^+$ and $\Pi_l^- \circ \VCK{C, K}^-$ are each normalized $M$--bilipschitz diffeomorphisms of the hyperbolic plane. \end{proof}
\begin{proof}[Proof of Proposition~\ref{prop:well-defined-ads}: $K < -1$]
Let $K < -1$.
Suppose first that $C = \Gamma(f)$ is a normalized $k$-quasicircle. 
Let $\VCK{C, K}^\pm: \HH^{2\pm}_K \to \Sa^\pm_K(C)$ be orientation-preserving isometries which extend to homeomorphisms $\partial \VCK{C, K}^\pm : \RP^1 \to C$, guaranteed to exist by Proposition~\ref{prop:extend-ads}. Further by pre-composing with isometries, we may adjust so that each of $\VCK{C, K}^+$ and $\VCK{C, K}^-$ is normalized, in the sense that $\partial \VCK{C,K}^\pm(i) = (i,i)$ for $i = 0,1, \infty$.
By Lemma \ref{lem:unif-bilip-ads} the composition $(\Pi_l^- \circ \VCK{C,K}^-)^{-1} \circ (\Pi_l^+ \circ \VCK{C, K}^+)$ is then a $M^2$--bilipschitz diffeomorphism of the hyperbolic plane
which extends the map $\Psi_{C, K} = (\partial \VCK{C, K}^-)^{-1} \circ \partial \VCK{C, K}^+$. It follows that $\Psi_{C, K}$ is a $k'$-quasisymmetric homeomorphism where the constant $k'$ depends only on $M^2$ which depends only on $K$ and~$k$.
\end{proof}

\section{Proofs of Theorems~\ref{tm:induced-ads}, \ref{tm:induced-ads-K} and \ref{tm:III-ads-K}}

Similar to the proofs of Theorems~\ref{tm:induced-hyp} and~\ref{tm:induced-hyp-K} in the setting of hyperbolic geometry, we will use the following surjectivity criterion, analogous to Proposition~\ref{pr-surj}, to prove Theorems~\ref{tm:induced-ads} and \ref{tm:induced-ads-K}. Like Proposition~\ref{pr-surj}, the following proposition is a direct corollary of Proposition~\ref{pr:uniflim}.

\begin{prop} \label{pr-surj-ads}
Let $F\co\qcm({\Ein^{1,1}}) \to \cT$ be a map satisfying the following conditions:
\begin{enumerate}[(i)]
\item\label{item:contads} If $(C_n)_{n\in \N}$ is a sequence of normalised $k$--quasicircles converging to a normalised $k$--quasicircle $C$, then $(F(C_n))_{n\in \N}$ 
converges uniformly to $F(C)$.
\item\label{item:propads} For any $k$, there exists $k'$ such that if $F(C)$ is a normalised $k$--quasisymmetric homeomorphism, then $C$ is a $k'$--quasicircle.
\item\label{item:qfads} The image of $F$ contains all the quasifuchsian elements of $\cT$.
\end{enumerate}
Then $F$ is surjective.
\end{prop}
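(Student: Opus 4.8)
The plan is to follow verbatim the strategy that establishes the hyperbolic surjectivity criterion Proposition~\ref{pr-surj}, substituting the $\Ein^{1,1}$ compactness Lemma~\ref{lm:cmpqcads} for its $\CP^1$ counterpart Lemma~\ref{lm:int-conv}. Fix a target $t \in \cT$, represented by a normalized $k_0$-quasisymmetric homeomorphism. The first step is to invoke the density statement Proposition~\ref{pr:uniflim}: it supplies a sequence $t_n \in \cT$ of quasifuchsian elements, all $k$-quasisymmetric for a single constant $k = k(k_0)$ independent of $n$, converging uniformly to $t$. This \emph{uniform} control of the quasisymmetric constant along the approximating sequence is the essential feature of the strengthened density statement; without it the properness estimate below could not be applied uniformly, and the whole argument would collapse.

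Next, by hypothesis~\eqref{item:qfads} each quasifuchsian $t_n$ lies in the image of $F$, so I would choose $C_n \in \qcm(\Ein^{1,1})$ with $F(C_n) = t_n$. Since $t_n = F(C_n)$ is $k$-quasisymmetric with $k$ independent of $n$, hypothesis~\eqref{item:propads} furnishes a constant $k' = k'(k)$ so that each $C_n$ is a normalized $k'$-quasicircle in $\Ein^{1,1}$. I would then apply the compactness Lemma~\ref{lm:cmpqcads} to this uniformly bounded family $C_n = \Gamma(f_n)$: after passing to a subsequence, either $C_n$ converges in the Hausdorff topology to a $k'$-quasicircle $C_\infty$, or $C_n$ converges to a degenerate union $\{p\}\times\RP^1 \cup \RP^1\times\{q\}$ of a leaf of the left ruling and a leaf of the right ruling.

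The only genuinely $\AdS$-specific point, and the step I expect to require the most care, is ruling out the degenerate alternative, which has no analogue in the hyperbolic setting. I would dispatch it using the normalization: each $C_n$ passes through $(0,0),(1,1),(\infty,\infty)$, so the Hausdorff limit must contain these three points. But a point $(a,b)$ lies on $\{p\}\times\RP^1 \cup \RP^1\times\{q\}$ only if $a = p$ or $b = q$; since $p$ and $q$ are each a single value, at most one of the three points can satisfy $a = p$ and at most one can satisfy $b = q$, so at most two of the three points could lie on the degenerate union — a contradiction. Hence $C_n \to C_\infty$ with $C_\infty$ a normalized $k'$-quasicircle, in particular $C_\infty \in \qcm(\Ein^{1,1})$.

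Finally, the continuity hypothesis~\eqref{item:contads}, applied to the $k'$-quasicircles $C_n \to C_\infty$, gives $F(C_n) \to F(C_\infty)$ uniformly; but $F(C_n) = t_n \to t$ uniformly by construction, so $F(C_\infty) = t$, proving surjectivity. Beyond this bookkeeping the argument is entirely formal: all the analytic content is packaged in the deferred density result Proposition~\ref{pr:uniflim}, which I would treat as a black box here, and the sole subtlety internal to the proof is the elimination of the degenerate limit handled above.
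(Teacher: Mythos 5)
Your proof is correct and follows essentially the same argument as the paper's (which treats Propositions~\ref{pr-surj} and~\ref{pr-surj-ads} simultaneously): approximate $t$ by uniformly quasisymmetric quasifuchsian elements via Proposition~\ref{pr:uniflim}, lift through $F$ using~(iii), obtain a uniform quasicircle bound from~(ii), extract a Hausdorff-convergent subsequence, and conclude with~(i). The only cosmetic difference is in excluding the degenerate limit: the paper simply invokes the normalized clause of Lemma~\ref{lm:cmpqs} (normalized $k$-quasisymmetric maps subconverge uniformly to a normalized $k$-quasisymmetric map), whereas you argue by hand that $\{p\}\times\RP^1\cup\RP^1\times\{q\}$ cannot contain all three of $(0,0),(1,1),(\infty,\infty)$ --- both are valid.
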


We begin by noting that for each $K \in (-\infty,-1]$, the map $\Psi_{\cdot, K}$ satisfies Condition~\eqref{item:qfads}. For $K= -1$, this is due to Diallo \cite{diallo2013}, while for $K \in (-\infty, -1)$, this is due to Tamburelli~\cite{tamburelli2016}. To prove Theorems~\ref{tm:induced-hyp} and~\ref{tm:induced-hyp-K}, we must now prove that for any $K \leq -1$, the map $\Psi_{\cdot, K}$ satisfies Conditions~\eqref{item:contads} and~\eqref{item:propads}. We treat each condition in its own subsection.

\subsection{Continuity of $\Psi_{\cdot, K}$}

In this section we prove that the map $\Psi_{\cdot, K}$ satisfies Condition~\eqref{item:contads} in Proposition \ref{pr-surj-ads} which we state as a stand-alone proposition for convenience:

\begin{prop}\label{pr:ads-continuity}
 Let $K \in (-\infty, -1]$ and let $C_n$ be a sequence of normalized $k$--quasicircles converging in the Hausdorff sense to a normalized $k$-quasicircle $C$. Then $\Psi_{C_n, K}$ converges uniformly to $\Psi_{C, K}$.
\end{prop}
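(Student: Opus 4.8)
The plan is to follow the strategy of the proof of Proposition~\ref{pr:main-continuity}, with the left projection $\Pi_l^\pm$ playing the role that the nearest-point retraction played in the hyperbolic setting. Set $g_n^\pm := \Pi_l^\pm \circ \VCK{C_n,K}^\pm \co \HH^{2\pm}_K \to \HH^2$. Since $\Pi_l^\pm$ continuously extends the canonical left projection $\pi_l\co C_n \to \RP^1$ (Lemma~\ref{lm:propconv} when $K<-1$, Lemma~\ref{lem:boundary-project} when $K=-1$) and $\partial\VCK{C_n,K}^\pm$ takes values in $C_n=\Gamma(f_n)$, one checks that $\partial g_n^\pm = \pi_l\circ\partial\VCK{C_n,K}^\pm$, whence
\[
\Psi_{C_n,K} = (\partial\VCK{C_n,K}^-)^{-1}\circ\partial\VCK{C_n,K}^+ = (\partial g_n^-)^{-1}\circ\partial g_n^+ = \cmp(g_n^-,g_n^+).
\]
By Lemma~\ref{lem:unif-bilip-ads} (for $K<-1$) or Lemma~\ref{lem:unif-qi-ads} (for $K=-1$), each $g_n^\pm$ is a normalized $A$-quasi-isometry with $A=A(K,k)$ independent of $n$; by Lemma~\ref{lm:int-vs-bound} it therefore suffices to prove that $g_n^\pm$ converges pointwise to $g^\pm := \Pi_l^\pm\circ\VCK{C,K}^\pm$, as then $\partial g_n^\pm\to\partial g^\pm$ uniformly and $\cmp(g_n^-,g_n^+)\to\cmp(g^-,g^+)=\Psi_{C,K}$ uniformly.

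For $K<-1$ I would produce, along a subsequence, a limit isometry for $\VCK{C_n,K}^\pm$. Pick any point $y_\infty\in\Sa^\pm_K(C)$; by the Hausdorff convergence $\Sa^\pm_K(C_n)\to\Sa^\pm_K(C)$ furnished by Lemma~\ref{lm:continuityboundary} (the degeneration to a lightlike plane being excluded because $C_n\to C$ is a quasicircle) there are $y_n\in\Sa^\pm_K(C_n)$ with $y_n\to y_\infty$. Setting $x_n := (\VCK{C_n,K}^\pm)^{-1}(y_n)$, the points $g_n^\pm(x_n)=\Pi_l^\pm(y_n)$ stay in a compact subset of $\HH^2$; since $g_n^\pm$ is a normalized $A$-quasi-isometry, it maps a fixed basepoint into a fixed compact set (Lemma~\ref{lm:norm}), so $x_n$ remains bounded in $\HH^{2\pm}_K$. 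The boundedness hypothesis of Proposition~\ref{pr:cmpads} is thus satisfied, and a subsequence of $\VCK{C_n,K}^\pm$ converges smoothly on compact sets to an isometry $V_\infty^\pm\co\HH^{2\pm}_K\to\Sa^\pm_K(C)$. Because the surfaces, and hence their oriented tangent planes, converge, the projections $\Pi_l^\pm$ converge with them, giving $g_n^\pm\to\Pi_l^\pm\circ V_\infty^\pm$ pointwise along the subsequence.

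To identify the limit I would apply Lemma~\ref{lm:int-vs-bound} once more: $\partial g_n^\pm$ converges uniformly to $\pi_l\circ\partial V_\infty^\pm$, which is normalized because every $g_n^\pm$ is. Since $\partial V_\infty^\pm$ takes values in the normalized curve $C$ and $\pi_l\circ\partial V_\infty^\pm(i)=i$, this forces $\partial V_\infty^\pm(i)=(i,i)$ for $i=0,1,\infty$, so $V_\infty^\pm=\VCK{C,K}^\pm$ and $\Pi_l^\pm\circ V_\infty^\pm=g^\pm$. As the limit is independent of the subsequence, the whole sequence $g_n^\pm$ converges pointwise to $g^\pm$, completing the argument for $K<-1$. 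The case $K=-1$ runs along identical lines, using Lemma~\ref{lm:continuityboundary} for the Hausdorff convergence of the pleated surfaces $\partial^\pm\CH(C_n)\to\partial^\pm\CH(C)$; since these surfaces are not smooth, the convergence of the isometries $\VCK{C_n,-1}^\pm$ comes not from Proposition~\ref{pr:cmpads} but from the earthquake description of $\Pi_l^\pm\circ\VCK{C_n,-1}^\pm$ (Proposition~\ref{prop:mess}): the shearing measures are uniformly bounded (Proposition~\ref{pr:bounlam}) and converge weak-$*$, so the boundary extensions converge uniformly by Lemma~\ref{lm:continearth} (alternatively, one adapts the Gromov--Hausdorff argument of Lemma~\ref{lem:bend-conv}).

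The main obstacle is the second step: guaranteeing that the normalizing isometries $\VCK{C_n,K}^\pm$ admit a convergent, non-degenerate subsequential limit. This requires simultaneously verifying the boundedness hypothesis of Proposition~\ref{pr:cmpads} (handled above by transporting a bounded point of the limit surface back through the uniform quasi-isometries $g_n^\pm$) and excluding degeneration of the spanning surfaces to a lightlike plane, which is precisely where the hypothesis that the Hausdorff limit $C$ is again a genuine $k$-quasicircle, rather than a degenerate rhombus-type curve, is essential. The $K=-1$ case carries the extra difficulty that the spanning surfaces are merely pleated, so smooth compactness must be replaced by the earthquake and lamination machinery.
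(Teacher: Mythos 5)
Your proposal is correct and its architecture coincides with the paper's: factor $\Psi_{C_n,K}$ through the left projections, invoke Lemma~\ref{lem:unif-bilip-ads} (resp.\ Lemma~\ref{lem:unif-qi-ads}) for uniform quasi-isometry constants, reduce to pointwise convergence via Lemma~\ref{lm:int-vs-bound}, and feed a boundedness statement into Proposition~\ref{pr:cmpads}. The one step you handle differently is the boundedness itself. The paper fixes a single point $x\in\HH^2$ and uses \emph{both} projections: by Lemma~\ref{lm:norm} the normalized quasi-isometries $(\Pi_l^\pm)_n\circ \VCK{C_n,K}^\pm$ and $(\Pi_r^\pm)_n\circ \VCK{C_n,K}^\pm$ send $x$ into fixed compacta $K_l$ and $K_r$, so $\VCK{C_n,K}^\pm(x)$ lies on a timelike geodesic $L_{p,q}$ with $p\in K_l$, $q\in K_r$, and the union of these geodesics is compact. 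You instead pull back a convergent sequence $y_n\to y_\infty$ on the limit surface through the left projection alone. That works, but it silently uses that $(\Pi_l^\pm)_n(y_n)$ stays bounded, which requires the support planes of the convex surfaces $\Sa^\pm_K(C_n)$ at $y_n$ to converge to the (unique, spacelike) support plane of the smooth limit surface at $y_\infty$ --- true for Hausdorff-convergent convex surfaces with smooth strictly spacelike limit, but a point you should state; the paper's two-projection trick sidesteps it entirely and needs no information about the limit surface beyond Lemma~\ref{lm:norm}. Your treatment of $K=-1$ is more explicit than the paper's (which cites Proposition~\ref{pr:cmpads} even though that proposition is stated for $K<-1$); of your two suggested routes, adapting the Gromov--Hausdorff argument of Lemma~\ref{lem:bend-conv} is the cleaner one, since the earthquake route additionally requires justifying weak-$*$ convergence of the shearing measures (by weak-$*$ compactness of uniformly bounded laminations plus uniqueness of earthquakes), which you assert without proof.
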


\begin{proof}
Let $\VCK{C, K}^\pm\co \HH^2_K \to \Sa^\pm_K(C)$ be the unique isometry which is normalized, in the sense that its extension $\partial \VCK{C, K}^\pm: \RP^1 \to C$ satisfies that $\partial \VCK{C, K}^\pm(i) = (i,i)$, and let $\Pi_l^\pm: \Sa^\pm_K(C) \to \HH^2$ denote the left projection from $\Sa^\pm_K(C)$ to $\HH^2$, which is either an area preserving diffeomorphism if $K < -1$ or an earthquake map if $K = -1$.
Similarly, for each $n$, let $\VCK{C_n, K}^\pm: \HH^{2\pm}_K \to \Sa^\pm_K(C_n)$ denote the unique normalized isometry, let $(\Pi^\pm_l)_n: \Sa^\pm_K(C_n) \to \HH^2$ denote the left projection associated to $\Sa^\pm_K(C_n)$.
 Recall that $\Psi_{C_n, K} = (\partial \VCK{C_n, K}^{-1})^{-1} \circ \partial \VCK{C_n, K}^+$ extends the composition $((\Pi^-_l)_n \circ \VCK{C_n, K}^-)^{-1} \circ ((\Pi^+_l)_n \circ \VCK{C_n, K}^+)$.
By Lemma~\ref{lem:unif-qi-ads} in the case $K = -1$ or Lemma~\ref{lem:unif-bilip-ads} in the case $K < -1$, both $((\Pi^-_l)_n \circ \VCK{C_n, K}^{-1})$ and $((\Pi^+_l)_n \circ \VCK{C_n, K}^{+})$ are $A$--quasi-isometries for some $A > 1$ depending only on $k$.
By Lemma \ref{lm:int-vs-bound}, it is sufficient to show that the maps $((\Pi^\pm_l)_n \circ \VCK{C_n, K}^\pm)$ converge pointwise almost everywhere to the maps $(\Pi_l^\pm) \circ \VCK{C,K}^{\pm}$. By Lemma~\ref{lm:norm} and Proposition~\ref{pr:cmpads}, we only need to show that for some point $x \in \HH^2$, the image $\VCK{C_n, K}^{\pm}(x)$ remains in a compact set in $\AdS^3$. 
By Lemma~\ref{lm:norm}, the normalized uniform quasi-isometries $((\Pi^\pm_l)_n \circ \VCK{C_n, K}^{\pm})$ take any point $x$ into some compact subset $K_l$ of $\HH^2$. Similarly, the normalized uniform quasi-isometries 
$((\Pi^\pm_r)_n \circ \VCK{C_n, K}^{\pm})$ take any point $x$ into some compact subset $K_r$ of $\HH^2$, where here $(\Pi^\pm_r)_n: \Sa^\pm_K(C_n) \to \HH^2$ denotes the right projection associated to $\Sa^\pm_K(C_n)$. Hence $\VCK{C_n, K}^{\pm}(x)$ lies in the union of all timelike geodesics of the form $L_{x,y}$ with $x \in K_l$ and $y \in K_r$.  This is a compact family of compact subsets, whose union is therefore compact. This completes the proof.
  \end{proof}

\subsection{Properness of $\Psi_{\cdot, K}$}

In this section, we prove that the map $\Psi_{\cdot, K}\co \qcm(\Ein^{1,1}) \to \cT$ satisfies condition $(ii)$ of Proposition \ref{pr-surj-ads}, which we state here as a stand-alone proposition for convenience:

\begin{prop}\label{pr:ads_properness}
For each $K \in (-\infty, -1]$ and $k>1$, there exists $k'>1$ such that if $\Psi_{C, K}$ is $k$--quasisymmetric, then $C$ is a $k'$--quasicircle.
\end{prop}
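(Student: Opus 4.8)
The plan is to argue by contradiction, reducing to the rhombus degeneration via the width criterion of Proposition~\ref{pr:w_ads}. Suppose the statement fails for some $K \in (-\infty,-1]$ and $k>1$: then there is a sequence of normalized quasicircles $C_n = \Gamma(f_n)$ with $\Psi_{C_n,K}$ $k$-quasisymmetric but whose optimal quasisymmetric constants diverge to $\infty$. By the second part of Proposition~\ref{pr:w_ads}, after applying isometries $\phi_n = (g_n,h_n) \in \isom_0(\AdS^3)$ we may assume $\phi_n C_n \to \rhombus$ in the Hausdorff topology, and in particular $w(\phi_n C_n) \to \pi/2$. The first point to check is that this renormalization is harmless: since the $K$-surfaces, the projections $\Pi_l^\pm$, and the isometries $V^\pm_{C,K}$ are all $\isom_0(\AdS^3)$-equivariant, a short computation shows $\Psi_{\phi_n C_n, K} = (\partial W_n^-)^{-1}\circ \Psi_{C_n,K}\circ \partial W_n^+$ for suitable $W_n^\pm \in \PSL(2,\RR)$ (arising from renormalizing $V^\pm_{\phi_n C_n,K}$ to fix $0,1,\infty$). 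Pre- and post-composition with Möbius maps preserves the quasisymmetric constant, so $\Psi_{\phi_n C_n,K}$ is still $k$-quasisymmetric. Replacing $C_n$ by $\phi_n C_n$, we may thus assume $C_n \to \rhombus$ while $\Psi_{C_n,K}$ remains $k$-quasisymmetric.

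By Lemma~\ref{lm:cmpqs}, after passing to a subsequence the normalized $k$-quasisymmetric maps $\Psi_{C_n,K}$ converge uniformly to a normalized $k$-quasisymmetric homeomorphism. The contradiction will come from computing this limit geometrically and showing it instead collapses an arc of $\RP^1$ — equivalently, that the image cross-ratio of some symmetric quadruple under $\Psi_{C_n,K}$ tends to $0$ or $\infty$, contradicting Proposition~\ref{qs_cross}. To set this up, recall $\Psi_{C_n,K} = \cmp(V^-_{C_n,K}, V^+_{C_n,K}) = (a_n^-)^{-1}\circ a_n^+$, where $a_n^\pm := \pi_l \circ \partial V^\pm_{C_n,K} = \partial(\Pi_l^\pm \circ V^\pm_{C_n,K})$ are the boundary maps of the left projections (Lemma~\ref{lm:propconv}). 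Since $\rhombus$ is achronal but not acausal, the surfaces $\Sa^\pm_K(C_n)$ cannot converge to lightlike planes (their accumulation set is $C_n \to \rhombus$, not a union of two full rulings), so by the analysis behind Proposition~\ref{pr:cmpads} and Lemma~\ref{lm:continuityboundary} they subconverge to convex surfaces spanning $\rhombus$; and their principal curvatures must blow up, since a uniform bound would produce in the limit a smooth $K$-surface spanning $\rhombus$, forcing $\rhombus$ to be a quasicircle by Theorem~\ref{tm:K-surfaces-ads} — a contradiction.

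The heart of the argument is to locate this blow-up. Label the corners of the normalized $\rhombus$ so that its two dual spacelike diagonals are $e^+ = [(a,b),(a',b')]$ (the future interface of $\CH(\rhombus)$) and $e^- = [(a',b),(a,b')]$ (the past interface), as in Example~\ref{ex:rhombus}. The decisive observation is that under the left projection both diagonals have endpoints $\{a,a'\}$, hence project to the \emph{same} geodesic $g \subset \HH^2$. In the case $K=-1$, Proposition~\ref{prop:mess} identifies $\Pi_l^+\circ V^+_{C_n,-1}$ and $\Pi_l^-\circ V^-_{C_n,-1}$ as a left and a right earthquake whose shearing measures concentrate on $g$ with weights $w_n^\pm \to \infty$ (by Proposition~\ref{pr:bounlam}, since the quasisymmetric constants of $f_n$ diverge). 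Taking $(a_n^-)^{-1}\circ a_n^+$ turns the right shear into a left shear and adds it to the left shear of $a_n^+$ along the common geodesic $g$, so that $\Psi_{C_n,-1}$ is, up to bounded error, a left earthquake along $g$ of weight $w_n^+ + w_n^- \to \infty$; by Proposition~\ref{pr:bounlam} together with Lemma~\ref{lm:continearth} this forces its quasisymmetric constant to diverge, the desired contradiction. For $K<-1$ there is no bending lamination, but the same picture holds for the smooth surfaces: the curvature concentration of $\Sa^\pm_K(C_n)$ along the diagonals $e^\pm$ makes the bilipschitz maps $\Pi_l^\pm\circ V^\pm_{C_n,K}$ (whose constants degenerate, cf. Proposition~\ref{prop:bilip} and Lemma~\ref{lem:unif-bilip-ads}) shear in opposite senses across $g$, so that $\Psi_{C_n,K}=(a_n^-)^{-1}\circ a_n^+$ again accumulates unbounded distortion across $g$; one then exhibits a fixed symmetric quadruple straddling $g$ whose image cross-ratio under $\Psi_{C_n,K}$ tends to $0$, contradicting Proposition~\ref{qs_cross}.

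The main obstacle is precisely this last step: upgrading the qualitative degeneration ``$\Sa^\pm_K(C_n)$ pinches toward $\partial^\pm\CH(\rhombus)$ along the dual diagonals'' to the quantitative statement that the two left projections shear \emph{constructively} — rather than cancelling — across the common geodesic $g$, so that $\Psi_{C_n,K}$ genuinely degenerates. For $K=-1$ this is transparent because the earthquake and measured-lamination formalism (Propositions~\ref{prop:mess} and~\ref{pr:bounlam}) makes the additivity of the weights along $g$ explicit. For $K<-1$ the absence of a literal lamination means one must instead track cross-ratios of well-chosen quadruples through the degenerating bilipschitz projections, using the convergence statements of Proposition~\ref{pr:cmpads} and the curvature blow-up that negates Lemma~\ref{lm:unif-bound-II-ads}; checking that the future and past surfaces bend to \emph{opposite} sides of $g$, so that no cancellation occurs, is the crux of the proof.
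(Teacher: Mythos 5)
Your reduction to the rhombus via Proposition~\ref{pr:w_ads}, and the observation that the renormalizing isometries only change $\Psi_{C_n,K}$ by pre- and post-composition with M\"obius maps, both match the paper's proof and are fine. But there is a genuine gap at exactly the point you flag as ``the crux'': you never actually prove that the degenerations of the future and past projections combine rather than cancel. For $K=-1$ your earthquake argument asserts that the shearing measures of $\Pi_l^\pm\circ V^\pm_{C_n,-1}$ \emph{concentrate on the single geodesic $g$} with weights tending to infinity; divergence of the Thurston norms (which does follow from Proposition~\ref{pr:bounlam}) does not give concentration on one leaf, and the claim that $\Psi_{C_n,-1}$ is ``up to bounded error'' an earthquake along $g$ of weight $w_n^++w_n^-$ is not justified -- a composition of a left earthquake and an inverse right earthquake along two \emph{different} laminations that merely accumulate on $g$ is not an earthquake, and controlling the error is essentially the whole difficulty. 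For $K<-1$ you offer no argument at all. A secondary issue: you invoke Proposition~\ref{pr:cmpads} and Lemma~\ref{lm:continuityboundary} to control the limits of $\Sa^\pm_K(C_n)$, but those statements assume the limit curve is a $k$-quasicircle, which $\rhombus$ is not; the correct mechanism (used in the paper) is that $\Sa^\pm_n$ is squeezed between $\CH(C_n)$ and $\overline{E(C_n)}$, and both converge to the tetrahedron $\CH(\rhombus)$, forcing $\Sa^\pm_n$ to converge to the union of the two future (resp.\ past) faces.

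The paper closes the gap with a direct cross-ratio computation that avoids laminations entirely and treats $K=-1$ and $K<-1$ uniformly. Take the spacelike plane $P=\{z=0\}$ meeting $\rhombus$ in four points $a,b,c,d$, and two \emph{transverse} timelike planes $Q=\{x_2=0\}$ and $R=\{x_1=0\}$. The path $Q\cap\Sa^+_\infty\cap P_+$ is piecewise lightlike (it runs over the future edge $e^+$ of the tetrahedron), hence has length zero, so the length of $Q\cap\Sa^+_n\cap P_+$ tends to zero; Lemma~\ref{st:convex} shows $\Sa^+_n\cap P_+$ is \emph{intrinsically convex}, so its preimage $U_n^+\subset\HH^{2+}_K$ is a convex region containing the geodesics $\overline{a_n^+b_n^+}$ and $\overline{c_n^+d_n^+}$, both crossed by a path of length tending to zero -- whence $\cro(a_n^+,b_n^+,c_n^+,d_n^+)\to 0$. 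Running the same argument on the past surface with the \emph{other} timelike plane $R$ (whose trace runs over the past edge $e^-$) collapses the other pairing, giving $\cro(a_n^-,b_n^-,c_n^-,d_n^-)\to\infty$. The opposite directions of degeneration are thus forced by the geometry of the two dual edges of the tetrahedron, with no cancellation question ever arising, and Proposition~\ref{qs_cross} yields the contradiction. If you want to salvage your outline, you would need to supply an analogue of Lemma~\ref{st:convex} or some other device converting the extrinsic collapse into intrinsic cross-ratio information; the earthquake bookkeeping alone will not do it for $K<-1$.
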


The proof will use a simple geometric statement. Given a spacelike totally geodesic plane $P\subset \AdS^3$, we denote by $P^+$ the union of all future-oriented timelike geodesic segments of length $\pi/2$ starting orthogonally from $P$. Similarly, we denote by $P^-$ the union of all past oriented timelike geodesic segments of length $\pi/2$ starting orthogonally from $P$.

\begin{lemma} \label{st:convex}
\begin{enumerate}
\item
  Let $S\subset \AdS^3$ be a spacelike past convex surface, and let $P$ be a spacelike totally geodesic plane. In the neighborhood of all points $x\in S\cap P$, the intersection $S\cap P^+$ is locally convex in the induced metric on $S$. 
  \item   Let $S\subset \AdS^3$ be a spacelike future convex surface, and let $P$ be a spacelike totally geodesic plane. In the neighborhood of all points $x\in S\cap P$, the intersection $S\cap P^-$ is locally convex in the induced metric on $S$. 
  \end{enumerate}
\end{lemma}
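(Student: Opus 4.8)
The plan is to reduce the statement to a single sign computation for the second fundamental form of $S$, exploiting the standard fact that local convexity of a region in a surface is detected by the tangent geodesic to its boundary. Fix $x\in S\cap P$ and work first in the generic situation where $S$ and $P$ meet transversally, so that near $x$ the intersection $\Gamma = S\cap P$ is a smooth curve; since $T_xS$ and $T_xP$ are both spacelike planes, the line $T_xS\cap T_xP$ is spacelike and hence $\Gamma$ is a spacelike curve. (If $S$ and $P$ are tangent at $x$, then the strict convexity of $S$ forces it to lie locally on one side of $P$, so that $S\cap P^+$ reduces to $\{x\}$ near $x$ and the statement is trivial there.) Let $T$ be the unit tangent to $\Gamma$ at $x$, let $N_S$ and $N_P$ be the future-pointing unit normals to $S$ and to $P$, and let $\ell$ be the geodesic of $(S,\I)$ tangent to $\Gamma$ at $x$. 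The reformulation I will use is: $S\cap P^+$ is locally convex at $x$ if and only if $\ell$ remains, near $x$, in the closed past side of $P$, equivalently the geodesic curvature vector of $\Gamma$ in $S$ points into the future component $S\cap P^+$.

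First I would record the two differential inputs. Because $\ell$ is a geodesic of $S$, its ambient acceleration is purely normal to $S$; writing $\nabla$ for the Levi-Civita connection of $\AdS^3$, one has $\nabla_{\dot\ell}\dot\ell = -\II(T,T)\,N_S$ at $x$, where $\II$ is the second fundamental form of $S$ with respect to $N_S$. Next, let $\delta$ denote the signed future-distance to $P$ (positive on $P^+$), restricted to $\ell$. Since $P$ is totally geodesic, the Hessian of $\delta$ vanishes along $T_xP$ at points of $P$, and $\operatorname{grad}\delta = -N_P$ on $P$; moreover $T=\dot\ell(0)\in T_xP$ is orthogonal to $N_P$, so $\delta(0)=0$ and $\delta'(0)=0$. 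A direct computation then gives
\begin{equation*}
\delta''(0) = d\delta\big(\nabla_{\dot\ell}\dot\ell\big) = \big\langle -N_P,\ -\II(T,T)N_S\big\rangle = \II(T,T)\,\langle N_S,N_P\rangle ,
\end{equation*}
so the behaviour of $\ell$ relative to $P$ is governed entirely by the scalars $\II(T,T)$ and $\langle N_S,N_P\rangle$.

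To conclude I would pin down the two signs. Since $S$ is past convex, it lies locally in the past half-space of each of its support (tangent) planes, and the same second-order computation applied to the tangent plane shows this is exactly the statement that $\II(T,T)>0$ for every spacelike direction $T$ with respect to the future normal $N_S$. On the other hand $N_S$ and $N_P$ are both future-directed unit timelike vectors, so the reverse Cauchy--Schwarz inequality yields $\langle N_S,N_P\rangle \le -1 < 0$. Hence $\delta''(0)<0$, so $\delta$ attains a strict local maximum $0$ at $x$ along $\ell$; that is, $\ell$ stays in the closed past side of $P$ near $x$, which by the reformulation above means $S\cap P^+$ is locally convex at $x$, proving (1). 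Statement (2) then follows by reversing the time orientation of $\AdS^3$, which interchanges past convex with future convex and $P^+$ with $P^-$. I expect the main obstacle to be the careful bookkeeping of orientations and signs --- fixing consistently which half-space is $P^+$, the normalization conventions in $\II$ and in $\delta$, the identity $\operatorname{grad}\delta=-N_P$, and the inequality $\langle N_S,N_P\rangle<0$ --- together with cleanly isolating the transversality hypothesis so that $\Gamma$ is genuinely a spacelike curve and the tangent-geodesic criterion for local convexity applies.
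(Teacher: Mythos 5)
Your argument is correct and is essentially the paper's proof in different packaging: the paper shows the geodesic curvature vector $\kappa_S$ of the intersection curve $S\cap P$ satisfies $\langle \kappa_S, N_P\rangle = \II(T,T)\,\langle N_S,N_P\rangle<0$ and hence points into $S\cap P^+$, while you compute the same quantity as $\delta''(0)$ for the signed distance to $P$ along the tangent $S$-geodesic; both reduce to the two sign facts $\II(T,T)\ge 0$ (past convexity, with respect to the future normal) and $\langle N_S,N_P\rangle<0$ (two future-pointing unit timelike vectors). The only ingredient the paper includes that you omit is the one-line remark that the non-smooth case (needed when the lemma is applied to convex-hull boundaries, $K=-1$) follows by approximation by smooth convex surfaces.
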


\begin{proof}
 We prove the first statement, as the second is similar. We assume that $S$ is smooth, observing that the general case then follows by approximation. We also suppose that $P$ is not tangent to $S$ at $x$, since otherwise the result is clear.

  Since $P$ is not tangent to $S$, the intersection $P\cap S$ is a smooth curve $\gamma$ in the neighborhood of $x$. We denote by $\kappa$ its curvature vector. Since $\gamma\subset P$, the vector $\kappa$ is parallel to $P$. Denote by $\kappa_S$ the curvature vector of $\gamma$ as a curve in $S$, so that $\kappa_S$ is tangent to $S$ at $x$. 
  Then
  \begin{align*}
  \kappa_S &= \kappa + \langle \kappa, N_S \rangle N_S,
  \end{align*}
  where $N_S$ denotes the future-oriented unit normal vector to $S$.
  Then, if $N$ denotes the future oriented unit normal vector to $P$, we have that
  \begin{align*}
  \langle \kappa_S, N \rangle &=\langle \kappa, N_S \rangle \langle N_S, N \rangle,
  \end{align*}
  where we have used that $\langle \kappa, N \rangle = 0$ since $\gamma$ is contained in $P$. Finally observing that $\langle N_S, N \rangle < 0$ since both $N$ and $N_S$ are future oriented, and that $\langle \kappa, N_S \rangle > 0$ since $S$ is past convex, we conclude that $\langle \kappa_S, N \rangle < 0$, and hence $\kappa_S$ points toward the future side of $P$, i.e. into $P^+ \cap S$.
\end{proof}

\begin{proof}[Proof of Proposition \ref{pr:ads_properness}]
  We argue by contradiction. Suppose that $(C_n)_{n\in \N}$ is a sequence of normalized quasicircles, whose optimal quasisymmetric constant $k_n'$ tends to infinity. We will prove that the quasisymmetric constant of the gluing map $\Psi_{C_n, K}$ 
  between the future and past $K$-surfaces, denoted $\Sa^+_n = \Sa^+_K(C_n)$ and $\Sa^-_n = \Sa^-_K(C_n)$ respectively, must also go to infinity. Let $V_{C_n, K}^\pm\co \HH^{2\pm}_K \to \Sa^\pm_K(C_n)$ be the unique isometry whose extension 
  $\partial \VCK{C_n, K}^{\pm}\co \RP^1 \to C_n$ to the boundary satisfies $\partial \VCK{C_n, K}^+(i) = (i,i)$ for $i=0,1,\infty$. Then, recall that $\Psi_{C_n, K} = (\partial \VCK{C_n, K}^{-})^{-1} \circ \partial \VCK{C_n, K}^+$.

  Since $k_n'\to \infty$, Proposition~\ref{pr:w_ads} implies that the width $w(C_n)$ converges to $\pi/2$. Further, after adjusting by an appropriate sequence of isometries, $C_n$ converges in the Hausdorff topology to the rhombus example $\rhombus$ of Example~\ref{ex:rhombus}. We note that, even after adjusting by isometries we may assume the quasicircles $C_n$ remain normalized and that the limit $\rhombus$ is also normalized, although this is not important for the arguments to come.
  
 Let us work in the projective model for $\AdS^3$ in the coordinates~\eqref{coordinates sl2r} so that in the affine chart $x_4 = 1$, $\Ein^{1,1}$ is the hyperboloid $x_1^2 + x_2^2 = x_3^2 + 1$, with $\AdS^3$ seen as the region  $x_1^2 + x_2^2 < x_3^2 + 1$. 
 We may then arrange that the vertices of $\rhombus$ are the points $(\pm\sqrt{2},0,-1)$ and $(0,\pm\sqrt 2,1)$.  
 Since $\Sa^+_n$ is in the future of the $\CH(C_n)$ but contained in the invisible domain $E(C_n)$, and since $\overline{E(C_n)}$ and $\CH(C_n)$ both  converge to $\CH(\rhombus)$,  we have that
the limit as $n \to \infty$ of $\Sa^+_n$ is the union $\Sa^+_\infty$ of the two future faces of $\CH(\rhombus)$. Similarly, the limit as $n \to \infty$ of $\Sa^-_n$ is the union $\Sa^-_\infty$ of the two past faces of $\CH(\rhombus)$.
 
 \begin{figure}[htb]
 
 \def\svgwidth{4.6cm}
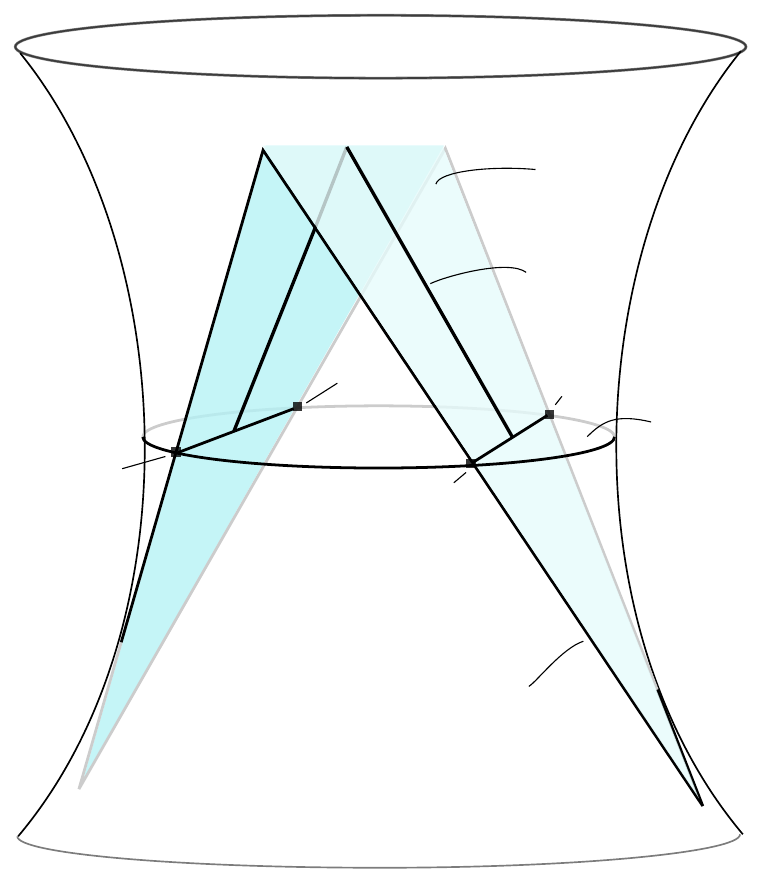
 
 \caption{ The path $Q \cap \Sa^+_\infty \cap P^+$ on the surface $\Sa^+_\infty$ is piecewise lightlike and connects a point of the $\AdS$ geodesic connecting $a$ to $b$ to a point on the $\AdS$ geodesic connecting $c$ to $d$. } \label{fig:proof-rhombus}
 \end{figure}

  Let $P$ be the spacelike totally geodesic plane equal to the intersection with $\AdS^3$ of the plane $z=0$ in $\RR^3$. Let $a,b,c,d$ be the intersection of $\partial P$ with $\rhombus$, occurring in this cyclic order, so that $a=(-\sqrt 2/2,\sqrt 2/2,0)$, $b = (-\sqrt{2}/2,-\sqrt{2}/2, 0)$, $c = (\sqrt{2}/2,-\sqrt{2}/2,0)$, and $d = (\sqrt{2}/2, \sqrt{2}/2, 0)$. For all $n\in \N$, we denote by $a_n,b_n, c_n,d_n$ the intersection points of $C_n$ with $P$ in the respective neighborhoods of $a,b,c,d$, so that $a_n \to a, b_n \to b, c_n \to c, d_n \to d$. Let $a_n^\pm = (\partial \VCK{C_n, K}^\pm)^{-1}(a_n)$, $b_n^\pm = (\partial \VCK{C_n, K}^\pm)^{-1}(b_n)$, $c_n^\pm = (\partial \VCK{C_n, K}^\pm)^{-1}(c_n)$, and $d_n^\pm = (\partial \VCK{C_n, K}^\pm)^{-1}(d_n)$.   We will show that the cross-ratio of the four points $(a_n^+, b_n^+, c_n^+, d_n^+)$ goes to zero, but that the cross-ratio of their images $(a_n^-, b_n^-, c_n^-, d_n^-)$ under $\Psi_{C_n, K}$ goes to infinity. 
 
 Consider the timelike plane $Q$ defined by $x_2 = 0$. Then the path $Q \cap \Sa^+_\infty \cap P_+$ from $Q \cap \overline{ab}$ to $Q \cap \overline{cd}$ along the piecewise lightlike geodesic $Q \cap \Sa^+_\infty$ has length zero in the $\AdS$ metric, where here $P_+$ denotes the future of $P$ as in Lemma~\ref{st:convex}. See Figure~\ref{fig:proof-rhombus}. It follows that the length of the path $Q \cap \Sa^+_n \cap P_+$ has distance converging to zero. By Lemma~\ref{st:convex}.(1), $\Sa^+_n \cap P_+$ has locally convex boundary with respect to the induced metric. In other words $U^+_n := (\VCK{C_n, K}^+)^{-1}(\Sa^+_n \cap P_+)$ corresponds to a region of $\HH^{2+}_K$ which has locally convex boundary and hence is globally convex. Since $\Sa^+_n \cap P_+$ has the points $a_n, b_n, c_n, d_n$ in its closure, Proposition~\ref{prop:extend-ads} implies that $U^+_n$ has the points $a_n^+, b_n^+, c_n^+,$ and $d_n^+$ in its ideal boundary. Since $U^+_n$ is convex, it contains the geodesics $\overline{a_n^+ b_n^+}$ and $\overline{c_n^+ d_n^+}$ and hence $\Sa^+_n \cap P_+$ contains $\gamma^{+}(a_n,b_n) := \VCK{C_n, K}^+(\overline{a_n^+ b_n^+})$ and $\gamma^+(c_n,d_n) := \VCK{C_n, K}^+(\overline{c_n^+ d_n^+})$. The path $Q \cap \Sa^+_n \cap P_+$ crosses both geodesics. Thus the distance between $\gamma^{+}(a_n,b_n)$ and $\gamma^+(c_n,d_n)$ in $\Sa^+_n$ is converging to zero, hence $\overline{a_n^+ b_n^+}$ and $\overline{c_n^+ d_n^+}$ are becoming arbitrarily close, and therefore the cross-ratio of $(a_n^+, b_n^+, c_n^+, d_n^+)$ converges to zero as $n \to \infty$. See Figure~\ref{fig:proof-rhombus-2}.
  
   \begin{figure}[htb]
 
 \def\svgwidth{9.5cm}
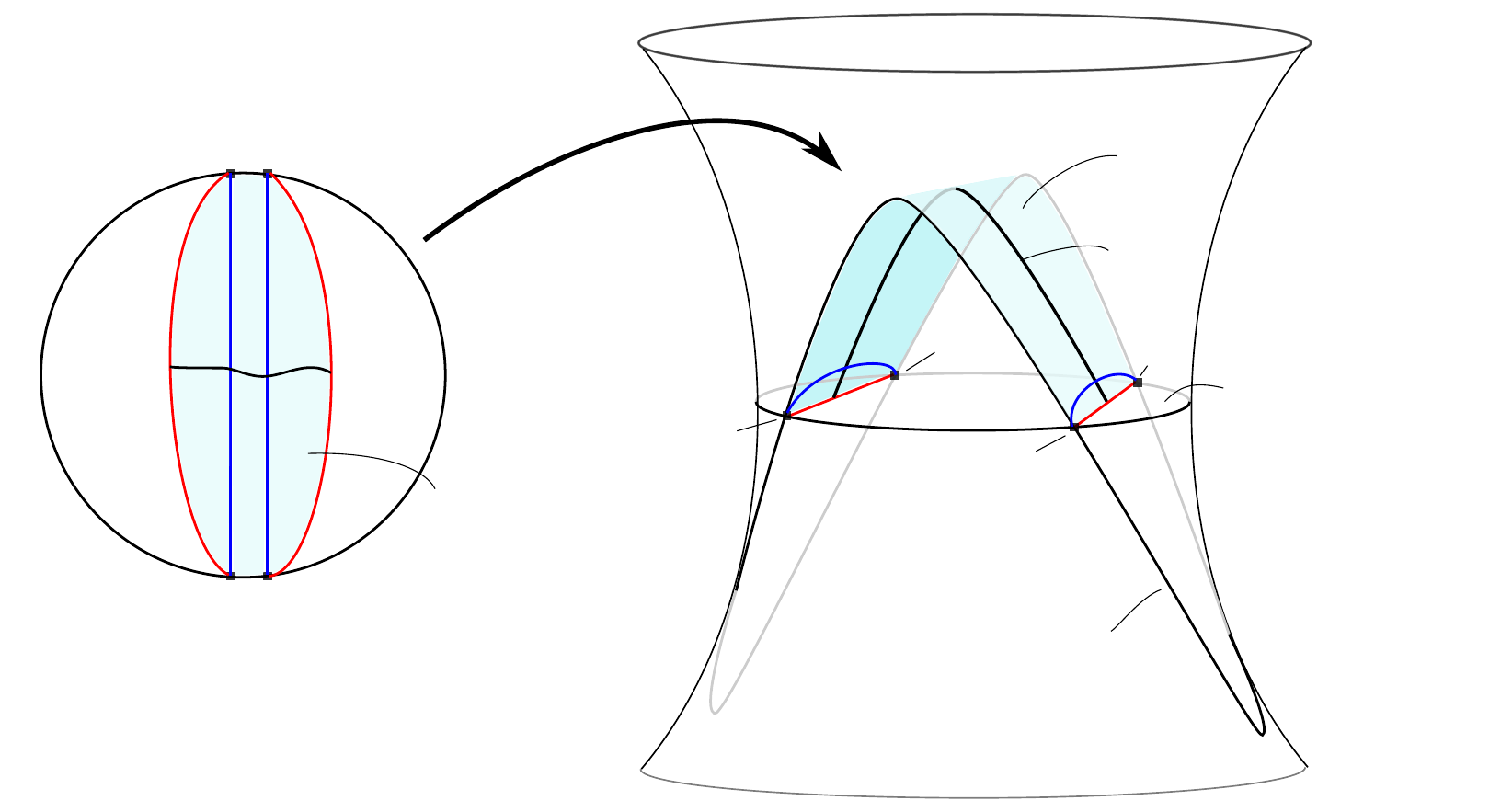

 \caption{As $n \to \infty$, the length of the path $Q \cap \Sa^+_n \cap P_+$ shrinks to zero. By Lemma~\ref{st:convex}, the region $U_n^+ \subset \HH^{2+}_K$ is convex, hence the geodesics $\overline{a_n^+ b_n^+}$ and $\overline{c_n^+ d_n^+}$ of $\HH^{2+}_K$ are becoming arbitrarily close.} \label{fig:proof-rhombus-2}
 \end{figure}

We apply a similar argument to the surfaces $\Sa^-_n$. Let $R$ denote the timelike plane defined by $x_1 = 0$. The path $R \cap \Sa^-_\infty \cap P_-$ joins $R \cap \overline{da}$ to $R \cap \overline{bc}$ and has length zero. So we similarly argue, using Lemma~\ref{st:convex}.(2), that the points $(d_n^-, a_n^-, b_n^-, c_n^-)$ at the ideal boundary of the convex set $U^-_n = (\partial \VCK{C_n, K}^-)^{-1}(\Sa^-_n \cap P_-) \subset \HH^{2-}_K$ have cross-ratio converging to zero. Hence, after rearranging, the cross-ratio of the points $(a_n^-, b_n^-, c_n^-, d_n^-)$ tends to infinity.  
Therefore, using Proposition \ref{qs_cross}, the quasisymmetric constants of $\Psi_{C_n, K}$ go to infinity as claimed. 
 \end{proof}

Propositions~\ref{pr:ads-continuity} and~\ref{pr:ads_properness}, plus the results of Diallo and Tamburelli imply Theorems \ref{tm:induced-ads} and Theorem \ref{tm:induced-ads-K}, via Proposition~\ref{pr-surj-ads}. 

\subsection{Proof of Theorem \ref{tm:III-ads-K}}

We now use the polar duality in $\AdS^3$, described in Section \ref{ssc:polar-ads}, to prove Theorem \ref{tm:III-ads-K} from Theorem \ref{tm:induced-ads-K}. We consider a fixed $K\in (-\infty, -1)$, and set $K^*=-\frac{K}{K+1}\in (-\infty, -1)$.

Let $v\in \cT$ be a normalized quasisymmetric homeomorphism $v\co\RP^1\to \RP^1$. By Theorem \ref{tm:induced-ads-K}, there is a normalized quasicircle $C \subset \partial \AdS^3$ such that the gluing map between the future-convex $(K^*)$--surface $S^-_{K^*}(C)$ and the past-convex $(K^*)$--surface $S^+_{K^*}(C)$ spanning $C$ is equal to $v^{-1}\in \cT$. 

Let $S^{*-}$ and $S^{*+}$ be the polar duals of $S^+_{K^*}(C)$ and $S^-_{K^*}(C)$, respectively. Then both surfaces have constant curvature $K$. Moreover, their induced metric is equal (under the identification by the duality map of $S^+_{K^*}(C)$ with $S^{*-}$, and of $S^-_{K^*}(C)$ with $S^{*+}$) to $\III$, the third fundamental form of $S^+_{K^*}(C)$ and $S^-_{K^*}(C)$. It follows that those two surfaces $S^{*-}$ and $S^{*+}$ have curvature $K$, and they are complete since $S^+_{K^*}(C)$ and $S^-_{K^*}(C)$ have bounded principal curvatures by Lemma \ref{lm:unif-bound-II-ads}.

Note also that $S^{*-}$ and $S^{*+}$ are contained in the invisible domain of $C$, and therefore have asymptotic boundary equal to $C$. Indeed, if $x^*$ is a point of $S^{*-}$, then it is dual to a support plane $P$ of $S^+_{K^*}(C)$. This plane $P$ is in the past of $S^+_{K^*}(C)$, and therefore also in the past of $C$, so that the past cone of $x^*$ is disjoint from $C$ and therefore $x^*$ is contained in the invisible domain of $C$. 

We also notice that the duality map between $S^+_{K^*}(C)$ and $S^{*-}$, and between $S^-_{K^*}(C)$ and $S^{*+}$ extends as the identity on their common boundary $C$. This follows from the fact that both are space-like, complete surfaces with the same boundary, and each point $x$ of $S^-_{K^*}(C)$ corresponds to a point $x^*$ of $S^{*+}$ which is in its future. As $x$ converges to a limit point $\xi\in C$, the intersection of the future cone of $x$ with $S^{*+}$ also converges to $\xi$, so that the point dual to $x$ on $S^{*+}$ must also converge to $\xi$.

Since the third fundamental forms of $S^{*\pm}$ are identified through the duality map with the induced metrics on $S^\mp_{K^*}(C)$, and since the gluing map at infinity between the induced metrics on $S^-_{K^*}(C)$ and $S^+_{K^*}(C)$ is $v^{-1}$, the gluing map at infinity between the third fundamental forms of $S^{*-}$ and $S^{*+}$ is equal to $v$. This concludes the proof of Theorem \ref{tm:III-ads-K}.

\section{Approximation of quasisymmetric maps}\label{approx}

In this section we show how to approximate quasisymmetric homeomorphisms by quasifuchsian ones. The main goal is to prove Proposition~\ref{pr-surj} which we used in the proofs of Theorems~\ref{tm:induced-hyp}, \ref{tm:induced-hyp-K}, and~\ref{tm:III-hyp-K} above, and its analogue, Proposition~\ref{pr-surj-ads}, used in the proofs of Theorem~\ref{tm:induced-ads} and~\ref{tm:induced-ads-K}.  Both propositions are direct corollaries of the following refinement of~\cite[Lemma 3.4]{bon-sep}.

\begin{prop}\label{pr:uniflim}
Any element $v\in\cT$ is the $C^0$--limit of a sequence of normalised uniformly quasisymmetric quasifuchsian homeomorphisms $v_n$.
\end{prop}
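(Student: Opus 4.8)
The plan is to realize $v$ as the boundary extension of an earthquake and to approximate its shearing measure by group-invariant laminations of uniformly bounded Thurston norm, so that the three hypotheses feeding into the cited earthquake results are met at once.

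First I would invoke Thurston's earthquake theorem (Theorem III.1.3.1 of \cite{th-earth}) to write $v = E_\lambda|_{\RP^1}$, where $E_\lambda$ is the essentially unique normalised earthquake with shearing measure $\lambda \in \cML(\HH^2)$. Since $v \in \cT$ is quasisymmetric, Proposition~\ref{pr:bounlam} guarantees that $\lambda$ is bounded, say $\|\lambda\|_{\mathrm{Th}} = C_0 < \infty$. The entire problem is thereby transported to the world of measured laminations: I want a sequence $\lambda_n \in \cML(\HH^2)$ which (a) weak-$*$ converges to $\lambda$, (b) satisfies $\sup_n \|\lambda_n\|_{\mathrm{Th}} < \infty$, and (c) is invariant under a cocompact torsion-free Fuchsian group $\Gamma_n < \PSL(2,\RR)$.

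The heart of the argument, and the main obstacle, is the construction of such $\lambda_n$; this is the quantitative refinement of Lemma 3.4 of \cite{bon-sep}. I would fix a basepoint $o$ and a sequence of cocompact torsion-free Fuchsian groups $\Gamma_n$ whose Dirichlet domains $D_n$ centred at $o$ contain balls $B(o,R_n)$ with $R_n \to \infty$ and whose systoles also tend to infinity. The idea is to ``periodise'' $\lambda$: record the pattern that $\lambda$ cuts out on a slightly shrunken copy of $D_n$ and propagate it $\Gamma_n$-equivariantly across the tiling of $\HH^2$ by the translates $g D_n$, $g \in \Gamma_n$. Two points require genuine care. The delicate one is that the leaves of $\lambda$ are complete geodesics, so naive transport produces crossings between translates; this must be repaired by modifying the pattern in a collar of $\partial D_n$ so that the closed-up family descends to a genuine (simple-support) measured lamination on the closed surface $\HH^2/\Gamma_n$. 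Because the systole grows, these collar corrections occur at large scale and should contribute only an $o(1)$ error, so that $\|\lambda_n\|_{\mathrm{Th}} \le C_0 + o(1)$; and because, on any fixed compact set, $\lambda_n$ agrees with $\lambda$ once $R_n$ is large, one obtains the weak-$*$ convergence $\lambda_n \to \lambda$.

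Granting such $\lambda_n$, the conclusion follows from the cited machinery. Let $v_n = E_{\lambda_n}|_{\RP^1}$, normalised by a post-composition with an element of $\PSL(2,\RR)$. Since $\lambda_n$ is $\Gamma_n$-invariant, for each $\gamma \in \Gamma_n$ the earthquake $E_{\lambda_n} \circ \gamma$ has shearing measure $\gamma^*\lambda_n = \lambda_n$, hence differs from $E_{\lambda_n}$ by post-composition with an isometry $\gamma' \in \PSL(2,\RR)$; the assignment $\gamma \mapsto \gamma'$ is the holonomy of the earthquaked hyperbolic structure on $\HH^2/\Gamma_n$, a cocompact Fuchsian group $\Gamma_n' = v_n \Gamma_n v_n^{-1}$, so each $v_n$ is quasifuchsian. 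By the uniform bound (b) and the converse half of Proposition~\ref{pr:bounlam}, every $v_n$ is $M$-quasisymmetric for a single constant $M = M(\sup_n\|\lambda_n\|_{\mathrm{Th}})$, so the family is uniformly quasisymmetric. Finally, applying Lemma~\ref{lm:continearth} with $\lambda_\infty = \lambda$ and $E_\infty = E_\lambda$, the weak-$*$ convergence (a) of the shearing measures of the normalised earthquakes $E_{\lambda_n}$ yields uniform convergence $v_n = E_{\lambda_n}|_{\RP^1} \to E_\lambda|_{\RP^1} = v$, producing the desired sequence and proving the proposition.
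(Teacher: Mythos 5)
Your overall architecture coincides with the paper's: realize $v=E_\lambda|_{\RP^1}$ with $\lambda$ bounded via Proposition~\ref{pr:bounlam}, approximate $\lambda$ in the weak-$*$ topology by $\Gamma_n$-invariant measured laminations $\lambda_n$ with $\sup_n\|\lambda_n\|_{\mathrm{Th}}<\infty$, and conclude by combining the converse half of Proposition~\ref{pr:bounlam} (uniform quasisymmetry) with Lemma~\ref{lm:continearth} (uniform convergence of the boundary extensions). The opening and closing steps are correct as written, including the observation that $\Gamma_n$-invariance of $\lambda_n$ forces $v_n\Gamma_n v_n^{-1}$ to be a cocompact Fuchsian group, so that $v_n$ is quasifuchsian.

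The gap is the middle step, which you rightly call the heart of the argument but do not actually carry out. ``Periodising'' $\lambda$ over a Dirichlet domain $D_n$ and ``repairing crossings in a collar of $\partial D_n$'' is precisely where all the difficulty sits, and no construction is given. A leaf of $\lambda$ meets $D_n$ in a chord; the $\Gamma_n$-orbit of that chord neither closes up into complete geodesics nor has any reason to match across the face-pairings of $D_n$ or to stay pairwise disjoint. This is a global compatibility problem, not a local collar modification, and the claim that the repair costs only $o(1)$ in Thurston norm is exactly what has to be proved. The paper's Lemma~\ref{lem:approxlam} resolves it by a different and quite specific device: first approximate $\lambda$ by \emph{finite} laminations $\mu_n$ of uniformly bounded norm (Miyachi--\v{S}ari\'c), perturb the finitely many leaves into ultraparallel position without decreasing any pairwise distance (Lemma~\ref{lem:tecn0}), and then build a right-angled convex polygon $P_k$ whose edges meet every leaf orthogonally and whose vertices stay at definite distance from the leaves (Lemma~\ref{lem:tec}). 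Taking the orbit under the reflection group in the sides of $P_k$ then automatically yields a genuine invariant lamination --- orthogonality makes each leaf invariant under the reflections in the edges it crosses, so the orbit consists of complete, pairwise disjoint geodesics --- while the non-decreasing-distance and vertex-spacing conditions give $\|\lambda_n^k\|_{\mathrm{Th}}\leq\|\mu_n'\|_{\mathrm{Th}}\leq C$. Without an argument of comparable precision, your proof of the approximation lemma, and hence of the proposition, is incomplete.
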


Letus first prove the surjectivity criteria Propositions~\ref{pr-surj} and~\ref{pr-surj-ads} assuming Proposition~\ref{pr:uniflim}. 

\begin{proof}[Proof of Propositions~\ref{pr-surj} and~\ref{pr-surj-ads}]
Let $\qcm(\mathbb X)$ denote the space of normalized quasicircles in $\mathbb X$, where $\mathbb X$ is either $\CP^1$ or $\Ein^{1,1}$. Let $F: \qcm(\mathbb X) \to \cT$ be a map that satisfies the three criteria:
\begin{enumerate}[(i)]
\item\label{item:cont_proof} If $(C_n)_{n\in \N}$ is a sequence of normalised $k$--quasicircles converging in the Hausdorff topology to a normalised $k$--quasicircle $C$, then $(F(C_n))_{n\in \N}$ converges uniformly to $F(C)$.
\item\label{item:prop_proof} For any $k$, there exists $k'$ such that if $F(C)$ is a normalised $k$--quasisymmetric homeomorphism, then $C$ is a $k'$--quasicircle.
\item\label{item:qf_proof} The image of $F$ contains all the quasifuchsian elements of $\cT$.
\end{enumerate}
Let $v \in \cT$. Then by Proposition~\ref{pr:uniflim}, $v$ is the $C^0$-limit of a sequence $(v_n)$ of normalized quasifuchsian homeomorphisms which are $k$-quasisymmetric for a fixed $k$ independent of $n$. By~\eqref{item:qf_proof}, let $C_n$ be normalized quasicircles so that $F(C_n) = v_n$. By~\eqref{item:prop_proof}, each $C_n$ is a $k'$-quasicircle, for some $k'$ independent of $n$. 
We pass to a subsequence so that $C_n$ converges in the Hausdorff topology to some limit $C_\infty$. By Lemma~\ref{lm:int-conv} if $\mathbb X = \CP^1$ or Lemma~\ref{lm:cmpqs} if $\mathbb X = \Ein^{1,1}$, $C_\infty$ is a $k'$--quasicircle.
By~\eqref{item:cont_proof}, $F(C_\infty) = v$. Hence $F$ is surjective.
\end{proof}

The rest of this section is devoted to the proof of Proposition \ref{pr:uniflim}.
Let $v \in \cT$. Let $\lambda$ denote the shearing lamination associated to the left earthquake extending $v$. It is a bounded measured geodesic lamination by Proposition \ref{pr:bounlam}. 
We will show (Lemma \ref{lem:approxlam}) that $\lambda$ is approximated by a sequence of uniformly bounded measured laminations $(\lambda_n)_{n\in \N}$, so that each $\lambda_n$ is  invariant under the action of a uniform lattice  $\Gamma_n$. Each $\lambda_n$ is the shearing locus of an earthquake whose boundary extension, say $v_n$ is a quasifuchsian homeomorphism. The proof is then complete upon remarking that the sequence $v_n$ uniformly converges to $v$ (Lemma \ref{lm:continearth}).

The proof of the key lemma (Lemma \ref{lem:approxlam}) is based directly on the same strategy used in \cite[Lem. 3.4]{bon-sep}. 
The main difference is that here we require the Thurston norm of $\lambda_n$ to be uniformly bounded, so some additional care is needed in the construction. 

First we state two technical facts we will need in order to prove Lemma \ref{lem:approxlam}.

\begin{lemma}\label{lem:tecn0}
Let $l_1,\ldots, l_k$ be pairwise disjoint complete geodesics in $\mathbb H^2$.
Then there exist $\epsilon>0$ and  smooth families of geodesics $(l_1(t), \ldots, l_k(t))_{t\in [0,\epsilon]}$,
 such that
\begin{itemize}
\item $l_i(0)=l_i$ for all $i\in \{ 1,\cdots, k\}$,
\item $d(l_i(t), l_j(t))>d(l_i, l_j)$ for every $t\in (0,\epsilon]$ and $i\neq j$, where $d$ denotes the hyperbolic distance between pairs of disjoint geodesics. 
\end{itemize}
\end{lemma}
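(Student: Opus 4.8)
The plan is to induct on the number $k$ of geodesics, at each step peeling off an \emph{outermost} leaf of the arrangement and pushing it away from all of the others faster than the inductive deformation can bring them back.

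First I would record the combinatorial structure. The pairwise disjoint geodesics $l_1,\dots,l_k$ cut $\HH^2$ into finitely many regions, and the adjacency graph on these regions (two regions being adjacent when they share one of the $l_i$ in their boundary) is a finite tree $T$ whose edges are the $l_i$; there are no cycles precisely because the $l_i$ are disjoint simple separating curves. A tree with $k+1 \geq 2$ vertices has at least one leaf, i.e. a region $R$ adjacent to exactly one geodesic, say $l_{i_0}$. By the tree structure this forces every geodesic other than $l_{i_0}$ to lie in the open half-plane $H^+$ bounded by $l_{i_0}$ on the side opposite $R$. In other words $l_{i_0}$ is \emph{outermost}: all the other geodesics, hence all their ideal endpoints, lie on one fixed side of $l_{i_0}$. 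The base case $k\le 1$ is vacuous.

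By the inductive hypothesis applied to the $k-1$ pairwise disjoint geodesics other than $l_{i_0}$, there are smooth families $l_j(t)$ with $l_j(0)=l_j$ and $\frac{d}{dt}d(l_i(t),l_j(t))\big|_{t=0}>0$ for all $i,j\neq i_0$. It remains to choose a family $l_{i_0}(s)$ strictly increasing the distance to every $l_j$. Normalizing by an isometry of $\HH^2$ so that all ideal endpoints of the $l_j$ ($j\neq i_0$) lie in one closed boundary arc $\overline{A^+}$, while $l_{i_0}$ has its two endpoints at the ends of the complementary arc $A^-$, I would let $l_{i_0}(s)$ be the geodesic obtained by moving both endpoints of $l_{i_0}$ a parameter $s$ into the open arc $A^-$. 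For $s>0$ the endpoints of $l_{i_0}(s)$ are strictly separated from $\overline{A^+}$, so $l_{i_0}(s)$ is ultraparallel to every $l_j$, and — as I will check — $d(l_{i_0}(s),l_j)$ is strictly increasing in $s$ at a rate bounded below by a uniform $\alpha>0$ over the finitely many $j$. This single motion separates $l_{i_0}$ from all the others at once precisely because they all sit on one side.

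Finally I would combine the two motions with a time rescaling: set $l_{i_0}(t)=l_{i_0}(\rho t)$ for large $\rho$, keeping the $l_j(t)$ as above. Pairs not involving $i_0$ are unaffected, so those distances still increase, while for a pair $(i_0,j)$,
\[
\frac{d}{dt}\, d(l_{i_0}(\rho t),l_j(t))\Big|_{t=0}
= \rho\,\frac{d}{ds} d(l_{i_0}(s),l_j)\Big|_{s=0}
+ \frac{d}{dt} d(l_{i_0},l_j(t))\Big|_{t=0}
\geq \rho\,\alpha-\beta,
\]
where $\beta$ bounds the finitely many cross terms from the motion of the $l_j$; taking $\rho>\beta/\alpha$ makes every such derivative positive. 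Positivity of all pairwise distance derivatives at $t=0$ yields strict monotonicity on some $(0,\epsilon]$, which is exactly the claim. The main obstacle I anticipate is the geometric monotonicity statement for the outermost geodesic — that pushing $l_{i_0}$ into $A^-$ strictly and uniformly increases its distance to each $l_j$ — and in particular the treatment of \emph{asymptotic} leaves, where $d(l_{i_0},l_j)=0$ and $d$ is not smooth: there one argues directly that moving the endpoints into $A^-$ strictly separates the endpoint pairs, so the distance jumps to a positive value and is thereafter monotone in the smooth ultraparallel regime. This monotonicity is a routine but genuine cross-ratio estimate, and that is where the real content of the lemma lies.
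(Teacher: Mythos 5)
Your proof is correct and follows essentially the same strategy as the paper's: induct on $k$ after identifying an ``outermost'' geodesic having all the others on one side (the paper phrases this as ``up to renaming, all $l_i$ lie in a closed half-plane $P_0$ bounded by $l_{k+1}$'' rather than via the adjacency tree, but it is the same fact), then superimpose a motion of that geodesic away from the half-plane containing the rest, fast enough to dominate the inductive perturbation. The only real divergence is in the final estimate. You run it through first derivatives of the distance function together with a time rescaling by $\rho$, which forces a separate treatment of asymptotic pairs, where $d$ vanishes and is not differentiable --- a case you correctly flag but only sketch. The paper instead pushes $l_{k+1}$ by a hyperbolic translation orthogonal to it, with translation length exactly $Ct$ where $C$ is a Lipschitz constant for the inductive motion, and invokes the superadditivity of distance across the separating geodesic:
\[
d(l_i(t), l_{k+1}(t)) \;\geq\; d(l_i(t), l_{k+1}) + d(l_{k+1}, l_{k+1}(t)) \;>\; \bigl(d(l_i,l_{k+1}) - Ct\bigr) + Ct,
\]
which requires no differentiability and disposes of asymptotic pairs with no extra case analysis. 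Both versions work; the paper's bookkeeping is simply a bit more robust at the degenerate pairs.
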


\begin{proof}
We argue by induction on $k$. The statement is certainly true for $k=1$.
Let us consider the inductive step $k\Rightarrow (k+1)$. Up to renaming the geodesics we can assume that all $l_i$ are contained in a closed half-plane $P_0$ bounded by  $l_{k+1}$.
By the inductive hypothesis there is $\epsilon>0$ and a smooth family of geodesics $(l_1(t),\ldots, l_k(t))_{t\in[0,\epsilon]}$ which satisfies the stated properties.
After applying a smooth family of isometries we may assume that $l_i(t)\subset P_0$ for every $t\leq\epsilon$.

By smoothness there is a constant  $C>0$ such that $$d(l_i(t), l_{k+1})>d(l_i, l_{k+1})-Ct~$$
holds for all $1 \leq i \leq k$ and $t \in [0,\epsilon]$.
Let $(g(t))_{t\in [0,\epsilon]}$ be a $1$-parameter subgroup of hyperbolic transformations with axis orthgonal to $l_{k+1}$, and repulsive fixed point in $P_0$, normalized so that the translation length of $g(t)$ is $Ct$.
Set $l_{k+1}(t)=g(t)(l_{k+1})$. In this way $d(l_{k+1}(t), l_{k+1})=Ct$. Since $l_{k+1}$ separates $l_{k+1}(t)$ from $l_i(t)$ for $i=1,\ldots, k$, we deduce that
\[
    d(l_{i}(t), l_{k+1}(t))\geq d(l_i(t), l_{k+1})+d(l_{k+1}, l_{k+1}(t))>d(l_i, l_{k+1})~.
\]
This shows that the family $(l_1(t),\ldots, l_{k+1}(t))_{t\in[0,\epsilon]}$ satisfies the stated properties and completes the induction.
\end{proof}

\begin{lemma}\label{lem:tec}
Let $U$ be a region of $\mathbb H^2$ bounded by two ultraparallel geodesics $e_1, e_2$ and a geodesic arc $c$ joining $e_1$ and $e_2$. Fix also a point $x_0\in\mathbb H^2$ and a number $R>0$. There exists a piecewise geodesic arc $\alpha$ in $U$ with endpoints on $e_1$ and $e_2$ such that
\begin{itemize}
\item $\alpha$ meets $e_1$ and $e_2$ orthogonally, and the angle at each verte  of $\alpha$ is $\pi/2$.
\item The region of $U$ bounded by $c$ and $\alpha$ is convex. 
\item The distance from $x_0$ to $\alpha$ is larger than $R$.
\end{itemize}
\end{lemma}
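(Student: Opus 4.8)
The plan is to construct $\alpha$ as a finite \emph{convex} right-angled geodesic chain that shadows the ideal boundary at the far end of $U$. First I would fix the geometry. Since $e_1$ and $e_2$ are ultraparallel they admit a unique common perpendicular, and the arc $c$ cuts the strip between $e_1$ and $e_2$ into two pieces, with $U$ the piece containing exactly one end of the strip. This end determines an arc $J=(\xi_1,\xi_2)$ of $\partial\mathbb H^2$, bounded by an ideal endpoint $\xi_1$ of $e_1$ and the adjacent ideal endpoint $\xi_2$ of $e_2$. The key elementary observation is that, for fixed $x_0$, the region $U\setminus\overline{B(x_0,R)}$ is a neighborhood of $J$ inside $U$ whose closure meets $e_1$ only near $\xi_1$ and $e_2$ only near $\xi_2$; thus it suffices to produce $\alpha$ inside this neighborhood of the end.

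Next I would build the chain. Starting from a point $A\in e_1$ close to $\xi_1$, erect the geodesic $M_1\perp e_1$ at $A$, travel a controlled distance into $U$, and make a right-angle turn onto a geodesic $M_2\perp M_1$; then iterate, always turning in the same direction (toward $c$), so that the chain marches along $J$ toward $\xi_2$, and terminate with a final segment meeting $e_2$ orthogonally. Equivalently, one can realize $\alpha$ as an arc of the boundary of a large right-angled convex polygon adapted to $e_1$ and $e_2$: as the number of sides grows, such a polygon swells toward $\partial\mathbb H^2$, so its edges, and hence $\alpha$, recede from any fixed $x_0$. In either description every vertex of $\alpha$, together with the two contact points with $e_1$ and $e_2$, carries interior angle $\pi/2$, giving the first bullet.

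Convexity of the region $D$ enclosed by $c$ and $\alpha$ would then follow formally. Its boundary consists of the geodesics $c$, a sub-arc of $e_1$, the chain $\alpha$, and a sub-arc of $e_2$; all corners coming from $\alpha$ have interior angle $\pi/2\le\pi$ (because every turn is made toward $D$), while the two corners at the endpoints of $c$ are the interior angles of $U$ along $c$ and are therefore less than $\pi$. A closed, simply connected geodesic polygon all of whose interior angles are at most $\pi$ is convex, which yields the second bullet. The third bullet, $d(x_0,\alpha)>R$, is then arranged by choosing the initial contact point $A$ close enough to $\xi_1$ and the chain close enough to $J$ (equivalently, by taking the adapted polygon large enough).

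The hard part will be the simultaneous control of convexity, the distance bound, and the orthogonal closure on $e_2$, since these three requirements pull against one another. A single geodesic bridging from near $\xi_1$ to near $\xi_2$ would use few vertices but necessarily dips back toward the fixed geodesic $\overline{\xi_1\xi_2}$, which stays at bounded distance from $x_0$; conversely, a crenellated path hugging $J$ closely introduces alternating $\pm\pi/2$ turns and so fails to be convex. The resolution is exactly that the arc $J$ is convex as seen from inside $\mathbb H^2$, so it can be shadowed by a convex right-angled chain; making this quantitative -- producing, for each $R$, an explicit chain that is convex, right-angled at every vertex and at $e_1,e_2$, and contained in $U\setminus\overline{B(x_0,R)}$ -- is the technical heart. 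I would carry this out either by an explicit lower bound on the inradius of the adapted right-angled polygon, or by a compactness/continuity argument letting the contact points tend to $\xi_1$ and $\xi_2$ while checking that the chain stays convex and is eventually disjoint from $\overline{B(x_0,R)}$.
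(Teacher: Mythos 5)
Your high-level picture --- a convex right-angled geodesic chain shadowing the ideal arc $J$ at the far end of $U$ --- is exactly the shape of the construction the paper uses, and your diagnosis of the tension is accurate (a single bridging geodesic stays near $\overline{\xi_1\xi_2}$ and hence near $x_0$; a literal crenellation alternates turn directions and is not convex). But the proposal stops precisely where the lemma begins: you explicitly defer ``the technical heart'' to either an unproved inradius bound for an adapted right-angled polygon or an unspecified compactness/continuity argument, and your concrete iterative scheme (turn by $\pi/2$ toward $c$, travel ``a controlled distance,'' repeat) carries a closing problem you acknowledge but do not resolve --- nothing forces the final segment to arrive at $e_2$ orthogonally, nor is there any mechanism that, for a given $R$, keeps the whole chain outside $\overline{B(x_0,R)}$ while remaining convex. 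So the three constraints you correctly identify as pulling against one another are never actually reconciled.

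The missing idea is a normalization that reconciles them all at once. Put $e_1=(0,\infty)$, $e_2=(1,a)$, so the ideal arc of $U$ is $[0,1]$; set $t_j=j/(2n+1)$ and let $g_i$ be the geodesic with endpoints $t_{2i-1},t_{2i}$. The key observation is that for every $i$ and every $n$ the four endpoints of $g_i$ and $g_{i+1}$ are M\"obius-equivalent to $(-1,0,1,2)$, so consecutive members of the list $e_1,g_1,\dots,g_n,e_2$ are pairwise ultraparallel at a \emph{universal} distance $\le d_0$, while each individual $g_i$ recedes from $x_0$ and from $c$ as $n\to\infty$. Choosing $n$ so that $d(g_i,x_0)>R+d_0$ and $d(g_i,c)>d_0$ for all $i$, one takes $\alpha$ to be the concatenation of the common perpendiculars between consecutive members of the list with the sub-segments of the $g_i$ joining the feet of successive perpendiculars. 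Orthogonality at every vertex and at $e_1,e_2$ is then automatic (every connecting arc is a common perpendicular), the enclosed region is cut out by half-planes on the side of $c$ and hence convex, and every point of $\alpha$ lies within $d_0$ of some $g_i$, so at distance $>R$ from $x_0$. Without this equal-spacing/universal-distance device (or a genuinely worked-out substitute), the proposal remains a plan rather than a proof.
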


\begin{proof}
In the upper half-plane model, we may suppose that the endpoints of $e_1$ are $0,\infty$ and the endpoints of $e_2$ are $1, a$ for some $a>1$. In this way the segment $[0,1]$ is in the asymptotic boundary of $U$. For any $n$ we take the partition of $[0,1]$ with nodes at $t_i=i/(1+2n)$ for $i=0,\ldots 1+2n$, and consider the geodesics $g_1,\ldots g_n$ of $\mathbb H^2$  so that the endpoints of $g_i$ are $t_{2i-1}, t_{2i}$.

 \begin{figure}
 \centering
 \def\svwidth{\columnwidth}
\def\svgscale{.3}
 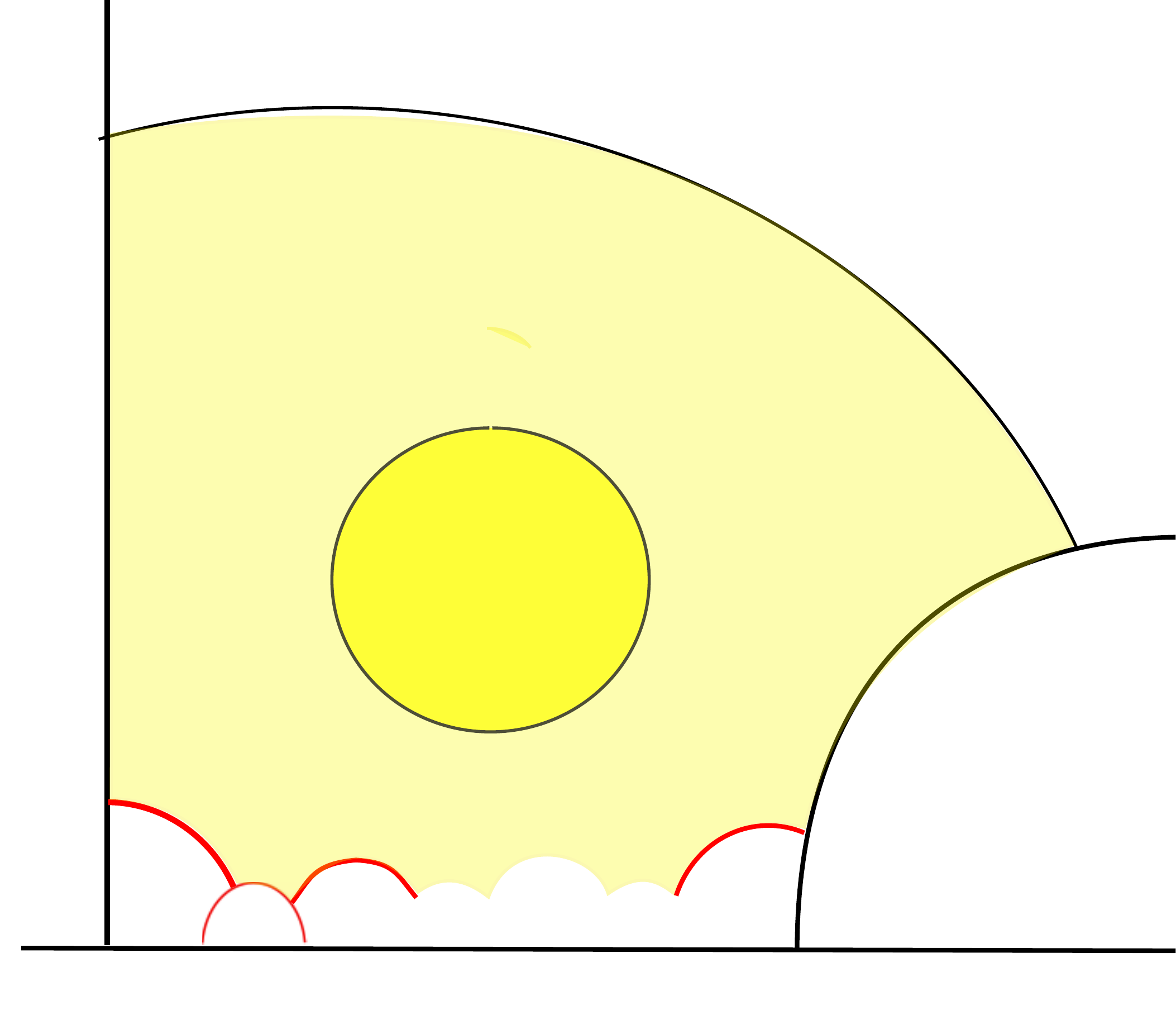
 \caption{The construction of the piecewise geodesic arc $\alpha$, in red in the picture}\label{lemme93:fig}
 \end{figure}

Notice that the endpoints of $g_i$ and $g_{i+1}$ are  $\PSL(2,\R)$-equivalent to the four points $-1$, $0$ $1$ and $2$, so the distance between $g_i$ and $g_{i+1}$ is some constant $d_0$ (independent of $n$). By comparing the distance between $g_1$ and $e_1$ with the distance between $g_1$ and the geodesic with endpoints $-1/(1+2n)$ and $0$ we see that the former distance is bounded by $d_0$ too. Analogously, the distance between $g_n$ and $e_2$ is bounded by $d_0$, 
if  $\frac{1}{2n+1}<a$.

On the other hand, the distance between $g_i$ and $c$  and between $g_i$ and $x_0$ diverges for $n\to+\infty$. Thus, we can choose $n$ big enough so that the distances between $g_i$ and $c$ are all bigger than $d_0$ 
and the distances between $g_i$ and $x_0$ are all bigger than $R+d_0$.

In this way the geodesics arc $a_i$ joining orthogonally $g_i$ to $g_{i+1}$  cannot meet $c$, and any point of $a_i$ is at distance at least $R$ from $x_0$.
The same facts  hold for the arcs $a_0$ joining orthogonally $e_1$ to $g_1$ and the arc $a_{n}$ joining orthogonally $g_n$ to $e_2$. 

Finally we can construct the arc $\alpha$ as union of the arcs $a_i$ and of geodesic segments of each of the $g_i$ (see Figure~\ref{lemme93:fig}).
\end{proof}

\begin{lemma}\label{lem:approxlam}
Let $\lambda$ be a bounded lamination on $\mathbb H^2$. There are a constant $C>0$, and 
 a sequence $(\lambda_n, \Gamma_n)$, where $\Gamma_n$ is a cocompact Fuchsian group and $\lambda_n$ is a $\Gamma_n$-invariant measured lamination, such that:
\begin{itemize}
\item $\lambda_n$ converges to $\lambda$ in the weak--$\ast$ topology.
\item The Thurston norm of $\lambda_n$ is bounded by $C$ for every $n$. 
\end{itemize}
\end{lemma}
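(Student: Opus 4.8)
The plan is to follow the strategy of \cite[Lem. 3.4]{bon-sep}, approximating $\lambda$ by laminations that are invariant under cocompact groups whose fundamental domains grow to exhaust $\mathbb{H}^2$, while exercising the extra care needed to keep the Thurston norms \emph{uniformly} bounded. First I would fix a basepoint $x_0 \in \mathbb{H}^2$ and reduce to a finite-leaf approximation: by the density of weighted finite-leaf (simplicial) laminations in $\cML(\mathbb{H}^2)$ together with lower semicontinuity of the Thurston norm, for each $n$ I may choose a lamination $\mu$ with finitely many leaves $l_1, \ldots, l_k$ (carrying weights $w_1, \ldots, w_k$) which is weak-$*$ close to $\lambda$ and satisfies $||\mu||_{\mathrm{Th}} \le ||\lambda||_{\mathrm{Th}} + 1$. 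It then suffices to periodize each such $\mu$ and run a diagonal argument, so I fix $\mu$ and a radius $R \to \infty$ and construct a single pair $(\lambda_n, \Gamma_n)$.

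Second, I would build the cocompact group together with a compact, convex, right-angled fundamental polygon $P \supset B(x_0, R)$. Using Lemma~\ref{lem:tecn0} I first push the leaves $l_1, \ldots, l_k$ slightly apart, by an amount $\epsilon$ to be sent to $0$, so that consecutive leaves become ultraparallel with strictly positive, controlled pairwise distances; this is exactly the separation that will keep the periodic translates disjoint. In each gap bounded by two such consecutive leaves $e_1, e_2$, Lemma~\ref{lem:tec} furnishes a piecewise-geodesic capping arc $\alpha$ meeting $e_1, e_2$ orthogonally, with all interior angles equal to $\pi/2$, bounding a convex region and lying at distance $> R$ from $x_0$. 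The leaves together with these capping arcs bound a compact polygon $P$ all of whose angles equal $\pi/2$; by Poincar\'e's polygon theorem the induced side-pairings generate a cocompact Fuchsian group $\Gamma_n$ with fundamental domain $P$. Because all meetings are orthogonal with right-angle vertices, each leaf $l_i$ extends through the resulting tiling to a complete $\Gamma_n$-invariant geodesic, and the full $\Gamma_n$-orbit of leaf-lines is pairwise disjoint; I let $\lambda_n$ be this orbit, each line carrying the corresponding weight $w_i$ (the capping arcs remaining unweighted).

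Third, I would verify the two conclusions. For weak-$*$ convergence: on any fixed compact set $L \subset \mathcal G$, once $R$ is large the weighted leaf-lines of $\lambda_n$ meeting $L$ are precisely the pushed leaves $l_i$ (the capping arcs and all their translates lie at distance $> R$ from $x_0$), whose geometry returns to that of $\lambda$ as $R \to \infty$ and $\epsilon \to 0$; hence $\int_{\mathcal G} f\, d\lambda_n \to \int_{\mathcal G} f \, d\lambda$ for every compactly supported $f$. For the uniform norm bound: since weight is carried only by the leaf-lines, any unit-length transversal $c$ accumulates $\lambda_n$-mass only from the weighted leaf-lines contained in a bounded number of translates $\gamma P$ (the right-angle condition controls how many tiles fit around each vertex, independently of $n$), and within each such translate the $\lambda_n$-mass of $c$ is at most $||\mu||_{\mathrm{Th}} \le ||\lambda||_{\mathrm{Th}} + 1$. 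This yields $||\lambda_n||_{\mathrm{Th}} \le C$ with $C$ depending only on $||\lambda||_{\mathrm{Th}}$, and a diagonal extraction over $n$ completes the proof.

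The main obstacle, and the point where this argument goes beyond \cite[Lem. 3.4]{bon-sep}, is arranging the construction so that the Thurston norm does not accumulate under periodic extension. The delicate part is the order of the quantifiers: one must choose the pushing amount $\epsilon$, the radius $R$, and the number of subdivisions in Lemma~\ref{lem:tec} in the right mutual dependence so that, simultaneously, (a) $\lambda_n$ is a genuine measured lamination, i.e. the $\Gamma_n$-orbit of leaf-lines is simple and the transverse measure is $\Gamma_n$-invariant, (b) the capping arcs recede to infinity, so $P \supset B(x_0,R)$ and the domains exhaust $\mathbb{H}^2$, and (c) the separation between adjacent leaf-lines stays bounded below along unit transversals, so the per-transversal mass never exceeds a fixed multiple of $||\lambda||_{\mathrm{Th}}$. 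The orthogonality in Lemma~\ref{lem:tec} (controlling how tiles meet at vertices) and the distance-increasing deformation in Lemma~\ref{lem:tecn0} (controlling disjointness of translates) are precisely the tools that make (a) and (c) compatible with (b).
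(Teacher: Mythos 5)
There are two genuine gaps. The first is in your reduction to a finite-leaf lamination with controlled norm: lower semicontinuity of the Thurston norm under weak-$*$ convergence gives $\|\lambda\|_{\mathrm{Th}}\leq\liminf\|\mu_n\|_{\mathrm{Th}}$, which is the wrong direction --- it does not prevent the approximating finite laminations from having much larger norm than $\lambda$. The existence of discrete approximations with \emph{uniformly bounded} Thurston norm is a genuinely non-trivial fact; the paper imports it from Theorem 5 of Miyachi--\v{S}ari\'c \cite{mi-sa}, and some such input is unavoidable here.

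The second, more serious, gap is in the construction of the polygon: you take the (slightly separated) leaves themselves as the sides of $P$, capped off by the arcs of Lemma~\ref{lem:tec}. This cannot work. First, the leaves you want to retain are precisely those passing through a fixed ball around $x_0$, so no convex polygon containing $B(x_0,R)$ can have them as boundary sides once $R$ exceeds their distance to $x_0$. Second, a finite disjoint family of geodesics is in general \emph{nested} (one leaf separates $x_0$ from another), and such a family cannot bound a single convex polygon with every leaf contributing a side: the inner leaf makes the outer one redundant, so the outer leaf would simply be absent from $\lambda_n$, destroying weak-$*$ convergence. The paper's construction differs in exactly this respect: the leaves cross the \emph{interior} of $P_k$ and meet $\partial P_k$ orthogonally at points at distance greater than $1$ from the vertices; the sides of $P_k$ are built not from the leaves but from auxiliary geodesics --- two per leaf, orthogonal to it near its two ends, outside the large ball, and at distance greater than $1$ from all other leaves --- joined by the capping arcs of Lemma~\ref{lem:tec}. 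One then takes $\Gamma_n$ to be (the Fuchsian subgroup of) the reflection group in the edges of $P_k$; orthogonality makes the orbit of $\mu'_n$ a genuine invariant lamination agreeing with $\mu'_n$ on $B(x_0,k+n)$, and the ``distance greater than $1$ from the vertices'' condition is what yields the uniform Thurston norm bound. Your use of Lemma~\ref{lem:tecn0} to separate the leaves into ultraparallel position and of Lemma~\ref{lem:tec} to cap off gaps is in the right spirit, but these tools must be applied to the auxiliary orthogonal geodesics, not to the leaves themselves.
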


\begin{proof}
In Theorem 5 of Miyachi and \v{S}ari\'c \cite{mi-sa} it is shown that $\lambda$ is the limit of discrete laminations $\mu_n$ for the uniform weak--$\ast$ convergence. Moreover in the proof of Theorem 5  (Claim 2) it is pointed out that $\mu_n$ have uniformly bounded Thurston norm. 

For a fixed $n$  we consider the leaves $l_1,\ldots, l_{s(n)}$ of $\mu_n$ which intersect a ball of radius $n$ centered at a fixed point $x_0\in \mathbb H^2$. Denote  
$a_i$  the  weight corresponding to $l_i$.
Fix a reference Riemannian distance $d_\mathcal G$ on the space of geodesics.
By Lemma~\ref{lem:tecn0} we may replace the geodesics $l_1,\ldots, l_{s(n)}$, some of which may be asymptotic, with  ultraparallel geodesics $l'_1,\ldots, l'_{s(n)}$ such that
\begin{itemize}
\item $d_{\mathcal G}(l_i, l'_i)<1/n$ for every $i=1,\ldots, s(n)$,
\item $d(l'_i, l'_j)>d(l_i, l_j)$ for every $i\neq j$.
\end{itemize}
Consider the finite lamination $\mu'_n=\sum_{i=1}^{s(n)}a_i\delta_{l'_i}$, where $\delta_l$ denotes the Delta measure centered at $l$.
As in the proof of Step 2 of Lemma 3.4 in \cite{bon-sep} we have that $\mu'_n$ is made by ultraparallel geodesics and $\mu'_n$ converges to $\lambda$
in the weak--$\ast$ topology.
Finally since $d(l'_i, l'_j)>d(l_i, l_j)$  the Thurston norm of $\mu'_n$ is smaller than the Thurston norm
of $\mu_n$. So there is a constant $C$ such that $||\mu'_n||_{Th}<C$ for any $n$.  

Now for each fixed $n$ we intend to construct a quasifuchsian approximation of $\mu'_n$, say $\lambda_n^{k}$, such that $||\lambda_n^{k}||_{Th}\leq 2||\mu'_n||_{Th}$. 
To this aim let us fix $n$. We claim that  for any $k$ there exists a polygon $P_k$ such that:
\begin{itemize}
\item $P_k$ contains the ball $B(x_0, k+n)$;
\item The interior angles of $P_k$ are all $\pi/2$;
\item The boundary $\partial P_k$ meets every leaf of $\mu'_n$ orthogonally in two points;
\item Any vertex of $P_k$ is at distance bigger than $1$ from the intersection points of any leaf of $\mu'_n$ with $\partial P_k$.
\end{itemize}

\begin{figure}
\centering
\def\svwidth{\columnwidth}
\def\svgscale{.3}
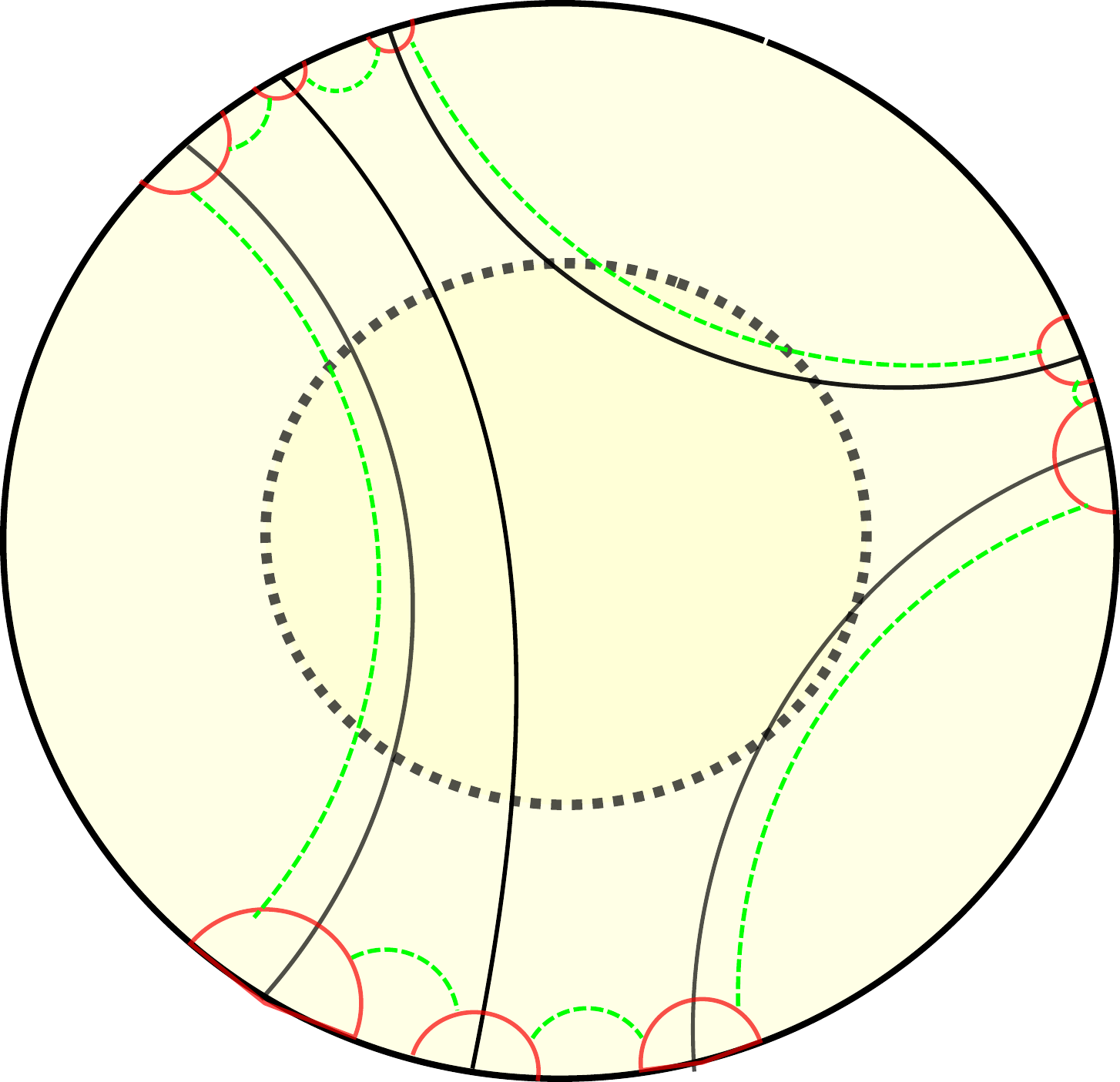
\caption{Geodesics $e_i$ and geodesic segments $c_i$ as in the proof of Lemma \ref{lem:approxlam}.}\label{polygon1:fig}
\end{figure}

 \begin{figure}
\centering
\def\svwidth{\columnwidth}
\def\svgscale{.3}
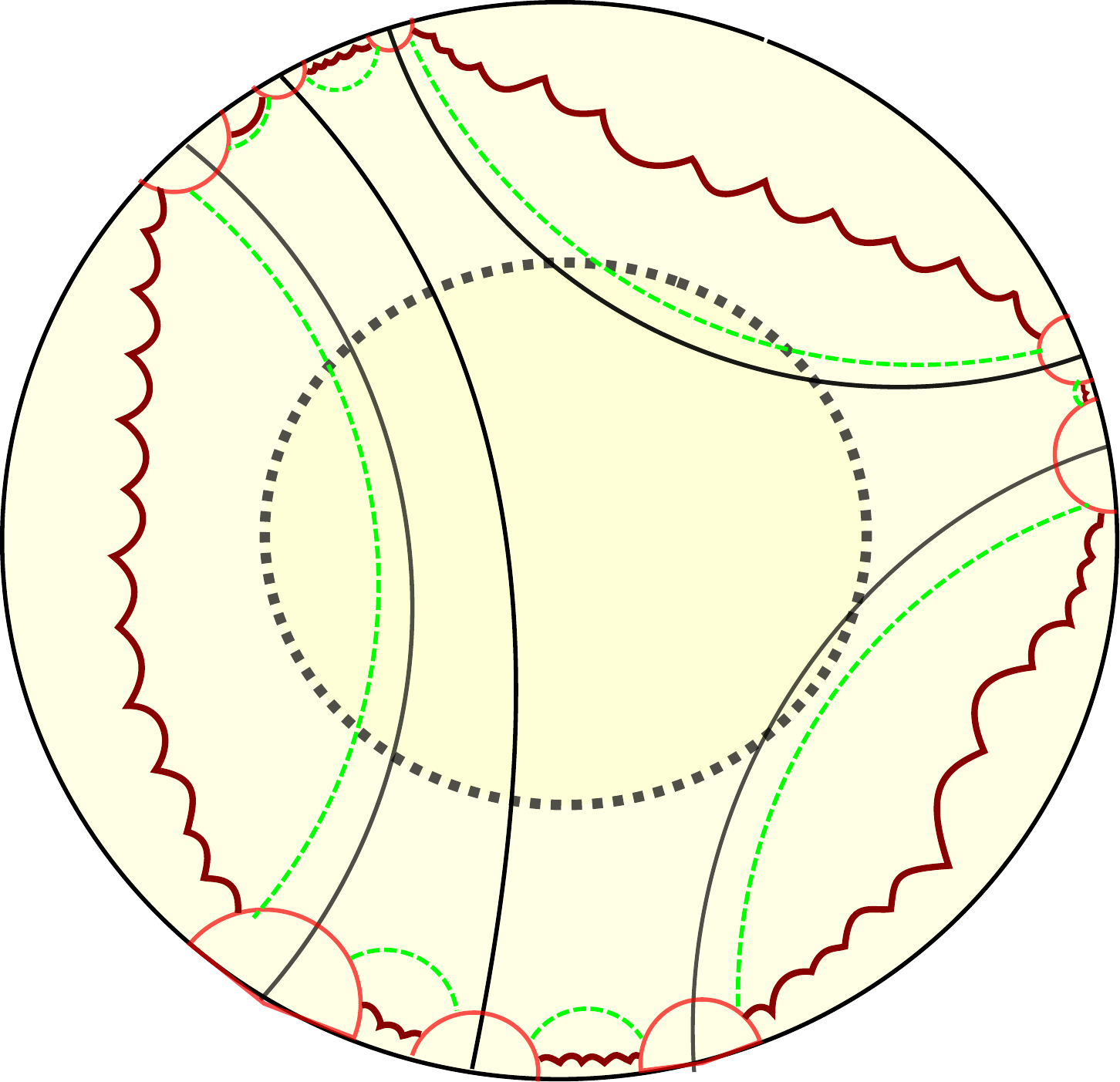
\caption{The construction of the piece-wise geodesics $\alpha_i$.}\label{polygon2:fig}
\end{figure}
  
  \begin{figure}[htb]
   \includegraphics[width = 2in]{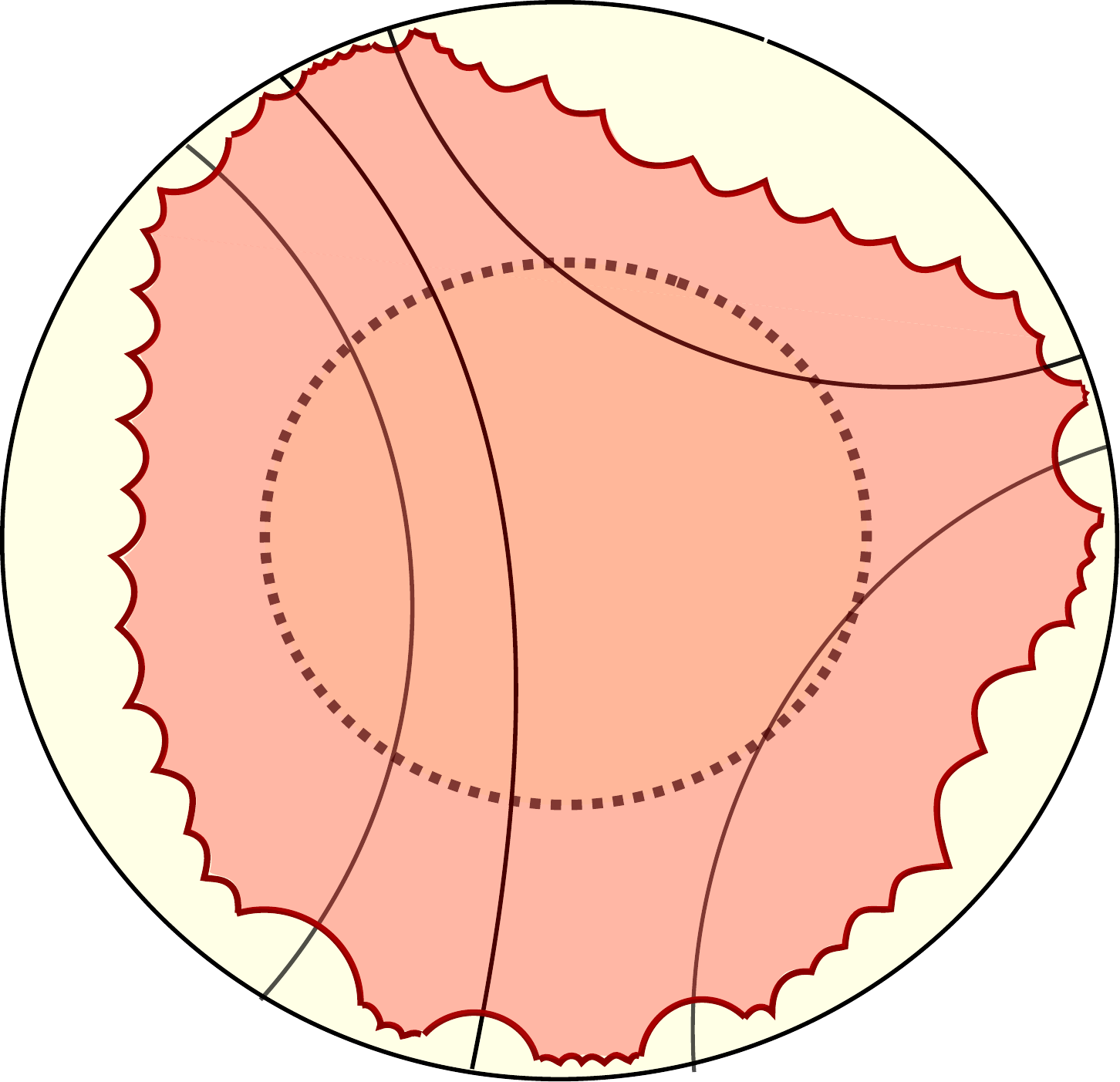}
   \caption{The polygon $P_k$.}\label{polygon3:fig}
   \end{figure}

To construct $P_k$ we first choose for each leaf $l$ of $\mu'_n$ two  geodesics  which orthogonally  meet $l$  at points in  different components of  $l\setminus B(x_0, k+n)$, in such a way
\begin{enumerate}
\item the distance of those geodesics to any other leaf of $\mu'_n$ is bigger than $1$;
\item all the geodesics constructed in this way are ultraparallel and bound a convex open subset of $\Hyp^2$ containing $B(x_0, k+n)$.
\end{enumerate}

Let us  enumerate those geodesics by $e_1, e_2,\ldots, e_N$ so that $e_i$ is adjacent to $e_{i-1}$ and $e_{i+1}$. 
We can choose a point $p_i$ on $e_i$ and $q_{i+1}$ on $e_{i+1}$ so that the distance between the segment $c_i=[p_i, q_{i+1}]$ and the support of $\mu'_n$ is bigger than $1$ (see Figure~\ref{polygon1:fig}).

Let $U_i$ be the region of $\mathbb H^2$ bounded by $e_i, e_{i+1}$ and $c_{i}$ which does not contain $e_j$ for $j\neq i, i+1.$ 
Applying Lemma \ref{lem:tec} to $e_i, e_{i+1}$ and $c_i$ we construct a piecewise geodesic path $\alpha_i$ contained in $U_i$ so that: 
\begin{itemize}
\item $\alpha_i$ meets orthogonally $e_i$ and $e_{i+1}$;
\item $\alpha_i$ stays outside $B(x_0, k+n)$ and at each vertex it forms a $\pi/2$-angle facing $x_0$;
\item The distance between $\alpha_i$ and the support of $\mu'_n$ is bigger than $1$. (see Figure~\ref{polygon2:fig}.)
\end{itemize} 
The union of $\alpha_i$ and the segments  along $e_i$ joining $\alpha_{i}$ to $\alpha_{i-1}$  bounds  a convex polygon $P_k$ (Figure~\ref{polygon3:fig}). It is immediate to check that $P_k$ satisfies the required properties.

Let $\Gamma_n^k$ be the reflection group generated by the reflections along the edges of $P_k$. For $k\geq n$, the lamination  $\mu'_n$  meets the boundary of $P_k$ orthogonally, so its $\Gamma_n^k$--orbit is a measured geodesic lamination $\lambda_n^k$ invariant under $\Gamma_n^k$, and $\lambda_n^k\cap B(x_0, k+n)=\mu'_n\cap B(x_0, k+n)$.

We want to prove that the Thurston norm of $\lambda_n^k$ is bounded by the Thurston norm of $\mu'_n$. If $c$ is a geodesic arc of length $1$, let $\hat c$ be the intersection of its $\Gamma_n^k$--orbit with $P_k$. Clearly $\iota(\lambda_n^k, c)=\iota(\lambda_n^k, \hat c)=\iota(\mu'_n, \hat c)$, where $\iota$ is the intersection form on measured laminations. If $\hat c$ is contained in the interior of $P_k$, then the statement is obvious. If $\hat c$ meets the boundary of $P_k$, by the last condition of $P_k$, it can meet at most one leaf of $\lambda_n^k$, so again its total intersection is less that the Thurston norm of $\mu'_n$.

Finally observe now that $\lambda_n^n$ weakly--$\ast$ converges to $\lambda$. Indeed if $f$ is a function with compact support on the space of geodesics of $\mathbb H^2$, then there exists $k$ such that the support of $f$ is contained in the set of geodesics that meet $B(x_0,k)$. For $n>k$ we have that
\[
  \int f(g)d\lambda_n^n(g)=\int f(g)d\mu'_n(g)
\] 
and the latter converges to $\int f(g)d\lambda(g)$ as $n\to+\infty$.
\end{proof}

\begin{proof}[Proof of Proposition \ref{pr:uniflim}]
We consider the shearing lamination $\lambda$ of the left earthquake extending $v$. This is a bounded measured  lamination by Proposition \ref{pr:bounlam}.
Applying Lemma \ref{lem:approxlam} we get a sequence of uniformly bounded  measured laminations $\lambda_n$ and  Fuchsian groups $\Gamma_n$,
 such that $\lambda_n$ is $\Gamma_n$ invariant and $(\lambda_n)_{n\in\mathbb N}$ weakly-* converges to $\lambda$.
 
 Let $v_n$ denote the restriction to $\RP^1$ of the normalised left erathquake along $\lambda_n$. 
 By Proposition \ref{pr:bounlam} the homeomorphisms $v_n$ are uniformly quasisymmetric.
 By Lemma \ref{lm:continearth} the sequence $v_n$ uniformly converges to $v$. 
\end{proof}

\section{Parametrized versions of the main theorems}\label{sec:param}

In the final section, we give versions of the main theorems for which the quasicircle $C$ is replaced by a parameterized quasicircle and the gluing map at infinity between the two $K$-surfaces spanning $C$ is replaced by a pair elements of the universal Teichm\"uller space. These ``parameterized" versions of the main theorems follow easily, and are appealing in that they more superficially resemble the analogous statements from the setting of quasifuchsian hyperbolic three-manifolds or globally hyperbolic maximal compact AdS spacetimes.

\subsection{Parameterized quasicircles in $\CP^1$}

\begin{defi}
An embedding $f\co \RP^1 \to \CP^1$ is called a \emph{parameterized quasicircle} if it is the restriction to $\RP^1$ of a quasiconformal homeomoprhism of $\CP^1$. Note that the image $C= f(\RP^1)$ is by definition, a quasicircle.
\end{defi}

Given a parameterized quasicircle $f\co \RP^1 \to \CP^1$ with image $C = f(\RP^1)$, let $\Omega^\pm$ be the top and bottom components of $\CP^1 \setminus C$ and let $U^\pm_C\co \HH^{2\pm} \to \Omega^\pm$ be the conformal homeomorphisms as in Section~\ref{sec:intro-hyp}. We do not require here that the extensions $\partial U^\pm\co \RP^1 \to C$ be normalized. Then, define $u^+_f\co \RP^1 \to \RP^1$ and  $u^-_f\co \RP^1 \to \RP^1$  by
\begin{align*}
u^+_f &= (\partial U^+_C)^{-1} \circ f\\\
u^-_f &= (\partial U^-_C)^{-1} \circ f
\end{align*}
Since $f$ is a parameterized quasicircle, both $u^+_C$ and $u^-_C$ are quasisymmetric. 
Since each of $U^\pm_C$ is well-defined only up to pre-composition by an element of $\PSL(2,\RR)$, each of $u^\pm_f$ is well-defined up to post-composition by an element of $\PSL(2,\RR)$, hence the pair $(u^+_f, u^-_f)$ is naturally an element of $\cT \times \cT$. The universal version of Bers' Simultaneous Uniformization Theorem \cite{bers} states that the map $f \mapsto u^+_f$ is a homeomorphism from the space $\pqc$ of parameterized quasicircles, up to post-composition by an element of $\PSL(2,\CC)$, to the product $\cT \times \cT$ of two copies of the universal Teichm\"uller space. Note that a gluing map between the upper and lower regions of the complement of $C$ is given by $\varphi_C = u^-_f \circ (u^+_f)^{-1}$.

For each $K \in [-1,0)$, let $V^\pm_{C, K}\co \HH^{2\pm}_{C, K} \to S^\pm_K(C)$ denote an isometry from the rescaled hyperbolic plane to the top/bottom $K$-surface spanning $C$ in $\HH^3$ (as usual $S^\pm_{-1}(C)$ refers to $\partial^\pm\CH(C)$), with $\partial \VCK{C,K}^\pm: \RP^1 \to C$ denoting the extension to the ideal boundary.
Then, define $v^+_{K, f}: \RP^1 \to \RP^1$ and  $v^-_{K, f}: \RP^1 \to \RP^1$  by
\begin{align}
v^+_{K,f} &= (\partial \VCK{C, K}^+)^{-1} \circ f \nonumber\\
v^-_{K,f} &= (\partial \VCK{C, K}^-)^{-1} \circ f. \label{eqn:two-maps}
\end{align}
Each of $v^\pm_{K,f}$ is well-defined, up to post-composition by $\PSL(2,\RR)$, and since $f$ is a parameterized quasicircle, each of $v^\pm_{K,f}$ is the boundary extension of a quasiconformal map of the disk, hence quasisymmetric. Therefore each of $v^\pm_{K,f}$ is naturally an element of $\cT$. 
Note that the gluing map between the top and bottom $K$-surfaces spanning $C = f(\RP^1)$ is given by $\Phi_{C, K} = v^-_{K,f} \circ (v^+_{K,f})^{-1}$. The following is the parameterized version of Theorems~\ref{tm:induced-hyp} and~\ref{tm:induced-hyp-K}.

\begin{theorem}\label{thm:parameterized-hyp}
For fixed $K \in [-1,0)$, the map $\pqc \to \cT \times \cT$ defined by $f \mapsto (v^+_{K,f}, v^-_{K,f})$ is surjective.
\end{theorem}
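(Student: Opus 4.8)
The plan is to reduce the statement directly to the already-established surjectivity of the single gluing map $\Phi_{\cdot, K}$ (Theorem~\ref{tm:induced-hyp-K}, or Theorem~\ref{tm:induced-hyp} in the case $K=-1$), together with a suitable choice of parameterization. Fix a target $(v^+, v^-) \in \cT \times \cT$ and represent each class by its unique normalized quasisymmetric homeomorphism, still denoted $v^+, v^-$ (so both fix $0,1,\infty$). Since the normalized quasisymmetric homeomorphisms form a group under composition, $g := v^- \circ (v^+)^{-1}$ is again a normalized quasisymmetric homeomorphism, hence an element of $\cT$. By Theorem~\ref{tm:induced-hyp-K} there is a normalized quasicircle $C \in \qcm$ whose gluing map between the top and bottom $K$-surfaces is exactly $g$; that is, writing $V^{\pm}_{C,K}$ for the normalized isometries with $\partial V^{\pm}_{C,K}(i) = i$ for $i = 0,1,\infty$, we have $\Phi_{C,K} = (\partial V^-_{C,K})^{-1} \circ \partial V^+_{C,K} = g$.

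With $C$ in hand, I define the parameterization $f := \partial V^+_{C,K} \circ v^+ \co \RP^1 \to C$. The crux of the argument is to check that $f$ is a \emph{parameterized} quasicircle, i.e. the restriction to $\RP^1$ of a quasiconformal homeomorphism of $\CP^1$ (its image is manifestly $f(\RP^1)=C$). For this I write $f = \partial U^+_C \circ w$, where $w := (\partial U^+_C)^{-1} \circ \partial V^+_{C,K} \circ v^+ = \cmp(U^+_C, V^+_{C,K}) \circ v^+$. By Proposition~\ref{prop:extend-bis} the comparison map $\cmp(U^+_C, V^+_{C,K}) = \cmp(V^+_{C,K}, U^+_C)^{-1}$ is quasisymmetric, so $w$ is a quasisymmetric self-homeomorphism of $\RP^1$ and therefore extends to a quasiconformal homeomorphism $W$ of $\CP^1$. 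On the other hand, since $C$ is a quasicircle, Lemma~\ref{lem:ahl} provides a quasiconformal homeomorphism $\widetilde U^+$ of $\CP^1$ extending $U^+_C$, so that $\widetilde U^+|_{\RP^1} = \partial U^+_C$. Then $\widetilde U^+ \circ W$ is a quasiconformal homeomorphism of $\CP^1$ restricting to $f$ on $\RP^1$, which shows $f \in \pqc$.

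Finally, I verify that this $f$ realizes the prescribed data. By construction $f(\RP^1) = C$, so the $K$-surfaces and normalized isometries appearing in the definition of $v^{\pm}_{K,f}$ are precisely the $V^{\pm}_{C,K}$ above. Hence
\[
 v^+_{K,f} = (\partial V^+_{C,K})^{-1} \circ f = (\partial V^+_{C,K})^{-1} \circ \partial V^+_{C,K} \circ v^+ = v^+,
\]
and, using $\Phi_{C,K} = g = v^- \circ (v^+)^{-1}$,
\[
 v^-_{K,f} = (\partial V^-_{C,K})^{-1} \circ f = \Phi_{C,K} \circ v^+ = v^- \circ (v^+)^{-1} \circ v^+ = v^-.
\]
Thus $(v^+_{K,f}, v^-_{K,f}) = (v^+, v^-)$ in $\cT \times \cT$, proving surjectivity. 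The case $K = -1$ is identical, invoking Theorem~\ref{tm:induced-hyp} in place of Theorem~\ref{tm:induced-hyp-K} and using that Proposition~\ref{prop:extend-bis} is stated for all $K \in [-1,0)$. The only genuinely non-formal point is the verification in the middle paragraph that the chosen parameterization extends quasiconformally across all of $\CP^1$ rather than merely on the upper half; everything else is bookkeeping of normalizations, and I do not expect any serious obstacle, since the analytic heavy lifting has already been carried out in the proof of Theorem~\ref{tm:induced-hyp-K}.
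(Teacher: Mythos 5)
Your proposal is correct and follows essentially the same route as the paper: the paper likewise picks $C$ with $\Phi_{C,K}=v^-\circ (v^+)^{-1}$ via Theorem~\ref{tm:induced-hyp} or~\ref{tm:induced-hyp-K} and sets $f=\partial V^+_{C,K}\circ v^+$, then checks the two identities exactly as you do. The only difference is that you explicitly verify $f\in\pqc$ (via $f=\partial U^+_C\circ w$ with $w$ quasisymmetric, Lemma~\ref{lem:ahl} and Proposition~\ref{prop:extend-bis}), a point the paper's proof leaves implicit; your verification is correct.
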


\begin{proof}
Let $(v^+, v^-) \in \cT \times \cT$. By Theorem~\ref{tm:induced-hyp} if $K = -1$, or Theorem~\ref{tm:induced-hyp-K} if $K \in (-1,0)$, there exists a normalized quasicircle $C \subset \CP^1$ so that $\Phi_{C, K} = v^- \circ (v^+)^{-1} \in \cT$. Let $V^\pm_{C, K}: \HH^{2\pm}_K \to S^\pm_K(C)$ denote the isometry whose extension $\partial V^\pm_{C, K}$ maps $i$ to $i$ for all $i = 0,1, \infty$. 
Now define $f: \RP^1 \to \CP^1$ by the formula
\begin{align*}
f = (\partial \VCK{C, K}^+) \circ v^+
\end{align*}
Then $v^+_{K,f}  = v^+$ clearly holds. Finally,  
\begin{align*}
v^-_{K,f} &=  \Phi^K_C \circ v^+_{K,f}\\
& = v^- \circ (v^+)^{-1} \circ v^+_{K,f} \\ &= v^-\\
\end{align*}
holds as well.
\end{proof}

Note that there is also a parametrized version of Theorem \ref{tm:induced-hyp-K} which can be proved using  similar arguments.

\subsection{Parameterized quasicircles in $\Ein^{1,1}$}

\begin{defi}
An embedding $f: \RP^1 \to \Ein^{1,1} = \RP^1 \times \RP^1$ is called a \emph{parameterized quasicircle} if both the left projection $\pi_l \circ f$ and the right projection $\pi_r \circ f$ are quasisymmetric. Note that the image $C= f(\RP^1)$ is then a quasicircle in $\Ein^{1,1}$ since it is the graph $\Gamma(g)$ of the quasisymmetric homeomorphism $g = (\pi_r \circ f) \circ (\pi_l \circ f)^{-1}$, 
\end{defi}
The space of parameterized quasicircles in $\Ein^{1,1}$ is denoted $\pqc(\Ein^{1,1})$. Clearly, $\pqc(\Ein^{1,1})$ is in natural bijection with $\cT \times \cT$ via the map $f \mapsto (\pi_l \circ f, \pi_r \circ f)$.

Let $f: \RP^1 \to \Ein^{1,1}$ be a parameterized quasicircle.
Exactly as in the previous section, for each value of $K \in (\infty, -1]$, $f$ determines two maps $v^+_{K,f}$ and $v^-_{K,f}$ by the same formula~\eqref{eqn:two-maps}, where now the maps $\VCK{C, K}^\pm$ denote isometries, well-defined, up to pre-composition with an element of $\PSL(2,\RR)$, between $\HH^{2\pm}_K$ and the future/past $K$-surfaces $\Sa^\pm_K(C)$ spanning $C$ in $\AdS^3$. The map $v^+_{C, K}$ is the composition of two homeomorphisms $\RP^1 \to \RP^1$,
\begin{align*}
v^+_{C, K} &= (\pi_r \partial \VCK{C, K}^+) \circ (\pi_r \circ f)
\end{align*}
the first of which is quasisymmetric because it is the boundary extension of a quasi-isometry (Lemma~\ref{lem:unif-qi-ads} if $K = -1$, or Proposition~\ref{prop:bilip} is $K < -1$), and the second of which is quasisymmetric because $f$ is a parameterized quasicircle.
Hence $v^+_{C, K}$ is quasisymmetric, and so it determines an element of $\cT$.  Similarly,  $v^-_{C, K}$ is quasisymmetric.
 The following is the parameterized version of Theorems~\ref{tm:induced-ads} and~\ref{tm:induced-ads-K}.

\begin{theorem}\label{thm:parameterized-ads}
For fixed $K \in (-\infty,-1]$, the map $\pqc(\Ein^{1,1}) \to \cT \times \cT$ defined by $f \mapsto (v^+_{K,f}, v^-_{K,f})$ is surjective.
\end{theorem}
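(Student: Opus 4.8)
The plan is to mimic exactly the proof of Theorem~\ref{thm:parameterized-hyp} given just above, substituting the AdS surjectivity result (Theorem~\ref{tm:induced-ads} or~\ref{tm:induced-ads-K}) for its hyperbolic counterpart. The essential point is that the parameterized statement is a purely formal consequence of the unparameterized one, once one observes that the gluing map $\Psi_{C,K}$ factors as $v^-_{K,f}\circ(v^+_{K,f})^{-1}$ and that both $v^\pm_{K,f}$ are already known to be quasisymmetric by the discussion preceding the theorem.

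Concretely, let $(v^+,v^-)\in\cT\times\cT$ be given. First I would invoke Theorem~\ref{tm:induced-ads} (in the case $K=-1$) or Theorem~\ref{tm:induced-ads-K} (in the case $K\in(-\infty,-1)$) to produce a normalized quasicircle $C=\Gamma(g)\in\qcm(\Ein^{1,1})$ whose gluing map between the future and past $K$-surfaces satisfies
\[
\Psi_{C,K}=v^-\circ(v^+)^{-1}\in\cT.
\]
Next I would take the normalized isometries $\VCK{C,K}^\pm\co\HH^{2\pm}_K\to\Sa^\pm_K(C)$, whose boundary extensions $\partial\VCK{C,K}^\pm\co\RP^1\to C$ satisfy $\partial\VCK{C,K}^\pm(i)=(i,i)$ for $i=0,1,\infty$, as guaranteed by Proposition~\ref{prop:extend-ads}. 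Then I would \emph{define} the candidate parameterized quasicircle by
\[
f=(\partial\VCK{C,K}^+)\circ v^+\co\RP^1\to\Ein^{1,1}.
\]
One must check that $f$ is indeed a parameterized quasicircle, i.e. that $\pi_l\circ f$ and $\pi_r\circ f$ are quasisymmetric: this holds because $v^+$ is quasisymmetric and $\pi_l\circ\partial\VCK{C,K}^+$, $\pi_r\circ\partial\VCK{C,K}^+$ are boundary extensions of the quasi-isometries $\Pi_l^+\circ\VCK{C,K}^+$, $\Pi_r^+\circ\VCK{C,K}^+$ (Lemma~\ref{lem:unif-qi-ads} for $K=-1$, Lemma~\ref{lem:unif-bilip-ads} for $K<-1$), and composition preserves quasisymmetry.

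With $f$ so defined, the verification is immediate and identical in spirit to the hyperbolic case. By construction $v^+_{K,f}=(\partial\VCK{C,K}^+)^{-1}\circ f=v^+$. For the other coordinate, since the gluing map spanning $C=f(\RP^1)$ equals $\Psi_{C,K}=v^-_{K,f}\circ(v^+_{K,f})^{-1}$, we compute
\[
v^-_{K,f}=\Psi_{C,K}\circ v^+_{K,f}=\bigl(v^-\circ(v^+)^{-1}\bigr)\circ v^+=v^-.
\]
Thus $f\mapsto(v^+_{K,f},v^-_{K,f})=(v^+,v^-)$, proving surjectivity. I do not expect any genuine obstacle here: the theorem is essentially a restatement of Theorems~\ref{tm:induced-ads} and~\ref{tm:induced-ads-K} together with the already-established quasisymmetry of the normalizing boundary maps. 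The only point requiring a sentence of care is confirming that the explicitly defined $f$ lands in $\pqc(\Ein^{1,1})$ rather than merely producing a quasicircle with the correct gluing data, but this follows directly from the factorization of $\pi_l\circ f$ and $\pi_r\circ f$ through the quasi-isometric projections.
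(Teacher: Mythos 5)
Your proposal is correct and follows exactly the route the paper takes: the paper's proof of this theorem is literally declared to be identical to that of Theorem~\ref{thm:parameterized-hyp} with Theorems~\ref{tm:induced-ads} and~\ref{tm:induced-ads-K} substituted in, which is precisely your construction $f=(\partial\VCK{C,K}^+)\circ v^+$ followed by the cocycle computation $v^-_{K,f}=\Psi_{C,K}\circ v^+_{K,f}=v^-$. Your added check that $f$ actually lands in $\pqc(\Ein^{1,1})$, via the quasi-isometric left and right projections, is a small point of care the paper leaves implicit but handles the same way in the discussion preceding the theorem.
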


The proof is identical to that of Theorem~\ref{thm:parameterized-hyp} using Theorems~\ref{tm:induced-ads} and~\ref{tm:induced-ads-K} in place of Theorems~\ref{tm:induced-hyp} and~\ref{tm:induced-hyp-K}. There is also a parametrized version of Theorem \ref{tm:III-ads-K} which can be proved with similar techniques.

\bibliographystyle{amsalpha}
\bibliography{adsbib}

\providecommand{\bysame}{\leavevmode\hbox to3em{\hrulefill}\thinspace}
\providecommand{\MR}{\relax\ifhmode\unskip\space\fi MR }
\providecommand{\MRhref}[2]{%
  \href{http://www.ams.org/mathscinet-getitem?mr=#1}{#2}
}
\providecommand{\href}[2]{#2}
\begin{thebibliography}{\v{S}06}

\bibitem[Ahl63]{ahlfors-reflections}
Lars~V. Ahlfors, \emph{Quasiconformal reflections}, Acta Math. \textbf{109}
  (1963), 291--301. \MR{0154978}

\bibitem[Ale05]{alex}
Alexander~D. Alexandrov, \emph{Convex polyhedra}, Springer Monographs in
  Mathematics, Springer-Verlag, Berlin, 2005.

\bibitem[BB09]{BeBo}
Riccardo Benedetti and Francesco Bonsante, \emph{Canonical {Wick} rotations in
  3-dimensional gravity}, Memoirs of the American Mathematical Society
  \textbf{198} (2009), 164pp, math.DG/0508485.

\bibitem[BBZ07]{BBZ}
Thierry Barbot, Fran{\c{c}}ois B{\'e}guin, and Abdelghani Zeghib,
  \emph{Constant mean curvature foliations of globally hyperbolic spacetimes
  locally modelled on {${\rm AdS}\sb 3$}}, Geom. Dedicata \textbf{126} (2007),
  71--129. \MR{MR2328923 (2008j:53041)}

\bibitem[BC10]{bridgeman_canary}
Martin Bridgeman and Richard~D. Canary, \emph{The {T}hurston metric on
  hyperbolic domains and boundaries of convex hulls}, Geom. Funct. Anal.
  \textbf{20} (2010), no.~6, 1317--1353. \MR{2738995}

\bibitem[BDMS]{width-paper}
Francesco Bonsante, Jeffrey Danciger, Sara Maloni, and Jean-Marc Schlenker,
  \emph{Width of hyperbolic quasicircles}, In preparation (2019).

\bibitem[Ben08]{Benoist-survey}
Yves Benoist, \emph{A survey on divisible convex sets}, Geometry, analysis and
  topology of discrete groups, Adv. Lect. Math. (ALM), vol.~6, Int. Press,
  Somerville, MA, 2008, pp.~1--18. \MR{2464391}

\bibitem[Ber60]{bers}
Lipman Bers, \emph{Simultaneous uniformization}, Bull. Amer. Math. Soc.
  \textbf{66} (1960), 94--97. \MR{MR0111834 (22 \#2694)}

\bibitem[Bon05]{BonsanteSurvey}
Francesco Bonsante, \emph{Constant curvature {$(2+1)$}-spacetimes and
  projective structures}, Actes de {S}\'{e}minaire de {T}h\'{e}orie {S}pectrale
  et {G}\'{e}om\'{e}trie. {V}ol. 23. {A}nn\'{e}e 2004--2005, S\'{e}min.
  Th\'{e}or. Spectr. G\'{e}om., vol.~23, Univ. Grenoble I,
  Saint-Martin-d'H{\`e}res, 2005, pp.~9--48. \MR{2270222}

\bibitem[BS10]{maximal}
Francesco Bonsante and Jean-Marc Schlenker, \emph{Maximal surfaces and the
  universal {T}eichm\"uller space}, Invent. Math. \textbf{182} (2010), no.~2,
  279--333. \MR{2729269}

\bibitem[BS17]{bon-sep}
Francesco Bonsante and Andrea Seppi, \emph{{S}pacelike convex surfaces with
  prescribed curvature in (2+1)-{M}inkowski space}, Advances in Mathematics
  \textbf{304} (2017), 434--493.

\bibitem[BS18]{bon_are}
Francesco Bonsante and Andrea Seppi, \emph{{A}rea-preserving diffeomorphism of
  the hyperbolic plane and $k$-surfaces in anti-de sitter space}, Journal of
  Topology \textbf{11} (2018), no.~2, 420--468.

\bibitem[CEG87]{can_not}
R.~D. Canary, D.~B.~A. Epstein, and P.~Green, \emph{Notes on notes of
  {T}hurston}, Analytical and geometric aspects of hyperbolic space, London
  Math. Soc. Lecture Note Ser., vol. 111, Cambridge Univ. Press, Cambridge,
  1987, pp.~3--92. \MR{903850 (89e:57008)}

\bibitem[Dia13]{diallo2013}
Boubacar Diallo, \emph{Prescribing metrics on the boundary of convex cores of
  globally hyperbolic maximal compact ads 3-manifolds}, preprint
  arXiv:1303.7406, 2013.

\bibitem[DMS14]{idealpolyhedra}
Jeff Danciger, Sara Maloni, and Jean-Marc Schlenker, \emph{{Polyhedra inscribed
  in a quadric}}, preprint arXiv:1410.3774, 2014.

\bibitem[ED86]{ep-ma}
Marden~A. Epstein~D.B.A., \emph{Convex hulls in hyperbolic space, a theorem of
  sullivan, and measured pleated surfaces}, Analytical and Geometric Aspects of
  Hyperbolic Space (Epstein D.B.A., ed.), Cambridge University Press, 1986.

\bibitem[FM07]{FlMa}
A.~Fletcher and V.~Markovic, \emph{Quasiconformal maps and teichm{\"u}ller
  theory}, Oxford graduate texts in mathematics, Oxford University Press, 2007.

\bibitem[FS18]{fillastre-seppi}
Fran{\c{c}}ois Fillastre and Andrea Seppi, \emph{Spherical, hyperbolic and
  other projective geometries: convexity, duality, transitions}, Sixteen essays
  on non-Euclidean geometry (Athanase Papadopoulos, ed.), European Mathematical
  Society Publishing House, 2018.

\bibitem[GHL02]{Ga-Hu}
Frederick~P. Gardiner, Jun Hu, and Nikola Lakic, \emph{Earthquake curves},
  Contemp. Math. \textbf{311} (2002), 141--195.

\bibitem[HMM05]{herron}
David~A. Herron, William Ma, and David Minda, \emph{Estimates for conformal
  metric ratios}, Comput. Methods Funct. Theory \textbf{5} (2005), no.~2,
  323--345. \MR{2205417}

\bibitem[HR93]{HR}
Craig~D. Hodgson and Igor Rivin, \emph{A characterization of compact convex
  polyhedra in hyperbolic 3-space}, Invent. Math. \textbf{111} (1993), 77--111.

\bibitem[KS07]{minsurf}
Kirill Krasnov and Jean-Marc Schlenker, \emph{Minimal surfaces and particles in
  3-manifolds}, Geom. Dedicata \textbf{126} (2007), 187--254. \MR{MR2328927}

\bibitem[Lab89]{L1}
Fran{\c{c}}ois Labourie, \emph{Immersions isom\'etriques elliptiques et courbes
  pseudo-holo\-morphes}, J. Differential Geom. \textbf{30} (1989), 395--424.

\bibitem[Lab91]{L6}
\bysame, \emph{Probl\`eme de {M}inkowski et surfaces \`a courbure constante
  dans les vari\'et\'es hyperboliques}, Bull. Soc. Math. France \textbf{119}
  (1991), no.~3, 307--325. \MR{92k:53077}

\bibitem[Lab92a]{L4}
\bysame, \emph{M\'etriques prescrites sur le bord des vari\'et\'es
  hyperboliques de dimension 3}, J. Differential Geom. \textbf{35} (1992),
  609--626.

\bibitem[Lab92b]{L5}
\bysame, \emph{Surfaces convexes dans l'espace hyperbolique et
  {CP1}-structures}, J. London Math. Soc., II. Ser. \textbf{45} (1992),
  549--565.

\bibitem[LV73]{Lehto}
O.~Lehto and K.~I. Virtanen, \emph{Quasiconformal mappings in the plane},
  second ed., Springer-Verlag, New York-Heidelberg, 1973, Translated from the
  German by K. W. Lucas, Die Grundlehren der mathematischen Wissenschaften,
  Band 126. \MR{0344463}

\bibitem[Mes07]{mess}
Geoffrey Mess, \emph{Lorentz spacetimes of constant curvature}, Geom. Dedicata
  \textbf{126} (2007), 3--45. \MR{MR2328921}

\bibitem[MS12]{mi-sa}
Hideki Miyachi and Dragomir Sari\'c, \emph{Uniform weak* topology and
  earthquakes in hyperbolic plane}, Proc. Lond. Math. Soc. \textbf{105} (2012),
  1123--1148.

\bibitem[Pog73]{Po}
Aleksei~V. Pogorelov, \emph{Extrinsic geometry of convex surfaces}, American
  Mathematical Society, 1973, Translations of Mathematical Monographs. Vol. 35.

\bibitem[Pom92]{Pommerenke}
Ch. Pommerenke, \emph{Boundary behaviour of conformal maps}, Grundlehren der
  Mathematischen Wissenschaften [Fundamental Principles of Mathematical
  Sciences], vol. 299, Springer-Verlag, Berlin, 1992. \MR{1217706}

\bibitem[Riv92]{rivin-comp}
Igor Rivin, \emph{Intrinsic geometry of convex ideal polyhedra in hyperbolic
  3-space}, Analysis, Algebra, and Computers in Mathematical Research
  (M.~Gyllenberg and L.~E. Persson, eds.), Marcel Dekker, 1992, (Proc. of the
  21st Nordic Congress of Mathematicians), pp.~275--292.

\bibitem[RS94]{RS}
H.~Rosenberg and J.~Spruck, \emph{On the existence of convex hypersurfaces of
  constant {Gauss} curvature in hyperbolic space}, J. Differential Geom.
  \textbf{40} (1994), 379--409.

\bibitem[Sch96]{these}
Jean-Marc Schlenker, \emph{Surfaces convexes dans des espaces lorentziens \`a
  courbure constante}, Comm. Anal. Geom. \textbf{4} (1996), no.~1-2, 285--331.
  \MR{MR1393565 (98c:53076)}

\bibitem[Sch98]{shu}
\bysame, \emph{M\'etriques sur les poly\`edres hyperboliques convexes}, J.
  Differential Geom. \textbf{48} (1998), no.~2, 323--405. \MR{1630178
  (2000a:52018)}

\bibitem[Sch02]{horo}
\bysame, \emph{Hypersurfaces in {$H\sp n$} and the space of its horospheres},
  Geom. Funct. Anal. \textbf{12} (2002), no.~2, 395--435. \MR{MR1911666
  (2003d:53108)}

\bibitem[Sch06]{hmcb}
\bysame, \emph{Hyperbolic manifolds with convex boundary}, Invent. Math.
  \textbf{163} (2006), no.~1, 109--169. \MR{MR2208419 (2006m:57023)}

\bibitem[Sul81]{sullivan_travaux}
Dennis Sullivan, \emph{Travaux de {T}hurston sur les groupes quasi-fuchsiens et
  les vari\'et\'es hyperboliques de dimension {$3$}\ fibr\'ees sur {$S^{1}$}},
  Bourbaki {S}eminar, {V}ol. 1979/80, Lecture Notes in Math., vol. 842,
  Springer, Berlin-New York, 1981, pp.~196--214. \MR{636524}

\bibitem[Tam18]{tamburelli2016}
Andrea Tamburelli, \emph{Prescribing metrics on the boundary of ads
  3-manifolds}, International Mathematics Research Notices \textbf{5} (2018),
  1281--1313.

\bibitem[Thu97]{th-earth}
William~P. Thurston, \emph{Three-dimensional geometry and topology. {V}ol. 1},
  Princeton Mathematical Series, vol.~35, Princeton University Press,
  Princeton, NJ, 1997, Edited by Silvio Levy. \MR{1435975 (97m:57016)}

\bibitem[Thu10]{thu_mathoverflow}
William~P Thurston, \emph{Mathoverflow: Complete metric on the space of jordan
  curves?},
  \url{https://mathoverflow.net/questions/40049/complete-metric-on-the-space-of-jordan-curves},
  2010.

\bibitem[Tro91]{troyanov:schwarz}
Marc Troyanov, \emph{The {S}chwarz lemma for nonpositively curved {R}iemannian
  surfaces}, Manuscripta Math. \textbf{72} (1991), no.~3, 251--256.
  \MR{1118545}

\bibitem[\v{S}06]{sa}
Dragomir \v{S}ari\'{c}, \emph{Real and complex earthquakes}, Trans. Amer. Math.
  Soc. \textbf{358} (2006), no.~1, 233--249. \MR{2171231}

\bibitem[Yau78]{yau:schwarz}
Shing~Tung Yau, \emph{A general {S}chwarz lemma for {K}\"ahler manifolds},
  Amer. J. Math. \textbf{100} (1978), no.~1, 197--203. \MR{0486659}

\end{thebibliography}

\end{document}